\numberwithin{equation}{section}
\theoremstyle{plain}
\newtheorem{thm}{\protect\theoremname}[section]
\theoremstyle{plain}
\newtheorem{cor}[thm]{\protect\corollaryname}
\theoremstyle{plain}
\newtheorem{lem}[thm]{\protect\lemmaname}
\theoremstyle{plain}
\newtheorem{proposition}[thm]{\protect\propositionname}
\theoremstyle{plain}
\newtheorem{definition}{Definition}
\newtheorem{claim}{Claim}
\newtheorem{example}{Example}
\theoremstyle{rem}
\newtheorem{rem}{{Remark}}
\providecommand{\corollaryname}{Corollary}
\providecommand{\propositionname}{Proposition}
\providecommand{\lemmaname}{Lemma}
\providecommand{\theoremname}{Theorem}
\providecommand{\notationname}{Notation}
\DeclareMathOperator{\dimh}{dim_H}
\DeclareMathOperator{\R}{\mathbb{R}}
\DeclareMathOperator{\cS}{\mathcal{S}}
\DeclareMathOperator{\K}{\mathcal{K}}
\DeclareMathOperator{\cJ}{\mathcal{J}}
\DeclareMathOperator{\T}{\mathbb{T}}
\DeclareMathOperator{\N}{\mathbb{N}}
\DeclareMathOperator{\Z}{\mathbb{Z}}
\DeclareMathOperator{\cN}{\mathcal{N}}
\DeclareMathOperator{\cY}{\mathcal{Y}}
\DeclareMathOperator{\bfx}{{\bf{x}}}
\DeclareMathOperator{\bfy}{{\bf{y}}}
\DeclareMathOperator{\bfz}{{\bf{z}}}
\DeclareMathOperator{\bfw}{{\bf{w}}}
\DeclareMathOperator{\dH}{{\mathrm{d}_{\mathrm{H}}}}
\DeclareMathOperator{\dist}{{\mathrm{dist}}}
\DeclareMathOperator{\s}{\mathcal{S}}
\DeclareMathOperator{\m}{\ (\bmod ~1)}
\date{}
\begin{document}
\begin{sloppypar}
\title[dimension theory  of inhomogeneous diophantine approximation ]{Dimension theory  of inhomogeneous Diophantine approximation with matrix sequences}
\author{Zhang-nan Hu, Junjie Huang, Bing Li*, Jun Wu}

\thanks{* Corresponding author}
 \subjclass[2020]{Primary: 11J83; Secondary: 28A80, 28A78, 37C45}

\date{\today}	

\begin{abstract}

In this paper, we investigate the Hausdorff dimension of naturally occurring sets of inhomogeneous well-approximable points with a sequence of real invertible matrices $\mathcal{A}=(A_n)_{n\in\mathbb{N}}$. Specifically, for a given point $\mathbf{y}\in [0,1)^d$ and a function $\psi : \N \to \R^+$, we study the limsup set
\[ W\big(\mathcal{A},\psi,{\bf y}\big)
 =\Big\{\mathbf{x}\in [0,1)^d\colon A_n\mathbf{x}~(\bmod~1)\in B\big(\mathbf{y}, \psi(n)\big)  {\rm ~ for~ infinitely ~many}~n\in\N\Big\}.\]
The upper and lower bounds on the Hausdorff dimension of $W\big(\mathcal{A},\psi,{\bf y}\big)$ %depend on 
are determined by involving the singular values of $A_n$  and the successive minima of the lattice $A_n^{-1}\mathbb{Z}^d$, and both bounds are shown to be attainable for some matrices.
Within this framework, we unify the problem of shrinking target sets and recurrence sets, establishing the Hausdorff dimensions for such limsup sets. As applications, our corresponding upper bounds for shrinking target and recurrence sets essentially improve those appearing in the present literature. 
 Furthermore, explicit Hausdorff dimension formulas are derived for shrinking targets and recurrence sets associated with concrete classes of matrices.

We extend the Mass Transference Principle for rectangles of Li-Liao-Velani-Wang-Zorin (Adv. Math., 2025) to rectangles under local isometries. This generalization yields a general lower bound for the Hausdorff dimension of $W\big(\mathcal{A},\psi,{\bf y}\big)$. 
 
\end{abstract}

\maketitle

\section{Introduction }

We begin by setting the scene. Let $d\ge 1$. For  $\mathbf{x}=(x_1,...,x_d)\in\R^d$,  denote  $\Vert \mathbf{x}\Vert_d:=\sqrt{\Vert x_1\Vert^2+...+\Vert x_d\Vert^2}$, where $\Vert x_i\Vert:=\mathrm{dist}(x_i,\mathbb{Z})$, $1\le i\le d$. 
A ball $B(\mathbf{x},r):=\{\mathbf{y}\in [0,1)^d \colon \Vert  {\bf y}-{\bf x}\Vert_d <r\}$ is defined by a centre $\mathbf{x}\in [0,1)^d$ and radius $r>0$.
Let $\mathbb{R}^+:=(0,\infty)$ and $\psi:\mathbb{R}^+\to\mathbb{R}^+$ be a nonincreasing continuous function.  Let  $\mathcal{A}:=(A_n)_{n\in\mathbb{N}}$ be a sequence of $d\times d$ invertible matrices with real  entries. We are interested in the distribution of $(A_n{\bf x})_{n\in\mathbb{N}}$, where ${\bf x}\in [0,1)^d$, focusing on their approximation properties to a target point $\mathbf{y}\in [0,1)^d$. More precisely, for ${\bf y}\in [0,1)^d$, consider the set 
\begin{equation*}
W\big(\mathcal{A},\psi,{\bf y}\big)
 =\Big\{\mathbf{x}\in [0,1)^d\colon A_n\mathbf{x}\m\in B\big(\mathbf{y}, \psi(n)\big)  {\rm ~ for~ infinitely ~many}~n\in\N\Big\}.
 \end{equation*}
It is therefore natural to consider  the size of $W\big(\mathcal{A},\psi,{\bf y}\big)$ in terms of 
\begin{enumerate}
\item $d$-dimensional Lebesgue measure $\mathcal{L}^d$,\medskip 
\item Hausdorff dimension,\medskip
\item Hausdorff measure.
\end{enumerate}
A  straightforward consequence of the Borel-Cantelli Lemma is that if $\sum_{n\in\mathbb{N}}\mathcal{L}^d(W_n)<\infty$, where $W_n=\{\mathbf{x}\in [0,1)^d\colon A_n\mathbf{x}\m\in B\big(\mathbf{y}, \psi(n)\big)\}$, then  $\mathcal{L}^d(W\big(\mathcal{A},\psi,{\bf y}\big))=0$. Consequently, the purpose of this work is to investigate the size of $W\big(\mathcal{A},\psi,{\bf y}\big)$ in Hausdorff dimension.

 A key motivating factor is the theory of Diophantine approximation, which originated in the study of rational approximations and plays a crucial role in characterizing the arithmetic properties of numbers. %Classic Diophantine approximation concerns how well an irrational number can be approximated by rational numbers.  
For a point $\mathbf{y}\in [0,1)^d$ and a function $\psi:\mathbb{R}^+\to\mathbb{R}^+$, define the set
\begin{equation}\label{das}
W(\mathbf{y}, \psi) :=\{\mathbf{x} \in [0, 1)^d : \langle q\mathbf{x} -\mathbf{y}\rangle < \psi(q) {\rm ~for ~infinitely ~many ~}q \in\N\},
\end{equation}
where $\langle\cdot   \rangle $ denotes the shortest distance from a point in $\mathbb{R}^d$ to $\mathbb{Z}^d$ under the maximum norm, that is, $\langle  \mathbf{x}  \rangle =\max_{1\le i\le d}\Vert x_i\Vert$   for  $\mathbf{x}=(x_1,...,x_d)\in\R^d$. The size of $W(\mathbf{y}, \psi)$ is a core subject of the study of metric Diophantine approximation. 
According as $\mathbf{y}=\mathbf{0}$ or not,  the study of $W(\mathbf{y}, \psi)$ is referred to as homogeneous or inhomogeneous  Diophantine approximation.   In the case 
when $\mathbf{y}=\mathbf{0}$ and $d\ge 2$,   Gallagher \cite{ga} showed that  the Lebesgue measure of $W(\mathbf{0}, \psi)$ obeys zero-one law according to the convergence or divergence of $\sum_{q=1}^\infty\psi(q)^d$. When $d=1$, Duffin and Schaeffer \cite{DSC} proved that  the aforementioned conclusion holds when $\psi$ is monotonic, and this condition is crucial. %Maynard and Koukouloupolous \cite{MK} obtained the Lebesgue  measure of $W(\mathbf{0}, \psi)$ without the monotonicity assumption on $\psi$. 
When 
$\psi$ is monotonic, the Hausdorff measure of the set 
$W(\mathbf{0}, \psi)$ satisfies a 0-1 law depending on the convergence or divergence of a certain series, as established in  \cite{bbdv},  and the monotonicity assumption on 
$\psi$ can be dropped for $d\ge2$. For general approximation functions 
$\psi$ in the case 
$d=1$, Rynne \cite{Ryn} derived a formula for the Hausdorff dimension of 
$W(\mathbf{0}, \psi)$. 

%The study of these sets when $\mathbf{y}\ne 0$ is relatively sparse, for example,  see \cite{bbdv,sui,han, han2}. When $\psi$ is monotonic, results on the measure-theoretic properties of the set can be found in \cite{bere, bere2}. Notably, Ram\'irez  \cite{ram} demonstrated that if the monotonicity condition is removed entirely, {\color{red}the aforementioned conclusions??} fail to hold. Further investigations into the Hausdorff measure of such sets are detailed in  \cite{bbdv}. 
When $\mathbf{y}\ne 0$ and $\psi$ is monotonic, Sprind{\v z}uk \cite{Sprind} established that  the Lebesgue  measure of $W(\mathbf{y}, \psi)$ has 0-1 law. Ram\'irez  \cite{ram} demonstrated that the monotonicity condition on $\psi$ is essential when $d=1$. Yu \cite{han2} proved that the monotonicity can be omitted if $d\ge3$, while Kim \cite{Kim} showed that for $d=2$, this condition can be removed provided $\mathbf{y}\in\mathbb{Q}^2$.  Levesley \cite{leve} obtained the Hausdorff dimension of 
$W(\mathbf{y}, \psi)$ under the monotonicity condition on $\psi$.  Furthermore, Beresnevich et al.\cite{bbdv} characterized the Hausdorff measure of this set, with the monotonicity condition being dispensable for $d\ge2$.
 Pollington, Velani and Zafeiropoulos \cite{PVZ} replaced $q$ by a lacunary sequence $(q_n)_{n\in\mathbb{N}}$, and considered the following set 
\begin{equation*}
W((q_n)_n,\psi,y)=\left\{x\in [0,1): \Vert q_nx-y\Vert < \psi(n) {\rm ~for~infinitely~many ~}n\in\N\right\},
\end{equation*}
%where $\gamma\in[0,1]$, a function $\psi:\mathbb{N}\to [0,1]$ are given, 
and then under some condition on  the Fourier transform of $ \mu $,  they established a quantitative inhomogeneous Khintchine-type theorem for $W((q_n)_n,\psi,y)\cap E$ with $E\subset [0,1)$. Here if  the norm $\langle\cdot   \rangle  $ is replaced by  $\Vert\cdot\Vert_d$  in \eqref{das}, denoting the corresponding set by $W_d(\psi,\mathbf{y})$,  the aforementioned conclusions regarding  measure and dimension of $W(\psi,\mathbf{y})$ also hold for $W_d(\psi,\mathbf{y})$, which follows from that there exists a constant $c>1$ such that $c^{-1}\Vert \mathbf{x} \Vert_d\le \langle  \mathbf{x}  \rangle \le c\Vert \mathbf{x} \Vert_d$  for all $\mathbf{x}\in\R^d$. 

Our problem is closely related to, but differs from, the quantitative recurrence and shrinking target problems under matrix transformation of $[0,1)^d$.   Like these limsup set studies,  our primary tool is a novel Mass Transference Principle. 
Detailed comparisons with previous work appear in Section \ref{compare}.

\subsection{The problem set up and main results}\label{applications}%\hfill \\

Let $F\subset \mathbb{R}$ and $GL_d(F)$ denote the space of $d\times d$ invertible matrices with entries in $F$. 
\begin{definition}[{\bf singular value}]
Let $A\in GL_d(\mathbb{R})$. Let $\lambda_1\le  \cdots \le\lambda_d$  denote the eigenvalues of $A^TA$, with repetitions. Let $\sigma_i(A) = \sqrt{\lambda_i}$, $1\le i\le d$. The numbers $\sigma_1(A)\le  \cdots \le\sigma_d(A)$ defined above are called the singular values of $A$.
\end{definition}
Let $\mathcal{A}=(A_n)_{n\in\N}\subset GL_d(\R)$. % be a lacunary sequence defined by Definition \ref{highlac}. 
Recall that  for  ${\bf y}\in [0,1)^d$,
\begin{equation*}
W\big(\mathcal{A},\psi,{\bf y}\big)
 =\Big\{\mathbf{x}\in [0,1)^d\colon A_n\mathbf{x}\m\in B\big(\mathbf{y}, \psi(n)\big)  {\rm ~ for~ infinitely ~many}~n\in\N\Big\}.
 \end{equation*}
As mentioned above, this paper is primarily concerned with the following question:
\bigskip

\noindent\textbf{Problem 1:} {\it What is the Hausdorff dimension of $W\big(\mathcal{A},\psi,{\bf y}\big)$ ?
}

\bigskip

 Before stating our results precisely, we need to introduce some notation. 

 Let  $\Lambda_n:=A_n^{-1}\mathbb{Z}^d$, which is a lattice (see Definition \ref{lattice}) and $m_i(\Lambda_n),~1\le i\le d$ be the  successive minima of $\Lambda_n $  (see Definition \ref{minina}).
For $n\ge1$, put
$$
\tau_n:=-\frac{1}{n}\log\psi(n),\quad  l_{n,i}:=\frac{1}{n}\log\sigma_{i}(A_n)\quad {\rm and} \quad  h_{n,i}:=-\frac{1}{n}\log m_i(\Lambda_n),\quad 1\le i\le d.$$

For $1\le i\le d$, denote
\begin{equation*}
\underline{s}_n(\mathcal{A},\psi,i ):=\frac{1}{\tau_n+l_{n,i}}\Big(\sum_{j=1}^dl_{n,j}-\sum_{k\in \K_{n,2}(i)}(l_{n,j}-l_{n,i})
+\sum_{k\in\K_{n,1}(i)}(\tau_n+l_{n,i}-l_{n,j})\Big),
\end{equation*}

\begin{equation*}
\overline{s}_n(\mathcal{A},\psi,i ):=\frac{1}{\tau_n+l_{n,i}}\Big(\sum_{j=1}^dl_{n,j}-\sum_{j\in \K_{n,2}(i)}(l_{n,j}-l_{n,i})
+\sum_{j\in \Gamma_n(i)} (\tau_n+l_{n,i}-h_{n,j})\Big),
\end{equation*}
where
$$\K_{n,1}(i):=\{1\le j\le d:l_{n,j}>\tau_n+ l_{n,i}\}\quad \K_{n,2}(i):=\{1\le j\le d:l_{n,j}< l_{n,i}\}, $$
and
\begin{equation}\label{gammai}
\Gamma_n(i):=\{1\le j\le d:h_{n,j}\ge \tau_n+l_{n,i}\}. 
\end{equation}
Then put
\[\underline{s}(\mathcal{A},\psi):=\limsup_{n\to\infty}\min_{1\le i\le d}\{\underline{s}_n(\mathcal{A},\psi,i )\},\]
\begin{equation*}
\overline{s}(\mathcal{A},\psi):=\limsup_{n\to\infty}\min_{1\le i\le d}\{\overline{s}_n(\mathcal{A},\psi,i )\}.
\end{equation*}

Let us choose the common part of $\underline{s}_n(\mathcal{A},\psi,i )$ and $\overline{s}_n(\mathcal{A},\psi,i )$, and define it as $\hat{s}_n(\mathcal{A},\psi,i )$. More precisely,
 $$\hat{s}_n(\mathcal{A},\psi,i ):=\frac{1}{\tau_n+l_{n,i}}\Big(\sum_{j=1}^dl_{n,j}-\sum_{j\in \K_{n,2}(i)}(l_{n,j}-l_{n,i})\Big).$$
 By definitions, $\hat{s}_n(\mathcal{A},\psi,i )\ge \max\{\underline{s}_n(\mathcal{A},\psi,i ),\overline{s}_n(\mathcal{A},\psi,i )\}$. Put 
 \[\hat{s}(\mathcal{A},\psi):=\limsup_{n\to\infty}\min_{1\le i\le d}\{\hat{s}_n(\mathcal{A},\psi,i )\}.\]

 Let $\Gamma(\mathcal{A})$ denote the set of accumulation points $\bm{\ell}=(l_1,\dots, l_d)$ of sequences 
 $$\Big\{\Big(\frac{1}{n}\log \sigma_1(A_n),\dots,\frac{1}{n}\log \sigma_d(A_n)\Big)\Big\}_{n\in\N}.$$

Now we are fully ready to state our first theorem, which provides an estimate for  $\dim_{\rm H}W\big(\mathcal{A},\psi,{\bf y}\big)$, here $\dim_{\rm H}$ denotes the Hausdorff dimension.
For $s\ge0$, let $\mathcal{G}^{s}([0,1)^d)$ be the classes of sets in $[0,1)^d$ with large intersection property
with index $s$, and its formal definition will be given in  Section \ref{lippp}.

\begin{thm}\label{application1}
Let $\mathcal{A}\subset GL_d(\R)$ with $\Gamma(\mathcal{A}) \subset (\mathbb{R}^+)^d$. Let  $\psi : \R^+ \to \R^+$ be a positive and non-increasing function. 
Then  for any ${\bf y}\in [0,1)^d$,
\[\underline{s}(\mathcal{A},\psi)\le \dim_{\rm H} W\big(\mathcal{A},\psi,{\bf y}\big)\le \overline{s}(\mathcal{A},\psi),\]
and $W\big(\mathcal{A},\psi,{\bf y}\big)\in\mathcal{G}^{\underline{s}(\mathcal{A},\psi)}([0,1)^d)$. 
Moreover, if $\mathcal{A}$ is a sequence of  diagonal matrices, then 
\[\dim_{\rm H} W\big(\mathcal{A},\psi,{\bf y}\big)=\underline{s}(\mathcal{A},\psi) =\overline{s}(\mathcal{A},\psi).\]
\end{thm}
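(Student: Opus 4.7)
The plan is to establish the upper bound $\dim_{\rm H} W(\mathcal{A},\psi,\mathbf{y}) \le \overline{s}(\mathcal{A},\psi)$ by a direct covering argument, the lower bound together with the large intersection property via the paper's Mass Transference Principle for rectangles under local isometries, and the diagonal collapse through a direct computation showing $\underline{s}_n = \overline{s}_n$.

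For the upper bound, fix $i \in \{1,\ldots,d\}$. Note that $W_n \cap [0,1)^d$ is the image modulo $\mathbb{Z}^d$ of the ellipsoid $A_n^{-1}B(\mathbf{y},\psi(n))$ translated by the lattice $\Lambda_n = A_n^{-1}\mathbb{Z}^d$; in the singular-value frame of $A_n$ this ellipsoid has semi-axes $r_{n,j} := \psi(n)/\sigma_j(A_n) = e^{-n(\tau_n+l_{n,j})}$. Cover each ellipsoid by balls of radius $r_{n,i}$: the number required is $\prod_{j\in\K_{n,2}(i)}\sigma_i(A_n)/\sigma_j(A_n)$, contributing $\sum_{j\in\K_{n,2}(i)}(l_{n,i}-l_{n,j})$ to the log-count exponent. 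Next multiply by the number of lattice cosets of $\Lambda_n$ meeting $[0,1)^d$: this naively gives $\sum_{j=1}^d l_{n,j}$, but in each direction $j \in \Gamma_n(i)$ (i.e.\ $m_j(\Lambda_n) \le r_{n,i}$) consecutive translates of the ellipsoid merge into a continuous strip, so discrete lattice counting must be replaced by slab covering in that direction, producing the correction $\sum_{j\in\Gamma_n(i)}(\tau_n+l_{n,i}-h_{n,j})$. Adding these contributions and dividing by $\tau_n+l_{n,i}$ recovers $\overline{s}_n(\mathcal{A},\psi,i)$. Applying this cover to $\bigcup_{n\ge N}W_n$, optimizing over $i$, and taking the limsup in $n$ deliver the bound $\overline{s}(\mathcal{A},\psi)$.

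For the lower bound and the large intersection property, I would invoke the Mass Transference Principle for rectangles under local isometries, established earlier in the paper as a generalization of Li-Liao-Velani-Wang-Zorin (Adv. Math., 2025) to targets obtained by isometric rotations of coordinate rectangles. After the diagonalizing rotation for $A_n$, the preimage ellipsoids become axis-aligned rectangles of side lengths $r_{n,j}$, and each local patch in $[0,1)^d$ sees them through a local isometry. First one establishes full Lebesgue measure of the auxiliary homogeneous set obtained by enlarging $\psi(n)$ so that the largest side of the natural rectangles has size of order $1$; this Khintchine-type input is supplied by the hypothesis $\Gamma(\mathcal{A}) \subset (\mathbb{R}^+)^d$, which forces every singular value to grow exponentially and yields the required equidistribution. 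The rectangle MTP then shrinks the sides back to $r_{n,j}$, producing a set whose rectangle-dimension evaluates to $\underline{s}(\mathcal{A},\psi)$; the same principle simultaneously places $W(\mathcal{A},\psi,\mathbf{y})$ in the large intersection class $\mathcal{G}^{\underline{s}(\mathcal{A},\psi)}([0,1)^d)$.

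Finally, in the diagonal case $\Lambda_n$ is axis-aligned with successive minima equal to the reciprocals of the singular values of $A_n$; a short rearrangement then gives the identity $\sum_{j\in\Gamma_n(i)}(\tau_n+l_{n,i}-h_{n,j}) = \sum_{j\in\K_{n,1}(i)}(\tau_n+l_{n,i}-l_{n,j})$, so $\overline{s}_n=\underline{s}_n$ and the two bounds collapse to the exact dimension. The main obstacle I anticipate is the lattice-geometric bookkeeping in the upper bound when several successive minima of $\Lambda_n$ straddle the chosen scale $r_{n,i}$: correctly interpolating between discrete lattice counting and continuous slab covering in each direction, uniformly in $n$ and $i$, so that the resulting exponent is exactly $\overline{s}_n(\mathcal{A},\psi,i)$, is the most delicate part of the proof.
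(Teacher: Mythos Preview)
Your proposal follows essentially the paper's approach in all three parts. Two remarks are worth making. For the lower bound, the ubiquity input is far simpler than a Khintchine-type statement: the paper replaces $\psi$ by a constant $M>\sqrt d$, and then the union of the $\asymp\prod_j e^{nl_{n,j}}$ rectangles of side $Me^{-nl_{n,j}}$ trivially covers $[0,1)^d$ by Lemma~\ref{number:ellipse}; no equidistribution is invoked, and the hypothesis $\Gamma(\mathcal{A})\subset(\mathbb{R}^+)^d$ enters only to guarantee eventual expansion in every singular direction so that this counting lemma applies. For the upper bound, the obstacle you anticipate is resolved in the paper not by balls plus an ad hoc slab correction, but by covering directly with parallelepipeds of diameter $\asymp r_{n,i}$ whose edges are parallel to a Minkowski-reduced basis $\mathbf{v}_{n,1},\ldots,\mathbf{v}_{n,d}$ of $\Lambda_n$ furnished by Corollary~\ref{corsm}; the lattice translates of such a parallelepiped by the $\asymp\prod_{j\in\Gamma_n(i)} r_{n,i}/m_j(\Lambda_n)$ vectors of $\Lambda_n\cap P_{n,i}$ all lie inside the doubled parallelepiped, which is precisely the saving you describe and produces the $\Gamma_n(i)$ term without any interpolation ambiguity between discrete and continuous counting.
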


Throughout this paper, we denote by $\tau=\tau(\psi)$ the  lower order  at infinity of $\psi$, that is,
\begin{equation}\label{loip}
\tau=\tau(\psi):=\liminf_{n\to\infty}\frac{-\log\psi(n)}{n}.
\end{equation}

\begin{cor}\label{cor1}
As in the setting of Theorem \ref{application1}, for $\tau>\sup\limits_{\bm{\ell}=(l_1,\dots,l_d)\in \Gamma}\{\max\limits_{1\le i\le d}l_i-\min\limits_{1\le i\le d}l_i\}$, we have
\[
\dim_{\rm H}W\big(\mathcal{A},\psi,{\bf y}\big)= \overline{s}(\mathcal{A},\psi)= \underline{s}(\mathcal{A},\psi)=\hat{s}(\mathcal{A},\psi).
\]
\end{cor}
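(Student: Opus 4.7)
The plan is to show that under the hypothesis $\tau > \sup_{\bm{\ell}\in\Gamma(\mathcal{A})}\{\max_i l_i-\min_i l_i\}$, both index sets $\K_{n,1}(i)$ and $\Gamma_n(i)$ become empty for every $1\le i\le d$ once $n$ is large enough. Once that is established, inspecting the three formulas shows immediately that
\[\underline{s}_n(\mathcal{A},\psi,i)=\hat{s}_n(\mathcal{A},\psi,i)=\overline{s}_n(\mathcal{A},\psi,i)\qquad(1\le i\le d,\ n\gg 1),\]
since the extra summands attached to $\K_{n,1}(i)$ and $\Gamma_n(i)$ all disappear. Taking $\min_i$ and $\limsup_n$ then gives $\underline{s}(\mathcal{A},\psi)=\hat{s}(\mathcal{A},\psi)=\overline{s}(\mathcal{A},\psi)$, and the sandwich $\underline{s}(\mathcal{A},\psi)\le \dim_H W(\mathcal{A},\psi,\mathbf{y})\le \overline{s}(\mathcal{A},\psi)$ supplied by Theorem~\ref{application1} finishes the proof.

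Set $\delta_n:=l_{n,d}-l_{n,1}$. Any accumulation point of $(\delta_n)$ arises, by extracting a further subsequence along which $(l_{n,\cdot})$ converges, as $\max_i l_i-\min_i l_i$ for some $\bm{\ell}\in\Gamma(\mathcal{A})$, so
\[\limsup_{n\to\infty}\delta_n\ \le\ \sup_{\bm{\ell}\in\Gamma(\mathcal{A})}\{\max_i l_i-\min_i l_i\}\ <\ \tau\ =\ \liminf_{n\to\infty}\tau_n.\]
A standard comparison of a $\limsup$ strictly below a $\liminf$ yields $\delta_n<\tau_n$ for all $n$ large. Emptiness of $\K_{n,1}(i)$ is then immediate: for any $i,j$, $l_{n,j}-l_{n,i}\le \delta_n<\tau_n$, so no $j$ satisfies $l_{n,j}>\tau_n+l_{n,i}$.

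To handle $\Gamma_n(i)$ I will use the elementary lower bound $m_1(\Lambda_n)\ge 1/\sigma_d(A_n)$, which holds because every nonzero $\mathbf{k}\in\mathbb{Z}^d$ satisfies $\|A_n^{-1}\mathbf{k}\|_d\ge \|\mathbf{k}\|_d/\sigma_d(A_n)\ge 1/\sigma_d(A_n)$. Taking logs, $h_{n,1}\le l_{n,d}$, and since the successive minima are nondecreasing in $j$, the sequence $(h_{n,j})_{j=1}^d$ is nonincreasing, giving $h_{n,j}\le h_{n,1}\le l_{n,d}$ for every $j$. Consequently $h_{n,j}-l_{n,i}\le l_{n,d}-l_{n,1}=\delta_n<\tau_n$ for $n$ large, so no $j$ lies in $\Gamma_n(i)$.

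The main obstacle I foresee is the limit-point comparison at the start: it tacitly uses that the sequence $(l_{n,\cdot})$ is bounded (or at least that its spread has no runaway accumulation point outside $\Gamma(\mathcal{A})$), which should be part of the standing framework $\Gamma(\mathcal{A})\subset(\mathbb{R}^+)^d$ of Theorem~\ref{application1}. Once that point is settled, the rest of the argument is an elementary singular-value estimate plus a direct substitution into the three defining formulas for $\underline{s}_n,\hat{s}_n,\overline{s}_n$; no appeal to Minkowski's second theorem or to the finer structure of the successive minima beyond $m_1(\Lambda_n)\ge \sigma_d(A_n)^{-1}$ is required.
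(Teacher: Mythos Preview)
Your proposal is correct and follows essentially the same route as the paper's proof (given in the remark immediately after Corollary~\ref{cor1}): show that under the hypothesis on $\tau$ both $\K_{n,1}(i)$ and $\Gamma_n(i)$ are empty for all $i$ and all large $n$, whence the three formulas collapse to $\hat{s}_n$. You supply a bit more detail than the paper does---in particular the justification $m_1(\Lambda_n)\ge 1/\sigma_d(A_n)$ via the operator-norm bound, which the paper only states---and your boundedness concern is indeed resolved by the standing hypothesis $\Gamma(\mathcal{A})\subset(\mathbb{R}^+)^d$ (see the remark following the corollary, which spells out that this means each $l_{n,i}$ has finite positive $\liminf$ and $\limsup$). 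One small notational slip: in the singular-value inequality you should write the Euclidean norm $|\cdot|$ rather than $\|\cdot\|_d$, since the latter denotes the torus distance in this paper and would vanish on $\mathbb{Z}^d$.
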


\begin{rem} 
The condition $\Gamma(\mathcal{A}) \subset (\mathbb{R}^+)^d$ means that 
\[0<\liminf_{n\to\infty}\frac{1}{n}\log\sigma_i(A_n)\le \limsup_{n\to\infty}\frac{1}{n}\log\sigma_i(A_n)<\infty,\quad 1\le i\le d. \]
Notice that
 $$0\le \max_i\big\{\limsup\limits_{n\to\infty}(h_{n,1}-\ell_{n,i})\big\}\le \sup_{\bm{\ell}\in \Gamma}\{\max_{1\le i\le d}l_i-\min_{1\le i\le d}l_i\}<\infty.$$ 
 When $\tau>\sup_{\bm{\ell}\in \Gamma}\{\max_{1\le i\le d}l_i-\min_{1\le i\le d}l_i\}$, it implies that for large $n$, we have  $\bigcup_{i=1}^d\Gamma_n(i)=\emptyset$, also $\bigcup_{i=1}^d\mathcal{K}_{n,1}(i)=\emptyset$. The different parts in the formula of $\underline{s}_n(\mathcal{A},\psi,i )$ and $\overline{s}_n(\mathcal{A},\psi,i )$ disappear, then  we obtain $\underline{s}_n(\mathcal{A},\psi,i )=\overline{s}_n(\mathcal{A},\psi,i )=\hat{s}_n(\mathcal{A},\psi,i )$. Then Corollary  \ref{cor1} follows from  Theorem \ref{application1}.
\end{rem}

\begin{rem}
For  $\mathcal{A}\subset GL_d(\mathbb{Z})$ with $\Gamma(\mathcal{A})\subset (\R^{+}\cup\{+\infty\})^d$.  Tan and Zhou \cite{tananzhou} proved that the sequence $(A_n{\bf x})_n$ is equidistributed for almost every ${\bf x}$ if  the  singular values $\inf_n\sigma_i(A_{n+1}/A_n),\,1\le i\le d$ are uniformly bounded below by some constant $C >1$. Under the same condition, one may deduce from \cite[Theorem 1.10]{tananzhou} that the Lebesgue measure of $W\big(\mathcal{A},\psi,{\bf y}\big)$ obeys 0-1 law according to  the convergence or divergence of $\sum_{n=1}^\infty\psi(n)^d$.
\end{rem}

\iffalse
\begin{rem}
Let $A\in GL_d(\mathbb{Z})$. If we take $A_n=A^n$, then $W\big(\mathcal{A},\psi,{\bf y}\big)$ is a shrinking target set, and If $A_n=A^n-I$, then $W\big(\mathcal{A},\psi,{\bf 0}\big)$ is a recurrence set. {\color{blue} Furthermore, if $A$ is diagonalizable over $\mathbb{Z}$, Li et al. \cite{Lietal}  investigated $\dim_{\rm H}W\big((A^n)_n,\psi,{\bf y}\big)$, and refer the details about the idea of proof to \cite[Section 3.3.3]{Lietal}. However their method cannot deal with the case where $A$ is diagonalizable over $\mathbb{Q}$. }
\end{rem}

\begin{rem}
{\color{red}
{\color{blue}Let $A\in GL_d(\R\setminus \mathbb{Z})$ be diagonal} and $\mathcal{A}_0=(A^n)_{n\in\N}$, the set $W\big(\mathcal{A}_0,\psi,{\bf y}\big)$ is different from the sets studied above, since $A^n{\bf x}\pmod 1\ne T^n{\bf x}$, where $T{\bf x}=A{\bf x}\pmod 1 $.  
Theorem \ref{application1} establishes the Hausdorff dimension of $W\big(\mathcal{A}_0,\psi,{\bf y}\big)$, which is crucial for deriving the dimension formulas for shrinking target sets and recurrence sets pertaining to integer matrices which are diagonalizable over $\mathbb{Q}$, see Theorem \ref{diag}. }
\end{rem}
\fi

\begin{rem}
Let $A \in {GL}_d(\mathbb{Z})$. If we take $A_n = A^n$, then $W(\mathcal{A}, \psi, \mathbf{y})$ is a shrinking target set. If instead $A_n = A^n - I$, then $W(\mathcal{A}, \psi, \mathbf{0})$ becomes a recurrence set.
\end{rem}

\begin{rem}
When $A$ is diagonalizable over $\mathbb{Z}$, Li et al. \cite{Lietal} investigated the Hausdorff dimension of $W\big((A^n)_n, \psi, \mathbf{y}\big)$. For further details on their proof strategy, we refer to \cite[Section 3.3.3]{Lietal}. It should be noted, however, that their method does not extend to the case where $A$ is diagonalizable over $\mathbb{Q}$. The  primary obstacle  arises because the operation $A^n \mathbf{x} \bmod 1$ does not coincide with $T^n \mathbf{x}$, where $A \in {GL}_d(\mathbb{R}) \setminus {GL}_d(\mathbb{Z})$ and $T \mathbf{x} = A \mathbf{x} \bmod 1$. In this situation, the set $W((A^n)_n, \psi, \mathbf{y})$ differs from both shrinking target and recurrence sets.

Theorem \ref{application1} establishes the Hausdorff dimension of $W((A^n)_n, \psi, \mathbf{y})$ when $A\in {GL}_d(\mathbb{R})$ is diagonal. This result is essential for deriving dimension formulas for both shrinking target sets and recurrence sets associated with integer matrices that are diagonalizable over $\mathbb{Q}$; see Theorem \ref{diag}.
\end{rem}

In Theorem \ref{application1}, we give an estimate on $\dim_{\rm H}W\big(\mathcal{A},\psi,{\bf y}\big)$, and moreover part indicates that the Hausdorff dimension $\dim_{\rm H}W\big(\mathcal{A},\psi,{\bf y}\big)$ is given by  $\underline{s}(\mathcal{A},\psi)=\overline{s}(\mathcal{A},\psi)$ when $A_n$ are diagonal for $n\in\mathbb{N}$. 
It is therefore natural to ask the following question:

\bigskip

\noindent\textbf{Problem 2:} {\it Does $\dim_{\rm H}W\big(\mathcal{A},\psi,{\bf y}\big)=\underline{s}(\mathcal{A},\psi)=\overline{s}(\mathcal{A},\psi)$ always hold?
}

\bigskip

However it does not always hold, as demonstrated by the following theorem. Let $\det A$ and $A^T$ denote the determinant and transpose of matrix $A$, respectively.

\begin{thm}\label{counterexample}
Assume that  $A\in GL_2(\mathbb{Z})$ satisfies $|\det A |=1$, $A^T=A$ and the modulus  of all  eigenvalues of $A$ are not equal to one. Let $\mathcal{A}=\{(\lambda A)^n\}_{n\in\N}$, where $\lambda$ is a nonzero integer for which the matrix  $\lambda A$ is expanding. 
Let  $\psi: \R^+ \to \R^+$ be a positive and non-increasing function.
Then  we obtain that for any ${\bf y}\in[0,1)^2$,  
\begin{equation}\label{upperbound}
\begin{split}
\dim_{\rm H}W\big(\mathcal{A},\psi,{\bf y}\big)&= \overline{s}(\mathcal{A},\psi)=\hat{s}(\mathcal{A},\psi).
\end{split}
\end{equation}
 Moreover $W\big(\mathcal{A},\psi,{\bf y}\big)\in\mathscr{G}^{ \overline{s}(\mathcal{A},\psi)}( [0,1)^2)$.
 \end{thm}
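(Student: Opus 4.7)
The plan is to combine the upper bound from Theorem~\ref{application1} with a matching lower bound obtained from the Mass Transference Principle for rectangles under local isometries established elsewhere in this paper, exploiting the very special geometry forced by \(A^{T} = A\) and \(|\det A| = 1\). The first step is the orthogonal diagonalisation \(A = QDQ^{T}\) with \(D = \operatorname{diag}(\mu_{1}, \mu_{2})\); since \(|\det A| = 1\) and \(|\mu_{i}| \neq 1\) we have \(|\mu_{1}\mu_{2}| = 1\), so (after relabelling) \(|\mu_{1}| < 1 < |\mu_{2}|\). The singular values of \((\lambda A)^{n}\) are therefore \((|\lambda\mu_{i}|)^{n}\), giving the \(n\)-independent exponents \(l_{n,i} = \log|\lambda\mu_{i}|\). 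The key point is that \(A^{n} \in GL_{2}(\mathbb{Z})\), so the lattice collapses to
\[
\Lambda_{n} = (\lambda A)^{-n}\mathbb{Z}^{2} = \lambda^{-n}A^{-n}\mathbb{Z}^{2} = \lambda^{-n}\mathbb{Z}^{2},
\]
which yields \(m_{1}(\Lambda_{n}) = m_{2}(\Lambda_{n}) = |\lambda|^{-n}\) and hence \(h_{n,1} = h_{n,2} = \log|\lambda| = \tfrac{1}{2}(l_{n,1} + l_{n,2})\).

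Writing \(a = \log|\lambda|\) and \(b = \log|\mu_{2}|\), a direct inspection of the definitions shows that \(\hat{s}_{n}\) and \(\overline{s}_{n}\) agree at \(i = 2\), while at \(i = 1\) they differ only in the regime \(\tau_{n} \leq b\), in which case \(\overline{s}_{n}(1) = 2\) already dominates \(\overline{s}_{n}(2) = 2l_{n,2}/(\tau_{n} + l_{n,2})\). Thus the minimum over \(i\) is always attained at an index where \(\overline{s}_{n}\) and \(\hat{s}_{n}\) coincide, so passing to \(\limsup_{n}\) gives \(\overline{s}(\mathcal{A}, \psi) = \hat{s}(\mathcal{A}, \psi)\). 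Theorem~\ref{application1} supplies the upper bound \(\dim_{\rm H} W(\mathcal{A},\psi,{\bf y}) \leq \overline{s}(\mathcal{A}, \psi)\) for free.

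For the matching lower bound, note that \(T := \lambda A\) is an expanding endomorphism of \(\mathbb{T}^{2}\) of topological degree \(|\lambda|^{2}\). For each \(r > 0\), the preimage \(T^{-n}B({\bf y}, r)\) is the disjoint union of \(|\lambda|^{2n}\) congruent ellipses with semi-axes \(r|\lambda\mu_{1}|^{-n}\) and \(r|\lambda\mu_{2}|^{-n}\), all oriented along the columns of the fixed orthogonal matrix \(Q\); equivalently, \(W(\mathcal{A}, \psi, {\bf y})\) is a limsup of rotated rectangles under the single global rotation \(Q\). The idea is to apply the generalised rectangle MTP under local isometries with \(Q\) as the isometry. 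Exponential mixing of the expanding endomorphism \(T\) supplies the full-Lebesgue-measure limsup input required by the MTP (for instance by taking initial radii \(r_{0}(n)\) with \(\sum r_{0}(n)^{2} = \infty\)), and the side-length data \((|\lambda\mu_{1}|^{-n}\psi(n),\, |\lambda\mu_{2}|^{-n}\psi(n))\) fed into the MTP formula, together with the isotropic successive minima \(h_{n,j} = \log|\lambda|\), collapses the MTP output precisely to \(\limsup_{n}\min_{i}\hat{s}_{n}(\mathcal{A}, \psi, i)\), i.e.\ to \(\overline{s}(\mathcal{A}, \psi)\). The same application yields the large intersection property \(W(\mathcal{A}, \psi, {\bf y}) \in \mathscr{G}^{\overline{s}(\mathcal{A}, \psi)}([0,1)^{2})\).

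The delicate step is the last one: the reason the rectangle MTP under local isometries outputs the stronger exponent \(\overline{s}\) rather than the generic \(\underline{s}\) of Theorem~\ref{application1} is exactly the isotropy of the lattices \(\Lambda_{n}\), which makes the successive-minima correction terms in the MTP formula saturate. Checking the hypotheses of the MTP — the quasi-independence and uniform separation of the \(|\lambda|^{2n}\) rotated rectangles on the torus, and the correct bookkeeping of the isometry \(Q\) in the MTP input — is where the bulk of the technical work will sit.
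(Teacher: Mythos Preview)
Your proposal has a genuine gap at the lower-bound step, and in fact this gap is precisely the phenomenon that Theorem~\ref{counterexample} is designed to exhibit. The MTP of Theorems~\ref{MTP1}--\ref{MTP2} takes as input only the side-length exponents \((u_{i,n})\) and \((v_{i,n})\); the successive minima \(h_{n,j}\) never enter the formula \(s(\mathbf{u},\mathbf{v})\). Your claim that ``the isotropic successive minima make the MTP output collapse to \(\hat{s}\)'' is therefore unfounded: the lattice structure of \(\Lambda_n\) affects only the upper bound \(\overline{s}\). Concretely, in the critical regime \(\tau<(l_2-l_1)/2\) one has \(v_{1,n}=l_1+\tau_n<\log|\lambda|\), so the constraint \(u_{1,n}\le v_{1,n}\) forces
\[
s(\mathbf{u},\mathbf{v},2)=\frac{u_1+u_2}{v_2}\le\frac{v_1+v_2}{v_2}=\frac{l_1+l_2+2\tau}{l_2+\tau}<\frac{2l_2}{l_2+\tau}=\hat{s}(\mathcal{A},\psi,2),
\]
and no admissible choice of \(\mathbf{u}\) can reach \(\hat{s}\). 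This is not a bookkeeping issue: Figure~1 and Remark~7 state explicitly that the MTP lower bound \(\underline{s}(\mathcal{A},\psi)\) is strictly below the true dimension here.

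The paper handles the regime \(\tau<(l_2-l_1)/2\) by an entirely different route: a slicing argument. One intersects \(W_n(\psi)\) with each vertical fibre \(S_x=\{x\}\times[0,1)\), obtaining a family of intervals \(\{I_{n,j}\}\) of length \(\asymp e^{-n(l_2+\tau_n)}\). The crucial Diophantine input is that the eigenvector of \(A\) has \emph{quadratic irrational} slope (since \(|\det A|=1\) and the eigenvalues are not units of modulus one), so Liouville's theorem gives a separation estimate \(\mathrm{dist}(I_{n,i},I_{n,j})\gtrsim e^{-n(l_2-\tau_n)}\) between distinct intervals (Lemmas~\ref{DISTR2PL}--\ref{distance}). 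This separation, together with a count via Pick's theorem (Lemma~\ref{prelemma}), lets one build a measure on \(\limsup_n W_n(x)\) satisfying the hypotheses of Lemma~\ref{lemmaforlb}, yielding \(\limsup_n W_n(x)\in\mathcal{G}^{s_0}([0,1))\) with \(s_0=\frac{l_2-\tau}{l_2+\tau}\). A product/slicing lemma for large-intersection classes (\cite[Lemma~4.5]{HP}) then lifts this to \(W(\mathcal{A},\psi,\mathbf{y})\in\mathcal{G}^{s_0+1}([0,1)^2)\), and \(s_0+1=\frac{2l_2}{l_2+\tau}=\hat{s}(\mathcal{A},\psi)\). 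None of this machinery is present in your plan, and the MTP alone cannot substitute for it.
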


Theorem \ref{counterexample}  shows there exists a class of matrices such that 
 \[\dim_{\rm H}W\big((A^n)_{n\in\N},\psi,{\bf y}\big)= \overline{s}(\mathcal{A},\psi),\]
  which is the upper bound given in Theorem \ref{application1}, and  does not correspond to the lower bound in some cases, as Figure 1 shows. This demonstrates $ \underline{s}(\mathcal{A},\psi)$ given in Theorem \ref{application1} is not always the dimension formula of $W(\mathcal{A},\psi,\mathbf{y})$.  
 \begin{figure}[h]\label{upppicture}
    \begin{center}
      \includegraphics[scale=0.6]{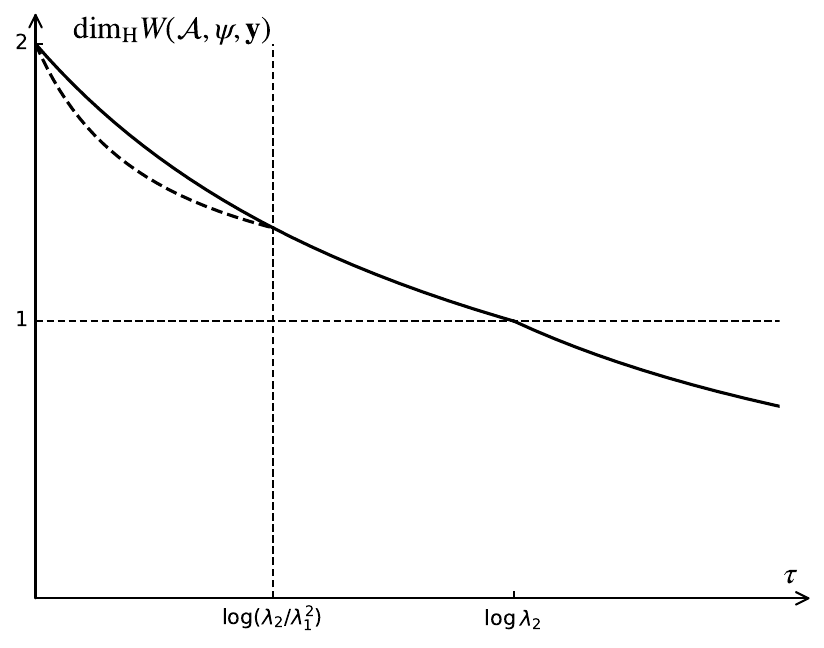}
    \end{center}
    \caption{The graph of $\tau \mapsto \overline{s}((A^n)_{n},\psi)$ in solid line for Theorem \ref{counterexample}, and the graph of $\tau \mapsto \underline{s}((A^n)_{n},\psi)$ is in dashed line, with $A=5\begin{bmatrix}2&1\\1&1\end{bmatrix}$.} 
  \end{figure}\label{fig3first}
Furthermore,  $\underline{s}(\mathcal{A},\psi)$ is derived by applying Theorem \ref{MTP1}, and then this example implicitly suggests that, at times,  the lower bound provided by the mass transference principle is not the accurate dimension formula.

We observe that in Corollary \ref{cor1} and Theorem \ref{counterexample} 
\begin{equation}\label{equality}
\begin{split}
\overline{s}(\mathcal{A},\psi)
= \hat{s}(\mathcal{A},\psi)
\end{split}
\end{equation}
Then one may conjecture that the equality \eqref{equality} always holds.
 The following example gives a negative answer, and this example is inspired by \cite[Example 7.2]{HPWZ}.

\begin{example}\label{exam1} Let $k\in \N$ and $A=\begin{bmatrix}10&5&0\\5&5&0\\0&0&k \end{bmatrix}$. The eigenvalues of $A$ are 
\[\frac{15-5\sqrt{5}}{2}=:\lambda_1,\quad \frac{15+5\sqrt{5}}{2}=:\lambda_2,\quad k=:\lambda_3.\]
Let $\mathcal{A}_1=(A^n)_{n\in\N}$ and $\psi:\mathbb{R}^+\to \mathbb{R}^+$ be a positive and
non-increasing function with the lower order at infinity $\tau$ defined in \eqref{loip}.  Write $l_i=\log\lambda_i,~1\le i\le 3$. Assume that  $k>\lambda_2^{3/2}$. Then 
\begin{equation*}
\begin{split}
\dim_{\rm H}W\big(\mathcal{A}_1,\psi,{\bf 0}\big)&%=\min\Big\{ \frac{3\log\lambda_3}{\tau+\log\lambda_3} ,\frac{\tau+3\log\lambda_2}{\tau+\log\lambda_2},\frac{\tau+2\log\lambda_1+\log\lambda_2}{\tau+\log\lambda_1},\frac{\sum_{i=1}^3\log\lambda_i}{\tau+\log\lambda_1}\Big\}.
=\begin{cases}
\frac{\tau+3l_2}{\tau+l_2}\quad & \text{if~$\tau\le (l_2-l_1)/2$,}\\
\min\{\frac{\tau+2l_1+l_2}{\tau+l_1},\frac{\tau+3l_2}{\tau+l_2}\}\quad&\text{if~$ (l_2-l_1)/2<\tau\le l_3-l_2$,}\\
\min\{\frac{\tau+2l_1+l_2}{\tau+l_1},\frac{2l_2+l_3}{\tau+l_2},\frac{3l_3}{\tau+l_3}\}\quad&\text{if~$ l_3-l_2<\tau\le l_3-l_1$,}\\
\min\{\frac{l_1+l_2+l_3}{\tau+l_1},\frac{2l_2+l_3}{\tau+l_2},\frac{3l_3}{\tau+l_3}\}\quad &\text{if~$\tau\ge l_3-l_1$.}
\end{cases}\\
&=\overline{s}(\mathcal{A}_1,\psi),
\end{split}
\end{equation*}
which equals the upper bound given by Theorem \ref{application1}.
The proof of this example will be given in Section \ref{examples} later. 
\end{example}
\begin{rem}
In the above example, for simplify, we take  $\tau<l_2-2l_1$, then
$$ \dim_{\rm H}W\big(\mathcal{A}_1,\psi,{\bf 0}\big)=\overline{s}(\mathcal{A}_1,\psi)=\frac{\tau+3l_2}{\tau+l_2},$$
$$ \hat{s}(\mathcal{A}_1,\psi)=\min\Big\{\frac{3l_3}{\tau+l_3},\frac{2l_2+l_3}{\tau+l_2},\frac{l_1+l_2+l_3}{\tau+l_1}\Big\}=\frac{3l_3}{\tau+l_3},$$
and the lower bound  of $W\big(\mathcal{A}_1,\psi,{\bf 0}\big)$ given by Theorem \ref{application1} is  
%{\color{blue}$$ \underline{s}(\mathcal{A}_1,\psi)=\min\Big\{\frac{3\log\lambda_3}{\tau+\log\lambda_3},\frac{\tau+2\log\lambda_1+\log\lambda_2}{\tau+\log\lambda_1},\frac{2\tau+3\log\lambda_1}{\tau+\log\lambda_1}\Big\}.$$}
$$ \underline{s}(\mathcal{A}_1,\psi)=\min\Big\{\frac{3l_3}{\tau+l_3},\frac{\tau+3l_2}{\tau+l_2},\frac{2\tau+3l_1}{\tau+l_1}\Big\}=\frac{2\tau+3l_1}{\tau+l_1}.$$

  \begin{figure}[h]\label{fig11}
    \begin{center}
      \includegraphics[scale=0.5]{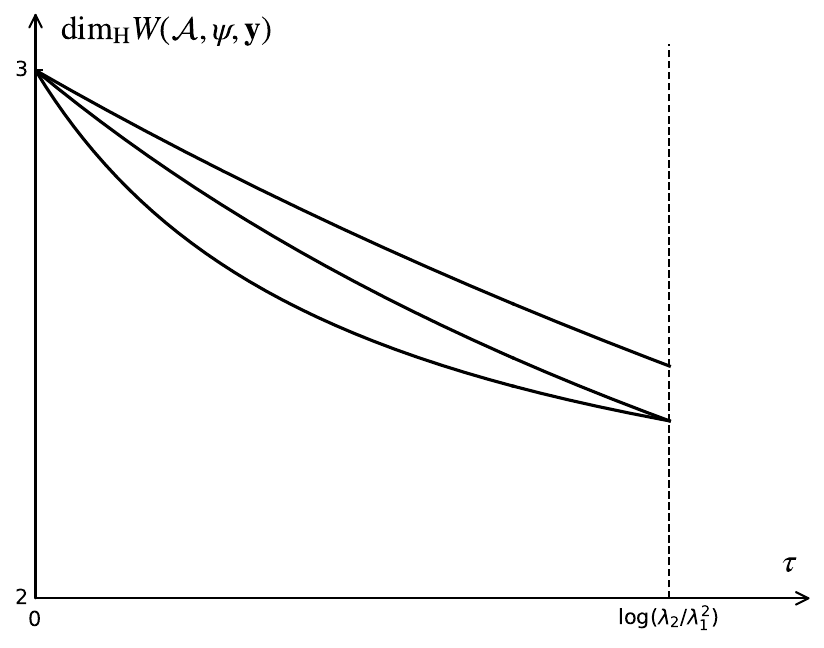}
    \end{center}
    \caption{The middle line is the graph of $\tau \mapsto \overline{s}(\mathcal{A}_1,\psi)$ for Example \ref{exam1} with $k=262$ and $\tau<l_2-2l_1$. The top line is the  graph of $\tau\mapsto  \hat{s}(\mathcal{A}_1,\psi)$, and  the lower one is the graph of $\tau\mapsto  \underline{s}(\mathcal{A}_1,\psi)$.} 
  \end{figure}

We refer to Figure 2 for a graph representing the value $\overline{s}(\mathcal{A}_1,\psi)$ of the Hausdorff dimension, along with the known bounds $  \hat{s}(\mathcal{A}_1,\psi)$ and $\underline{s}(\mathcal{A}_1,\psi)$ with $k=262$ and $\tau<l_2-2l_1$,  then  
we observe that 
\begin{equation}\label{newprevious}
\underline{s}(\mathcal{A}_1,\psi)<\dim_{\rm H}W\big(\mathcal{A}_1,\psi,{\bf 0}\big)= \overline{s}(\mathcal{A}_1,\psi)< \hat{s}(\mathcal{A}_1,\psi)
\end{equation}
holds.%$\frac{1}{2}(\log\lambda_2-\log\lambda_1)<\tau<\log\lambda_2-2\log\lambda_1$.

The Hausdorff dimension of $W\big(\mathcal{A}_1,\psi,{\bf 0}\big)$ is $\overline{s}(\mathcal{A}_1,\psi)$ which is determined by involving the successive minima of the lattice $A^{-n}\mathbb{Z}^3$. The quantity $  \hat{s}(\mathcal{A}_1,\psi)$ is the known upper bound appearing in present literature, and \eqref{newprevious} shows that the corresponding upper bounds we obtain for shrinking target sets advance the existing results in the existing literature.
%our upper bound for shrinking target essentially improves existing results. 
The successive minima characterize the structure of $A^{-n}\mathbb{Z}^3$, and this example also confirms that structural analysis is necessary in determining the Hausdorff dimension of shrinking target and recurrence sets. 
\end{rem}

\begin{rem}
In Theorem \ref{application1}, the lower bound $\underline{s}(\mathcal{A},\psi)$ for $W(\mathcal{A},\psi,{\bf y})$ is derived from the Mass Transference Principle (MTP), namely Theorem \ref{MTP1}. In many cases, this lower bound can be attained. However, as shown in Theorem \ref{counterexample} and Example \ref{exam1}, the value $\underline{s}(\mathcal{A},\psi)$ does not always represent the exact Hausdorff dimension, indicating that the lower bound provided by MTP may not always be the best.

More generally, when applying MTP to estimate $\dim_{\rm H}(\limsup\limits_{n\to\infty} R_n)$ for a sequence of rectangles $(R_n)_n$, it is often necessary to enlarge each $R_n$ in certain directions to form $\widetilde{R}_n$ such that the sequence $(\widetilde{R}_n)_n$ satisfies some structural conditions, thereby yielding a lower bound on the dimension. Interestingly, in the context of Theorem \ref{counterexample} or Example \ref{exam1}, one may alternatively enlarge $R_n$ in some directions while contracting it in others to obtain a modified rectangle $\hat{R}_n$ such that the corresponding condition holds, and leads to a different lower bound—which in these cases turns out to be the exact Hausdorff dimension of $\limsup\limits_{n\to\infty} R_n$.
\end{rem}

The starting point of the present work is an investigation into the Hausdorff dimension of the set $W\big(\mathcal{A},\psi,{\bf y}\big)$. As Theorem \ref{application1} establishes, this dimension ranges between $\underline{s}(\mathcal{A},\psi)$ and $\overline{s}(\mathcal{A},\psi)$. Furthermore, Theorem \ref{counterexample} and Example \ref{exam1} demonstrate the existence of  $\mathcal{A}$ for which $\dim_{\rm H}W\big(\mathcal{A},\psi,{\bf y}\big)>\underline{s}(\mathcal{A},\psi)$. Crucially, in all cases examined, the equality 
$\dim_{\rm H}W\big(\mathcal{A},\psi,{\bf y}\big)=\overline{s}(\mathcal{A},\psi)$ always holds. These observations motivate the following conjecture concerning the dimension of $W\big(\mathcal{A},\psi,{\bf y}\big)$.
\bigskip

\noindent\textbf{Conjecture:} {\it Let and $\mathcal{A} \subset GL_d(\R)$ with $\Gamma(\mathcal{A}) \subset (\R^+)^d$.  Let $\psi:\mathbb{R}^+\to \mathbb{R}^+$ be a positive and non-increasing function. Then  for any  ${\bf y}\in[0,1)^d$,
\[\dim_{\rm H}W\big(\mathcal{A},\psi,{\bf y}\big)=\overline{s}(\mathcal{A},\psi).\]}
%\bigskip

This conjecture asserts that the Hausdorff dimension depends not only on the singular values of $A_n$ and 
%the decay rate of $\psi(n)\to0$
 the lower order at infinity of $\psi$, but also on the structure of the lattices $A_n^{-1}{\mathbb{Z}^d}$. Various examples supporting the Conjecture are provided in Subsection \ref{example}  below.

\subsection{Applications to shrinking target sets and recurrence sets}\label{example} %\hfill \\

An application of Theorem \ref{application1} is to study the Hausdorff dimension of recurrence sets and shrinking target sets  for expanding matrix transformations. Here we give some related  notation.

%Let  $\psi : \R^+ \to \R^+$ be a positive and non-increasing function. Given a  point $y$ in $X$, define the shrinking~ target ~set as
Let  $A\in GL_d( \mathbb{Z})$ with the eigenvalues $\lambda_1,\lambda_2,\cdots,\lambda_d$. Let $T$ be the matrix transformations induced by $A$.  Given $\mathbf{y}\in [0,1)^d$, define the shrinking~ target ~set as
\[S(\psi , \mathbf{y}):=\big\{\mathbf{x}\in  [0,1)^d\colon T^n\mathbf{x}\in B(\mathbf{y},\psi(n))~{\rm  for~ infinitely ~many}~n\in\N\big\},\]
and the recurrence set is
\[R(\psi):=\big\{\mathbf{x}\in  [0,1)^d\colon T^n\mathbf{x}\in B(\mathbf{x},\psi(n))~{\rm  for~ infinitely ~many}~n\in\N\big\}.\]
For a comprehensive discussion of the background and metric properties of these sets, see Section  \ref{dda} below.

Recall that 
\begin{equation*}
\begin{split}
%W(\tau)&=\big\{\mathbf{x}\in  [0,1)^d\colon A^n\mathbf{x}\m\in B(\mathbf{0},e^{-n\tau})~{\rm  for~ infinitely ~many}~n\in\N\big\}.
W((A^n)_n,\psi, \mathbf{y})&=\big\{\mathbf{x}\in  [0,1)^d\colon A^n\mathbf{x}\m\in B(\mathbf{y},\psi(n))~{\rm  for~ infinitely ~many}~n\in\N\big\}.
\end{split}
\end{equation*}
If $A\in GL_d(\mathbb{Z})$, define $T{\bf x}=A{\bf x}\pmod 1 $. Then $A^n{\bf x}\pmod 1= T^n{\bf x}$ holds for all $n\in\mathbb{N}$, and consequently, $W((A^n)_n,\psi, \mathbf{y})=S(\psi , \mathbf{y})$.

 On applying  Theorem \ref{application1} and the fact that $\overline{s}(\mathcal{A},\psi)\le \hat{s}(\mathcal{A},\psi)$, we have the following corollary.
\begin{cor}\label{cor}
  Let a matrix $A\in GL_d(\mathbb{Z})$ with all eigenvalues $\lambda_1, \lambda_2,\cdots,\lambda_d$ of modulus strictly larger than 1. Let $\psi:\mathbb{R}^+\to \mathbb{R}^+$ be a positive and non-increasing function. Then  for any  ${\bf y}\in[0,1)^d$,
 % \[\widehat{s}(A,\tau)\le \dim_{\rm H}S(\psi), \dim_{\rm H}R(\psi)\le \widetilde{s}(A,\tau), \]
 \[\underline{s}((A^n)_n,\psi)\le \dim_{\rm H}S(\psi , \mathbf{y})\le \overline{s}((A^n)_n,\psi)\le \hat{s}((A^n)_n,\psi),\]
and 
\[\underline{s}((A^n)_n,\psi)\le \dim_{\rm H}R(\psi)\le \overline{s}((A^n-I)_n,\psi)\le \hat{s}((A^n)_n,\psi). \]
% where $\bm{\ell}=(\log|\lambda_1|, \log|\lambda_2|,\dots,\log|\lambda_d|)$. 
\end{cor}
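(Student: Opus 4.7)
The approach is to recast both $S(\psi,\mathbf{y})$ and $R(\psi)$ as instances of the limsup set $W(\mathcal{A},\psi,\mathbf{z})$ from Theorem \ref{application1}, and then invoke Theorem \ref{application1} together with the relation $\overline{s}\le\hat{s}$ recorded immediately after the definition of $\hat{s}$. Since $A\in GL_d(\mathbb{Z})$, the induced torus map $T\mathbf{x}=A\mathbf{x}\bmod 1$ satisfies $T^n\mathbf{x}=A^n\mathbf{x}\bmod 1$ for all $n$, giving $S(\psi,\mathbf{y})=W((A^n)_n,\psi,\mathbf{y})$. The condition $T^n\mathbf{x}\in B(\mathbf{x},\psi(n))$ rewrites as $(A^n-I)\mathbf{x}\bmod 1\in B(\mathbf{0},\psi(n))$, and $A^n-I$ is invertible because $|\lambda_j|>1$ for every $j$ rules out $1$ as an eigenvalue of $A^n$; thus $R(\psi)=W((A^n-I)_n,\psi,\mathbf{0})$.

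Next I would verify the hypothesis $\Gamma\subset (\mathbb{R}^+)^d$ of Theorem \ref{application1} for both sequences. For $(A^n)_n$ this is the classical fact that $\sigma_i(A^n)^{1/n}$ converges to the modulus of some eigenvalue of $A$, all of which exceed $1$. For $(A^n-I)_n$, writing $A^n-I=A^n(I-A^{-n})$ with $\|A^{-n}\|\to 0$ exponentially yields $\tfrac{1}{n}\log\sigma_i(A^n-I)-\tfrac{1}{n}\log\sigma_i(A^n)\to 0$ for each $i$, so $\Gamma((A^n-I)_n)=\Gamma((A^n)_n)\subset (\mathbb{R}^+)^d$. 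The dual identity $(A^n-I)^{-1}-A^{-n}=A^{-n}(I-A^{-n})^{-1}A^{-n}=O(\|A^{-n}\|^2)$ shows that the successive minima of $(A^n-I)^{-1}\mathbb{Z}^d$ and $A^{-n}\mathbb{Z}^d$ have the same first-order exponential behaviour, so each of $\underline{s}$, $\overline{s}$ and $\hat{s}$ takes the same value on $(A^n)_n$ and on $(A^n-I)_n$.

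The first chain now follows directly: Theorem \ref{application1} applied to $(A^n)_n$ with target $\mathbf{y}$ gives
\[\underline{s}((A^n)_n,\psi)\le \dim_{\rm H}S(\psi,\mathbf{y})\le \overline{s}((A^n)_n,\psi),\]
and the definitional inequality $\overline{s}(\mathcal{A},\psi)\le \hat{s}(\mathcal{A},\psi)$ closes it. For the second chain, Theorem \ref{application1} applied to $(A^n-I)_n$ with target $\mathbf{0}$ yields
\[\underline{s}((A^n-I)_n,\psi)\le \dim_{\rm H}R(\psi)\le \overline{s}((A^n-I)_n,\psi)\le \hat{s}((A^n-I)_n,\psi),\]
and the identifications from the previous paragraph replace the outer $(A^n-I)_n$-quantities by their $(A^n)_n$-counterparts, producing the stated chain. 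The only step requiring care is the comparison between $A^n$ and $A^n-I$ for successive minima, which follows by writing a basis of $(A^n-I)^{-1}\mathbb{Z}^d$ as an $O(\|A^{-n}\|^2)$-perturbation of a basis of $A^{-n}\mathbb{Z}^d$ and invoking the standard Lipschitz continuity of Minkowski's successive minima; these perturbations are absorbed after taking $\tfrac{1}{n}\log$ and passing to the $\limsup\min$ defining $\underline{s}$, $\overline{s}$ and $\hat{s}$.
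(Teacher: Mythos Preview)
Your approach is essentially the same as the paper's: identify $S(\psi,\mathbf y)=W((A^n)_n,\psi,\mathbf y)$ and $R(\psi)=W((A^n-I)_n,\psi,\mathbf 0)$, apply Theorem~\ref{application1} to each, and use the singular-value comparison between $A^n$ and $A^n-I$ to transfer $\underline s$ and $\hat s$ from one sequence to the other. The paper packages the singular-value comparison into Lemmas~\ref{lem1} and~\ref{lem2} and then invokes Lemma~\ref{supelimsup} to pass from $\Gamma$ to $\underline s$ and $\hat s$; your multiplicative factorisation $A^n-I=(I-A^{-n})A^n$ with $I-A^{-n}\to I$ is a clean direct substitute.

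One small point: your claim that $\overline s$ also coincides for the two sequences is not needed (the corollary retains $\overline s((A^n-I)_n,\psi)$ for $R(\psi)$), and your justification via ``additive $O(\|A^{-n}\|^2)$ perturbation of a basis plus Lipschitz continuity of successive minima'' is not quite the right mechanism --- an additive perturbation of the generating matrix of that size need not control successive minima, since the relevant vectors in $\mathbb Z^d$ can be long. The correct argument is again multiplicative: $(A^n-I)^{-1}=(I-A^{-n})^{-1}A^{-n}$, so the two lattices differ by left-multiplication by a matrix tending to $I$, whence $m_i((A^n-I)^{-1}\mathbb Z^d)/m_i(A^{-n}\mathbb Z^d)\to 1$. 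Since only $\underline s$ and $\hat s$ (which depend on singular values alone) must be matched, this refinement is cosmetic for the corollary itself.
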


\begin{rem} Let $A$ be as given in Corollary \ref{cor} and assume that $1<|\lambda_1|\le \dots\le |\lambda_d|$ for simplicity. 
It is worth pointing out that $\underline{s}((A^n)_n,\psi)$ and $\hat{s}((A^n)_n,\psi)$ are determined by the singular values of $A^n$ and $\psi$. In all related work addressing  $S(\psi , \mathbf{y})$ and $R(\psi)$ under the same setting, the dimension formulae are given by eigenvalues of $A$ and $\psi$. Crucially, $\underline{s}((A^n)_n,\psi)$ and $\hat{s}((A^n)_n,\psi)$ coincide with corresponding formulae given by eigenvalues in prior related work. This equivalence follows  from Lemma \ref{supelimsup}, which establishes that
\begin{equation}\label{firstequal}
\underline{s}((A^n)_n,\psi)=\underline{s}((A^n-I)_n,\psi)=\min_i\left\{\frac{il_i}{\tau+l_i}-\sum_{j:l_j>\tau+l_i}\frac{l_j-l_i-\tau}{\tau+l_i}+\sum_{j>i}\frac{l_j}{\tau+l_i}\right\},
\end{equation}
and
\begin{equation}\label{secondequal}
\hat{s}((A^n)_n,\psi)=\hat{s}((A^n-I)_n,\psi)=\min_i\left\{\sum_{j=1}^d\frac{l_j}{\tau+l_i}-\sum_{j\in \K_2(i)}\frac{l_j-l_i}{\tau+l_i}\right\},
\end{equation}
where $l_i=\log|\lambda_i|$, $1\le i\le d$.%, $\Gamma_n(i)$ is defined in \eqref{gammai}.
\end{rem}

\begin{rem}
Let $A$ be as given in Corollary \ref{cor}, Hu et. al \cite{HPWZ}  showed that 
 \begin{equation*}%\label{estimate}
\underline{s}((A^n)_n,\psi)\le \dim_{\rm H}S(\psi , \mathbf{y})\le \hat{s}((A^n)_n,\psi).
\end{equation*}
The lower bound coincides with that established in Corollary \ref{cor}, whereas the same corollary yields a novel upper bound that improves the result of \cite{HPWZ}, as demonstrated in Example \ref{exam1}.
\end{rem}

We will further investigate the dimension formula of $S(\psi , \mathbf{y})$ and $R(\psi)$.
Theorems \ref{diag}--\ref{jordan} and Corollary \ref{jorin}  substantiate the Conjecture, and in these cases,
\[\dim_{\rm H}S(\psi , \mathbf{y})=\underline{s}((A^n)_n,\psi)=\overline{s}((A^n)_n,\psi).\]%\min_{1\le k\le d}s(k).\]
We remark that we only consider the Hausdorff dimension of $S(\psi , \mathbf{y})$ in these statements and proofs, however one can deduce that Theorems \ref{diag}--\ref{jordan} and Corollary \ref{jorin} also hold for the recurrence set  $R(\psi)$ with the same setting.

%\subsection{Example for diagonalizable form}
\begin{thm}\label{diag}
Let a matrix $A\in GL_d(\mathbb{Z})$ be diagonalizable over $\mathbb{Q}$, and all eigenvalues $\lambda_1,\,\lambda_2,\dots,\lambda_d$ are of modulus strictly larger than 1. Let $\psi:\mathbb{R}^+\to \mathbb{R}^+$ be a positive and non-increasing function. Then  for any  ${\bf y}\in[0,1)^d$,
\begin{eqnarray*}
     \dimh S(\psi , \mathbf{y})= \underline{s}((A^n)_n,\psi)=\overline{s}((A^n)_n,\psi).%\min_{1\leq k\leq s}s(k).
 \end{eqnarray*}

\end{thm}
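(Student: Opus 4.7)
The plan is to reduce to the diagonal case via the rational conjugation and then show that the upper and lower bounds supplied by Theorem \ref{application1} actually coincide in this setting. Since $A\in GL_d(\mathbb{Z})$ is diagonalizable over $\mathbb{Q}$, one may write $A=PDP^{-1}$ with $P\in GL_d(\mathbb{Q})$ and $D=\mathrm{diag}(\lambda_1,\dots,\lambda_d)$, where $\lambda_i\in\mathbb{Z}$ (they are algebraic integers lying in $\mathbb{Q}$) and $|\lambda_1|\le\cdots\le|\lambda_d|$. Since $A\in GL_d(\mathbb{Z})$, one has $S(\psi,\mathbf{y})=W((A^n)_n,\psi,\mathbf{y})$, so by Corollary \ref{cor} the inequalities $\underline{s}((A^n)_n,\psi)\le \dim_{\rm H} S(\psi,\mathbf{y})\le \overline{s}((A^n)_n,\psi)$ are already at our disposal. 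It therefore suffices to prove that $\underline{s}((A^n)_n,\psi)=\overline{s}((A^n)_n,\psi)$.

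The first analytical ingredient is the asymptotics of singular values and successive minima. For the singular values, Gelfand's formula applied to exterior powers yields $\|\wedge^i A^n\|_{op}^{1/n}\to |\lambda_{d-i+1}\cdots\lambda_d|$; together with $\|\wedge^i A^n\|_{op}=\sigma_d(A^n)\cdots\sigma_{d-i+1}(A^n)$, a descending induction gives $\sigma_j(A^n)=|\lambda_j|^n e^{o(n)}$, so $l_{n,j}\to \log|\lambda_j|$. For the successive minima of $\Lambda_n=A^{-n}\mathbb{Z}^d=P D^{-n}P^{-1}\mathbb{Z}^d$, pick an integer $M\ge 1$ with $MP,MP^{-1}\in M_d(\mathbb{Z})$, which yields the commensurability sandwich $M\mathbb{Z}^d\subseteq P^{-1}\mathbb{Z}^d\subseteq M^{-1}\mathbb{Z}^d$. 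Applying $D^{-n}$ and noting that successive minima scale as $m_i(cL)=|c|m_i(L)$ and reverse the direction of inclusion, one gets
\[M^{-1}m_i(D^{-n}\mathbb{Z}^d)\le m_i(D^{-n}P^{-1}\mathbb{Z}^d)\le M\, m_i(D^{-n}\mathbb{Z}^d).\]
Pre-multiplication by the fixed invertible map $P$ costs only a factor $\|P\|_{op}^{\pm 1}$, and since $m_i(D^{-n}\mathbb{Z}^d)=|\lambda_{d-i+1}|^{-n}$ one concludes $m_i(\Lambda_n)=|\lambda_{d-i+1}|^{-n}e^{o(n)}$, i.e., $h_{n,i}\to \log|\lambda_{d-i+1}|$.

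The next step is to show that these asymptotics force $\overline{s}_n(i)-\underline{s}_n(i)=o(1)$ uniformly in $i$. Writing $t_n(i):=\tau_n+l_{n,i}$, a rearrangement of the definitions gives
\[\overline{s}_n(i)-\underline{s}_n(i)=\frac{1}{t_n(i)}\Bigl(\sum_{j=1}^d (l_{n,j}-t_n(i))_+ -\sum_{j=1}^d (h_{n,j}-t_n(i))_+\Bigr).\]
Since $\{l_{n,j}\}_{j=1}^d$ and $\{h_{n,j}\}_{j=1}^d$, as multisets, both converge to $\{\log|\lambda_1|,\dots,\log|\lambda_d|\}$, and since the function $x\mapsto (x-t)_+$ is $1$-Lipschitz uniformly in $t$, the bracketed difference is $o(1)$. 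The prefactor $1/t_n(i)$ is bounded since $l_{n,i}\to l_i>0$, so when $\tau_n$ stays bounded we obtain $\overline{s}_n(i)-\underline{s}_n(i)=o(1)$; along any subsequence where $\tau_n\to\infty$ the sets $\mathcal{K}_{n,1}(i)$ and $\Gamma_n(i)$ are eventually empty, making both quantities equal to $\hat{s}_n(i)$. In either case $\min_i \overline{s}_n(i)=\min_i \underline{s}_n(i)+o(1)$, whence $\overline{s}((A^n)_n,\psi)=\underline{s}((A^n)_n,\psi)$, and the theorem follows.

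The main obstacle is the successive-minima comparison in the second paragraph: the key technical leverage comes from the \emph{rationality} of the diagonalizing matrix $P$, which gives the uniform (in $n$) commensurability of $P^{-1}\mathbb{Z}^d$ with $\mathbb{Z}^d$ and so passes from the exact diagonal calculation $m_i(D^{-n}\mathbb{Z}^d)=|\lambda_{d-i+1}|^{-n}$ to the required asymptotic for $m_i(A^{-n}\mathbb{Z}^d)$. Without rationality of $P$ (for example, if $A$ is only diagonalizable over $\mathbb{R}$), the lattices $D^{-n}P^{-1}\mathbb{Z}^d$ can develop arbitrarily thin Dirichlet-type directions and $h_{n,i}$ may fail to converge, which is precisely the phenomenon causing the strict inequality in Theorem \ref{counterexample} and Example \ref{exam1}.
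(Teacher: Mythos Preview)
Your proof is correct but takes a genuinely different route from the paper's. The paper proceeds geometrically: it clears denominators to get an integer matrix $P=pP_0\in GL_d(\mathbb{Z})$, proves that the torus map $T_P:\mathbf{x}\mapsto P\mathbf{x}\pmod 1$ is locally bi-Lipschitz (their Lemma~\ref{localbi-lip}), and uses the identity $A^n(P\mathbf{x}\bmod 1)\bmod 1 = P(D^n\mathbf{x}\bmod 1)\bmod 1$ to carry the shrinking target set $S(\psi,\mathbf{y})$ over to a finite union of sets of the form $W((D^n)_n,c\psi,\mathbf{y}_{\mathbf z})$; the dimension is then read off from the diagonal case of Theorem~\ref{application1}, which already established $\underline{s}=\overline{s}$. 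You instead stay entirely inside the quantitative framework of Theorem~\ref{application1}: you compute the successive minima of $\Lambda_n=A^{-n}\mathbb{Z}^d$ directly via the uniform lattice sandwich $M\mathbb{Z}^d\subseteq P^{-1}\mathbb{Z}^d\subseteq M^{-1}\mathbb{Z}^d$ supplied by rationality of $P$, and then exploit the clean rewriting
\[
\overline{s}_n(i)-\underline{s}_n(i)=\frac{1}{t_n(i)}\Bigl(\sum_j (l_{n,j}-t_n(i))_+-\sum_j (h_{n,j}-t_n(i))_+\Bigr)
\]
together with the fact that the multisets $\{l_{n,j}\}_j$ and $\{h_{n,j}\}_j$ have the same limiting multiset $\{\log|\lambda_j|\}_j$. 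Your argument is essentially a direct proof of the paper's Lemma~\ref{upp} (stated for the Jordan case) with the hypothesis verified by the commensurability trick; it makes transparent that what matters is exactly whether the successive minima of $A^{-n}\mathbb{Z}^d$ have the same exponential rates as the singular values. The paper's bi-Lipschitz approach is more modular and is what lets them immediately bootstrap to Corollary~\ref{jorin} once Theorem~\ref{jordan} is in hand. Two minor comments: your case split on bounded versus unbounded $\tau_n$ is unnecessary, since the bracketed difference above is $o(1)$ uniformly in $t$ and $1/t_n(i)$ is uniformly bounded (because $\liminf_n\tau_n\ge 0$ and $l_{n,i}\to\log|\lambda_i|>0$); and the observation $\lambda_i\in\mathbb{Z}$, while correct, is never used.
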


%\subsection{{\color{red} Example for Jordan standard form}}

Let $d\in\N$ and $A$ be a $d$-dimensional Jordan standard form, that is $A=\mathrm{diag}(J(\lambda_1,n_1),\cdots,J(\lambda_s,n_s))$ where $\lambda_i\in\R$, $n_j\in\N$ for all $j=1,2,\cdots,s$ and $n_1+\cdot\cdot\cdot+n_s=d$. Here $J(\lambda,n)$ denote a $n\times n$ Jordan block
\begin{eqnarray*}
\begin{pmatrix}
\lambda & 1 & 0 & \ldots & 0\\
0 & \lambda & 1 & \ldots & 0\\
\vdots & \vdots & \ddots & \ddots & \vdots\\
0 & 0 & \ldots & \lambda & 1 \\
0 & 0 & \ldots  & 0 & \lambda
\end{pmatrix}_{n\times n}.
\end{eqnarray*}
%In this note, 
We say that $A$ is expanding, if $|\lambda_j|>1$ for all $j=1,\cdots,s$. 
The Hausdorff dimension of $W((A^n)_n,\psi, \mathbf{y})$ is determined in the following theorem.
\begin{thm}\label{jordan}
    Let $A$ be a $d$-dimensional expanding Jordan standard form, %$\tau>0$ 
    and $W(\psi)$ be the set which are described as above. Let $\psi:\mathbb{R}^+\to \mathbb{R}^+$ be a positive and non-increasing function. Then for any  ${\bf y}\in[0,1)^d$,%the shrinking target set which are described as above. Then
 \begin{eqnarray*}
     \dimh W((A^n)_n,\psi, \mathbf{y})= \underline{s}((A^n)_n,\psi)=\overline{s}((A^n)_n,\psi)=
     \min_{1\leq i\leq s}s( i),
 \end{eqnarray*}
 where 
 \begin{eqnarray*}
    s( i)=\sum_{j\in\K'_1(i)}n_j+\sum_{j\in\K'_2(i)}\frac{n_j\log|\lambda_i|}{\log|\lambda_i|+\tau}+\sum_{j\in\K'_3(i)}\frac{n_j\log|\lambda_j|}{\log|\lambda_i|+\tau},
\end{eqnarray*}
and
\begin{eqnarray*}
\K'_1(i)&=&\left\{1\leq j\leq s:\log|\lambda_j|>\log|\lambda_i|+\tau\right\},\\[4pt]
\K'_2(i)&=&\left\{1\leq j\leq s:|\lambda_j|<|\lambda_i|\right\},\\[4pt]
\K'_3(i)&=&\{1,\cdots,s\}\backslash(\K_1(i)\cup\K_2(i)).
\end{eqnarray*}

\end{thm}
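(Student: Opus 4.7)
The plan is to invoke Theorem \ref{application1} for $\mathcal{A}=(A^n)_{n\in\N}$ and show that, for an expanding Jordan standard form $A$, the bounds $\underline{s}(\mathcal{A},\psi)$ and $\overline{s}(\mathcal{A},\psi)$ coincide and reduce to the explicit quantity $\min_{1\le i\le s} s(i)$. The governing heuristic is that each Jordan block $J(\lambda_j, n_j)$ behaves at the logarithmic scale like the scalar matrix $\lambda_j I_{n_j}$: writing $J(\lambda_j,n_j)^n = \sum_{k=0}^{n_j-1}\binom{n}{k}\lambda_j^{n-k} N^k$ with $N$ the nilpotent shift, every entry of $J(\lambda_j,n_j)^n$ is of size $O(n^{n_j-1}|\lambda_j|^n)$. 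Combined with the determinant identity $\prod_i \sigma_i(J(\lambda_j,n_j)^n) = |\lambda_j|^{n n_j}$, one obtains $c_1 n^{-c}|\lambda_j|^n \le \sigma_i(J(\lambda_j,n_j)^n) \le c_2 n^c |\lambda_j|^n$ for every $1\le i\le n_j$, and hence $\frac{1}{n}\log\sigma_i(J(\lambda_j,n_j)^n) \to \log|\lambda_j|$ uniformly in $i$. Passing to the block diagonal $A^n$, the sorted sequence $(l_{n,i})_{i=1}^d$ converges to a list containing each $\log|\lambda_j|$ with multiplicity $n_j$.

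An analogous argument applied to $J(\lambda_j,n_j)^{-n}$, which is upper triangular with diagonal $\lambda_j^{-n}$ and entries of size $O(n^{n_j-1}|\lambda_j|^{-n})$, shows that its columns form a basis of the lattice $J(\lambda_j,n_j)^{-n}\mathbb{Z}^{n_j}$ of length at most $cn^c|\lambda_j|^{-n}$, yielding $m_i \le cn^c|\lambda_j|^{-n}$. Combined with Minkowski's second theorem and $\prod_i m_i \asymp |\lambda_j|^{-n n_j}$, this forces all $m_i$ into the same polynomial window around $|\lambda_j|^{-n}$. Passing to the block diagonal lattice $A^{-n}\mathbb{Z}^d$, the multiset of normalized successive minima $\{h_{n,i}\}_{i=1}^d$ coincides, up to an $o(1)$ error, with the multiset $\{l_{n,i}\}_{i=1}^d$ of normalized singular values. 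Consequently the sum $\sum_{j\in\Gamma_n(i)}(\tau_n+l_{n,i}-h_{n,j})$ appearing in $\overline{s}_n$ and the sum $\sum_{j\in\K_{n,1}(i)}(\tau_n+l_{n,i}-l_{n,j})$ appearing in $\underline{s}_n$ agree up to $o(1)$ (the two index sets may differ, but the summands range over the same values against the same threshold), so $\overline{s}_n(\mathcal{A},\psi,i) - \underline{s}_n(\mathcal{A},\psi,i) \to 0$.

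With both bounds equalized, the final step is algebraic. Grouping the sum $\sum_j l_{n,j}$ by Jordan block and partitioning block indices $j\in\{1,\dots,s\}$ according to whether $\log|\lambda_j|>\log|\lambda_i|+\tau$, $|\lambda_j|<|\lambda_i|$, or neither (corresponding to $\K'_1(i)$, $\K'_2(i)$, $\K'_3(i)$), the common limit of $\underline{s}_n(\mathcal{A},\psi,i)$ and $\overline{s}_n(\mathcal{A},\psi,i)$ simplifies after cancellation to exactly $s(i)$. The minimum over $i\in\{1,\dots,d\}$ collapses to the minimum over block indices $i\in\{1,\dots,s\}$ since all singular-value indices within a single block yield the same value. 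The main obstacle is the successive-minima estimate for a single Jordan block: a priori Minkowski's theorem constrains only the product $\prod m_i$, and a separate construction of short lattice vectors is needed to upper bound each $m_i$ individually, preventing any one minimum from lagging exponentially behind the others. Exploiting the explicit upper-triangular structure of $J(\lambda_j,n_j)^{-n}$ via back-substitution provides the required basis. A secondary technicality is the boundary cases where $\log|\lambda_j|=\log|\lambda_i|+\tau$ exactly; these contribute $0$ to the relevant sums and may be disregarded.
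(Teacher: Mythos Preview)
Your proposal is correct and follows essentially the same strategy as the paper: show that for an expanding Jordan form both $l_{n,i}$ and $h_{n,i}$ converge (with multiplicities $n_j$) to $\log|\lambda_j|$, deduce $\underline{s}=\overline{s}$ via Theorem~\ref{application1}, and then simplify blockwise to the formula $\min_i s(i)$. The only tactical difference is that the paper obtains the successive-minima asymptotics directly from the two-sided $L^\infty$ norm estimate $(|\lambda_j|+\epsilon)^{-n}|\mathbf{v}|\le |J(\lambda_j,n_j)^{-n}\mathbf{v}|\le(|\lambda_j|-\epsilon)^{-n}|\mathbf{v}|$ (which pins down every $m_p$ without further input), whereas you argue via an upper bound from explicit column vectors and a lower bound from Minkowski's second theorem; the paper also isolates the deduction $\underline{s}=\overline{s}$ as a separate lemma (Lemma~\ref{upp}) rather than arguing it inline.
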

Theorem \ref{jordan} together with the strategy of Theorem \ref{diag} gives the following result which establishes $\dimh S(\psi)$.
\begin{cor}\label{jorin}
Let $A\in GL_d(\mathbb{Z})$ be an expanding matrix. Assume that there exist %an invertible  matrix
 $P\in GL_d( \mathbb{Q})$ and a $d$-dimensional Jordan standard form $\widetilde{A}$ such that $A=P^{-1}\widetilde{A}P$. Suppose that 
$$\widetilde{A}=\mathrm{diag}(J(\lambda_1,n_1),\cdots,J(\lambda_s,n_s)),$$
 where $\lambda_i\in\R$, $n_i\in\N$ for all $i=1,2,\cdots,s$ and $n_1+\cdot\cdot\cdot+n_s=d$. Let $\psi:\mathbb{R}^+\to \mathbb{R}^+$ be a positive and non-increasing function. Then for any ${\bf y}\in[0,1)^d$,
 \[\dim_{\rm H}S(\psi , \mathbf{y})= \underline{s}((A^n)_n,\psi)=\overline{s}((A^n)_n,\psi)= \min_{1\le i\le s}s(i).\]
\end{cor}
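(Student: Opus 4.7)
The plan is to replicate the strategy of Theorem~\ref{diag} in the Jordan setting, using Theorem~\ref{jordan} in place of the diagonal dimension formula. The rational conjugation $A=P^{-1}\widetilde{A}P$ gives $A^n=P^{-1}\widetilde{A}^nP$ for every $n\in\N$; since $A\in GL_d(\mathbb{Z})$ is expanding and $P\in GL_d(\mathbb{Q})$, each eigenvalue $\lambda_i$ of $\widetilde{A}=PAP^{-1}$ is both rational and an algebraic integer, hence an integer of modulus exceeding $1$. Consequently $\widetilde{A}$ itself lies in $GL_d(\mathbb{Z})$ and is expanding. Substituting $\mathbf{z}=P\mathbf{x}$ in the defining condition of $S(\psi,\mathbf{y})$, one sees that $A^n\mathbf{x}\bmod 1\in B(\mathbf{y},\psi(n))$ is equivalent to
$$\widetilde{A}^n\mathbf{z}-P\mathbf{y}\in P\mathbb{Z}^d+P\cdot B(\mathbf{0},\psi(n)),$$
i.e., to a shrinking-target condition for $\widetilde{A}$ on the rescaled torus $\mathbb{R}^d/P\mathbb{Z}^d$, with balls of radii bounded by $\|P\|\psi(n)$.

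To make this reduction precise, I would choose $N\in\N$ with $NP$ and $NP^{-1}$ both having integer entries, so that $\Lambda:=\mathbb{Z}^d\cap P\mathbb{Z}^d$ has finite index in each of $\mathbb{Z}^d$ and $P\mathbb{Z}^d$, and partition $[0,1)^d$ into the $N^d$ subcubes $C_k$ of side $1/N$. On each $C_k$ the map $\mathbf{x}\mapsto P\mathbf{x}$, composed with a suitable integer translation, is an affine bi-Lipschitz homeomorphism onto its image $C_k'\subset [0,1)^d$, with Lipschitz constants depending only on $\|P\|$ and $\|P^{-1}\|$. Under this correspondence $S(\psi,\mathbf{y})\cap C_k$ is carried onto the intersection with $C_k'$ of a set of the form
$$W\big((\widetilde{A}^n)_n,\psi',\mathbf{y}'\big)=\Big\{\mathbf{z}\in[0,1)^d:\widetilde{A}^n\mathbf{z}\bmod 1\in B(\mathbf{y}',\psi'(n))\text{ for infinitely many }n\Big\},$$
with $\psi'\asymp\psi$ (so $\psi$ and $\psi'$ share the same lower order $\tau$ from \eqref{loip}) and $\mathbf{y}'$ some representative of $P\mathbf{y}\pmod{\mathbb{Z}^d}$. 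Theorem~\ref{jordan} applied to the expanding integer Jordan form $\widetilde{A}$ then yields
$$\dim_{\rm H}W\big((\widetilde{A}^n)_n,\psi',\mathbf{y}'\big)=\underline{s}\big((\widetilde{A}^n)_n,\psi'\big)=\overline{s}\big((\widetilde{A}^n)_n,\psi'\big)=\min_{1\le i\le s}s(i).$$
Since the asymptotic singular-value rates of $A^n$ and $\widetilde{A}^n$ agree (both being governed by $|\lambda_i|^n$), these quantities coincide with $\underline{s}((A^n)_n,\psi)$ and $\overline{s}((A^n)_n,\psi)$. Because bi-Lipschitz maps preserve Hausdorff dimension and $S(\psi,\mathbf{y})$ is a finite union of such bi-Lipschitz images, the common value transfers to $\dim_{\rm H}S(\psi,\mathbf{y})$, completing the argument.

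The main obstacle is the rational but non-integral nature of the conjugating matrix $P$: the torus $\mathbb{R}^d/\mathbb{Z}^d$ on which $A$ acts does not coincide with the torus $\mathbb{R}^d/P\mathbb{Z}^d$ that naturally carries the conjugate dynamics of $\widetilde{A}$, so the substitution $\mathbf{z}=P\mathbf{x}$ is not a map between these tori in any direct way. The resolution is the finite-index bookkeeping sketched above: pass to the common sublattice $\Lambda$, cut $[0,1)^d$ into $N^d$ affine pieces on which $P$ becomes a genuine bi-Lipschitz affine homeomorphism, and verify that the perturbed radii $\psi'(n)$ differ from $\psi(n)$ only by the bounded factors $\|P\|$ and $\|P^{-1}\|$, so that neither the lower order $\tau$ nor the formula $\min_i s(i)$ is altered. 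This is precisely the mechanism that already underlies Theorem~\ref{diag}; replacing the diagonal dimension formula there by Theorem~\ref{jordan} here produces the stated Jordan generalization without any further structural novelty.
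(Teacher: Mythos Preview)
Your proposal is correct and follows essentially the same approach the paper indicates: reduce to Theorem~\ref{jordan} by the conjugation method of Theorem~\ref{diag}, using that $\widetilde{A}\in GL_d(\mathbb{Z})$ is expanding and that the local bi-Lipschitz correspondence preserves Hausdorff dimension. The only implementational difference is that the paper's proof of Theorem~\ref{diag} avoids your sublattice bookkeeping by scaling $P$ to an integer matrix $Q=pP$ and working directly with the torus endomorphism $T_Q$ via Lemma~\ref{localbi-lip}; this is slightly cleaner than partitioning into $N^d$ subcubes, but the two mechanisms are equivalent.
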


\subsection{Connection to the previous  works}\label{compare}%\hfill \\
 We will present several previous works that are related to our main results. 
 
\subsubsection{Dynamical Diophantine approximation}\label{dda}
In a dynamical system, dynamical Diophantine approximation investigates the quantitative properties of the distribution of orbits. More precisely, it focuses on the size of the limsup sets defined by the orbits in view of the measure-theoretic and dimensional characteristics. The quantitative study is motivated by the density of orbits and its connections to the classical Diophantine approximation.

Let $(X,\mu, T)$ be a metric probability measure preserving system and $T$ be ergodic with respect to $\mu$. Then given any ball $B$ in $X$ of positive measure, there is a direct consequence of Birkhoff's ergodic theorem that the subset 
\[\{x\in X\colon T^nx\in B~{\rm for~infinitely~many~}n\in\N\}\]
has full $\mu$-measure.   The classic shrinking target problems introduced by  Hill and Velani \cite{HL95} focus on `what happens if $B$ shrinks with time $n$?'. Let  $\psi: \R^+ \to \R^+$ be a positive and non-increasing function. Given a point $y$ in $X$, define the shrinking target set as
\[S(\psi)=S(\psi , y):=\big\{x\in X\colon T^nx\in B(y,\psi(n))~{\rm  for~ infinitely ~many}~n\in\N\big\}.\]
The dimension results for the sets are studied in several concrete dynamical systems, see \cite{AP, LWWX, BR, Ur, Huw} and references therein. References on the measure theory include \cite{AB, LLSV, KB, Ph, CK, GK} for instances.

%Let $A$ be a $d\times d$ integer matrix, then we can define a matrix transformation 
Assume that $A\in GL_d(\mathbb{R})$  with eigenvalues $1<|\lambda_1|\le \dots\le |\lambda_d|$. We now focus on the matrix transformation $T$ on $[0,1)^d$ induced by $A$:
\[T{\bf x}=A{\bf x}\pmod 1.\]
%For an expanding integer matrix $A$, Hill and Velani \cite{HL99} studied $\dim_{\rm H}S(\psi)$ under some condition on $\psi$.  
 Recall that $\tau$ is the lower order at infinity of the function $\psi$.
 
\begin{itemize}

\item When $A\in GL_d(\mathbb{Z})$, Hill and Velani \cite{HL99} proved that for  $\tau>\log|\lambda_d/\lambda_1|$,
 \begin{equation*}
 \begin{split}
 \dim_{\rm H}S(\psi)&=\min_{1\le i\le d}\left\{\frac{i\log|\lambda_i|+\sum_{j=i+1}^d\log|\lambda_j|}{\tau+\log|\lambda_i|}\right\}\\&\overset{\eqref{secondequal}}{=}\hat{s}((A^n)_n,\psi)=\underline{s}((A^n)_n,\psi)=\overline{s}((A^n)_n,\psi),
 \end{split}
 \end{equation*}
where the last two equalities follow from Corollary \ref{cor1}. 
\item Li et al. \cite{Lietal} investigated the $\dim_{\rm H}S(\psi)$ for expanding diagonal real matrices and integer matrices which are diagonalisable over $\mathbb{Z}$, and the dimension formula is %\sout{ the same as $\underline{s}((A^n)_n,\psi)$.}
\begin{equation}\label{cformula}
\begin{split}
\dim_{\rm H}S(\psi)=&
\min_{1\le i\le d}\Big\{\frac{i\log|\lambda_i|-\sum_{j\colon |\lambda_j|>|\lambda_ie^\tau|}(\log|\lambda_j/\lambda_i|-\tau)+\sum_{j>i}\log|\lambda_j|}{\tau+\log|\lambda_i|}\Big\}\\
&\overset{\eqref{firstequal}}{=}\underline{s}((A^n)_n,\psi).
\end{split}
\end{equation}
%which equals to $\underline{s}((A^n)_n,\psi)$.
\item In a recent work,  Hu et al. \cite{HPWZ} estimated $\dim_{\rm H}S(\psi)$. They showed that 
 \begin{equation}\label{estimate}
\underline{s}((A^n)_n,\psi)\le \dim_{\rm H}S(\psi)\le \hat{s}((A^n)_n,\psi).
\end{equation}
 \end{itemize}
 
Most of above results provide the same dimension formula  $\underline{s}((A^n)_n,\psi)$ for different types of matrices, and further indicate that the formula of $\dim_{\rm H}S(\psi)$ is determined by the eigenvalues of $A$ and the rate of $\psi(n)\to 0$.  It is natural to ask whether the dimension formula in \eqref{cformula}  is true for all expanding matrix transformations. %Let us make a precise statement in the following.  
 We now proceed to make a precise statement in the following.

 %In the related previous work, the dimension formulae are the same. 
 
\begin{claim}
 Let $A$ be an expanding matrix with eigenvalues $\lambda_1,\cdots,\lambda_d$. %Denote $\tau=\liminf\limits_{n\to\infty}\frac{-\log \psi(n)}{n}$.
  Then for any ${\bf y}\in[0,1)^d$,
 \[\dim_{\rm H} S(\psi)=\min_{1\le i\le d}\Big\{\frac{i\log|\lambda_i|-\sum_{j\colon |\lambda_j|>|\lambda_ie^\tau|}(\log|\lambda_j/\lambda_i|-\tau)+\sum_{j>i}\log|\lambda_j|}{\tau+\log|\lambda_i|}\Big\},\]
 where  $\tau$ is the lower order at infinity of the function $\psi$.
  \end{claim}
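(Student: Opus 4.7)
The plan is to \emph{disprove} this claim rather than prove it. By identity \eqref{firstequal}, the right-hand side of the claimed formula equals $\underline{s}((A^n)_n,\psi)$, which is precisely the Mass Transference Principle lower bound appearing in Theorem \ref{application1}. Hence the claim is asserting that the MTP bound is always sharp for expanding integer matrices, and Example \ref{exam1} is set up exactly to refute this.

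Concretely, I would take $A=\begin{bmatrix}10&5&0\\5&5&0\\0&0&k\end{bmatrix}\in GL_3(\mathbb{Z})$ with an integer $k$ satisfying $k>\lambda_2^{3/2}$, where $\lambda_2=(15+5\sqrt 5)/2$. This $A$ is expanding, with eigenvalues $\lambda_1=(15-5\sqrt 5)/2$, $\lambda_2$, $\lambda_3=k$ all of modulus $>1$. I would then fix a non-increasing $\psi:\R^+\to\R^+$ whose lower order satisfies $\tau<l_2-2l_1$, with $l_i=\log\lambda_i$. Since $A$ is an integer matrix, $W((A^n)_n,\psi,\mathbf{0})=S(\psi,\mathbf{0})$, and Example \ref{exam1} computes
\[
\dim_{\rm H}S(\psi,\mathbf{0})=\overline{s}(\mathcal{A}_1,\psi)=\frac{\tau+3l_2}{\tau+l_2},\qquad \underline{s}(\mathcal{A}_1,\psi)=\frac{2\tau+3l_1}{\tau+l_1}.
\]
A short arithmetic check, reducing to the sign of $\tau(\tau+2l_1-l_2)$, verifies $\underline{s}(\mathcal{A}_1,\psi)<\overline{s}(\mathcal{A}_1,\psi)$ under the hypothesis $\tau<l_2-2l_1$, which is exactly \eqref{newprevious}. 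Thus the value predicted by the claim is strictly smaller than the true Hausdorff dimension of $S(\psi,\mathbf{0})$.

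The genuine work is therefore inside Example \ref{exam1}, specifically in estimating the successive minima $m_i(A^{-n}\mathbb{Z}^3)$ that drive $\overline{s}(\mathcal{A}_1,\psi)$ and in applying the upper bound of Theorem \ref{application1}. The main obstacle, and the conceptual reason the claim fails, is that the $2\times 2$ upper-left block $\begin{bmatrix}10&5\\5&5\end{bmatrix}$ has irrational eigenvalues, so its eigenspaces are not rational subspaces of $\mathbb{R}^2$; consequently the lattice $A^{-n}\mathbb{Z}^3$ does not split as a product of one-dimensional lattices aligned with the eigendirections of $A$, and the successive minima $m_i(\Lambda_n)$ grow at rates different from $\sigma_i(A^n)^{-1}$. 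This discrepancy is exactly the arithmetic data encoded by $\overline{s}(\mathcal{A},\psi)$ but invisible to the eigenvalue-only formula of the claim.

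I would conclude by pointing out that this example is consistent with the paper's Conjecture: for general expanding integer matrices the correct dimension formula must involve the successive minima of $A^{-n}\mathbb{Z}^d$, not merely the eigenvalues of $A$; the claim, being an eigenvalue-only extension of \cite{Lietal}, therefore cannot hold once $A$ fails to be diagonalizable over $\mathbb{Q}$.
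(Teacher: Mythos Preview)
Your proposal is correct and matches the paper's own treatment: the paper explicitly states that the Claim is generally false and cites Theorem \ref{counterexample} and Example \ref{exam1} as counterexamples, with the Remark following Example \ref{exam1} carrying out exactly the comparison $\underline{s}(\mathcal{A}_1,\psi)<\dim_{\rm H}W=\overline{s}(\mathcal{A}_1,\psi)$ that you describe. The only minor addition worth noting is that the paper also offers Theorem \ref{counterexample} as a second, two-dimensional counterexample, so one need not rely solely on the three-dimensional Example \ref{exam1}.
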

   In 9
   ection \ref{example}, we present several examples showing that the claim holds true for %was alluded to in \cite{}, and it was 
 a large class of matrices. However it turns out that this claim is generally false.  In fact, we give some counter-examples for which the dimension formulae are not the expected one in this claim, see Theorem \ref{counterexample} and Example \ref{exam1}.

  There is another interesting topic closely related to the shrinking target problem, which is the so-called quantitative recurrence problem. 
It originates from the famous Poincar\'e recurrence theorem, which states that when $(X,\mu, T)$ is a metric probability measure preserving system and $X$ is separate, almost all points return close to themselves infinitely many times under $T$.  
A natural question is to quantify the rate of recurrence for general dynamical systems. In particular, one  focuses on  the
set  of points that return with the rate $\psi$,
\[R(\psi)=\big\{x\in X\colon T^nx\in B(x,\psi(n))~{\rm  for~ infinitely ~many}~n\in\N\big\}.\]
One of the earliest results regarding the rate of recurrence is due to Boshernitzan \cite{BO}. % Since then lots of work has been done about the size of recurrence sets, see for measure results; and see also \cite{} for dimension results.
Since then lots of papers have appeared in establishing measure and dimension results of such sets in various dynamical systems. To name but a few, see \cite{KB,KKP,CWW,BF, vbh} for measure results, and see also \cite{TW,SW,HP} for dimension results. 
When $T$ is an expanding matrix transformation induced by $A$, He and Liao \cite{HeL} calculated $\dim_{\rm H}R(\psi)$ for the same matrices as  Li et al. \cite{Lietal} did. In \cite{HL2024}, if $\tau>\log|\lambda_d/\lambda_1|$, Hu and Li calculated the dimension formula of $R(\psi)$, which can be also deduced from Corollary \ref{cor1}.

Typically, the shrinking target sets $S(\psi)$ and recurrence sets $R(\psi)$ are studied independently. The existing results indicate that in many dynamical systems, especially for matrix transformations, the dimension formulae for these two sets are the same. Consequently, it is natural to consider whether it is possible to investigate the dimension unifying the shrinking targets and recurrence. In view of the existing results,  the dimensions of $S(\psi),~R(\psi)$ were unified in some dynamical systems by replacing the inhomogeneous part $y$ in $S(\psi)$ and $x$ in $R(\psi)$  by a Lipschitz function  \cite{Kz, wu,lvww}. In this paper, we will unify these two sets in the following way, while considering the matrix transformations $T$ on $[0,1)^d$. %Define $T_n:[0,1)^d\to [0,1)^d$ as
 %\[T_n(\mathbf{x})=A_n\mathbf{x}\pmod 1,\]\operatorname{id}
 % Since we focus on matrix transformations, here we use the Euclidean distance $d$. 
Given our focus on matrix transformations, we adopt the Euclidean distance $d$. Then define 
\[T_n=\begin{cases}T^n-\mathbf{y}\\T^n-I\end{cases},\]
for $\mathbf{y}\in [0,1)^d$, and this transformation provides a unified approach to studying the dimensional properties of
$S(\psi),R(\psi)$ via the following set
\[\big\{\mathbf{x}\in [0,1)^d\colon T_n(\mathbf{x})\in B\big(\mathbf{0}, \psi(n)\big)~{\rm  for~ infinitely ~many}~n\in\N\big\}.\] 
 % Let  $\psi : \R^+ \to \R^+$  be a positive and non-increasing function. Throughout this paper, denote
%\[\tau:=\liminf_{n\to\infty}\frac{-\log\psi(n)}{n}.\]
Therefore, our main results can be applied to the study of both shrinking target sets and recurrence sets. Let $T$ be induced by $A\in GL_d(\mathbb{Z})$ with all eigenvalues of modulus strictly larger than 1.  Corollary \ref{cor1} covers the main result of \cite[Theorem 1]{HL99}, and \cite[Theorem 1.2]{HL2024}. Theorem \ref{application1} can give a better bound for $\dim_{\rm H}S(\psi)$. % and $\bm{\ell}=(\log|\lambda_1|,\dots,\log|\lambda_d|)$. 
Moreover, we extend \cite[Theorem 8]{Lietal} and \cite[Theorem 1.5]{HL2024} for any matrix diagonalizable over 
$\mathbb{Q}$ (see Theorem \ref{diag}) and to matrices of Jordan standard form (see Corollary \ref{jorin}).
 Theorem \ref{counterexample}  and Example \ref{exam1} show that both the upper bound and the lower bound given in \eqref{estimate} can not be used as a unified dimension formula of $S(\psi)$. As a matter of fact, we propose a
conjecture for the dimension that $\dim_{\rm H}S(\psi)=\overline{s}((A^n)_n,\psi),\, \dim_{\rm H}R(\psi)=\overline{s}((A^n-I)_n,\psi)$.

\subsubsection{Mass Transference Principle}
%The Mass Transference Principle constitutes a foundational and highly versatile tool in fractal geometry and Diophantine approximation, offering a unified method for establishing lower bounds on the Hausdorff dimension of limsup sets. 

%The Mass Transference Principle is a fundamental and extremely valuable technique in many fields of mathematics, particularly Diophantine approximation and fractal geometry, as it provides a consistent method for deriving lower bounds on the Hausdorff dimension of limsup sets. The Mass Transference Principle was introduced in its seminal publication \cite{bv}, which focused on a locally compact set $X\subset \R^d$  endowed with an Ahlfors $\delta$-regular measure.  It allows for the transference of $\mathcal{H}^\delta$ -measure statements to generic $\mathcal{H}^s$ -measure statements for $0\le s\le \delta$,  thereby offering a powerful mechanism for dimension estimation of limsup sets defined by sequences of balls.

In Diophantine approximation, dynamical systems, and fractal geometry and so on, the limsup sets are of great importance. Given a sequence of sets $(E_n)_{n\in\N}$, define the limsup set 
\[\limsup_{n\to\infty}E_n:=\bigcap_{i=1}^\infty\bigcup_{i=n}^\infty E_n=\{x\colon x\in E_n~\text{for infinitely many } n\in\N\}.\]
A fundamental and extremely valuable tool in determining the Hausdorff dimension of $\limsup\limits_{n\to\infty}E_n$ is the Mass Transference Principle. The original Mass Transference Principle was introduced by Beresnevich and Velani \cite{bv}, which focused on a locally compact set $X\subset \R^d$  endowed with an Ahlfors $\delta$-regular measure, and $E_n$, $n\in\N$ are balls. It allows for the transference of a $\mathcal{H}^\delta$ -measure statement for a limsup set of balls which are obtained by enlarging $E_n$ in a certain way, to a $\mathcal{H}^s$ -measure statement for $0\le s\le \delta$, thereby offering a powerful method for dimension estimation of $\limsup\limits_{n\to\infty}E_n$.
 
 The Mass Transference Principle has been very extensively studied, and 111
 a comprehensive overview can be found in the surveys \cite{AD, DT19}. A major advancement was made by Wang and Wu \cite{WW}, who extended the Mass Transference Principle to limsup sets defined by rectangles with sides parallel to the coordinate axes. This generalization greatly broadened the applicability of the method, particularly in areas such as multiplicative Diophantine approximation, linear forms, and the dimensional theory of limsup sets defined by rectangles. Further extending the work of \cite{WW}, Li et al. \cite{LLVWZ} essentially developed a version applicable to unbounded settings—specifically, when the lower order at infinity of the side lengths of the rectangles is infinite in some direction. This extension enabled applications not only to the completion of the dimensional theory in classical simultaneous Diophantine approximation, but also to dynamical settings such as shrinking target problems. %Numerous other variants and refinements have since been established; a comprehensive overview can be found in the surveys \cite{AD, DT19}.
 
Despite these substantial developments, existing versions of the Mass Transference Principle remain insufficient for treating limsup sets defined via families of rectangles that allow rotations—a key feature in the main results of the present paper. To overcome this limitation, we introduce a further generalization of the Mass Transference Principle, building on the framework of \cite{LLVWZ}. Our main results, Theorems \ref{MTP1} and \ref{MTP2}, are valid under certain ubiquity and full measure assumptions, and apply to limsup sets formed by rectangles subjected to local isometries. Here, by `local isometries', we mean transformations that preserve distance locally, such as rotations and translations on torus. This extension provides two key innovations:

%\bigskip
 (1)\, First, it encompasses not only the classical ball and aligned rectangle settings, but also handles the previously inaccessible case of rotated rectangles, thereby addressing a key limitation of previous work.
%\bigskip
 
 (2)\, Second, our method simultaneously establishes that the resulting limsup sets enjoy the large intersection property, as introduced by Falconer \cite{falconer}. Such sets are the so-called $\mathcal{G}^s-$sets.  Negreira and Sequeira \cite{NS} reproduced the characterization of $\mathcal{G}^s-$sets from the Euclidean setting  \cite{falconer} to the metric space of homogeneous type $(X,\mu)$. The s-dimensional class of sets with large intersections is denoted by $\mathcal{G}^s(X)$, which consists of sets of Hausdorff dimension at least $s$. 
%\bigskip

We thereby introduce a comprehensive framework for the analysis of a broad class of limsup sets, permitting the inclusion of rectangles via local isometries, which is directly applicable to related problems in dynamical systems.

\subsubsection{Sets arising from one-dimensional Diophantine approximation}
It is now timely to make a comparison among the main results in the case where $d=1$ and some related results about the set arising from one-dimensional Diophantine approximation.

Let $(a_n)_{n\in\N}$ be a sequence of real numbers.
%The lacunary sequence is a sequence of real numbers $(a_n)_{n\in\N}$ such that $\inf_{n\in\N} a_{n+1} /a_n > 1$. 
Given $y\in [0,1)$, we consider
\begin{equation*}
W\big((a_n)_n,\psi,y\big)
=\big\{x\in [0,1) \colon a_nx\m\in B(y,\psi(n))~{\rm  for~ infinitely ~many}~n\in\N\big\}.
\end{equation*}
%and
%\[E_2=\big\{x\in \T \colon a_nx\m\in B(x,\psi(n))~{\rm  for~ infinitely ~many}~n\in\N\big\}.\]
%Denote
%\[\alpha=\limsup_{n\to\infty}\frac{\log |a_n|}{n}.\]

Denote
\[\alpha=\alpha((a_n)_n):=\limsup_{n\to\infty}\frac{\log |a_n|}{n}.\]

The corollary follows  by applying Theorem \ref{application1}.
\begin{cor}\label{cor2}
Let $(a_n)_{n\in\N}$ be a  sequence of real numbers with $|a_n|\ge1,n\ge1$ and $\alpha\in(0,\infty]$. Let $\psi: \mathbb{N}\to \mathbb{R}^+$ be a positive and non-increasing function.
 Then 
\[\dim_{\rm H} W\big((a_n)_n,\psi,y\big) =\limsup_{n\to\infty}\frac{\log|a_n|}{\log|a_n|-\log \psi(n)}.\]

\end{cor}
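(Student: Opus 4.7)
The plan is to deduce Corollary~\ref{cor2} from Theorem~\ref{application1} applied in dimension one, with a subsequence extraction to accommodate the hypothesis on $\Gamma(\mathcal{A})$, and a direct covering argument for a matching upper bound.

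I first specialise the quantities of Theorem~\ref{application1} to $d=1$. With $A_n=a_n$ a scalar, the unique singular value is $\sigma_1(A_n)=|a_n|$, and $\Lambda_n=a_n^{-1}\mathbb{Z}$ has the single successive minimum $m_1(\Lambda_n)=1/|a_n|$; hence $l_{n,1}=h_{n,1}=\tfrac{1}{n}\log|a_n|$. Only the index $j=1$ is available, so once $\psi(n)\le 1$ (which holds for all large $n$) the sets $\mathcal{K}_{n,1}(1)$, $\mathcal{K}_{n,2}(1)$ and $\Gamma_n(1)$ are all empty, and the three defining quantities collapse to
\[
\underline{s}_n(\mathcal{A},\psi,1)
=\overline{s}_n(\mathcal{A},\psi,1)
=\hat{s}_n(\mathcal{A},\psi,1)
=\frac{l_{n,1}}{\tau_n+l_{n,1}}
=\frac{\log|a_n|}{\log|a_n|-\log\psi(n)}
=:s_n,
\]
so $\underline{s}(\mathcal{A},\psi)=\overline{s}(\mathcal{A},\psi)=s^{*}:=\limsup_{n\to\infty} s_n$.

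Because every $1\times 1$ matrix is diagonal, the `moreover' part of Theorem~\ref{application1} yields $\dim_{\rm H}W\bigl((a_n)_n,\psi,y\bigr)=s^{*}$ as soon as $\Gamma(\mathcal{A})\subset\mathbb{R}^{+}$ holds. To treat the general case (where this hypothesis may fail because $\liminf_n l_{n,1}=0$ or $\alpha=+\infty$), I extract a subsequence $(n_k)$ such that $l_{n_k,1}$ lies in a compact subinterval of $(0,\infty)$ while $s_{n_k}\to s^{*}$; applying Theorem~\ref{application1} to this subsequence and using the trivial inclusion of the subsequence limsup set into $W\bigl((a_n)_n,\psi,y\bigr)$ gives the lower bound $\dim_{\rm H}W\bigl((a_n)_n,\psi,y\bigr)\ge s^{*}$. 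A matching upper bound is supplied by a standard covering argument: each level set $W_n=\{x\in[0,1):\Vert a_nx-y\Vert<\psi(n)\}$ is covered by at most $2|a_n|+2$ intervals of length $2\psi(n)/|a_n|$, and for $s>s^{*}$ the definition of $s_n$ gives
\[
\sum_n |a_n|\bigl(\psi(n)/|a_n|\bigr)^{s}
=\sum_n \exp\bigl((\log|a_n|-\log\psi(n))(s_n-s)\bigr)<\infty,
\]
whence $\mathcal{H}^{s}(W\bigl((a_n)_n,\psi,y\bigr))=0$ by the Hausdorff--Cantelli lemma.

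The main obstacle is the subsequence extraction when $\alpha=+\infty$ or $\liminf_n l_{n,1}=0$: one must simultaneously realise the limsup $s^{*}$ and confine $l_{n_k,1}$ to a compact window of $(0,\infty)$ so that Theorem~\ref{application1} applies. The monotonicity of $\psi$ should allow one to rebalance $l_{n,1}$ against $\tau_n$ so that the quotient $s_n$ is preserved as we thin the index set. A secondary care is needed in the summability check of the upper bound when $\log|a_n|-\log\psi(n)$ does not tend to infinity along all of $\mathbb{N}$; splitting the series according to whether $|a_n|$ is bounded should resolve this.
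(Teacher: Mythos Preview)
Your reduction to $d=1$, with $\underline{s}=\overline{s}=\hat s=s^*$, is correct and is exactly what the paper does (it says only that the corollary ``follows by applying Theorem~\ref{application1}''). You are also right that the hypothesis $\Gamma(\mathcal{A})\subset(0,\infty)$ of that theorem is not guaranteed by $\alpha\in(0,\infty]$ alone, so some extra care is needed. For the lower bound your subsequence idea is sound in spirit, but ``$l_{n_k,1}$ in a compact subinterval'' cannot be arranged in the \emph{original} indexing (try $|a_n|=e^{n^2}$, $\psi(n)=e^{-n^2}$: then $l_{n,1}=n\to\infty$ along every subsequence while $s_n\equiv\tfrac12$); the clean route is to apply Theorem~\ref{thm5} with the reparametrisation $r_j=|a_{p_j}|^{-1}$ along a subsequence with $|a_{p_j}|\to\infty$ and $s_{p_j}\to s^*$, which forces $u_{1,j}=1-\log M/\log|a_{p_j}|\to1$ automatically. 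This is what the remark following the corollary is pointing to.

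The genuine gap is in your upper bound. The series $\sum_n|a_n|\bigl(\psi(n)/|a_n|\bigr)^s=\sum_n\exp\bigl(D_n(s_n-s)\bigr)$ need \emph{not} converge for $s>s^*$: eventually $s_n-s<0$, but $D_n=\log|a_n|-\log\psi(n)$ may grow too slowly on the bulk of indices. Concretely, let $n_{k+1}\sim e^{n_k}$, set $|a_{n_k}|=e^{n_k}$ and $a_n\in[1,2]$ for $n\in(n_k,n_{k+1})$, and put $\psi(n)=e^{-n_k}$ on $[n_k,n_{k+1})$. Then $\alpha=1$ and $s^*=\tfrac12$, yet each block contributes $\sim e^{n_k}\cdot e^{-sn_k}\to\infty$ for every $s<1$. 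Your proposed splitting according to boundedness of $|a_n|$ does not rescue this --- the divergence comes precisely from the bounded-$|a_n|$ terms --- and in fact one can choose those $a_n\in[1,2]$ so that the centres $(y{+}1)/a_n$ form an $e^{-n_k}$-net of $[\tfrac12,1]$, whence $[\tfrac12,1]\subset W\big((a_n)_n,\psi,y\big)$ and $\dim_{\rm H}W=1>s^*$. So under the corollary's stated hypotheses the upper bound (and hence the corollary itself) fails for general real sequences; the paper's one-line proof tacitly relies on the full hypothesis of Theorem~\ref{application1}, namely $0<\liminf_n\tfrac1n\log|a_n|\le\alpha<\infty$, under which your covering sum does converge since then $D_n\gtrsim n$.
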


\begin{rem}\label{rem6}
If $\alpha\in [0,\infty]$, by Theorem \ref{thm5}, we have that
\[\dim_{\rm H} W\big((a_n)_n,\psi,y\big) \ge\limsup_{n\to\infty}\frac{\log|a_n|}{\log|a_n|-\log \psi(n)}.\]
\end{rem}

Now we make a comparison between Corollary \ref{cor2} and a result of  Li, Li and Wu \cite{llwu}. 
Let $W(\mathbf{y}, \psi) $ be as defined in \eqref{das} with $d=1$.  Li, Li and Wu \cite{llwu} showed that for $(a_n)_{n\in\N}\subset \mathbb{N}$,
\[\dim_{\rm H} W(\mathbf{y}, \psi)=\min\left\{1,\inf\left\{s:\sum_{n\ge1}a_n\left(\frac{\psi(n)}{a_n}\right)^s<\infty\right\}\right\}.\]
In this case, if  $\alpha\in(0,\infty]$, one can deduce from Corollary \ref{cor2} that 
\[\dim_{\rm H} W(\mathbf{y}, \psi) =\limsup_{n\to\infty}\frac{\log a_n}{\log a_n-\log \psi(n)},\] 
which is the same as the formula given by \cite{llwu}, since after some calculations, we have
\[ \limsup_{n\to\infty}\frac{\log a_n}{\log a_n-\log \psi(n)}=\inf\left\{s:\sum_{n\ge1}a_n\left(\frac{\psi(n)}{a_n}\right)^s<\infty\right\}\le 1.\]
In fact, Corollary \ref{cor1}  can deal with any sequence of real numbers $\vert a_n\vert\ge1,n\ge1$ with $\alpha>0$. But  Corollary \ref{cor1}  is not always better than that \cite{llwu}.
For example $\psi(n)=1/n$ and $a_n=n$, that is $\alpha=0$, one may deduce from \cite{llwu} that  $\dim_{\rm H} W\big((a_n)_n,\psi,y\big)=1/2$, and from Remark \ref{rem6} that 
 $$1\ge \dim_{\rm H} W\big((a_n)_n,\psi,y\big)\ge 1/2.$$
Thus, when $\alpha=0$, the result of  \cite{llwu} gives an exact value, whereas our Remark 9 only provides bounds. However the formulae coincide when $\alpha>0$.

% {\color{blue} Let $\beta>1$. Consider $a_n=\beta^n$}

\subsection{The organization of this paper}
The rest of this paper is organised as follows: We begin by listing our results on the mass transference principle in Section \ref{MTP}, namely Theorems \ref{MTP1} and \ref{MTP2}, which are general results for rectangles under local isometries and used to derive the lower bound on the Hausdorff dimension in most of the results. 
In Section \ref{preliminaries}, we recall some useful preliminaries. Theorems  \ref{MTP1} and \ref{MTP2} are proven in Section \ref{proofmtp}. 
The proof of our main result, Theorem \ref{application1}, is presented in full in Section \ref{proofof28}.
Section \ref{proof13} is dedicated to the proofs of Theorem \ref{counterexample} and Example \ref{exam1}. Finally, in Section \ref{examples}, we prove the results given in Section \ref{example}.

\section{Mass Transference Principle}\label{MTP} %\hfill \\

 Before stating our results formally, we require some preliminaries with the framework.  Given a metric space $(X,\rho_X)$, we use $B_{\rho_X}(x,r)$ to denote an open ball centered at $x\in X$ with radius $r>0$. In the following, we often write $\rho$ or $B(x,r)$ in place of  $\rho_X$ or $B_{\rho_X}(x,r)$  if the associated metric space is explicit in the context. 
 
 { Let $d\in\N$ be an integer}.
For any $i = 1, . . . , d$, {let $p_i$ be a positive integer} and  let $X_i\subseteq \R^{p_i}$, $Y_i\subseteq\R^{p_i}$ be  subsets equipped with  metrics $\rho_{X_i}$, $\rho_{Y_i}$  on $X_i$, $Y_i$ respectively. {We mention that the metric $\rho_{X_i}$ (or $\rho_{Y_i}$) is not necessarily compatible with the usual topology on $X_i$ (or $Y_i$) induced by the Euclidean distance.}  
Throughout the section, assume that  $(X_i,\rho_{X_i})$ and $(Y_i,\rho_{Y_i})$ are compact metric spaces equipped with $\delta_i$-Ahlfors regular measures $\mu_i$ and $\nu_i$ respectively; that is to say that there exist constants $0\le c_1\le 1\le c_2<\infty$ and $r_0>0$ such that 
\[c_1r^{\delta_i}\le \mu_i(B({\bf x},r))\le c_2r^{\delta_i},\quad c_1r^{\delta_i}\le \nu_i(B({\bf y},r))\le c_2r^{\delta_i}\]
for all balls $B({\bf x},r)\subseteq X_i$ and $B({\bf y},r)\subseteq Y_i$ with $r\le r_0$. Given $s>0$ and a ball $B=B({\bf x},r)$ in $X_i$ (or a ball $B=B({\bf y},r)$ in $Y_i$), we define 
\[B^s:=B({\bf x},r^{\frac{s}{\delta_i}})\qquad\text{(or $B^s:=B(\bfy,r^{\frac{s}{\delta_i}})$)}.\]Let $X=\prod_{i=1}^dX_i$, $Y=\prod_{i=1}^dY_i$ and $\mu=\prod_{i=1}^d\mu_i$, $\nu=\prod_{i=1}^d\nu_i$. The distance between  ${\bf x}=({\bf x}_1,\dots,{\bf x}_d)$ and $ {\bf y}=({\bf y}_1,\dots,{\bf y}_d)\in X$ is defined as
\begin{equation}\label{defofrhoinx}
    \rho_X({\bf x},{\bf y})=\left(\sum_{i=1}^d\rho_{X_i}({\bf x}_i,{\bf y}_i)^2\right)^{\frac{1}{2}}.
\end{equation}
We define the metric $\rho_Y$ on $Y$ similarly.

 The following notion of local isometry between metric spaces is required in the statements of main results in this section.  
 
\begin{definition}
Let $(Y,\rho_Y)$ and $(X,\rho_X)$ be metric spaces. Let $\varepsilon_0>0$. We say that a map $f:Y\to X$ is an $\varepsilon_0$-local isometry if there exists $\varepsilon_0>0$ such that for all $ y\in Y$ the map $$f|_{B(y,\varepsilon_0)}:B(y,\varepsilon_0)\to B(f(y),\varepsilon_0)$$ 
is a homeomorphism and
\begin{eqnarray}\label{locisodef}
    \rho_X\left(f(x) , f(y)\right)=\rho_Y\left(x , y\right)\qquad\forall\ x\in B(y,\epsilon_0).
\end{eqnarray}
We simply say that $f:Y\to X$ is a local isometry if there exists $\varepsilon_0>0$ such that $f$ is an $\varepsilon_0$-local isometry.
% An isometry of a metric space $(X, \rho)$ is a bijective map $f : X \to X$ such that the distance function is preserved: for all $ x,  y \in  X$, 
% \[\rho(x , y)= \rho\left(f( x) , f(y)\right).\]
% {\color{blue}%A bijective map
% A map} $f : X \to X$ is a  local isometry 
%for all ${\bf u}, {\bf v} \in \R^d$.
\end{definition}
%  Let $\R^{\sum_{i=1}^d p_i}$ be the $\sum_{i=1}^d p_i$-dimensional Euclidean space endowed with the usual Euclidean metric and let $X=\prod_{i=1}^d X_i$ be  equipped with the metric $\rho$ defined in \eqref{defofrhoinx}. Next, we present the following condition ({\bf A}) imposed on an $\varepsilon_0$-local isometry $f:\R^{\sum_{i=1}^d p_i}\to X$ that is required to state our Mass Transference Principle: 
% \begin{itemize}
%     \item[({\bf A})] For any ${\bf x}$, ${\bf y}\in\R^{\sum_{i=1}^d p_i}$ such that $$E({\bf x},{\bf y}):=f(B({\bf x},\varepsilon_0))\cap f(B({\bf y},\varepsilon_0))=B(f({\bf x}),\varepsilon_0)\cap B(f({\bf y}),\varepsilon_0)\neq \emptyset,$$ the map
%     \begin{equation*}
%         \left(f|_{B({\bf y},\varepsilon_0)}\right)^{-1}\circ f|_{{\left(f|_{B({\bf x},\varepsilon_0)}\right)}^{-1}\left(E({\bf x},{\bf y})\right)}:{\left(f|_{B({\bf x},\varepsilon_0)}\right)}^{-1}\left(E({\bf x},{\bf y})\right)\to B({\bf y},\varepsilon_0)
%     \end{equation*}
%     is a translation by {\bf z} for some ${\bf z}\in\R^{\sum_{i=1}^d p_i}$; that is to say that 
%     \begin{equation*}
%         \left(\left(f|_{B({\bf y},\varepsilon_0)}\right)^{-1}\circ f|_{{\left(f|_{B({\bf x},\varepsilon_0)}\right)}^{-1}\left(E({\bf x},{\bf y})\right)}\right)({\bf x})={\bf x}+{\bf z}
%     \end{equation*}
%     for any ${\bf x}\in{\left(f|_{B({\bf x},\varepsilon_0)}\right)}^{-1}\left(E({\bf x},{\bf y})\right)$.
% \end{itemize}
 
Next, we introduce the definitions of dimensional numbers   that are also needed in the statement of our Mass Transference Principle. Put $\bm{\delta}=(\delta_1,\dots,\delta_d)$. Let ${\bf u}=(u_1,\dots,u_d)\in(\R^+)^d$ and ${\bf v}=(v_1,\dots,v_d)\in(\R^+\cup\{+\infty\})^d$  be vectors such that
\[u_i\le v_i,\quad 1\le i\le d.\]
Denote
\[\mathcal{L}({\bf v}):=\{1\le i\le d\colon v_i<+\infty\}, \quad \mathcal{L}_\infty({\bf v}):=\{1\le i\le d\colon v_i=+\infty\}.\]
Next, we define the value $s({\bf u},{\bf v},i)$ depending on whether $i\in \mathcal{L}(\bf v)$  {or} $\mathcal{L}_\infty(\bf v)$.
\begin{itemize}
\item  For each $i\in \mathcal{L}({\bf v})$, put
\[s({\bf u},{\bf v},i)= \sum_{k\in\K_1(i)}\delta_k+\sum_{k\in\K_2(i)}\delta_k\Big(1-\frac{v_k-u_k}{v_i}\Big)+\sum_{k\in\K_3(i)}\frac{\delta_ku_k}{v_i},\]
where
\[\K_1(i):=\{1\le k\le d\colon u_k>v_i\}, \quad \K_2(i):=\{k\in \mathcal{L}({\bf v})\colon v_k\le v_i\},\]
and
\[\K_3(i):=\{1,\dots,d\}\setminus \left(\K_1(i)\cup \K_2(i)\right).\vspace{1ex}\]
\item  For $i\in \mathcal{L}_\infty({\bf v})$, put
\[s({\bf u},{\bf v},i):=\sum_{k\in \mathcal{L}({\bf v})}\delta_k.\]
\end{itemize}
With the above notation in mind, we then define the \textit{dimensional number} $s({\bf u},{\bf v})$ as
\begin{equation*}\label{dimensionquantity}
s({\bf u},{\bf v}):=\min_{1\le i\le d}\{s({\bf u},{\bf v},i)\},
\end{equation*}

 In our Mass Transference Principle, we mainly focus on a sequence of balls    $(B_{i,n})_{n\in\N}$    in $Y_i$  ($1\leq i\leq d$) that satisfy the following statement: \textit{there exist sequences of positive real numbers $(r_n)_{n\in\N}$ and $(v_{i,n})_{n\in\N}$ such that for any  $1\le i\le d$  and $n\in\N$,
\begin{eqnarray}\label{conofbandv}
    r(B_{i,n})=r_n^{v_{i,n}},\quad \lim_{n\to\infty}r_n=0\quad\text{and}\quad \lim\limits_{n\to\infty}v_{i,n}=v_i\in \mathbb{R}^+\cup\{+\infty\}.
\end{eqnarray}
}Write ${\bf v}(n)=(v_{1,n},\dots,v_{d,n})$ for any $n\in\N$ and ${\bf v}=(v_1,\dots,v_d)$.
For $n\ge1$, let ${\bf u}(n)=(u_{1,n},\dots,u_{d,n})\in (\R^+)^d$ and ${\bf u}=(u_1,\dots,u_d)\in (\R^+)^d$  satisfy 
\begin{equation}\label{conditiononu}
\lim_{n\to\infty} u_{i,n}= u_i\quad and \quad u_{i,n}\le v_{i,n}.
\end{equation}
Then,  we denote
\begin{equation}\label{defofsinudv}
    s_{i,n}:=\frac{u_{i,n}\delta_i}{v_{i,n}}
\end{equation}
 for each $n\ge 1$ and $1\le i\le d$. We are now ready to state two main theorems of the section as follows.

\subsection{Mass Transference Principle  under ubiquity}
% \hfill \\

{
Building on the notion of local ubiquity condition for balls introduced by Beresnevich, Dickinson and Velani \cite{bere}, Wang and Wu \cite{WW} defined a corresponding concept of local ubiquity for rectangles.  The following presents a slight modification of notion `` local ubiquity for rectangles'' given by \cite{WW}.}

\begin{definition}[{\bf Local ubiquity system}]

 Let $(k_n)_{n\in\N}$ be a sequence of positive integers. We say that a sequence of  {sets} $(R_{n,j})_{n\in\N,1\leq j\leq k_n}$  in $Y$ is a local ubiquity system with respect to  local isometries $(f_n:Y\to X)_{n\in\N}$  if  there exists a constant $c > 0$ such that for any ball $B\subset X$,
%{\color{red} and
 \begin{equation}\label{condition:measure}
 \limsup_{n\to\infty}\mu\left(\bigcup_{1\le j\le k_n }f_n(R_{n,j})\cap B\right)>c\mu(B).
 \end{equation}
%where 
%\[s_{n,i}=\frac{u_{i,n}}{v_{i,n}}.\]
\end{definition}
{For ${\bf x}\in  Y$  (or $X$) and a set $E\subset Y$ (or $X$),  throughout we use the notation ${\bf x}+E:=\{{\bf x}+{\bf y}\colon {\bf y}\in E\}$.}
\begin{thm} \label{MTP1} Let  $(B_{i,n})_{1\leq i \leq d,n\in\N}$, $({\bf v}(n))_{n\in\N}$ and ${\bf v}$ satisfy \eqref{conofbandv}. Let $({\bf u}(n))_{n\in\N}$ and ${\bf u}$ satisfy \eqref{conditiononu}. Let $(k_n)_{n\in\N} $ be a non-decreasing sequence of positive integers and let $({\bf x}_{n,j})_{n\in\N,1\leq j\leq k_n}$ be a sequence of points in $Y$. 
%Let 
%$$s_{n,i}=\frac{u_{i,n}}{v_{i,n}}.$$ 
Let $(f_n:Y\to X)_{n\in\N}$ be a sequence of local isometries with the same constant $\varepsilon_0>0$ {associated with \eqref{locisodef}} and suppose that
\begin{eqnarray}\label{defofrectan}
    \Big({\bf x}_{n,j}+\prod_{i=1}^dB_{i,n}^{s_{i,n}}\Big)_{1\le j\le k_n, n\in\N}
\end{eqnarray}
 is a  local ubiquity system with respect to   $(f_n)_{n\in\N}$, where $s_{i,n}$ is as in \eqref{defofsinudv}. 
Then we have
\[\limsup_{n\to\infty}\bigcup_{1\le j\le k_n}f_n\Big({\bf x}_{n,j}+\prod_{i=1}^dB_{i,n}\Big) \in \mathcal{G}^{s({\bf u},{\bf v})}(X).\]
\end{thm}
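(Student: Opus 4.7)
The plan is to adapt the Cantor-type argument of Li--Liao--Velani--Wang--Zorin \cite{LLVWZ} for aligned rectangles, with the key modification that all measure and covering considerations are carried out on balls of radius at most $\varepsilon_0$, where each $f_n$ restricts to an honest isometry by \eqref{locisodef}. On such a small ball, the image $f_n(\mathbf{x}_{n,j}+\prod_iB_{i,n})$ is metrically indistinguishable from the product rectangle in $Y$, so the local geometry of rectangles, the diameter and measure estimates, and the counting arguments used in \cite{LLVWZ} all transfer verbatim. Thus the content of the theorem is essentially a pair of statements: a lower bound $\dim_{\rm H}\ge s(\mathbf{u},\mathbf{v})$ for the limsup set inside any ball, and the stronger conclusion that it belongs to $\mathcal{G}^{s(\mathbf{u},\mathbf{v})}(X)$.

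I would fix an arbitrary ball $B_0\subset X$ and construct inside $B_0$ a Cantor-like subset $\mathcal{K}_\infty$ of the limsup set. At each level one exploits \eqref{condition:measure}: given a previously selected ball $B$, one chooses $n$ large enough that $r_n$ is much smaller than $r(B)$, and then a positive proportion of $B$ is covered by the thickened images $f_n\!\left(\mathbf{x}_{n,j}+\prod_iB_{i,n}^{s_{i,n}}\right)$. A Vitali-type selection extracts a family of the corresponding unthickened images $f_n(\mathbf{x}_{n,j}+\prod_iB_{i,n})$ that are pairwise sufficiently separated; into each such rectangle one then inserts a collection of sub-balls of radius comparable to the shortest side $r_n^{\max_iv_{i,n}}$, and the construction is iterated inside those sub-balls. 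The branching ratios are dictated by the exponents $s_{i,n}$ in \eqref{defofsinudv}, and the trichotomy $\K_1(i)$, $\K_2(i)$, $\K_3(i)$ (as well as the $\mathcal{L}(\mathbf{v})$ versus $\mathcal{L}_\infty(\mathbf{v})$ case split) prescribes, for each index, whether the corresponding side is fully dissected, resonant at the critical rate, or oversaturated. A standard mass distribution principle applied to the natural Cantor measure then produces the bound $\dim_{\rm H}\mathcal{K}_\infty\ge s(\mathbf{u},\mathbf{v})$.

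The hard part will be controlling the mismatch of orientations between consecutive levels: the level-$n$ rectangle comes from $f_n$ while the level-$(n+1)$ rectangles come from $f_{n+1}$, and there is no reason for these local isometries to agree. The device that I would use to circumvent this is precisely the sub-ball decomposition mentioned above: once a parent rectangle is fixed, I never descend into the whole rectangle at the next level, but rather into each of its constituent sub-balls separately, each of radius much smaller than $\varepsilon_0$. Inside each such sub-ball, $f_{n+1}^{-1}$ is an honest isometry to $Y$, so the usual volume comparisons and disjointness arguments for aligned rectangles in $Y$ can be performed and then pushed forward. The convergence of $\mathbf{u}(n)\to\mathbf{u}$ and $\mathbf{v}(n)\to\mathbf{v}$, together with a standard diagonal subsequence argument, is used to ensure that the branching ratios are asymptotically those prescribed by the limiting exponents $\delta_iu_i/v_i$.

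Finally, to upgrade the dimension bound to the large intersection statement $\limsup_n\bigcup_jf_n(\mathbf{x}_{n,j}+\prod_iB_{i,n})\in\mathcal{G}^{s(\mathbf{u},\mathbf{v})}(X)$, I would invoke the characterisation of $\mathcal{G}^s$-sets in Ahlfors regular metric spaces established by Negreira--Sequeira \cite{NS}: it suffices to prove that for every open ball $B\subset X$ and every $s'<s(\mathbf{u},\mathbf{v})$, the $s'$-dimensional Hausdorff content of the intersection of the limsup set with $B$ is bounded below by a constant multiple of $r(B)^{s'}$, with the constant independent of $B$. Since the Cantor construction described above can be launched inside any such $B$, with \eqref{condition:measure} supplying the required measure input uniformly, the corresponding Cantor measure provides precisely this lower bound on Hausdorff content, completing the proof.
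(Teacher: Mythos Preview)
Your proposal is correct in outline, but it takes a substantially different route from the paper. You plan a multi-level Cantor construction as in \cite{LLVWZ}, with the sub-ball decomposition serving to neutralise the orientation mismatch between successive $f_n$'s, followed by the mass distribution principle and then a separate upgrade to $\mathcal{G}^{s(\mathbf{u},\mathbf{v})}$ via the Negreira--Sequeira content characterisation. The paper instead bypasses the Cantor iteration entirely: it constructs, for each $n$ along a suitable subsequence, a \emph{single-level} probability measure $\mu_n$ supported on $E_n=\bigcup_{j\in\mathcal{S}_n}f_n(\mathbf{x}_{n,j}+\prod_iB_{i,n})$ (where $\mathcal{S}_n$ is extracted by the covering lemma, Lemma~\ref{coverlemma}), and verifies directly that $\mu_n(B)\asymp\mu(B)$ as $n\to\infty$ and $\mu_n(B)\lesssim r^s$ uniformly. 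These two conditions feed into Lemma~\ref{lemmaforlb}, which delivers the $\mathcal{G}^s$ conclusion in one stroke, via a short energy estimate and \cite[Lemma~3.3]{HLY}.

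The paper's approach buys a significant simplification: because each $\mu_n$ lives on a single level, there is no orientation mismatch to manage between generations, and the large intersection property is obtained simultaneously with the dimension bound rather than as a separate step. Your approach is closer in spirit to the original MTP literature and would certainly work, but the sub-ball workaround and the iterated construction add bookkeeping that the paper avoids. The trade-off is that the paper's route depends on the availability of the criterion Lemma~\ref{lemmaforlb}, whereas your Cantor argument is more self-contained.
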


\begin{rem}
{In the preceding results, we invoke the condition \eqref{condition:measure} on the sequence of rectangles as in \eqref{defofrectan}.  This condition is, however, {somewhat restrictive}. {As established in Theorem \ref{MTP2}, it can be replaced by a weaker alternative with additional constraints imposed on the sequence $(f_n)_{n\in\N}$.}}

\end{rem}

\subsection{Mass transference principle under full measure}
 %\hfill \\
 
%Let us give some notation.
%For any rectangle $R\subset X$, there %is a rotation matrix $ O_R$ and 
%are balls $B(x_i,r_i)\subset X_i,~1\le i\le d$ such that 
%\[R=\prod_{i=1}^dB(x_i,r_i),\]
%and for a constant $c>0$, we denote
%\[cR= \prod_{i=1}^dB(x_i,cr_i).\]
 Give any constant $c>0$  and  rectangle  $R\subset Y$ (or $R\subset X$) with the form 
\[\prod_{i=1}^dB({\bf x}_i,r_i),\] 
where $B({\bf x}_i,r_i)\subset Y_i$ (or $B({\bf x}_i,r_i)\subset X$) is a ball, we denote
\[cR:= \prod_{i=1}^dB({\bf x}_i,cr_i).\] 
Moreover, for a map $f:Y\to X$, we let
\[
cf(R):=f\left(\prod_{i=1}^dB({\bf x}_{i} , cr_i)\right).
\]

%Let $\{B_{i,n}, 1\le i\le d, n\in\N\}$ be a sequence of balls in $[0,1)$.  Assume that there exist sequences of positive real numbers $(r_n)_{n\in\N}$ and $(v_{i,n})_{n\in\N}$ satisfying for $1\le i\le d$,
%\[r(B_{i,n})=r_n^{v_{i,n}}\]
%and $\lim\limits_{n\to\infty}v_{i,n}=v_i\in(\mathbb{R}^+\cup\{+\infty\})^d$. Write ${\bf v}(n)=(v_{1,n},\dots,v_{d,n})$ and ${\bf v}=(v_1,\dots,v_d)$.

Let $(f_n:Y\to X)_{n\in\N}$ be a sequence of local isometries with the same constant $\varepsilon_0>0$ associated with \eqref{locisodef} and let $\{r_n\}_{n\in\N}$ be as in \eqref{conofbandv}.   We say that $(f_n)_{n\in\N}$ satisfies the condition ({\bf H}) if the following statement holds: \textit{there exists a sequence of real numbers $(c_n)_{n\in\N}\subset[1,+\infty)$ such that
 $$\lim_{n\to\infty}\frac{\log c_n}{-\log r_n}=0,$$
and there exists a local isometry $f$ with the constant $\varepsilon_0>0$ associated with \eqref{locisodef} such that  for  
any $n\in\N$ and any rectangle $R=\prod_{i=1}^d B({\bf x}_i,r_i)\subset Y$ with ${\rm diam}(R)<\varepsilon_0$ , there are some vectors $({\bf y}_{R,i,n})_{n\in\N}$ in $Y_i$ for any $1\leq i\leq d$ that satisfy
\[f\Big(\prod_{i=1}^d B({\bf y}_{R,i,n},c_n^{-1}r_i)\Big)\subset f_n(R)\subset f\Big(\prod_{i=1}^d B({\bf y}_{R,i,n},c_nr_i)\Big).\]}

Let $(R_n)_{n\in\N}$ be a sequence of subsets in $Y$. We say  that  $(R_n)_{n\in\N}$ satisfies \textit{full measure condition} with respect to $(f_n)_{n\in\N}$ if 
\[\mu\left(\limsup_{n\to\infty}f_n(R_n)\right)=\mu(X).\]

% \begin{definition}[{\bf Full measure condition}]
% We say {\color{brown} that} a sequence of rectangles $(R_n)_{n\in\N}$ satisfies full measure condition  with respect to $(f_n)_{n\in\N}$ if 
% \[\mu\left(\limsup_{n\to\infty}f_n(R_n)\right)=\mu(X).\]
% \end{definition}

\begin{thm}\label{MTP2}
Let {$(B_{i,n})_{1\leq i\leq d,n\in\N}$}, $({\bf v}(n))_{n\in\N}$ and ${\bf v}$  {satisfy \eqref{conofbandv}}. Let $(f_n:Y\to X)_{n\in\N}$ be a sequence of local isometries with the same constant $\varepsilon_0>0$ associated with \eqref{locisodef} that satisfy the condition {\bf (H)}. 
Let ${\bf u}(n)=(u_{1,n},\dots,u_{d,n})\in (\R^+)^d$ and ${\bf u}=(u_1,\dots,u_d)\in (\R^+)^d$ be vectors that satisfy \eqref{conditiononu}. Assume that 
$\big(\prod_{i=1}^dB_{i,n}^{s_{i,n}}\big)_{n\in\N}$ satisfies full measure condition  with respect to $(f_n)_{n\in\N}$, where $s_{i,n}$ is defined as in \eqref{defofsinudv} for any $i=1,...,d$ and $n\in\N$.
%where 
%\[s_{i,n}=\frac{u_{i,n}}{v_{i,n}}.\]
Then 
\[\limsup_{n\to\infty}f_n\Big(\prod_{i=1}^dB_{i,n}\Big)\in\mathcal{G}^{s({\bf u},{\bf v})}(X).\]
%where
%\[s=\min_{1\le i\le d}\{s({\bf u},{\bf v},i)\}.\]

\end{thm}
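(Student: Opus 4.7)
The plan is to deduce Theorem \ref{MTP2} from Theorem \ref{MTP1} by converting the full measure hypothesis into a local ubiquity statement, with condition ({\bf H}) serving as the bridge that replaces the varying sequence $(f_n)$ by a single local isometry $f$ modulo the sub-polynomial distortion factors $c_n$, which satisfy $\log c_n=o(-\log r_n)$. Applying condition ({\bf H}) to both the original rectangles $R_n:=\prod_{i=1}^d B_{i,n}$ and their enlargements $\widetilde{R}_n:=\prod_{i=1}^d B_{i,n}^{s_{i,n}}$ furnishes centers $\mathbf{y}_{n,i}\in Y_i$ such that both $f_n(R_n)$ and $f_n(\widetilde{R}_n)$ are sandwiched between $f$-images of rectangles centered at $\mathbf{y}_{n,i}$, with sides $c_n^{\pm 1}r_n^{v_{i,n}}$ and $c_n^{\pm 1}r_n^{u_{i,n}}$ respectively (the $i$-th side of $\widetilde{R}_n$ has radius $r_n^{u_{i,n}}$ since $s_{i,n}/\delta_i=u_{i,n}/v_{i,n}$).

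The next step is to upgrade the full measure condition into a local ubiquity statement. Since $\mu(\limsup_n f_n(\widetilde{R}_n))=\mu(X)$ and $\mu$ is finite, continuity of $\mu$ from above yields, for every open ball $B\subset X$ and every $N\in\mathbb{N}$,
\begin{equation*}
\mu\Big(B\cap\bigcup_{n\geq N}f_n(\widetilde{R}_n)\Big)=\mu(B).
\end{equation*}
I would then partition $\mathbb{N}$ into finite blocks $I_k=\{n:r_n\in[\rho_{k+1},\rho_k)\}$ along a rapidly decreasing scale $\rho_k\to 0$; a diagonal argument over a countable base of balls in $X$ ensures the scales can be chosen so that
\begin{equation*}
\mu\Big(B\cap\bigcup_{n\in I_k}f_n(\widetilde{R}_n)\Big)>\tfrac{1}{2}\mu(B)
\end{equation*}
holds for infinitely many $k$ and every base ball $B$. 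Via the inclusion half of condition ({\bf H}) and the use of common block-representatives $\rho_k^{\tilde{u}_{i,k}}$ and $\rho_k^{\tilde{v}_{i,k}}$ (with $\tilde{u}_{i,k}\to u_i$, $\tilde{v}_{i,k}\to v_i$) in place of the $r_n^{u_{i,n}}$ and $r_n^{v_{i,n}}$, this transforms into a genuine local ubiquity system with respect to the single local isometry $f$, containing $k_k=|I_k|$ rectangles of a common shape per block.

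The principal technical obstacle is the uniformization within each block: since $u_{i,n}$ and $v_{i,n}$ fluctuate for $n\in I_k$, I must verify that (i) replacing them by common representatives preserves the ubiquity lower bound up to an irrelevant constant, and (ii) the dimensional number associated with these representatives converges to $s(\mathbf{u},\mathbf{v})$ as $k\to\infty$. Both items rest on $\log c_n=o(-\log r_n)$ together with $(u_{i,n},v_{i,n})\to(u_i,v_i)$, which ensures that all perturbations are sub-polynomial in $r_n^{-1}$ and vanish in the limit. Once this is established, Theorem \ref{MTP1} applied to the block-indexed ubiquity system produces
\begin{equation*}
\limsup_{k\to\infty}\bigcup_{n\in I_k}f\Big(\prod_{i=1}^d B\big(\mathbf{y}_{n,i},c_n^{-1}r_n^{v_{i,n}}\big)\Big)\in\mathcal{G}^{s(\mathbf{u},\mathbf{v})}(X),
\end{equation*}
which coincides with $\limsup_n f\big(\prod_{i=1}^d B(\mathbf{y}_{n,i},c_n^{-1}r_n^{v_{i,n}})\big)$ and is contained in $\limsup_n f_n(R_n)$ by the other half of condition ({\bf H}). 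Since $\limsup_n f_n(R_n)$ is a $G_\delta$ set and the class $\mathcal{G}^s$ is upward-closed among $G_\delta$ sets, the desired conclusion follows.
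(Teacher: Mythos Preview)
Your reduction to Theorem \ref{MTP1} has a genuine gap at the blocking step. To obtain the local ubiquity inequality $\mu\big(B\cap\bigcup_{n\in I_k}f_n(\widetilde R_n)\big)>\tfrac12\mu(B)$ from the full measure hypothesis you must take blocks $I_k$ that are \emph{long} in the sense that $\rho_k/\rho_{k+1}$ is unbounded (otherwise a single block need not capture a positive fraction of the mass of any fixed ball). But Theorem \ref{MTP1} requires all rectangles at a given level to share a common base scale $r_k$ and common exponent vector $(\tilde v_{i,k})$. Your proposed uniformisation replaces $r_n^{v_{i,n}}$ by $\rho_k^{\tilde v_{i,k}}$; since $r_n$ ranges over $[\rho_{k+1},\rho_k)$ with $\rho_{k+1}/\rho_k\to 0$, this introduces factors of order $(\rho_k/\rho_{k+1})^{v_i}$, which are \emph{not} sub-polynomial in $r_n^{-1}$ and cannot be absorbed into the perturbation budget coming from $c_n$ or from $v_{i,n}\to v_i$. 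Concretely, shrinking the small rectangles to a common scale destroys the containment $U_n\subset f_n(R_n)$, while enlarging the big ones destroys ubiquity; no single representative works in both directions. The claim ``all perturbations are sub-polynomial in $r_n^{-1}$'' is therefore false as stated.

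The paper does not attempt this reduction. It uses condition ({\bf H}) as you do to pass to a single $f$, extracts for each $m$ a disjoint family $\mathcal I_m\subset\{m,\dots,N_m\}$ via the $5r$-covering lemma, and then proves the $\mathcal G^s$ property \emph{directly} through Lemma \ref{lemmaforlb} by constructing probability measures $\mu_m$ on $E_m=\bigcup_{n\in\mathcal I_m}U_n$. The non-uniformity of scales within $\mathcal I_m$ is handled by a further dyadic partition $\mathcal F_p=\{n\in\mathcal I_m:2^{-p}\le r_n<2^{-p+1}\}$: the measure bound $\mu_m(B)\lesssim r^s$ is proved separately on each $\mathcal F_p$ (where scales \emph{are} comparable) and then summed over $p$, at the cost of a harmless $\log(1/r)$ factor. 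This extra layer of decomposition is exactly what is missing from your argument.
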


The {Mass Transference Principle} is often used in studying limsup sets, such as the shrinking target problem, and it gives the Hausdorff dimension formula in some cases (see \cite{Lietal,LLVWZ,WW,WWX}). 
However, {as shown in Theorem \ref{counterexample} and Example \ref{exam1},} the lower bound given by the {Mass Transference Principle} is not always the proper dimension formula.

\section{Preliminaries}\label{preliminaries}
 
Recall that for any $i = 1, . . . , d$, $X_i$ is a subset of $\R^{p_i}$  and   $(X_i,\rho_i)$ is a compact metric space equipped with a $\delta_i$-Ahlfors regular measure $\mu_i$. 
Denote $X=\prod_{i=1}^dX_i$ and $\mu=\prod_{i=1}^d\mu_i$.  The distance between  ${\bf x}=({\bf x}_1,\dots,{\bf x}_d)$ and ${\bf y}=({\bf y}_1,\dots,{\bf y}_d)\in X$ is  $$\rho({\bf x},{\bf y}):=\left(\sum_{i=1}^d\rho_i({\bf x}_i,{\bf y}_i)^2\right)^{\frac{1}{2}}.$$
 Then $(X,\mu)$ is an Ahlfors $\delta_0$-regular metric space,  where $\delta_0:=\delta_1+\cdots+\delta_d$.

 \subsection{Dyadic decomposition} {  Let $(X,\mu)$ be as defined above.  By \cite{krs}, for any parameter $0<b<\frac{1}{3}$,  there exists a dyadic decomposition of 
$X$ consisting of Borel sets 
$$\{Q_{n,i}\colon n\in\mathbb{N}, i\in\mathbb{N}_n\}\quad{\rm with}\quad  \mathbb{N}_n\subset \mathbb{N},$$ }
satisfying the following properties:
	\begin{enumerate}
		\item $X=\bigcup_{i\in\mathbb{N}_n}Q_{n,i}$ for every $n\in\mathbb{N}$.\medskip
		\item $Q_{n,i}\cap Q_{m,j}=\varnothing $ or $Q_{n,i}\subset Q_{m,j}$, where $n,m\in \mathbb{N}$, $n\ge m$, 
		$i\in \mathbb{N}_n $ and $j\in \mathbb{N}_m$.\medskip
		\item For every $n\in \mathbb{N}$ and $i\in\mathbb{N}_n$, there exists a point ${\bf x}_{n,i}\in X$ such that 
		\begin{equation*}\label{cube}
			B({\bf x}_{n,i},c_1b^n)\subset Q_{n,i}\subset \overline{B}({\bf x}_{n,i},c_1'b^n),
		\end{equation*}
		where $c_1=\frac{1}{2}-\frac{b}{1-b}$, $c'_1=\frac{1}{1-b}$ { and $\overline{B}$ is the closure of $B$}.\medskip
		\item There exists a point ${\bf x}_0\in X$ so that for every $n\in\mathbb{N}$, there is an index $i\in\mathbb{N}_n$ with 
		$B({\bf x}_0,c_1b^n)\subset Q_{n,i}$.\medskip
		\item $\{{\bf x}_{n,i}\colon i\in \mathbb{N}_n\}\subset \{{\bf x}_{n+1,i}\colon i\in \mathbb{N}_{n+1}\} $ for all $n\in\mathbb{N}$.
	\end{enumerate}
The family $\{Q_{n,i} \colon n \ge0, i \in \mathbb{N}_n\}$ satisfying the above properties is called ``generalized dyadic cubes".  For convenience, we write $\mathcal{D}_0=\{X\}$ and $\mathcal{D}_n=\{Q_{n,i}\colon i\in \mathbb{N}_n\}$ for $n\ge1$.%, and $\mathcal{Q}=\bigcup_{n\ge0}\mathcal{D}_n$.

\subsection{Sets with large intersection }\label{lippp}
%For $n\ge 0$, let $\mathcal{D}_n$ denote the collection of $d$-dimensional dyadic hyper-cubes in $[0,1)^d$, that is,
%\[\mathcal{D}_n =\Big\{\prod_{i=1}^d \big[k_i2^{-n},(k_i + 1)2^{-n}\big)\colon 0\le k_i<2^n,~1\le i\le d\Big\}.\]
Define the outer net measure associated with $s$ and $\mu$ as follows:
%\[\mathcal{M}_{\infty}^s(E)=\inf \left\{\sum_k|D_k|^s\colon E \subset \bigcup_kD_k, D_k\in \bigcup_{n\ge0}\mathcal{D}_n\right\}. \]
\[\forall \ E\subset X,\quad \mathcal{M}_{\infty}^s(E):=\inf\left\{\sum_{k}\mu( D_k)^{s/\delta_0}\colon E\subset \bigcup_{k\ge1} D_k, D_k\in \bigcup_{n\ge0}\mathcal{D}_n\right\}.\]
The $s$-dimensional class of sets with large intersection, denoted by $\mathcal{G}^s( X)$,  is defined as
\begin{multline*}
\mathcal{G}^s( X)=\mathcal{G}_{\mu}^s( X):=\Big\{E\subset  X\colon E{\rm ~is ~a ~}G_{\delta}~{\rm set~with~}\mathcal{M}_{\infty}^t(E\cap  D) = \mathcal{M}_{\infty}^t( D) \\
 {\rm~for~ all~}D\in \bigcup_{n\ge0}\mathcal{D}_n{\rm ~and ~for~all~}t < s \Big\}.
 \end{multline*}
{By definition, $\dimh E\geq s$ for any $E\in\mathcal{G}^s(X)$. The term ``large intersection'' comes from the following property: \textit{if $\{E_n\}_{n\in\N}\subseteq\mathcal{G}^s(X)$, then their intersection $\bigcap_{n=1}^{\infty}E_n$ also belongs to $\mathcal{G}^s(X)$.}} 
 
 \begin{rem}
 Note that the classes $\mathcal{G}^s( X)$ do not depend on the choice of the constant $b$, even if it affects the construction of the generalized dyadic cubes $\{\mathcal{D}_n\}$, see \cite[Theorem 1.2 and Proposition 2.2]{NS}. Moreover, let $\mu'$ be another Ahlfors $\delta_0$-regular measure on $X$, then $\mathcal{G}_{\mu}^s( X)=\mathcal{G}_{\mu'}^s( X)$, see \cite[Theorem 1.4 ]{NS}.
 %Negreira and Sequeira \cite{NS} showed that the net contents given by different dyadic decomposition are equivalent.  Therefore the classes $\mathcal{G}^s( X)$ defined by  Ahlfors regular measures and dyadic decomposition are the same. Hence  we use the notation $\mathcal{G}^s( X)$.
\end{rem}

{The following lemma is a slight variation of \cite[Lemma 3.2]{HPWZ}, which is crucial for estimating the lower bound of the Hausdorff dimension of various sets in this paper.
%the proof of  Theorems \ref{MTP1} and \ref{MTP2}.
In fact, it provides a {relatively effective} method to determine whether a limsup-set is in the class $\mathcal{G}^s( X)$,  {without} the need for estimating the outer net measure of {intersection with any generalized dyadic cube as in definition}. }

\begin{lem} \label{lemmaforlb}
Let $(X,\rho)$ be { a} compact {metric space} with an Ahlfors $\delta_0$-regular $\mu$. For all $n\in\N$, let $E_n$ be open sets in $X$ and $\mu_n$ be  probability measures with  $\mu_n(X\setminus E_n) =  0$.
  Suppose that there are constants $C\ge1$ and $s\ge 0$ such that
  \begin{equation}\label{lip}
    C^{-1} \le \liminf_{n\to\infty} \frac{\mu_n(B)}{\mu(B)}\le
    \limsup_{n\to\infty} \frac{\mu_n(B)}{\mu(B)}  \le C
  \end{equation}
  for any ball $B\subset X$, and 
  \begin{equation}\label{condition2}
  \mu_n(B) \le Cr^s
  \end{equation}
   for any $n\in\N$ and any  ball $B$ with radius $r$.  Then  $\limsup_{n\to\infty}  E_n\in\mathcal{G}^s(X).$ { In particular, we have $\dimh\limsup_{n\to\infty}E_n\geq s$.}
\end{lem}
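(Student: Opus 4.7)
The plan is to verify the two defining conditions of $\mathcal{G}^{s}(X)$: first, that $F:=\limsup_n E_n$ is $G_{\delta}$, and second, that $\mathcal{M}^{t}_{\infty}(F\cap D)=\mathcal{M}^{t}_{\infty}(D)$ for every dyadic cube $D\in\bigcup_n\mathcal{D}_n$ and every $t<s$. The $G_{\delta}$ property is immediate because each $E_n$ is open, so $F=\bigcap_{N}\bigcup_{n\ge N}E_n$ is a countable intersection of open sets; the Hausdorff-dimension consequence $\dim_{\rm H}F\ge s$ then follows automatically from $F\in\mathcal{G}^{s}(X)$.

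For the net-measure identity I fix $t<s$ and a dyadic cube $D$. The inequality $\mathcal{M}^{t}_{\infty}(F\cap D)\le \mathcal{M}^{t}_{\infty}(D)$ is trivial by monotonicity, so the content lies in the reverse direction. The approach is to produce a Borel probability measure $\nu$ supported on a compact subset of $F\cap D$ satisfying a Frostman-type bound $\nu(B(x,r))\le C_0\, r^{s}$ uniformly in $x\in X$ and $r>0$. The mass distribution principle for the net outer measure then delivers $\mathcal{M}^{t}_{\infty}(F\cap D)\gtrsim \mu(D)^{t/\delta_0}=\mathcal{M}^{t}_{\infty}(D)$; applying the construction uniformly inside every dyadic sub-cube of $D$ and combining with the self-similarity of $\mathcal{M}^{t}_{\infty}$ along the dyadic net upgrades this quantitative lower bound to the required equality.

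To produce $\nu$ I use a Cantor-type iteration. Choose a rapidly increasing sequence $(n_k)_{k\ge 1}$ and inductively build a nested sequence of disjoint collections $\mathcal{F}_k$ of dyadic sub-cubes of $D$, with each member of $\mathcal{F}_k$ contained in $E_{n_j}$ for all $j\le k$. To pass from $\mathcal{F}_k$ to $\mathcal{F}_{k+1}$ within a given $Q\in\mathcal{F}_k$, choose $n_{k+1}$ large enough that \eqref{lip} yields $\mu_{n_{k+1}}(Q)\ge (2C)^{-1}\mu(Q)$; since $\mu_{n_{k+1}}$ is concentrated on the open set $E_{n_{k+1}}$, a sufficiently fine dyadic refinement of $Q$ together with a Vitali/$5r$-covering argument extracts disjoint sub-cubes of $Q$ lying entirely inside $E_{n_{k+1}}$ and carrying a total $\mu$-mass bounded below by a fixed fraction of $\mu(Q)$. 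Distributing mass proportionally to $\mu$ at each stage and passing to the weak-$*$ limit produces $\nu$, supported on $\bigcap_k\bigcup\mathcal{F}_k\subseteq\bigcap_k E_{n_k}\subseteq F$. The Frostman bound $\nu(B(x,r))\le C_0\, r^{s}$ propagates from the uniform upper bound \eqref{condition2} on the measures $\mu_{n_k}$, via the comparability between the assigned masses and the $\mu_{n_k}$-measures of the selected sub-cubes.

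The main obstacle is the geometric step of the Cantor iteration: extracting from each $Q\in\mathcal{F}_k$ a family of dyadic sub-cubes lying inside the open set $E_{n_{k+1}}$ and carrying a uniformly positive fraction of $\mu(Q)$, given only the ``mass on balls'' information supplied by \eqref{lip}--\eqref{condition2}. When $s<\delta_0$ the measures $\mu_{n_k}$ need not be uniformly dominated by a multiple of $\mu$ as measures, so the dyadic generation at which sub-cubes are extracted must be tuned to $n_{k+1}$ and to the local geometry of $E_{n_{k+1}}$. Combining the openness of $E_{n_{k+1}}$, the lower bound on $\mu_{n_{k+1}}(Q)$ from \eqref{lip}, and the upper bound on $\mu_{n_{k+1}}(B(x,r))$ from \eqref{condition2} at the chosen scale is what makes the extraction---and hence the whole construction---go through.
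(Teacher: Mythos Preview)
Your approach differs substantially from the paper's. The paper's argument is four lines: for each $t<s$ it observes, via the layer-cake formula and \eqref{condition2} alone, that
\[
\int \rho(x,y)^{-t}\,d\mu_n(y)\;\le\;1+\int_1^\infty \mu_n\bigl(B(x,u^{-1/t})\bigr)\,du\;\le\;1+2C\,\frac{t}{s-t}
\]
uniformly in $x$ and $n$, so the $t$-energies $\iint\rho(x,y)^{-t}\,d\mu_n\,d\mu_n$ are uniformly bounded; it then invokes \cite[Lemma~3.3]{HLY}, which converts the package ``$\mu_n$ supported on $E_n$, satisfying \eqref{lip}, with uniformly bounded $t$-energy'' directly into $\limsup_n E_n\in\mathcal{G}^t(X)$. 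No Cantor set is built and the net-measure identity is never verified by hand.

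Your Cantor scheme, as written, has a genuine gap. You assert that inside each $Q\in\mathcal{F}_k$ one can extract disjoint dyadic sub-cubes contained in $E_{n_{k+1}}$ whose total \emph{$\mu$-mass} is a fixed fraction of $\mu(Q)$. This is false when $s<\delta_0$: nothing in the hypotheses prevents $\mu(E_n)\to 0$. A concrete instance is $X=[0,1]$, $\mu$ Lebesgue, $E_n=\bigcup_{k=0}^{2^n-1}[k2^{-n},\,k2^{-n}+4^{-n})$ and $\mu_n=2^n\mu|_{E_n}$; then \eqref{lip} holds and \eqref{condition2} holds with $s=\tfrac12$, yet $\mu(E_n)=2^{-n}$, so no family of sub-cubes of $Q$ lying in $E_n$ can carry $\mu$-mass comparable to $\mu(Q)$. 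What the hypotheses \emph{do} yield is a count: combining $\mu_{n_{k+1}}(Q)\gtrsim\mu(Q)$ with \eqref{condition2} shows any packing of $Q\cap E_{n_{k+1}}$ by $r$-cubes has $\gtrsim\mu(Q)\,r^{-s}$ members. A Cantor construction along these lines can be made to work, but the mass of $\nu$ must be distributed proportionally to $\mu_{n_{k+1}}$ (or by cube count), not to $\mu$; the Frostman exponent $s$ then emerges at each fresh scale from \eqref{condition2}, while \eqref{lip} is what allows the next generation to be launched inside each already-selected cube. Your proposal conflates the two mass-distributions, and the final paragraph correctly names the obstacle without resolving it.
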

\begin{proof}
Let $t<s$ and let $B\subset X$ be a ball with radius $r>0$. If $\mu_n(B)\leq C r^s$, then  given any ${\bf x} \in X$ we have
\begin{equation*}
\begin{split}
 \int \left(\rho({\bf x},{\bf y})\right)^{-t} \mathrm{d} \mu_n({\bf y})
    & \le 1+\int_1^{\left(\rho({\bf x},{\bf y})\right)^{-t}} \mathrm{d} u\mathrm{d} \mu_n({\bf y})
     = 1+\int_1^{\infty}\mu_n\bigl(B({\bf x}, u^{-1/t})\bigr) \mathrm{d}
      u \\
      &\le 1+
      2C\int_1^{\infty}u^{-s/t}  \mathrm{d} u
     = 1+2C\frac{t}{s-t}.
\end{split}
\end{equation*}
It derives that 
\[\int\int \left(\rho({\bf x},{\bf y})\right)^{-t} \mathrm{d} \mu_n({\bf y})\mathrm{d} \mu_n({\bf x})<1+2C\frac{t}{s-t}<+\infty.\]
Combing this with \cite[Lemma 3.3]{HLY}, we have  $\limsup\limits_{n\to\infty}
  E_n\in\mathcal{G}^t(X)$ for every $t<s$. By definition, we have  $\limsup\limits_{n\to\infty}
  E_n\in\mathcal{G}^s(X).$
\end{proof}

\subsection{Lattice}\label{lattice}

In this section, we briefly introduce the concept of a lattice in $\mathbb{R}^d$ and state some properties of lattices. One can refer to \cite{latt,lnogn} for more details.  {To begin with, we present some notation that will be used throughout. 

%Write $f_n\lesssim g_n$, $n\in\mathbb{N}$, if there is an
%  absolute constant $0<c< \infty$ such that for all
%  $n\in\mathbb{N}$, $f_n\le cg_n$.  write
%  $f_n\asymp g_n$, if $f_n\lesssim g_n$ and
%  $g_n\lesssim f_n$ for $n\in\mathbb{N}$.
  %, where the constant depends on other parameters these will appear as subscripts. 

Write $f\lesssim g$ if there exists a constant $0<c< \infty$ (called the implied constant)  such that $f\le cg$, and $f \asymp g$ if both $f\lesssim g$ and $g\lesssim f$ hold. Since the dimension $d$ of the space $[0,1)^d$ %will remain fixed in most discussions, 
will not be important for our purposes, we omit its dependence in these notations. For other parameters affecting the implied constants, we will explicitly indicate them.}

%\begin{definition}(Lattices). A discrete subgroup $L \subset G$ is called a lattice
%if $X = L \setminus G$ supports a $G$-invariant probability measure.
%\end{definition}

\begin{definition}[Lattices]\label{lattice} Let $\alpha_1,\dots , \alpha_d$ be $d$ linearly independent points in $\R^d$. The collection of points $\Lambda:=\{u_1 \alpha_1 + \cdots  + u_d \alpha_d \colon u_i \in\mathbb{Z}\}$ is called a lattice in $\R^d$ and  $\alpha_1,\dots, \alpha_n $
is called a basis of the lattice.
\end{definition}

%A lattice in $\R^d$ in the sense of the definition has the form $\Lambda = g\Z^d$ for some $g \in GL_d(\R)$.

{The following criterion for a set to be a lattice  follows directly from basic linear algebra and the proof is omitted.}

\begin{proposition}
 A set $\Lambda \subset \R^d$ is a lattice in $\R^d$ if and only if there exists a $d\times d$ nonsingular matrix $A$  %of order $d$ 
such that $\Lambda = A\mathbb{Z}^d$.
 \end{proposition}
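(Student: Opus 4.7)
The plan is to prove the two directions of the equivalence separately, both of which reduce to the standard observation that multiplication by a matrix $A$ sends the standard basis $e_1,\dots,e_d$ of $\mathbb{R}^d$ to its column vectors.

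For the forward direction, I would start with a lattice $\Lambda \subset \mathbb{R}^d$ and pick a basis $\alpha_1,\dots,\alpha_d$ as in Definition \ref{lattice}. Form the $d\times d$ matrix $A$ whose $i$-th column is $\alpha_i$. Since the $\alpha_i$ are linearly independent in $\mathbb{R}^d$, $A$ is nonsingular. For any $\mathbf{u}=(u_1,\dots,u_d)^T \in \mathbb{Z}^d$, a direct computation gives
\[
A\mathbf{u} = \sum_{i=1}^{d} u_i \alpha_i,
\]
so the image $A\mathbb{Z}^d$ coincides precisely with $\{u_1\alpha_1+\cdots+u_d\alpha_d : u_i\in\mathbb{Z}\} = \Lambda$.

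For the reverse direction, I would assume $\Lambda = A\mathbb{Z}^d$ for some nonsingular $d\times d$ matrix $A$, and set $\alpha_i := Ae_i$ for $1\le i\le d$, where $e_i$ denotes the $i$-th standard basis vector of $\mathbb{R}^d$. The nonsingularity of $A$ ensures that $\{\alpha_1,\dots,\alpha_d\}$ is a linearly independent set in $\mathbb{R}^d$. Since every $\mathbf{u}=(u_1,\dots,u_d)^T\in\mathbb{Z}^d$ satisfies $A\mathbf{u} = \sum_{i=1}^d u_i \alpha_i$, we obtain
\[
\Lambda = A\mathbb{Z}^d = \Big\{\sum_{i=1}^d u_i \alpha_i : u_i \in \mathbb{Z}\Big\},
\]
which matches Definition \ref{lattice} with basis $\alpha_1,\dots,\alpha_d$, so $\Lambda$ is a lattice.

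There is no real obstacle here: both implications are immediate from the identification of a matrix with its action on $\mathbb{Z}^d$ via the standard basis, and the equivalence between linear independence of a set of $d$ vectors in $\mathbb{R}^d$ and nonsingularity of the matrix they form. The only minor point worth spelling out is the bijective correspondence between choices of ordered bases of the lattice and choices of the generating matrix $A$, which is not needed for the equivalence itself but clarifies that $A$ is by no means unique (any two such matrices differ by right-multiplication by an element of $GL_d(\mathbb{Z})$).
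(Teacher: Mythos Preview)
Your proof is correct and is exactly the standard basic-linear-algebra argument the paper has in mind; the paper itself omits the proof entirely, simply remarking that the criterion ``follows directly from basic linear algebra.'' Your write-up supplies precisely that omitted verification.
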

 
%A fundamental domain for $\Lambda$ is given by the parallelepiped $A[0, 1)^d$, and has Lebesgue measure $|\det A|$, which  is independent of the choice of the basis . This measure is also called the covolume ${\rm covol}(\Lambda)$ of $\Lambda$.

Let $\Lambda$ be a lattice  and let $\alpha_1,\dots , \alpha_d$  be a basis of $\Lambda$. A fundamental domain for $\Lambda$ { associated with this basis} is given by the parallelepiped 
$$\{x_1 \alpha_1 + \cdots  + x_d \alpha_d \colon 0\le x_i < 1\}$$
whose  Lebesgue measure $|\det (\alpha_1,\dots, \alpha_d)|$  is independent of the choice of the basis; see \cite[Proposition 3.1.6]{lnogn}. This measure is called the covolume of $\Lambda$ denoted by ${\rm covol}(\Lambda)$.

{
\begin{definition}[Successive minima]\label{minina} Let $\Lambda \subset \R^d$ be a lattice. We define the successive minima of $\Lambda$ by
\[m_k(\Lambda) = \min\{r : \Lambda~ {\rm contains}~ k ~{\rm linearly~ independent ~vectors~ of~ Euclidean~norm~} \le r\}\]
for any $k = 1,\dots, d.$ 
\end{definition}}
Throughout, let $\vert \cdot \vert$ denote the Euclidean norm { on $\R^d$}. 
\begin{thm}[{\cite[Theorem~1.15]{latt}}]\label{thmsm}
 Let $\Lambda \subset \R^d$ be a lattice and $m_k(\Lambda)$, $k = 1,\dots, d$ be the successive minima of $\Lambda$. Then
\[m_1(\Lambda)\cdots m_d(\Lambda) \asymp \rm{covol}(\Lambda),\]
\end{thm}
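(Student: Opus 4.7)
The plan is to split the symmetric estimate $m_1(\Lambda)\cdots m_d(\Lambda)\asymp\mathrm{covol}(\Lambda)$ into its two halves and treat them separately. The lower bound $\mathrm{covol}(\Lambda)\lesssim m_1(\Lambda)\cdots m_d(\Lambda)$ is elementary, while the upper bound $m_1(\Lambda)\cdots m_d(\Lambda)\lesssim\mathrm{covol}(\Lambda)$ is the content of Minkowski's second theorem on successive minima and constitutes the harder of the two directions.

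For the easy direction I would first, using Definition \ref{minina} together with a standard compactness/minimality argument, select $d$ linearly independent vectors $v_1,\dots,v_d\in\Lambda$ realizing the successive minima, so that $|v_k|=m_k(\Lambda)$ for each $k=1,\dots,d$. Consider the sublattice $\Lambda':=\mathbb{Z}v_1+\cdots+\mathbb{Z}v_d\subseteq\Lambda$; since the index $[\Lambda:\Lambda']$ is a positive integer, we have
$$\mathrm{covol}(\Lambda)\le\mathrm{covol}(\Lambda')=\bigl|\det(v_1,\dots,v_d)\bigr|.$$
Applying Hadamard's inequality to the determinant on the right yields $|\det(v_1,\dots,v_d)|\le\prod_{k=1}^d|v_k|=\prod_{k=1}^d m_k(\Lambda)$, which chains with the previous inequality to give the lower bound with implied constant equal to $1$.

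For the upper bound I would invoke Minkowski's second theorem: for any $0$-symmetric convex body $K\subset\mathbb{R}^d$ with successive minima $\lambda_1(K,\Lambda),\dots,\lambda_d(K,\Lambda)$ relative to $\Lambda$, one has $\lambda_1(K,\Lambda)\cdots\lambda_d(K,\Lambda)\cdot\mathrm{vol}(K)\le 2^d\mathrm{covol}(\Lambda)$. I would apply this to $K=\overline{B}(0,1)$, the closed Euclidean unit ball; since $|\cdot|$ denotes the Euclidean norm, the successive minima of $\Lambda$ relative to this $K$ coincide exactly with $m_k(\Lambda)$, and hence $\prod_{k=1}^d m_k(\Lambda)\le 2^d\,\omega_d^{-1}\mathrm{covol}(\Lambda)$, where $\omega_d=\mathrm{vol}(K)$ depends only on $d$. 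The main obstacle here is Minkowski's second theorem itself, whose standard proof proceeds by induction on the number of linearly independent minimizing vectors together with a delicate volume-compression argument applied to a suitably deformed $K$. Since the implied constant in $\asymp$ is allowed to depend on $d$, the most efficient route is to cite this classical result (as the authors do via \cite{latt}) rather than reproducing the full argument; if a self-contained proof were required, a cruder but workable approach would iterate Minkowski's first (convex body) theorem on the orthogonal projections of $\Lambda$ modulo $\mathrm{span}(v_1,\dots,v_{k-1})$ to build up linearly independent vectors of controlled norm, essentially recovering the second theorem with a weaker constant.
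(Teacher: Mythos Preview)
The paper does not prove this statement at all: Theorem~\ref{thmsm} is quoted from \cite[Theorem~1.15]{latt} (and the subsequent remark points to \cite[Theorem~3.3.8]{EWp}) without any argument, so there is no ``paper's own proof'' to compare against. Your sketch is the standard and correct route --- Hadamard's inequality for $\mathrm{covol}(\Lambda)\le\prod_k m_k(\Lambda)$, and Minkowski's second theorem applied to the Euclidean unit ball for the reverse inequality --- and in fact supplies strictly more detail than the paper, which simply treats the result as a black box from the geometry of numbers.
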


\begin{cor}[Basis of a lattice{\cite[Corollary 1.16]{latt}}]\label{corsm}
 Let $\Lambda \subset \R^d$ be a lattice. Then there
is a basis ${\bf v}_1,\dots , {\bf v}_d \in  \Lambda $ of $\Lambda $ such that
\begin{equation}\label{1}
\vert {\bf v}_1\vert = m_1(\Lambda), \vert {\bf v}_2\vert \asymp m_2(\Lambda), \dots , \vert {\bf v}_d\vert \asymp m_d(\Lambda).
\end{equation}
Moreover, the projection $\pi_k({\bf v}_k)$ of ${\bf v}_k$ onto the orthogonal complement of
\[\R {\bf v}_1 + \cdots + \R {\bf v}_{k-1}\]
{satisfies} 
\begin{equation}\label{2}
\vert \pi_k({\bf v}_k)\vert \asymp m_k(\Lambda) \asymp \vert {\bf v}_k\vert
\end{equation}
for $k = 2, \dots , d$.
\end{cor}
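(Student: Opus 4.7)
The plan is to construct the basis $\{{\bf v}_1,\dots,{\bf v}_d\}$ inductively via the chain of sublattices $\Lambda_k := \Lambda \cap \mathrm{span}_{\R}(w_1,\dots,w_k)$, where $w_1,\dots,w_d\in\Lambda$ are linearly independent vectors realizing $|w_k|=m_k(\Lambda)$ (these exist by Definition \ref{minina}). Since $w_1,\dots,w_k$ lie in $\Lambda_k$ and are linearly independent while $\Lambda_k\subset\Lambda$, one first checks that $m_j(\Lambda_k)=m_j(\Lambda)$ for every $j\le k$. The target is a basis in which $\{{\bf v}_1,\dots,{\bf v}_k\}$ spans $\Lambda_k$ for each $k$. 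The base case is ${\bf v}_1:=w_1$, giving $|{\bf v}_1|=m_1(\Lambda)$ and $\pi_1({\bf v}_1)={\bf v}_1$ at once.

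For the inductive step, assume ${\bf v}_1,\dots,{\bf v}_{k-1}$ is already a basis of $\Lambda_{k-1}$. The quotient $\Lambda_k/\Lambda_{k-1}$ is a rank-one lattice inside the one-dimensional quotient space, so I would pick any ${\bf v}_k\in\Lambda_k$ lifting a generator of this quotient, which automatically makes $\{{\bf v}_1,\dots,{\bf v}_k\}$ a basis of $\Lambda_k$. Next, I would perform a Gram--Schmidt size-reduction, replacing ${\bf v}_k$ by ${\bf v}_k-\sum_{j<k}c_j{\bf v}_j$ with integers $c_j$ chosen so that every Gram--Schmidt coefficient of ${\bf v}_k$ along ${\bf v}_j$ has absolute value at most $1/2$. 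This modification preserves both $\pi_k({\bf v}_k)$ and the basis property, and produces the estimate $|{\bf v}_k|^2\le|\pi_k({\bf v}_k)|^2+\tfrac14\sum_{j<k}|{\bf v}_j|^2$.

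The upper bound $|\pi_k({\bf v}_k)|\le m_k(\Lambda)$ is immediate: writing $w_k=\sum_{j\le k}\alpha_j{\bf v}_j$ with $\alpha_j\in\Z$, the fact that $w_k\notin\Lambda_{k-1}$ forces $|\alpha_k|\ge1$, hence $|\pi_k({\bf v}_k)|\le|\pi_k(w_k)|\le|w_k|=m_k(\Lambda)$. The matching lower bound $|\pi_k({\bf v}_k)|\gtrsim m_k(\Lambda)$ is the main obstacle. I plan to derive it from the identity $\prod_{j=1}^{k}|\pi_j({\bf v}_j)|=\mathrm{covol}(\Lambda_k)$ (a Gram determinant computation) combined with Theorem \ref{thmsm} applied to $\Lambda_k$, which gives $\prod_{j=1}^{k}|\pi_j({\bf v}_j)|\asymp m_1(\Lambda)\cdots m_k(\Lambda)$. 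Using the inductive hypothesis $|\pi_j({\bf v}_j)|\asymp m_j(\Lambda)$ for $j<k$, I can then cancel and conclude $|\pi_k({\bf v}_k)|\asymp m_k(\Lambda)$.

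To close, $|{\bf v}_k|\ge|\pi_k({\bf v}_k)|\asymp m_k(\Lambda)$ supplies one direction, while the size-reduction estimate together with the inductive fact that $|{\bf v}_j|\asymp m_j(\Lambda)\le m_k(\Lambda)$ for $j<k$ (using monotonicity of the successive minima) yields $|{\bf v}_k|\lesssim m_k(\Lambda)$. This completes both asymptotic equalities $|{\bf v}_k|\asymp m_k(\Lambda)\asymp|\pi_k({\bf v}_k)|$, and the equality $|{\bf v}_1|=m_1(\Lambda)$ is already built into the base case.
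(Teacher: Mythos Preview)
The paper does not prove this statement itself; it is simply quoted from \cite[Corollary~1.16]{latt}, so there is no in-paper argument to compare against. Your proof is the standard one (and essentially the one found in the cited reference): build the flag $\Lambda_k=\Lambda\cap\mathrm{span}_{\R}(w_1,\dots,w_k)$, lift generators of the rank-one quotients $\Lambda_k/\Lambda_{k-1}$ and size-reduce, then combine the direct upper bound $|\pi_k({\bf v}_k)|\le m_k(\Lambda)$ with Minkowski's Second Theorem (Theorem~\ref{thmsm}) applied to $\Lambda_k$ to get the matching lower bound. All steps are valid. Two minor points to make explicit when writing it out: first, Theorem~\ref{thmsm} is being invoked for $\Lambda_k$ viewed as a full-rank lattice in its $k$-dimensional ambient subspace (isometrically identified with $\R^k$), with implied constant depending on $k\le d$; second, the upper bound $|\pi_j({\bf v}_j)|\le m_j(\Lambda)$ actually holds for \emph{every} $j$ by the same argument you give for $j=k$, so one can bypass the inductive cancellation entirely and read off $|\pi_k({\bf v}_k)|\gtrsim m_k(\Lambda)$ from a single application of Theorem~\ref{thmsm} to $\Lambda_k$, avoiding any compounding of constants.
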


\begin{rem}
Both Theorem \ref{thmsm} and Corollary \ref{corsm} can be found as Theorem 3.3.8 (Minkowski’s Second Theorem) in \cite{EWp}. And the implied constants in these statements depend only on the dimension $d$.
%In Theorem \ref{thmsm} and Corollary \ref{corsm}, the implied constants depend only on the dimension $d$. Theorem \ref{thmsm} and Corollary \ref{corsm} can also be found as Theorem 3.3.8 (Minkowski’s Second Theorem) in \cite{EWp}, which shows this.

\end{rem}

\begin{rem} {Let $\mathcal{A}=(A_n)_{n\in\N}$ be a sequence of matrices in $GL_d(\mathbb{R})$ with $\Gamma(\mathcal{A})\subset (\R^+)^d$. 
{Then,} for { any} $n\ge1 $, {it follows directly from definition that} $\Lambda_n:=A_n^{-1}\mathbb{Z}^d$ is a lattice. {Moreover}, for any sufficiently large $n\in\N$, the successive minima of  $A_n^{-1}\mathbb{Z}^d$ satisfies 
$$0<m_1(\Lambda_n)\le m_2(\Lambda_n)\le \cdots \le m_k(\Lambda_n)<1 .$$
}
%Moreover let $\{{\bf v}_{n,k}:1\le k\le d\}$ be the basis of $\Lambda_n$ satisfying \eqref{1} and \eqref{2}.}
\end{rem}

% is a G_\delta-set with \mathcal{M}_{\infty}^s(E\cap  D) \ge |D|^t for all dyadic cubes D and for all t < s \}	 \]

\section{The proofs of Theorem \ref{MTP1} and Theorem \ref{MTP2}}\label{proofof13}\label{proofmtp}
\subsection{Some important lemmata}
%First we give some related properties of $s(u,v)$.
When it comes to proving the Theorem \ref{MTP1}, it will be more convenient to use {the following more general form of dimensional number than that given in Section \ref{MTP}.}
% Recall that for $n\ge1$, {\color{brown}the vectors}
% $${\bf u}(n) =(u_{1,n},\dots,u_{d,n})\in (\R^+)^d\quad\text{and}\quad {\bf v}(n)=(v_{1,n},\dots,v_{d,n})\in (\R^+)^d$$ satisfy
% \[u_{i,n}\le v_{i,n}, ~{\rm for}~ 1\le i\le d.\]
% Put 
% \[\mathcal{U}_n=\{u_{i,n},v_{i,n}\colon 1\le i\le d\}.\]
For  any $a>0$ and any vectors ${\bf u}=(u_1,...,u_d)\in(\R^+)^d$ and ${\bf v}=(v_1,...,v_d)\in(\R^+\cup\{+\infty\})^d$ that satisfy $u_i\leq v_i$ for any $1\leq i\leq d$, define  \[ \mathfrak{s}({\bf u},{\bf v},a):=\sum_{k\in\K_1(a)}\delta_k+\sum_{k\in\K_2(a)}\delta_k\left(1-\frac{v_{k}-u_{k}}{a}\right)+\sum_{k\in\K_3(a)}\frac{\delta_ku_{k}}{a},\]
%\[\mathfrak{s}({\bf u}(n),{\bf v}(n),A)\]
where
\[\K_1(a):=\{1\le k\le d\colon u_{k}>a\}, \quad \K_2(a):=\{1\le k\le d\colon v_{k}\le a\},\]
and
\[\K_3(a)=\{1,\dots,d\}\setminus (\K_1(a)\cup \K_2(a)).\]
{We note that the above $\mathfrak{s}({\bf u},{\bf v},a)$ is related to the previously defined value $s({\bf u},{\bf v},i)$ via the equality $s({\bf u},{\bf v}, i)=\mathfrak{s}({\bf u},{\bf v},v_i)$ for any $1\leq i\leq d$. More relationships between them can be found in the following lemma.}

\begin{lem}[Proposition 4.1 in \cite{LLVWZ}]\label{lemma1}
Suppose that ${\bf u},{\bf v}\in(\mathbb{R}^+)^d$ satisfy $u_{i}\le v_{i}$ for any $1\le i\le d$, then we have
\[\min_{a\in\mathcal{A}}\{ \mathfrak{s} ({\bf u},{\bf v},a)\}=\min_{1\le i\le d}\{s({\bf u},{\bf v},i)\},\]
where $\mathcal{A}:=\{u_i,v_i:1\leq i\leq d\}$.
\end{lem}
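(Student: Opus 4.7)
The plan is to prove the two inequalities separately. In one direction the matter is purely notational: with $a=v_i$, the three index sets $\K_1(v_i),\K_2(v_i),\K_3(v_i)$ appearing in $\mathfrak{s}({\bf u},{\bf v},v_i)$ coincide with the sets $\K_1(i),\K_2(i),\K_3(i)$ used to form $s({\bf u},{\bf v},i)$, so $s({\bf u},{\bf v},i)=\mathfrak{s}({\bf u},{\bf v},v_i)$. Since $\{v_1,\dots,v_d\}\subseteq\mathcal{A}$, this gives $\min_{a\in\mathcal{A}}\mathfrak{s}({\bf u},{\bf v},a)\le\min_{1\le i\le d} s({\bf u},{\bf v},i)$ for free.

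For the reverse inequality I would extend $\mathfrak{s}({\bf u},{\bf v},\cdot)$ to all $a>0$ and write
\[g(a):=\mathfrak{s}({\bf u},{\bf v},a)=A(a)-\frac{B(a)}{a},\qquad B(a):=\sum_{k\in\K_2(a)}\delta_k(v_k-u_k)-\sum_{k\in\K_3(a)}\delta_k u_k,\]
where $A(a):=\sum_{k\in\K_1(a)\cup\K_2(a)}\delta_k$ depends only on the partition $(\K_1(a),\K_2(a),\K_3(a))$. On each open interval of $(0,\infty)\setminus\mathcal{A}$ this partition is constant, so $g$ is smooth there with slope $B(a)/a^2$. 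A direct comparison of the contribution of the transitioning index on the two sides of each breakpoint shows that $g$ is in fact continuous on all of $(0,\infty)$; the essential computation is the jump in $B$: as $a$ crosses an isolated value $u_j$, index $j$ moves from $\K_1$ to $\K_3$ and $B_+-B_-=-\delta_j u_j<0$, whereas as $a$ crosses an isolated $v_j$, $j$ moves from $\K_3$ to $\K_2$ and $B_+-B_-=\delta_j v_j>0$.

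The key step is to show that $\min_{a\in\mathcal{A}}g(a)$ is realised at some $v_i$. Suppose, for contradiction, that the minimum is attained at $a_0\in\mathcal{A}$ with $a_0\notin\{v_1,\dots,v_d\}$. Then the only transitions at $a_0$ come from the $u_j$'s equal to $a_0$, and the total jump satisfies $B_+-B_-=-\sum_{j:\,u_j=a_0}\delta_j u_j<0$, so $B_+<B_-$. On the other hand, comparing $g(a_0)$ with the values at the neighbouring breakpoints of $\mathcal{A}$ (and using that the slope of $g$ on each subinterval has the sign of $B$) forces $B_-\le 0$ immediately to the left of $a_0$ and $B_+\ge 0$ immediately to the right. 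Combined with $B_+<B_-$ this yields the contradiction $0\le B_+<B_-\le 0$. Hence the minimum must be attained at some $v_i$, which is exactly the reverse inequality.

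The main obstacle is the bookkeeping of ties and boundary cases. A coincidence $u_j=v_k$ places $a_0$ already in $\{v_1,\dots,v_d\}$, so the contradiction hypothesis is never triggered; several $u_j$'s coinciding at $a_0$ only strengthen $B_+<B_-$. The extreme elements of $\mathcal{A}$ are of the correct type automatically: the smallest is some $u_j$, at which $g$ strictly decreases to the right by the jump argument, so it cannot be a minimiser, while the largest equals $\max_i v_i$ and is always in $\{v_1,\dots,v_d\}$. With these observations the analysis covers every case.
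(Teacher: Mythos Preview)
The paper does not supply its own proof of this lemma; it is quoted verbatim as Proposition~4.1 of \cite{LLVWZ}. Your argument is correct and self-contained: the identification $s({\bf u},{\bf v},i)=\mathfrak{s}({\bf u},{\bf v},v_i)$ gives one inequality immediately, and your piecewise analysis of $g(a)=A(a)-B(a)/a$ correctly handles the other. The continuity check and the jump computations $B_+-B_-=-\sum_{u_j=a_0}\delta_j u_j<0$ at a pure $u$-breakpoint are right, and the derived contradiction $0\le B_+<B_-\le 0$ is valid once $a_0$ has neighbours in $\mathcal{A}$ on both sides; your treatment of the two endpoint cases (smallest element forces a strict decrease to the right since $B_-=0$ and $B_+<0$ there; largest element is automatically some $v_i$) closes the remaining gaps.
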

The next lemma  {states} that $s({\bf u},{\bf v})$ is {jointly} continuous  in ${\bf u}$ and ${\bf v}$.
\begin{lem}\label{lemma2}
Let $({\bf u}(n))_{n\in\N},~({\bf v}(n))_{n\in\N}$ be two sequences of vectors in $(\R^+)^d$ satisfying that  $u_{i,n}\le v_{i,n}$ for any $1\le i\le d$ and $n\in\N$, and suppose that they converge to ${\bf u}\in(\mathbb{R}^+)^d, ~{\bf v}\in (\mathbb{R}^+\cup \{+\infty\})^d$ respectively as $n\to\infty$. Then 
\[\lim_{n\to\infty}\min_{1\le i\le d}\{s({\bf u}(n),{\bf v}(n),i)\}=\min_{1\le i\le d}\{s({\bf u},{\bf v},i)\}.\]
% where $\{s({\bf u},{\bf v},i)\colon 1\le i\le d\}$ is given above.
\end{lem}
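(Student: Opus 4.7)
The plan is to prove the two inequalities
\[\limsup_{n\to\infty}\min_{1\le i\le d}s({\bf u}(n),{\bf v}(n),i) \le \min_{1\le i\le d}s({\bf u},{\bf v},i) \le \liminf_{n\to\infty}\min_{1\le i\le d}s({\bf u}(n),{\bf v}(n),i)\]
separately, by tracking the evolution of the partition $\{\mathcal{K}_1^{(n)}(i),\mathcal{K}_2^{(n)}(i),\mathcal{K}_3^{(n)}(i)\}$ along subsequences. When ${\bf v}\in(\R^+)^d$ the claim is essentially joint continuity of a piecewise rational function, so the genuine difficulty comes from the interaction between indices in $\mathcal{L}({\bf v})$ and those in $\mathcal{L}_\infty({\bf v})$.

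\emph{Step 1 (continuity at finite components).} For $i\in\mathcal{L}({\bf v})$, I would show $s({\bf u}(n),{\bf v}(n),i)\to s({\bf u},{\bf v},i)$. After passing to a subsequence that stabilises the signs of $u_{k,n}-v_{i,n}$ and $v_{k,n}-v_{i,n}$ for every $k$, the partition is eventually constant. For $k\in\mathcal{L}_\infty({\bf v})$ we have $v_{k,n}\to\infty>v_i$, so $k\notin\mathcal{K}_2^{(n)}(i)$ eventually, matching the definition of $\mathcal{K}_2(i)$ in the limit; the classification between $\mathcal{K}_1^{(n)}(i)$ and $\mathcal{K}_3^{(n)}(i)$ is decided by $u_k$ versus $v_i$, again agreeing with the limit. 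For $k\in\mathcal{L}({\bf v})$ the only subtle cases are $u_k=v_i$ or $v_k=v_i$, but a direct algebraic check shows that at such boundaries the contribution of $k$ coincides regardless of which of $\mathcal{K}_1$, $\mathcal{K}_2$, $\mathcal{K}_3$ it is placed in (for instance, when $u_k=v_i$, the $\mathcal{K}_1$-contribution $\delta_k$ equals the $\mathcal{K}_3$-contribution $\delta_k u_k/v_i$). Hence the limit equals $s({\bf u},{\bf v},i)$.

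\emph{Step 2 (the infinite case via a smallest-$v_{j,n}$ choice).} For $i\in\mathcal{L}_\infty({\bf v})$ the target value is $s({\bf u},{\bf v},i)=\sum_{k\in\mathcal{L}({\bf v})}\delta_k$, but $s({\bf u}(n),{\bf v}(n),i)$ need not converge to it: when several $v_{k,n}$ diverge at comparable rates, $\mathcal{K}_2^{(n)}(i)$-contributions from those $k$ can persist. I would pass to a subsequence on which $(v_{k,n})_{k\in\mathcal{L}_\infty({\bf v})}$ has a fixed relative order, and let $j^*\in\mathcal{L}_\infty({\bf v})$ attain $\min_{k\in\mathcal{L}_\infty({\bf v})}v_{k,n}$ for all $n$. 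Then each $k\in\mathcal{L}({\bf v})$ lies in $\mathcal{K}_2^{(n)}(j^*)$ and contributes $\delta_k(1-(v_{k,n}-u_{k,n})/v_{j^*,n})\to\delta_k$; each $k\in\mathcal{L}_\infty({\bf v})\setminus\{j^*\}$ satisfies $v_{k,n}\ge v_{j^*,n}$, so whether it is in $\mathcal{K}_2^{(n)}(j^*)$ (the equality case) or in $\mathcal{K}_3^{(n)}(j^*)$, its contribution reduces to $\delta_k u_{k,n}/v_{j^*,n}\to 0$; and $k=j^*$ itself contributes $\delta_{j^*}u_{j^*,n}/v_{j^*,n}\to 0$. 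Consequently $s({\bf u}(n),{\bf v}(n),j^*)\to\sum_{k\in\mathcal{L}({\bf v})}\delta_k$.

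\emph{Step 3 (conclusion).} For the $\limsup$ direction, given any $j\in\{1,\dots,d\}$ I would take $i_n=j$ if $j\in\mathcal{L}({\bf v})$ and $i_n=j^*$ if $j\in\mathcal{L}_\infty({\bf v})$; in either case $s({\bf u}(n),{\bf v}(n),i_n)\to s({\bf u},{\bf v},j)$, and minimising over $j$ yields the bound. For the $\liminf$ direction, extract a subsequence and a constant index $j^{\star}$ achieving the minimum in $i$: if $j^{\star}\in\mathcal{L}({\bf v})$, Step 1 gives the limit $s({\bf u},{\bf v},j^{\star})\ge s({\bf u},{\bf v})$; if $j^{\star}\in\mathcal{L}_\infty({\bf v})$, all contributions in $s({\bf u}(n),{\bf v}(n),j^{\star})$ are non-negative and the $k\in\mathcal{L}({\bf v})$ terms alone converge to $\sum_{k\in\mathcal{L}({\bf v})}\delta_k$, yielding $\liminf s({\bf u}(n),{\bf v}(n),j^{\star})\ge\sum_{k\in\mathcal{L}({\bf v})}\delta_k=s({\bf u},{\bf v},j^{\star})\ge s({\bf u},{\bf v})$. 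The main obstacle throughout is controlling the shifting partitions $\mathcal{K}_i^{(n)}$ when $v_i=+\infty$; this is resolved by the minimal-$v_{j,n}$ choice of $j^*$ in Step 2, together with the boundary-coincidence observation of Step 1.
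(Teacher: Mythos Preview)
Your proposal is correct and follows essentially the same route as the paper's proof: both arguments split into the finite case $i\in\mathcal{L}({\bf v})$ (handled by tracking how the partitions $\K_1,\K_2,\K_3$ shift and noting that the boundary mismatches contribute terms that vanish in the limit) and the infinite case, where the key device is exactly your choice of the index $j^*\in\mathcal{L}_\infty({\bf v})$ with the smallest $v_{j,n}$, together with non-negativity of the remaining contributions for the lower bound. The only organisational difference is that the paper proves $\lim_n s({\bf u}(n),{\bf v}(n),i)=s({\bf u},{\bf v},i)$ directly for finite $i$ (without passing to subsequences) and then handles $\min_{i\in\mathcal{L}_\infty}$ separately, whereas you frame everything as the pair of inequalities $\limsup\le s({\bf u},{\bf v})\le\liminf$; these are equivalent packagings of the same computation.
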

This lemma can be proved by using the same method as in \cite[Proposition 4.2]{LLVWZ} which implies that $s({\bf u},{\bf v})$ is continuous in ${\bf v}$. Here we include the proof for completeness. {Throughout, for any $n\in\N$, we denote $\mathcal{A}_n:=\{u_{k,n},v_{k,n}:1\leq k\leq d\}$, where ${\bf u}(n)=(u_{1,n},...,u_{d,n})$ and ${\bf v}(n)=(v_{1,n},...,v_{d,n})$ are vectors in $(\R^+)^d$ as stated in Lemma \ref{lemma2}.}
\begin{proof}[Proof of Lemma \ref{lemma2}] Observe that for any $n\ge1$ and $a\in \mathcal{A}_n$, redefining 
\[ \K_2(a):=\{1\le k\le d\colon v_{k,n}< a\}\]
makes the value of $ \mathfrak{s}({\bf u}(n),{\bf v}(n),a)$ invariant, and
the same holds also for $ \mathfrak{s}({\bf u},{\bf v},a) $.  We refer to \cite[Remark 4.1]{LLVWZ} for more details. We shall use the above new definitions {of $\mathfrak{s}({\bf u}(n),{\bf v}(n),a)$ and $\mathfrak{s}({\bf u},{\bf v},a)$} to prove this lemma. Note that
\begin{equation}\label{6142}
\min_{1\le i\le d}\{s({\bf u},{\bf v},i)\}=\min\Big\{\min_{i\in  \mathcal{L}(\bf v)}\{s({\bf u},{\bf v},i)\},\min_{i\in  \mathcal{L}_\infty(\bf v)}\{s({\bf u},{\bf v},i)\}\Big\}.
\end{equation}
{With this in mind, to prove the lemma, it suffices to show that
\begin{equation}\label{limfinite}
    \lim_{n\to\infty}s({\bf u}(n),{\bf v}(n),i)=s({\bf u},{\bf v},i), \quad\forall \ i\in\mathcal{L}({\bf v})
\end{equation}
and
\begin{equation}\label{liminfinte}
\lim_{n\to\infty}\min_{i\in \mathcal{L}_\infty(\bf v)}s({\bf u}(n),{\bf v}(n),i)=\min_{i\in \mathcal{L}_\infty(\bf v)}s({\bf u},{\bf v},i).
\end{equation}
We are going to prove \eqref{limfinite} and \eqref{liminfinte} separately in the following.}
\medskip
% Then there are two cases to consider.

\noindent\textit{$\bullet$ Proving the equality \eqref{limfinite}.}
Fix any $i\in\mathcal{L}({\bf v})$, then by the definition, $v_i<\infty$. Recall that 
\[\K_1(v_i):=\{1\le k\le d\colon u_k>v_i\}, \quad \K_1(v_{i,n}):=\{1\le k\le d\colon u_{k,n}>v_{i,n}\},\]
and
\[\K_2(v_i):=\{ k\in\mathcal{L}({\bf v})\colon v_k<v_i\}, \quad \K_2(v_{i,n}):=\{1\le k\le d\colon v_{k,n}<v_{i,n}\}.\]
% Note that for large $n$,
Since ${\bf v}(n)\to {\bf v}$ and ${\bf u}(n)\to {\bf u}$ as $n\to\infty$, it follows that for all sufficiently large $n\in\N$,
\[\K_1(v_i)\subset \K_1(v_{i,n}), \quad \K_2(v_i)\subset \K_2(v_{i,n}).\]
{For such $n\in\N$,} put
\[I_1=\K_1(v_{i,n})\setminus \K_1(v_i), \quad I_2=\K_2(v_{i,n})\setminus \K_2(v_i).\]
Then
\begin{equation}\label{expreofs}
\begin{split}
s({\bf u}(n),{\bf v}(n),i)&= \sum_{k\in\K_1(v_i)\cup\K_2(v_i)}\delta_k+\frac{1}{v_{i,n}}\Big(\sum_{k\in\K_2(v_i)}\delta_k(u_{k,n}-v_{k,n})+\sum_{k\in\K_3(v_i)}\delta_ku_{k,n}\Big)\\[4pt]
&\quad\qquad +\sum_{k\in I_1}\delta_k\big(1-\frac{u_{k,n}}{v_{i,n}}\big)+\sum_{k\in I_2}\delta_k\big(1-\frac{v_{k,n}}{v_{i,n}}\big).
\end{split}
\end{equation}
Denote 
\[T_n:=\sum_{k\in\K_1(v_i)\cup\K_2(v_i)}\delta_k+\frac{1}{v_{i,n}}\Big(\sum_{k\in\K_2(v_i)}\delta_k(u_{k,n}-v_{k,n})+\sum_{k\in\K_3(v_i)}\delta_ku_{k,n}\Big),\]
and notice that 
\begin{eqnarray}\label{limtnsuvi}
    \lim\limits_{n\to\infty}T_n=s({\bf u},{\bf v},i).
\end{eqnarray}
Now, for any $k\in I_1$, we have 
\[u_{k,n}>v_{i,n}, \quad u_k\le v_i,\]
which implies that 
\begin{equation}\label{kini1lim}
    \lim_{n\to\infty}\big(1-\frac{u_{k,n}}{v_{i,n}}\big)=0.
\end{equation}
If $k\in I_2$, then we have 
\[v_{k,n}<v_{i,n}, \quad v_k\ge v_i,\]
which yields 
\begin{equation}\label{kini2lim}
    \lim_{n\to\infty}\big(1-\frac{v_{k,n}}{v_{i,n}}\big)=0.
\end{equation}
Combining \eqref{expreofs}-\eqref{kini2lim}, we obtain the desired equality \eqref{limfinite} for any fixed $i\in\mathcal{L}({\bf v})$. 
\medskip

% obtain that 
% \begin{equation}\label{614}
% \lim_{n\to\infty}s({\bf u}(n),{\bf v}(n),i)=s({\bf u},{\bf v},i).
% \end{equation}
\noindent\textit{$\bullet$ Proving the equality \eqref{liminfinte}.}
For $i\in \mathcal{L}_\infty(\bf v)$, we have $\lim\limits_{n\to\infty}v_{i,n}=v_i=+\infty$. It follows that  for any sufficiently large $n\in\N$, 
\[\K_1(v_i)=\K_1(v_{i,n})=\emptyset,\quad\K_2(v_i)=\mathcal{L}({\bf v})\quad\text{and}\quad \K_2(v_{i,n})\supset \mathcal{L}({\bf v}).\]
Then, {for such $n\in\N$,} 
\begin{equation}\label{suv}
\begin{split}
s({\bf u}(n),{\bf v}(n),i)&=\sum_{k\in\K_2(v_{i,n})}\delta_k\left(1-\frac{v_{k,n}-u_{k,n}}{v_{i,n}}\right)+\sum_{k\in\K_3(v_{i,n})}\frac{\delta_ku_{k,n}}{v_{i,n}}\\
&=\sum_{k\in \mathcal{L}({\bf v})}\delta_k\left(1-\frac{v_{k,n}-u_{k,n}}{v_{i,n}}\right)+\sum_{k\in \K_2(v_{i,n})\setminus\mathcal{L}({\bf v})}\delta_k\left(1-\frac{v_{k,n}-u_{k,n}}{v_{i,n}}\right)\\
&\quad +\sum_{k\in\K_3(v_{i,n})}\frac{\delta_ku_{k,n}}{v_{i,n}}.
\end{split}
\end{equation}
Notice that  in \eqref{suv}, {since $v_{i,n}\to\infty$ as $n\to\infty$},
\[\lim_{n\to\infty}\frac{1}{v_{i,n}}\Big(\sum_{k\in\K_3(v_{i,n})}\delta_ku_{k,n}\Big)=0\quad\text{and}\quad\lim_{n\to\infty}\frac{1}{v_{i,n}}\Big(\sum_{k\in \mathcal{L}({\bf v})}\delta_k\frac{v_{k,n}-u_{k,n}}{v_{i,n}}\Big)=0.\]
{With this in mind, to prove the desired equality \eqref{liminfinte}, we are required  to analyze the asymptotic behavior of the term
\[
\sum_{k\in \K_2(v_{i,n})\setminus\mathcal{L}({\bf v})}\delta_k\left(1-\frac{v_{k,n}-u_{k,n}}{v_{i,n}}\right).
\]}
Observe that if $k\in \K_2(v_{i,n})\setminus\mathcal{L}({\bf v})$, then $v_{k,n}<v_{i,n}$. This implies that 
\[\sum_{k\in \K_2(v_{i,n})\setminus\mathcal{L}({\bf v})}\delta_k\left(1-\frac{v_{k,n}-u_{k,n}}{v_{i,n}}\right)>0 \qquad\forall\ n\in\N.\]
Therefore, for any $\varepsilon>0$, we have
\begin{equation*}%\label{lowerofsuv}
s({\bf u}(n),{\bf v}(n),i)\ge \sum_{k\in \mathcal{L}({\bf v})}\delta_k-\varepsilon
\end{equation*}
for any sufficiently large $n\in\N$ and all $i\in  \mathcal{L}_\infty(\bf v)$. This establishes  
\begin{equation}\label{lowerofsuv}
\liminf_{n\to\infty}\min_{i\in  \mathcal{L}_\infty({\bf v})}s({\bf u}(n),{\bf v}(n),i)\ge  \sum_{k\in \mathcal{L}({\bf v})}\delta_k.
\end{equation}
On the other hand, for any $n\ge1$, {we choose $i_n\in\mathcal{L}_\infty({\bf v})$ such that %such that $v_{i_n,n}$ is the minimum among $\{v_{i,n}:i\in \mathcal{L}_\infty({\bf v})\}$, that is,
\[v_{i_n,n}=\min_{i\in \mathcal{L}_\infty({\bf v})}v_{i,n}.\]}Then it is easily verified that $\lim\limits_{n\to\infty}v_{i_n,n}=+\infty$. Moreover, for sufficiently large $n$, it follows that 
\[\K_1(v_{i_n,n})=\emptyset \quad{\rm and}\quad \K_2(v_{i_n,n})=\mathcal{L}({\bf v}).\]
Hence we get
\begin{equation*}\label{614}
\begin{split}
s({\bf u}(n),{\bf v}(n),i_n)&= \sum_{k\in \mathcal{L}({\bf v})}\delta_k+\sum_{k\in \mathcal{L}({\bf v})}\delta_k\frac{u_{k,n}-v_{k,n}}{v_{i_n,n}}+\sum_{k\in\K_3(v_{i_n,n})}\frac{\delta_ku_{k,n}}{v_{i_n,n}}.
\end{split}
\end{equation*}
Note that 
\[\lim_{n\to\infty}\frac{u_{k,n}}{v_{i,n}}=0,\qquad\forall\ 1\leq k\leq d,\]
and
\[\lim_{n\to\infty}\frac{u_{k,n}-v_{k,n}}{v_{i_n,n}}=0,\qquad\forall \ k\in \mathcal{L}({\bf v})\]
%For $k\in\K_2(v_{i,n})\setminus\mathcal{L}({\bf v})$, we have  $1+\frac{u_{k,n}-v_{k,n}}{v_{i,n}}>0$, then
%\[s({\bf u}(n),{\bf v}(n),i)\ge s({\bf u},{\bf v},i)+\sum_{k\in \mathcal{L}({\bf v})}\frac{u_{k,n}-v_{k,n}}{v_{i,n}},\]
%and the last term tends to 0 as $n\to\infty$, 
Therefore,
\begin{equation}\label{6141}
\limsup_{n\to\infty}\min_{i\in  \mathcal{L}_\infty({\bf v})}s({\bf u}(n),{\bf v}(n),i)\le \lim_{n\to\infty}s({\bf u}(n),{\bf v}(n),i_n)=  \sum_{k\in \mathcal{L}({\bf v})}\delta_k.
\end{equation}
By combining \eqref{lowerofsuv} and \eqref{6141}, we obtain the desired equality \eqref{liminfinte}.
% By \sout{equations} \eqref{6142}-\eqref{6141}, we derive
% \[\lim_{n\to\infty}\min_{1\le i \le d}s\left({\bf u}(n),{\bf v}(n),i\right)=\min_{1\le i \le d}\left\{s({\bf u},{\bf v},i)\right\}.
% \]
\end{proof}

{The forthcoming auxiliary lemma is helpful to the proofs of Theorem \ref{MTP1} and Theorem \ref{MTP2}.}

\begin{lem}\label{elements in an}
For any $n\ge1$, arrange the elements in $\mathcal{A}_n$ in ascending order denoted by
\[
t_1<t_2<\cdots< t_{\#\mathcal{A}_n}.
\]
Then, for any $1\le k <\#\mathcal{A}_n$, we have
\begin{equation*}
\mathfrak{s}({\bf u}(n),{\bf v}(n),t_{k+1})\ =\  
\sum_{i\in \K_1(t_k)}\delta_i+\sum_{i\in  \K_2(t_k)}\delta_i\left(1-\frac{v_{i,n}-u_{i,n}}{t_{k+1}}\right)+\sum_{i\in \K_3(t_k)}\frac{\delta_iu_{i,n}}{t_{k+1}}.
\end{equation*}
\end{lem}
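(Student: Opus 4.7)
The plan is to prove the equality by comparing the two sides index by index. The crucial observation is that $t_k$ and $t_{k+1}$ are consecutive distinct elements of the finite set $\mathcal{A}_n=\{u_{i,n},v_{i,n}:1\le i\le d\}$, so no coordinate $u_{i,n}$ or $v_{i,n}$ lies strictly between them. Consequently, the partition $\{\K_1(a),\K_2(a),\K_3(a)\}$ of $\{1,\dots,d\}$ can differ at $a=t_k$ and $a=t_{k+1}$ only for indices $i$ with $u_{i,n}=t_{k+1}$ or $v_{i,n}=t_{k+1}$, since otherwise none of the strict inequalities defining membership in $\K_1(a)$ or $\K_2(a)$ changes its truth value as $a$ moves from $t_k$ to $t_{k+1}$.

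For any $i$ whose class is unchanged, its contribution to both sides is literally identical, because the denominator appearing in the $\K_2$ and $\K_3$ weights is the same fixed value $t_{k+1}$ on both sides. The indices whose class does change fall into exactly three types: (i) $u_{i,n}=v_{i,n}=t_{k+1}$, giving a transition $\K_1(t_k)\to\K_2(t_{k+1})$; (ii) $u_{i,n}=t_{k+1}$ with $v_{i,n}>t_{k+1}$, giving $\K_1(t_k)\to\K_3(t_{k+1})$; and (iii) $v_{i,n}=t_{k+1}$ with $u_{i,n}<t_{k+1}$, giving $\K_3(t_k)\to\K_2(t_{k+1})$. In each case a short direct calculation shows that the contributions on both sides coincide: in types (i) and (ii) both sides contribute $\delta_i$, while in type (iii) both sides contribute $\delta_i u_{i,n}/t_{k+1}$. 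Summing over $i$ yields the claimed equality.

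The main delicate point, and essentially the only place requiring care, is the boundary convention for $\K_2$ at $v_{i,n}=a$. As noted in the proof of Lemma~\ref{lemma2}, the value of $\mathfrak{s}({\bf u}(n),{\bf v}(n),a)$ itself is invariant under replacing $\K_2(a)=\{v_{i,n}\le a\}$ by $\{v_{i,n}<a\}$. Here, however, the classes on the right-hand side are taken at $t_k$ while the weights use the denominator $t_{k+1}$, so this invariance breaks down: the $\le$ convention (the default in the paper) is required to correctly place indices with $v_{i,n}=t_k$ into $\K_2(t_k)$ rather than $\K_3(t_k)$, thereby avoiding a spurious extra contribution of $\delta_i(t_{k+1}-t_k)/t_{k+1}$. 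I would flag this convention at the start of the proof and then carry out the three-case analysis sketched above; there is no deeper obstacle.
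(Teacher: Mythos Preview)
Your proposal is correct and takes essentially the same approach as the paper: both rely on the key observation that, since $t_k$ and $t_{k+1}$ are consecutive in $\mathcal{A}_n$, only indices with $u_{i,n}=t_{k+1}$ or $v_{i,n}=t_{k+1}$ change class, and then verify that the contributions agree. The paper organizes this via set decompositions ($\K_1(t_k)=\K_1(t_{k+1})\cup\{i:u_{i,n}=t_{k+1}\}$, etc.) followed by algebraic cancellation, whereas you check contributions index by index across the three transition types; the underlying computation is identical, and your remark on the $\le$ boundary convention for $\K_2$ is a helpful clarification the paper leaves implicit.
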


\begin{proof}
%Without loss of generality, we assume that $t_k<t_{k+1}$, that is $t_k,~t_{k+1}$ are distinct consecutive elements in $\mathcal{A}_n$. Then 
Since $t_k<t_{k+1}$  are distinct consecutive elements in $\mathcal{A}_n$, we get
\[
\K_1(t_k)=\{1\le i\le d\colon u_{i,n}\ge t_{k+1}\},
\]
and 
\[
\K_2(t_k)=\{1\le i\le d\colon v_{i,n}<t_{k+1}\}.
\]
It follows that 
\[\K_1(t_k)=\K_1(t_{k+1})\cup \{1\le i\le d\colon u_{i,n}=t_{k+1}\},\medskip\]
\[\K_2(t_k)=\K_2(t_{k+1})\setminus \{1\le i\le d\colon v_{i,n}=t_{k+1}\},\]
and
\[\K_3(t_k)=\K_3(t_{k+1})\cup \{1\le i\le d\colon v_{i,n}=t_{k+1}\}\setminus \{1\le i\le d\colon u_{i,n}=t_{k+1}\}.\]
We write 
\[s_1:= \sum_{i\in \K_1(t_k)}\delta_i+\sum_{i\in  \K_2(t_k)}\delta_i\left(1-\frac{v_{i,n}-u_{i,n}}{t_{k+1}}\right)+\sum_{i\in \K_3(t_k)}\frac{\delta_iu_{i,n}}{t_{k+1}}.\]%\sum_{i\in \K_1(t_k)\cup \K_2(t_k)}1+\frac{1}{t_{k+1}}\Big(\sum_{i\in \K_2(t_k)}(u_{i,n}-v_{i,n})+\sum_{i\in \K_3(t_k)}u_{n,i}\Big).\]
Then 
\begin{equation*}
\begin{split}
%s_1&=\sum_{i\in \K_1(t_{k+1})\cup \K_2(t_{k+1})}1-\sum_{\{i\colon u_{i,n}=t_{k+1}\}\atop \cup \{i\colon v_{i,n}=t_{k+1}\}}1+\frac{1}{t_{k+1}}\Bigl\{\sum_{i\in\K_2(t_{k+1})}(u_{i,n}-v_{i,n})\\
%&\quad+\sum_{i\in \K_3(t_{k+1})}u_{n,i} +\sum_{\{i\colon u_{i,n}=t_{k+1}\}} u_{i,n}+\sum_{\{i\colon v_{i,n}=t_{k+1}\}}v_{i,n}\Bigr\}\\
%&=\sum_{i\in \K_1(t_{k+1})\cup \K_2(t_{k+1})}1+\frac{1}{t_{k+1}}\bigl(\sum_{i\in \K_2(t_{k+1})}(u_{i,n}-v_{i,n})+\sum_{i\in\K_3(t_{k+1})}u_{n,i}\bigr)\\
%&=s({\bf u}(n),{\bf v}(n),t_{k+1}).
s_1&= \sum_{i\in \K_1(t_{k+1})}\delta_i +\sum_{\{i\colon u_{i,n}=t_{k+1}\}}\delta_i +\sum_{i\in  \K_2(t_{k+1})}\delta_i\left(1-\frac{v_{i,n}-u_{i,n}}{t_{k+1}}\right)\\
&\quad \qquad\qquad-\sum_{\{i\colon v_{i,n}=t_{k+1}\}}\delta_i\left(1-\frac{v_{i,n}-u_{i,n}}{t_{k+1}}\right)+\sum_{i\in \K_3(t_{k+1})}\frac{\delta_iu_{i,n}}{t_{k+1}}\\
&\quad\qquad\qquad +\sum_{\{i\colon v_{i,n}=t_{k+1}\}}\frac{\delta_iu_{i,n}}{t_{k+1}}-\sum_{\{i\colon u_{i,n}=t_{k+1}\}}\frac{\delta_iu_{i,n}}{t_{k+1}}\\
&=\mathfrak{s}({\bf u}(n),{\bf v}(n),t_{k+1})-\sum_{\{i\colon v_{i,n}=t_{k+1}\}}\delta_i\left(1-\frac{v_{i,n}-u_{i,n}}{t_{k+1}}-\frac{u_{i,n}}{t_{k+1}}\right)\\
&\quad \qquad\qquad+\sum_{\{i\colon u_{i,n}=t_{k+1}\}}\delta_i\left(1-\frac{u_{i,n}}{t_{k+1}}\right)\\
&=\mathfrak{s}({\bf u}(n),{\bf v}(n),t_{k+1}).
\end{split}
\end{equation*}
 The proof is complete.
\end{proof}

The following covering lemma  for rectangles is a slight modification of \cite[Lemma 5.1]{LLVWZ}. 
\begin{lem}[Covering Lemma]\label{coverlemma}
 %Let $O$ be a $d\times d$ rotation matrix 
 Let $f:Y\to X$ be a local isometry with the constant $\varepsilon_0>0$ associated with \eqref{locisodef}, ${\bf a} = (a_1, \dots, a_d )$ be a point in $(\R^+)^d$ and $\cY$  be a collection of points in $Y=\prod_{i=1}^d Y_i$. For any ${\bf y}\in\cY$, let $r_{\bf y}$ be a positive real number in $(0,1)$ that satisfies
\begin{equation}\label{condofrn2ai}
    \Big(\sum_{i=1}^d r_{\bf y}^{2a_i}\Big)^{\frac{1}{2}}<\frac{\varepsilon_0}{4},
\end{equation}
where $\varepsilon_0>0$ is  the index of the local isometry $f$ associated with \eqref{locisodef}. 
 Let $\mathcal{E}$ be a collection of sets in $X$ consisting of
$$f\Big(\prod_{i=1}^dB({\bf y} , r_{\bf y}^{a_i})\Big) \qquad\forall   \ {\bf y} \in\cY.$$
 Then there is a finite or countable sub-collection $\mathcal{E}' \subset \mathcal{E}$ such that the sets in $\mathcal{E}' $ are disjoint and 
% there exists an absolute constant $\tilde{c}>1$ such that 
\begin{equation}\label{resof5rcov}
    \bigcup_{R\in \mathcal{E}}R\subset \bigcup_{R\in \mathcal{E}'}5^{\tilde{c}}R,\qquad\text{where}\ \  \tilde{c}:=\frac{\max_i a_i}{\min_i a_i}.
\end{equation}
\end{lem}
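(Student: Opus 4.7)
The proof will adapt the classical Vitali $5r$-covering lemma to rectangles of variable aspect ratio, using \eqref{condofrn2ai} to neutralise the non-injectivity of the local isometry $f$.

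\textbf{Step 1 (reduction to a single chart).} The bound \eqref{condofrn2ai} forces every rectangle $\prod_{i=1}^d B({\bf y}, r_{\bf y}^{a_i})$ into $B({\bf y}, \varepsilon_0/4)$, where $f$ is an isometry onto $B(f({\bf y}), \varepsilon_0/4)$. If two image rectangles centred at ${\bf y}_1, {\bf y}_2$ meet at some $x \in X$, then tracking a common lift through both charts yields $\rho_X(f({\bf y}_1), f({\bf y}_2)) < \varepsilon_0/2$, so the local inverse of $f$ at ${\bf y}_1$ produces a unique ${\bf y}_2^{\ast} \in B({\bf y}_1, \varepsilon_0)$ with $f({\bf y}_2^{\ast}) = f({\bf y}_2)$ and $\rho_Y({\bf y}_1, {\bf y}_2^{\ast}) = \rho_X(f({\bf y}_1), f({\bf y}_2))$. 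This places both centres and both pre-image rectangles inside a single chart on which $f$ is a bijective isometry, and the chart-level intersection of the rectangles yields the coordinate-wise bound $\rho_{Y_i}({\bf y}_{1,i}, {\bf y}_{2,i}^{\ast}) \leq r_{{\bf y}_1}^{a_i} + r_{{\bf y}_2}^{a_i}$ for every $i$.

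\textbf{Step 2 (level-based greedy selection).} Set $r_0 := \sup_{\cY} r_{\bf y}$, $a_{\min} := \min_i a_i$, and partition $\cY$ by the longest side,
\[
\cY_k := \Big\{ {\bf y} \in \cY : r_{\bf y}^{a_{\min}} \in \big( 2^{-k-1} r_0^{a_{\min}},\ 2^{-k} r_0^{a_{\min}} \big] \Big\}, \qquad k \geq 0,
\]
so that longest sides within consecutive levels differ by a factor of at most $2$. Using Zorn's lemma inductively, pick a maximal subfamily $\cY_k' \subset \cY_k$ whose associated $X$-rectangles are pairwise disjoint and disjoint from those already chosen at levels $j < k$, and let $\mathcal{E}'$ be the resulting family. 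Each selected rectangle has positive $\mu$-measure by Ahlfors regularity and $\mu(X) < \infty$, so $\mathcal{E}'$ is at most countable.

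\textbf{Step 3 (enlargement inclusion).} For any ${\bf y} \in \cY_k$ not selected, maximality yields some ${\bf y}^{\ast} \in \cY_j'$ with $j \leq k$ whose rectangle meets that of ${\bf y}$. Then $r_{\bf y}^{a_{\min}} \leq 2\, r_{{\bf y}^{\ast}}^{a_{\min}}$, so $r_{\bf y} \leq 2^{1/a_{\min}}\, r_{{\bf y}^{\ast}}$ and consequently $r_{\bf y}^{a_i} \leq 2^{\tilde{c}}\, r_{{\bf y}^{\ast}}^{a_i}$ for every $i$. Combining this with Step 1,
\[
\rho_{Y_i}({\bf y}_i, {\bf y}^{\ast}_i) + r_{\bf y}^{a_i} \;\leq\; (1 + 2 \cdot 2^{\tilde{c}})\, r_{{\bf y}^{\ast}}^{a_i} \;\leq\; 5^{\tilde{c}}\, r_{{\bf y}^{\ast}}^{a_i},
\]
the last inequality being the elementary fact $5^{\tilde{c}} \geq 1 + 2^{\tilde{c}+1}$ for $\tilde{c} \geq 1$ (equality at $\tilde{c} = 1$; the left side grows strictly faster thereafter). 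Hence $\prod_i B({\bf y}, r_{\bf y}^{a_i}) \subset \prod_i B({\bf y}^{\ast}, 5^{\tilde{c}} r_{{\bf y}^{\ast}}^{a_i})$ inside the common chart, and applying $f$ gives $R \subset 5^{\tilde{c}} R^{\ast}$, i.e.\ \eqref{resof5rcov}.

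\textbf{Main obstacle.} The delicate point is Step 1: because $f$ is only locally (not globally) invertible, one must argue carefully that an $X$-level intersection can be transferred to an honest $Y$-level intersection of pre-image rectangles within one chart, and that the product structure survives this transfer so that the coordinate-wise bound used in Step 3 is available. Condition \eqref{condofrn2ai} is precisely calibrated so that every competitor centre lifts consistently into a single chart; once this reduction is accomplished, the rest is a direct extension of the rectangular Vitali lemma of \cite[Lemma 5.1]{LLVWZ}, with the exponent $\tilde{c}$ in the enlargement arising from the distortion between the longest and shortest side of a rectangle under doubling of the overall scale.
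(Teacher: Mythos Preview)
Your proposal correctly identifies the crux (your ``Main obstacle'' paragraph), but does not resolve it; and in fact the obstacle cannot be resolved along the lines you sketch. The coordinate-wise bound $\rho_{Y_i}({\bf y}_{1,i},{\bf y}_{2,i}^{\ast})\le r_{{\bf y}_1}^{a_i}+r_{{\bf y}_2}^{a_i}$ in Step~1 is the linchpin of Step~3, and it is unjustified. The local isometry $f$ preserves the \emph{total} metric $\rho_Y$, not the individual factor metrics $\rho_{Y_i}$; consequently the deck transformation $(f|_{B({\bf y}_1,\varepsilon_0)})^{-1}\circ f|_{B({\bf y}_2,\varepsilon_0)}$ is an isometry for $\rho_Y$ but need not carry the coordinate rectangle $\prod_i B({\bf y}_2,r_{{\bf y}_2}^{a_i})$ to $\prod_i B({\bf y}_2^{\ast},r_{{\bf y}_2}^{a_i})$. (Think of a rotation on $\R^2$: it is an isometry for the Euclidean metric but maps axis-parallel rectangles to tilted ones.) So even though the lift ${\bf y}_2^{\ast}$ and the lifted \emph{set} $(f|_{B({\bf y}_1,\varepsilon_0)})^{-1}(R_2)$ sit in the ${\bf y}_1$-chart, the latter has no reason to be a coordinate rectangle, and you cannot read off per-factor distances. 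Condition~\eqref{condofrn2ai} guarantees that centres and pre-images fit into a single chart, but it says nothing about the survival of the product structure.

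The paper bypasses this difficulty rather than confronting it. It introduces an auxiliary metric $\rho_Y'({\bf x},{\bf y})=\max_i \rho_{Y_i}({\bf x}_i,{\bf y}_i)^{a_{\min}/a_i}$ on $Y$ in which each rectangle $\prod_i B({\bf y},r_{\bf y}^{a_i})$ is precisely the round ball $B_{\rho_Y'}({\bf y},r_{\bf y}^{a_{\min}})$; it then pushes $\rho_Y'$ forward to a metric $\rho_X'$ on $X$ via the Hausdorff distance of fibres, so that each $f$-image rectangle becomes a $\rho_X'$-ball. The classical $5r$-covering lemma is applied \emph{directly in $(X,\rho_X')$}, and only at the very end is the inclusion translated back through the inequality $B_{\rho_X'}(f({\bf y}),5r_{\bf y}^{a_{\min}})\subset f\big(\prod_i B({\bf y},5^{\tilde c}r_{\bf y}^{a_i})\big)$, which is where the exponent $\tilde c$ appears. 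This manoeuvre never requires lifting an $X$-intersection to a coordinate-wise $Y$-intersection, and so the product-structure issue simply does not arise.
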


\begin{proof}
    Denote $a_{\max}:=\max_{1\leq i\leq d}a_i$ and $a_{\min}:=\min_{1\leq i\leq d}a_i$. Let $\rho_{Y}'$ be a different metric than $\rho_{Y}$ on $Y$ as
    \begin{eqnarray*}
        \rho_{Y}'({\bf x},{\bf y}):=\max\left\{\rho_{Y_i}({\bf x}_i,{\bf y}_i)^{\frac{a_{\min}}{a_{i}}}:1\leq i\leq d\right\}\qquad\forall\ {\bf x}, {\bf y}\in Y.
    \end{eqnarray*}
    Then, by a direct calculation, we obtain that
   % \begin{eqnarray}\label{prodbrhoy}
   %    {\color{red} \prod_{i=1}^d B({\bf y},r)=B_{\rho_Y'}({\bf y},r^{a_i/a_{\min}})}\qquad\forall\ {\bf y}\in \cY,\ \ \forall \ r>0.
  %  \end{eqnarray}
  % {\color{red} {\bf HZN: The radius of the ball on the right side of the equality looks a bit strange. I think it's better to rewrite it as follows. What do you think?}} {\textbf{\color{brown}H: I don't mind. You determine.}}
     \begin{eqnarray}\label{prodbrhoy}
       \prod_{i=1}^d B({\bf y},r^{a_i})=B_{\rho_Y'}({\bf y},r^{a_{\min}})\qquad\forall\ {\bf y}\in \cY,\ \ \forall \ r>0.
    \end{eqnarray}
    
     Let $\rho_{X}'$ be the metric on $X$ induced by $\rho_{Y}'$ and the local isometry $f:Y\to X$ via the definition
    \begin{eqnarray*}
        \rho_X'({\bf x},{\bf y}):=\dH(f^{-1}({\bf x}),f^{-1}({\bf y}))\qquad\forall\ {\bf x},{\bf y}\in X,
    \end{eqnarray*}
    where $\dH$ is the Hausdorff distance associated with $\rho_Y'$. Specifically, for any subsets $E,F\subseteq Y$，
    \begin{eqnarray*}
        \dH(E,F):=\sup\big\{\inf\{\rho_Y'({\bf x},{\bf z}_1):{\bf z}_1\in F\},\,\inf\{\rho_Y'({\bf z}_2,{\bf {y}}):{\bf z}_2\in E\}:{\bf x}\in E,\,{\bf y}\in F\big\}.
    \end{eqnarray*}
     Observe that
    \begin{itemize}
    \item for any ${\bf y}\in Y$ and $r>0$, by definition, we have
\begin{equation}\label{ballrectangle}
         B_{\rho_X'}(f({\bf y}),r)\subseteq f\left(B_{\rho_Y'}({\bf y},r)\right).\medskip
\end{equation}
    
        \item for any ${\bf y}\in\cY$, since $r_{\bf y}$ satisfies \eqref{condofrn2ai}, it follows from \eqref{prodbrhoy},  \eqref{ballrectangle} and the locally isometric property of $f:Y\to X$ that
        \begin{equation*}
            f\Big(\prod_{i=1}^dB({\bf y}_i,r_{\bf y}^{a_i})\Big)=B_{\rho_X'}(f({\bf y}),r_{\bf y}^{a_{\min}}).\medskip
        \end{equation*}

      %  {\color{red}\item {\bf HZN: $5r$-covering lemma requires a separable metric space, then I add a statement showing that $(X,\rho_X')$ is separable. What do you think? Is it necessary or unnecessary?} \textbf{\color{brown}H: No need of separation. 5r-covering lemma holds for any metric space (under the ZFC). See the Theorem 1.2 of the book \textit{Lectures on analysis on metric spaces} written by Juha Heinonen.}\\
      %  Since $(Y,\rho_Y')$ is compact, let $\mathcal{C}(Y)$ be the collection of all non-empty compact sets of $Y$, then %by Blaschke's selection theorem
     %   $(\mathcal{C}(Y),\dH)$ is compact. Define $g:X\to \mathcal{C}(Y)$ by $g(x)=f^{-1}x$, which is an isometric embedding from $(X,\rho_X')$ to $(\mathcal{C}(Y),\dH)$. Then $(X,\rho_X')$ is separable. }
    \end{itemize}
    Now, on applying $5r$-covering lemma \cite[Theorem 1.2]{Hein01} to the collection $$\left\{B_{\rho_X'}(f({\bf y}),r_{\bf y}^{a_{\min}})=f\Big(\prod_{i=1}^dB({\bf y},r_{\bf y}^{a_i})\Big)\right\}_{{\bf y}\in\cY},$$ there exists a finite or countable subset $\cY'\subseteq \cY$ such that
    \begin{eqnarray*}
        f\Big(\prod_{i=1}^dB({\bf y},r_{\bf y}^{a_i})\Big)\cap f\Big(\prod_{i=1}^dB({\bf y}',r_{\bf y'}^{a_i})\Big)=\emptyset \qquad\forall\ {\bf y}\neq{\bf y}'\in\cY'
    \end{eqnarray*}
    and
    \begin{eqnarray*}
        \bigcup_{{\bf y}\in\cY}f\Big(\prod_{i=1}^dB({\bf y},r_{\bf y}^{a_i})\Big)&=& \bigcup_{{\bf y}\in\cY}B_{\rho_X'}(f({\bf y}),r_{\bf y}^{a_{\min}})\\
        &\subseteq&\bigcup_{{\bf y}\in\cY'}B_{\rho_X'}(f({\bf y}),5r_{\bf y}^{a_{\min}})\\
        &\subseteq&\bigcup_{{\bf y}\in\cY'}f\big(B_{\rho_Y'}({\bf y},5r_{\bf y}^{a_{\min}})\big)\qquad(\text{by \eqref{ballrectangle}})\\
    &=&\bigcup_{{\bf y}\in\cY'}f\Big(\prod_{i=1}^d B\big({\bf y}_i,5^{\frac{a_{\max}}{a_{\min}}}r^{a_i}\big)\Big).
    \end{eqnarray*}
    The proof is thereby complete by letting $$\mathcal{E}'=\left\{f\Big(\prod_{i=1}^dB({\bf y},r_{\bf y}^{a_i})\Big) \right\}_{{\bf y}\in\cY'}.$$
\end{proof}

% \begin{proof}
% Consider the family $\widetilde{\mathcal{E}}$ of rectangles
% \[\left\{\prod_{i=1}^dB(x_{i,n} , r_n^{a_i}): (x_{1,n},\dots , x_{d,n} ) \in X, n \ge 1\right\}.\]
% Applying \cite[Lemma 5.1]{LLVWZ} to $\widetilde{\mathcal{E}}$ shows that there exists a subfamily $\widetilde{\mathcal{E}}_1\subset \widetilde{\mathcal{E}}$ such that $\widetilde{\mathcal{E}}_1$ \sout{is at most countable} consists of disjoint rectangles and 
% \[\bigcup_{R\in \widetilde{\mathcal{E}}}R\subset \bigcup_{R\in \widetilde{\mathcal{E}}_1}5^{\tilde{c}}R,\]
% where $\tilde{c}=\frac{\max_ia_i}{\min_ia_i}$. Put 
% \[\mathcal{E}_1:=\left\{f(R):R\in \widetilde{\mathcal{E}}_1\right\}\subset \mathcal{E}.\]
% Since {\color{blue}$(\sum_{i=1}^dr_n^{2a_i})^{\frac{1}{2}}<\frac{\varepsilon_0}{2}$, for any $R\in \widetilde{\mathcal{E}}$, $f|_R$ preserves distance. Notice that $\mathcal{E}_1$ consists of disjoint {\color{red}[WHY DISJOINT??? PLEASE CHECK!!! If $R,R'\in\tilde{\mathcal{E}_1}$, it is not necessary that $f|_{R\cup R'}$ preserves distance, and hence $f(R)$ may not be disjoint with $f(R')$.]} rectangles, and then
% \[\bigcup_{R\in \widetilde{\mathcal{E}}}f(R)\subset \bigcup_{R\in \widetilde{\mathcal{E}}_1}5^{\tilde{c}}f(R),\]}
% and this completes the proof.
% \end{proof}

\subsection{Proof of Theorem \ref{MTP1} }

 We prove the theorem by  establishing the following more general statement than that of Theorem \ref{MTP1} with weaker conditions imposed on sequences $({\bf u}(n))_{n\in\N}$ and $({\bf v}(n))_{n\in\N}$.

Given $1\leq i\leq d$,  let $(B_{i,n})_{n\in\N}$ be a sequence of balls in $Y_i$ and suppose that there exist sequences of positive real numbers $(r_n)_{n\in\N}$ and $(v_{i,n})_{n\in\N}$ such that
\[r(B_{i,n})=r_n^{v_{i,n}}.\]
For any $n\in\N$, let  ${\bf v}(n)=(v_{1,n},\dots,v_{d,n})\in (\R^+)^d$ and ${\bf u}(n)=(u_{1,n},\dots,u_{d,n})\in (\R^+)^d$ satisfy 
$$u_{i,n}\leq v_{i,n}\quad\forall \ i=1,2,...,d\qquad\&\qquad{0<\inf_{n\in\N}\min_{1\leq i\leq d}u_{i,n}\leq\sup_{n\in\N}\max_{1\leq i\leq d}u_{i,n}<+\infty}$$
and denote
\[s_{i,n}:=\frac{u_{i,n}\delta_i}{v_{i,n}}.\]

\begin{thm}\label{thm5}
 % and $s({\bf u}(n),{\bf v}(n),i)$
% satisfy one of the 
% following conditions:

% \begin{itemize}
%     \item[(1)]  $\lim_{n\to\infty}\min_{1\le i\le d}\{s({\bf u}(n),{\bf v}(n),i)\}$ exists, and  $\left\{{\bf x}_{n,j}+\prod_{i=1}^dB_{i,n}^{s_{i,n}}\right\}_{1\le j\le k_n, n\in\N}$ is a  local ubiquity system with respect to  local isometries $(f_n)_{n\in\N}$.
%     \medskip

%     \item[(2)] for any ball $B\subset X$,
% \[\lim_{n\to\infty}\mu\Big(\bigcup_{1\le j\le k_n }f_n\Big({\bf x}_{n,j}+\prod_{i=1}^dB_{i,n}^{s_{i,n}}\Big)\cap B\Big)\gtrsim \mu(B), \]
% \end{itemize}
Let $(B_{i,n})_{1\leq i\leq d,n\in\N},({\bf v}(n))_{n\in\N},({\bf u}(n))_{n\in\N}$  be as given above. Let $(f_n:Y\to X)_{n\in\N}$ be a sequence of local isometries with the same constant $\varepsilon_0>0$ {associated with \eqref{locisodef}}. Let $(k_n)_{n\in\N} $ be a non-decreasing sequence of positive integers and $\{{\bf x}_{n,j}\}_{1\leq j\leq k_n,n\in\N}\subset Y$.
Moreover, assume that $$\left\{{\bf x}_{n,j}+\prod_{i=1}^dB_{i,n}^{s_{i,n}}\right\}_{1\le j\le k_n, n\in\N}$$
is a  local ubiquity system with respect to  local isometries $(f_n)_{n\in\N}$. Then
\[\limsup_{n\to\infty}\bigcup_{1\le j\le k_n}f_n\left({\bf x}_{n,j}+\prod_{i=1}^dB_{i,n}\right) \in \mathcal{G}^{t}(X),\]
where 
\begin{eqnarray}\label{defoftuv}
   t=t(({\bf u})_{n\in\N},({\bf v})_{n\in\N}):=\liminf_{n\to\infty}\min_{1\le i\le d}\{s({\bf u}(n),{\bf v}(n),i)\}.
\end{eqnarray}
\end{thm}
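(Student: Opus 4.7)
The plan is to follow the Cantor-set construction for rectangles from Li--Liao--Velani--Wang--Zorin \cite{LLVWZ}, adapting it to the local isometry setting. First, by Lemma \ref{lemma1},
\begin{equation*}
t \;=\; \liminf_{n\to\infty}\min_{a\in\mathcal{A}_n}\mathfrak{s}({\bf u}(n),{\bf v}(n),a),
\end{equation*}
and since $\mathcal{G}^t(X)=\bigcap_{t'<t}\mathcal{G}^{t'}(X)$ by definition, it suffices to prove that $\limsup_{n\to\infty}\bigcup_{j}f_n\big({\bf x}_{n,j}+\prod_i B_{i,n}\big)\in\mathcal{G}^{t'}(X)$ for every $t'<t$. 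Fixing such a $t'$, pass to a subsequence of $n$ along which $\min_{a\in\mathcal{A}_n}\mathfrak{s}({\bf u}(n),{\bf v}(n),a)>t'$, and let $a_n^{\ast}\in\mathcal{A}_n$ realize this minimum.

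The core of the argument is an inductive Cantor-like construction. Fix an arbitrary ball $B_0\subset X$ with $\mu(B_0)>0$ and $\mathrm{diam}\,B_0<\varepsilon_0$. At each stage I maintain a finite family of pairwise disjoint ``level boxes'' contained in $B_0$. To pass from stage $n$ to stage $n+1$, inside each existing level box $\mathcal{B}$ I use the local ubiquity condition \eqref{condition:measure} to pick an index $N$ large enough that the union $\bigcup_{j}f_N(\widetilde{R}_{N,j})$, where $\widetilde{R}_{N,j}={\bf x}_{N,j}+\prod_i B_{i,N}^{s_{i,N}}$ is the ``large'' rectangle of side $r_N^{u_{i,N}}$ in direction $i$, occupies at least a definite fraction of $\mathcal{B}$. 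Then I apply Lemma \ref{coverlemma} to extract a disjoint subfamily whose $f_N$-images still fill a positive proportion of $\mathcal{B}$. For each extracted $f_N(\widetilde{R}_{N,j})$, I partition it into sub-rectangles of side-length $r_N^{\max(u_{i,N},\,\min(v_{i,N},\,a_N^{\ast}))}$ in direction $i$; by the choice of $a_N^{\ast}$, each such sub-rectangle contains a translate of the ``small'' target rectangle $f_N({\bf x}+\prod_i B_{i,N})$ of side $r_N^{v_{i,N}}$, and these sub-rectangles become the level-$(n+1)$ boxes.

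A natural probability measure $\mu$ is then placed on the resulting Cantor set $K\subseteq B_0\cap\limsup_n\bigcup_j f_n\big({\bf x}_{n,j}+\prod_i B_{i,n}\big)$ by distributing mass uniformly among the level boxes at each stage in proportion to their $\delta_0$-dimensional Ahlfors content under the product measure. To invoke Lemma \ref{lemmaforlb} and conclude $K\in\mathcal{G}^{t'}(X)$, two estimates must be verified: the two-sided growth condition \eqref{lip} for the approximating measures $\mu_n$ against $\mu$ restricted to $B_0$, and the crucial upper bound $\mu_n(B)\lesssim r^{t'}$ for every ball $B\subset B_0$ of radius $r$ and every $n$.

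The verification of $\mu_n(B)\lesssim r^{t'}$ is the main obstacle. It is carried out by a case analysis based on where $r$ falls among the anisotropic scales $r_n^{u_{i,n}}$, $r_n^{v_{i,n}}$, and $r_n^{a_n^{\ast}}$ attached to the level boxes that $B$ can intersect; the exponent produced in each regime is precisely $\mathfrak{s}({\bf u}(n),{\bf v}(n),a_n^{\ast})>t'$, which I unfold using Lemma \ref{elements in an} to rewrite $\mathfrak{s}$ consistently with the contributions of $\K_1(a_n^{\ast}),\K_2(a_n^{\ast}),\K_3(a_n^{\ast})$. The local-isometry hypothesis enters only through \eqref{locisodef}: since every rectangle arising in the construction has diameter at most a power of $r_n\to 0$, eventually smaller than $\varepsilon_0$, the map $f_n$ is a genuine isometry on each such rectangle, and all ball--rectangle incidence estimates performed in $Y$ transfer without distortion to $X$. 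Once these bounds are secured, the scheme of \cite[Sections 5--6]{LLVWZ} applies to yield $K\in\mathcal{G}^{t'}(X)$, and letting $t'\nearrow t$ completes the proof.
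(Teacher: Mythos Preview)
Your approach differs substantially from the paper's, and as written contains a gap regarding the large intersection property.

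The paper does \emph{not} build a Cantor set. Instead, for each $n$ it works with the single-level set $E_n=\bigcup_{j\in\mathcal{S}_n}R_{n,j}$, where $R_{n,j}=f_n\big({\bf x}_{n,j}+\prod_i B_{i,n}\big)$ and $\mathcal{S}_n$ is a disjoint subcollection of the enlarged rectangles $\widetilde{R}_{n,j}=f_n\big({\bf x}_{n,j}+\prod_i B_{i,n}^{s_{i,n}}\big)$ extracted via Lemma~\ref{coverlemma}. The measure $\mu_n$ is simply the normalized restriction $\mu|_{E_n}/\mu(E_n)$. Because the local ubiquity condition \eqref{condition:measure} holds for \emph{every} ball in $X$, the sets $\widetilde{E}_n=\bigcup_{j\in\mathcal{S}_n}\widetilde{R}_{n,j}$ are spread throughout $X$ (along an infinite subset $\mathcal{N}$ of indices), and this is precisely what makes the two-sided bound \eqref{lip} verifiable for \emph{all} balls $B\subset X$. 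The decay estimate $\mu_n(B)\lesssim r^s$ then follows from a case analysis on the position of $r$ among the scales $\{r_n^{u_{i,n}},r_n^{v_{i,n}}\}$, using Lemmas~\ref{lemma1} and~\ref{elements in an} exactly as you anticipate. No inductive nesting is required.

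Your Cantor construction, by contrast, is confined to a single fixed ball $B_0$. The level-$n$ measures you describe are therefore supported inside $B_0$, so for any ball $B$ disjoint from $B_0$ one has $\mu_n(B)=0$ while $\mu(B)>0$, and the lower bound in \eqref{lip} fails. Lemma~\ref{lemmaforlb} therefore cannot be invoked to place $K$ in $\mathcal{G}^{t'}(X)$; indeed, a Cantor set contained in a proper ball can never belong to $\mathcal{G}^{t'}(X)$, since membership requires $\mathcal{M}_\infty^{t}(K\cap D)=\mathcal{M}_\infty^{t}(D)$ for \emph{every} generalized dyadic cube $D$. The Cantor scheme of \cite{LLVWZ} delivers the Hausdorff-dimension lower bound $\dim_{\rm H}(\limsup_n\bigcup_j R_{n,j})\ge t$, but the large intersection statement is genuinely stronger and is one of the advertised novelties here. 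To rescue your route you would have to run the construction ``globally'' so that each level already fills every ball---at which point the nested structure becomes superfluous and you are back to the paper's direct argument.
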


We first use Theorem \ref{thm5} to prove Theorem \ref{MTP1}.

{\begin{proof}[Proof of Theorem \ref{MTP1} modulo Theorem \ref{thm5}]
Let $({\bf u}(n))_n$ and $({\bf v}(n))_n$ be as given in Theorem \ref{MTP1}, then
\[u_{i,n}\le v_{i,n},\quad\forall \  1\le i\le d,\ n\geq1\]
and
\[
0<\inf_{n\in\N}\min_{1\leq i\leq d}u_{i,n}\leq\sup_{n\in\N}\max_{1\leq i\leq d}u_{i,n}<+\infty.
\]
% Note that  %for any $\epsilon>0$, we have 
% %\[u_{i,n}< (1+\epsilon)v_{i,n},~1\le i\le d,~n\ge1.\]
% \[u_{i,n}<\left(1+\frac{1}{n}\right)v_{i,n},~1\le i\le d,~n\ge1.\]
% Considering $\left(1+\frac{1}{n}\right){\bf v}(n)$ instead of ${\bf v}(n)$, by Lemma \ref{lemma2}, 
% \[\lim_{n\to\infty}s\left({\bf u}(n), \left(1+\frac{1}{n}\right){\bf v}(n)\right)=s({\bf u}, {\bf v}).\]
% Therefore, without loss of generality, we may assume that  $u_{i,n}<v_{i,n}$ for   $1\le i\le d$ and $n\ge1$.
Since $u_{i,n}\to{ u_i}$ and $v_{i,n}\to{v_i }$ as $n\to\infty$ for any $1\leq i\leq d$,
by Lemma \ref{lemma2}, we have
\[\lim\limits_{n\to\infty}\min_{1\le i\le d}\{s({\bf u}(n),{\bf v}(n),i)\}=s({\bf u},{\bf v}).\]
On exploiting Theorem \ref{thm5}, we finish the proof of Theorem \ref{MTP1}.
\end{proof}
}

\begin{proof}[Proof of Theorem \ref{thm5}]
%Denote the Lebesgue measure on $\T^d=[0,1)^d$ by $\lambda$. 
%\begin{proof}[Proof of Theorem \ref{MTP1} ]
For any $n\in\N$ and $1\le j\le k_n$, put
\[\widetilde{R}_{n,j}=f_n\Big({\bf x}_{n,j}+\prod_{i=1}^dB_{i,n}^{s_{n,i}}\Big)\quad {\rm and} \quad R_{n,j}=f_n\Big({\bf x}_{n,j}+\prod_{i=1}^dB_{i,n}\Big).\]
{Since $r_n\to 0$ as $n\to\infty$, without loss of generality, assume that \eqref{condofrn2ai} holds with $r_{\bf y}$ replaced by $r_n$ and $a_i$ replaced by $u_{i,n}$ for any $n\in\N$ and $1\leq i\leq d$; that is to say that
\begin{eqnarray}\label{rangofrn}
    \Big(\sum_{i=1}^d r_n^{2u_{i,n}}\Big)^{1/2}<\frac{\varepsilon_0}{4}.
\end{eqnarray}}Then, given large $n\ge1$,  by Lemma \ref{coverlemma}, %there exists $\mathcal{N}\subset \mathbb{N}$ with $\#\mathcal{N}=\infty$ such that for $n\in\mathcal{N}$ 
there is a finite sub-collection $\mathcal{S}_n\subset \{1,2,\dots, k_n\}$ such that 
\[\big\{\widetilde{R}_{n,j}\big\}_{j\in \mathcal{S}_n} \]
%satisfies the  properties in Lemma \ref{cover}.
is a collection of disjoint rectangles and 
\[\bigcup_{j=1}^{k_n}\widetilde{R}_{n,j}\subset \bigcup_{j\in \mathcal{S}_n}5^{\hat{c}}\widetilde{R}_{n,j},\]
where {$\hat{c}=\frac{2\sup_{n\in\N}\max_{1\leq i\leq d}u_{i,n}}{\inf_{n\in\N}\min_{1\leq i\leq d}u_{i,n}}<+\infty$}.

By the local ubiquity condition, there exists a constant $c>0$ such that for any ball $B_0$ in $X$, we have
%\begin{equation}\label{c1}
 \[\limsup_{n\to\infty}\mu\Big(\bigcup_{j=1 }^{k_n }\widetilde{R}_{n,j}\cap B_0\Big)>c\mu(B_0).\]
% \end{equation}
Taking $B_0= {X}$ gives that
\begin{equation}\label{measureofen}
\mu\Big(\bigcup_{j=1 }^{k_n }\widetilde{R}_{n,j}\Big)\asymp \mu( {X})
\end{equation}
for infinitely many $n$, denoted by $\mathcal{N}$.
Let
\[\widetilde{E}_n=\bigcup_{j\in\mathcal{S}_n}\widetilde{R}_{n,j},
\quad {\rm and}
\quad E_n=\bigcup_{j\in\mathcal{S}_n}R_{n,j},\]
then  $E_n\subset \widetilde{E}_n$ since $R_{n,j}\subset \widetilde{R}_{n,j}$ for any $1\leq j\leq k_n$. Observe that 
\[\limsup_{n\in\mathcal{N}\atop n\to \infty}E_n\subset \limsup_{n\to\infty}\bigcup_{j=1}^{k_n}{R}_{n,j}.\]
Therefore, it suffices to consider the large intersection property  of $ \limsup_{n\in\mathcal{N}, n\to \infty}E_n$.

Note that  isometries preserve the Hausdorff measure. Then, for a local isometry $f:Y\to X$ with the constant $\varepsilon_0>0$ associated with \eqref{locisodef} and any set $E$ with ${\rm diam} E<\varepsilon_0$,
\begin{eqnarray}\label{locisohas}
    \mathcal{H}^{s} (f(E))=\mathcal{H}^{s}(E),\qquad\forall \ s>0.
\end{eqnarray}
Since $\mu$ and $\nu$ are Ahlfors $\delta_0$-regular measures on $X$ and $Y$ respectively,  where $\delta_0=\sum_{i=1}^d\delta_i$, it is known that $\mu\asymp \mathcal{H}^{\delta_0}|_X$ and $\nu\asymp\mathcal{H}^{\delta_0}|_Y$; see e.g.  \cite[Section 8.7]{Hein01}. This together with \eqref{rangofrn} and \eqref{locisohas} gives that
\begin{eqnarray*}
    \mu(R_{n,j})\asymp \mathcal{H}^{\delta_0}(R_{n,j}) =\mathcal{H}^{\delta_0}\Big({\bf x}_{n,j}+\prod_{i=1}^d B_{i,n}\Big)\asymp \nu\Big({\bf x}_{n,j}+\prod_{i=1}^d B_{i,n}\Big)\asymp\prod_{i=1}^d r_n^{\delta_iv_{i,n}}
\end{eqnarray*}
for any $n\in\N$ and $1\leq j\leq k_n$.  Similarly, 
\[
\mu(\widetilde{R}_{n,j})\asymp\nu\Big({\bf x}_{n,j}+\prod_{i=1}^d B_{i,n}^{s_{i,n}}\Big)\asymp\prod_{i=1}^dr_n^{\delta_iu_{i,n}}.
\]
Combing this and the choice of $n\in\mathcal{N}$, we deduce that
\begin{equation}\label{meaofen}
    \mu(\widetilde{E}_n)\asymp 1\quad {\rm and}\quad\mu(E_n)\asymp \prod_{i=1}^dr_n^{\delta_i(v_{i,n}-u_{i,n})}.
\end{equation}

Define a probability measure supported on $E_n$ as
\[\mu_n:=\sum_{j\in\s_n}\frac{\mu(\widetilde{R}_{n,j})}{\mu(\widetilde{E}_n)}\frac{\mu|_{R_{n,j}}}{\mu(R_{n,j})}=\frac{1}{\mu(E_n)}\mu|_{E_n}.\]
In the following, in order  to obtain  the large intersection property  of $\limsup_{n\in\mathcal{N}, n\to \infty}E_n$, we shall verify the conditions in Lemma \ref{lemmaforlb} for $(\mu_n)_{n\in\N}$.

For any ball $B\subset  X$, it follows that
\begin{equation}\label{limsup}
 \begin{split}
 \limsup_{n\in\mathcal{N},n\to \infty}\frac{\mu_n(B)}{\mu(B)}&= \limsup_{n\in\mathcal{N},n\to \infty}\frac{1}{\mu(B)}\sum_{j\in\s_n\atop \widetilde{R}_{n,j}\cap B\ne\varnothing}\frac{\mu(\widetilde{R}_{n,j})}{\mu(\widetilde{E}_n)}\frac{\mu(B\cap R_{n,j})}{\mu(R_{n,j})}\\
 &\lesssim \limsup_{ n\in\mathcal{N},n\to \infty}\frac{1}{\mu(B)}\sum_{j\in\s_n\atop \widetilde{R}_{n,j}\cap B\ne\varnothing}\mu(\widetilde{R}_{n,j})\\
 &\le  \limsup_{n\in\mathcal{N},n\to \infty}\frac{1}{\mu(B)}\mu(3B)\asymp 1.%=3^{\delta_0}.
 \end{split}
 \end{equation}
 On the other hand, assume without loss of generality that $\max_{1\le i\le d}r_n^{u_{i,n}}<\frac{1}{5}|B|$ when $n$ is sufficiently large.  Then, for such $n\in\N$, 
 \[\Big\{j\in\s_n\colon 5^{\hat{c}}\widetilde{R}_{n,j}\cap \frac{1}{5}B\ne\emptyset\Big\}\subset \Big\{j\in\s_n\colon \widetilde{R}_{n,j}\cap \frac{1}{2}B\ne\emptyset\Big\}.\]
It follows that
\begin{equation}\label{liminf}
 \begin{split}
\liminf_{n\in\mathcal{N}\atop n\to \infty}\frac{\mu_n(B)}{\mu(B)} &\gtrsim \liminf_{n\in\mathcal{N}\atop n\to \infty}\frac{1}{\mu(B)}\sum_{j\in\s_n\atop \widetilde{R}_{n,j}\cap \frac{1}{2}B\ne \varnothing}\mu(\widetilde{R}_{n,j})\frac{\mu(B\cap R_{n,j})}{\mu(R_{n,j})}\\
 &= \liminf_{n\in\mathcal{N}\atop n\to \infty}\frac{1}{\mu(B)} \sum_{j\in\s_n\atop \widetilde{R}_{n,j}\cap \frac{1}{2}B\ne \varnothing}\mu(\widetilde{R}_{n,j})\\
&\gtrsim \liminf_{n\in\mathcal{N}\atop n\to \infty}\frac{1}{\mu(B)}\sum_{j\in\s_n\atop 5^{\hat{c}}\widetilde{R}_{n,j}\cap \frac{1}{5}B\ne\varnothing}\mu(5^{\hat{c}}\widetilde{R}_{n,j} \cap B)\\
 &\gtrsim\liminf_{n\in\mathcal{N}\atop n\to \infty}\frac{1}{\mu(B)}\mu\Big(\bigcup_{j\in\s_n}5^{\hat{c}}\widetilde{R}_{n,j}\cap \frac{1}{5}B\Big)\\
 &\ge \liminf_{n\in\mathcal{N}\atop n\to \infty}\frac{1}{\mu(B)}\mu\Big(\bigcup_{j=1}^{k_n}\widetilde{R}_{n,j}\cap \frac{1}{5}B\Big)\gtrsim 1,
 \end{split}
 \end{equation}
 where the last inequality follows from
\eqref{measureofen}. Therefore, by inequalities \eqref{limsup} and \eqref{liminf}, we verify that $(\mu_n)_{n\in\mathcal{N}}$ satisfies \eqref{lip}. Specifically, we will show that for any $s<t$, where $t=t(({\bf u}(n))_{n\in\N},({\bf v}(n))_{n\in\N})$ is defined as in \eqref{defoftuv}, there exists $C>0$ such that
 \[\mu_n(B({\bf x},r))\le Cr^s\]
holds for all ${\bf x}\in X$, $r>0$ and $n\in\mathcal{N}$.
%Note that
%\begin{equation*}
%\begin{split}
%\mu_n(B)&=\frac{1}{\lambda(E_n)}\sum_{j\in\s_n}\lambda(B\cap R_{n,j})\\
%&=\#\{j\in\s_n\colon B\cap R_{n,j}\ne\varnothing\}\frac{\lambda(B\cap R_{n,j})}{\lambda(E_n)}.
%\end{split}
%\end{equation*}
%In the following, we shall estimate $\#\{j\in\s_n\colon B\cap R_{n,j}\ne\varnothing\}$.

%{\color{red}
%First of all we assume that
%\[\lim_nv_{n,i}=v_i<\infty,\]
%and
%\[u_i<v_i, \quad(1\le i\le d).\]
%Then $u_i<v_{i,n}$ for $n$ large enough.}
For any  $n\in\mathcal{N}$, let
\[\mathcal{A}_n=\{v_{i,n},u_{i,n}\colon 1\le i\le d\}\]
Assume that $v_{d,n}\in \mathcal{A}_n$ is the largest element in $\mathcal{A}_n$ and $u_{1,n}\in \mathcal{A}_n$ is the smallest one. In the following, fix  $n\in\mathcal{N}$, ${\bf x}\in X$, $r>0$ and write $B=B({\bf x},r)$. We then estimate the $\mu_n$-measure of $B$ by considering the following cases depending on the size of $r$.\medskip

\noindent $\bullet$ \textit{Estimating $\mu_n(B)$ when $0<r<r_n^{v_{d,n}}$}. 
In this case, by \eqref{meaofen} and the definition of $\mu_n$, we have 
\begin{equation*}
\begin{split}
\mu_n(B)=\frac{\mu(B\cap E_n)}{\mu(E_n)}\lesssim r^{\delta_0}\prod_{i=1}^dr_n^{-\delta_i(v_{i,n}-u_{i,n})}<r^{\sum_{i=1}^d\delta_i\big(1-\frac{v_{i,n}-u_{i,n}}{v_{d,n}}\big)}.
\end{split}
\end{equation*}
\medskip

\noindent $\bullet$ \textit{Estimating $\mu_n(B)$ when $r\ge r_n^{u_{1,n}}$}. In this case, observe that 
\[\bigcup_{j\in\mathcal{S}_n:\,\widetilde{R}_{n,j}\cap B\ne \emptyset }\widetilde{R}_{n,j}\subset (1+2\sqrt{d})B,\]
and thus
\[\#\{j:\widetilde{R}_{n,j}\cap B\ne \emptyset \}\cdot\mu(\widetilde{R}_{n,j})\le \mu\left((1+ 2\sqrt{d})B\right).\]
It follows that
\begin{eqnarray}\label{numofjcapb}
    \#\{j:\widetilde{R}_{n,j}\cap B\ne \emptyset \}\le \frac{\mu\left((1+  2\sqrt{d})B\right)}{\mu(\widetilde{R}_{n,j})}\asymp \frac{r^{\delta_0}}{\prod_{i=1}^dr_n^{\delta_iu_{i,n}}}.
\end{eqnarray}
Since $R_{n,j}\subseteq \widetilde{R}_{n,j}$ for any $1\leq j\leq k_n$, the inequality \eqref{numofjcapb} yields that the ball $B$  intersects  
\[\lesssim \frac{r^{\delta_0}}{\prod_{i=1}^dr_n^{\delta_iu_{i,n}}}\]
rectangles in $\{R_{n,j}\}_{j\in\mathcal{S}_n}$. Thus
\begin{equation*}
\begin{split}
\mu_n(B)&\le \frac{1}{\mu(E_n)}\sum_{j\in\s_n\atop R_{n,j}\cap B\ne\varnothing}\mu( R_{n,j})\lesssim \prod_{i=1}^dr_n^{\delta_i(u_{i,n}-v_{i,n})}\cdot \frac{r^{\delta_0}}{\prod_{i=1}^dr_n^{\delta_iu_{i,n}}}\cdot \prod_{i=1}^dr_n^{v_{i,n}\delta_i}
=r^{\delta_0}.
\end{split}
\end{equation*}
\medskip

\noindent $\bullet$ \textit{Estimating $\mu_n(B)$ when $r_n^{v_{d,n}}\leq r<r_{n}^{u_{1,n}}$}. Let
\[
\mathcal{S}_n(B):=\left\{j\in\mathcal{S}_n:B\cap R_{n,j}\neq\emptyset\right\}.
\]
By \eqref{rangofrn} and the locally isometric property of $f_n$, it is easily seen that  $$\big\{\big(f_n|_{B({\bf x}_{n,j},\varepsilon_0)}\big)^{-1}(B)\big\}_{j\in\mathcal{S}_n(B)}$$ are the same ball in $Y$ with radius $r$, denoted as   $B({\bf y},r)$, where ${\bf y}=({\bf y}_1,...,{\bf y}_d)\in Y$  and $f_n({\bf y})$ is the center of $B$.
Note that
\begin{equation}\label{case3munb}
\begin{split}
\mu_n(B)&=\frac{\mu(B\cap E_n)}{\mu(E_n)}= \frac{1}{\mu(E_n)}\cdot\sum_{j\in\mathcal{S}_n(B)}\mu(B\cap R_{n,j})\\
&\asymp\frac{1}{\mu(E_n)}\cdot\sum_{j\in \cS_n(B)}\nu\left(\big(f_n|_{B({\bf x}_{n,j},\varepsilon_0)}\big)^{-1}(B)\cap\big({\bf x
}_{n,j}+\prod_{i=1}^d B_{i,n}\big)\right)\\
&=\frac{1}{\mu(E_n)}\cdot\#\cS_n(B)\cdot\prod_{i=1}^d\big(\min\{r,r_n^{v_{i,n}}\}\big)^{\delta_i}.
\end{split}
\end{equation}
%  Since $f_n$ is a local isometry, it is clear that $f_n^{-1}(B)$ is a ball of radius $r$ in $Y$ and  
% \[f_n^{-1}(R_{n,j})={\bf x}_{n,j}+\prod_{i=1}^dB_{i,n},\quad f_n^{-1}(\widetilde{R}_{n,j})={\bf x}_{n,j}+\prod_{i=1}^dB_{i,n}^{s_{i,n}}.\]
% With the range of $r$ in this case, the ball $f_n^{-1}(B)$ may intersect many rectangles in $\{f_n^{-1}(R_{n,j})\}_{j\in\mathcal{S}_n}$ 
{Clearly, to bound the $\mu_n$-measure of $B$, it is required to estimate the number of elements in $\cS_n(B)$ and the value of $\min\{r,r_{n}^{v_{i,n}}\}$ for any $1\leq i\leq d$}.
% and each rectangle is in  transversal in $f_n^{-1}(B)$.
% There are $d$ directions along the sides of rectangles. In each of these $d$ directions, we will estimate the number of rectangles that a ball with radius $r$ intersects, respectively, aiming to get the total number of rectangles intersecting $f_n^{-1}(B)$.
Recall that 
\[\mathcal{U}_n=\{v_{i,n},u_{i,n}\colon 1\le i\le d\}.\]
Arrange the elements in $\mathcal{U}_n$ as follows
\[
t_1<t_2<\cdots<t_{\#\mathcal{U}_n}.
\]
Since $r_{n}^{v_{d,n}}\leq r<r_{n}^{u_{1,n}}$,  there exists $1\leq k\leq \#\mathcal{U}_n$ such that 
\[r_n^{t_{k+1}}\le r<r_n^{t_k}.\]
For such $k$, recall that 
\[\K_1(t_k)=:\{i\colon u_{i,n}>t_k\}%=\{i\colon r_n^{u_i}<r_n^{u_k}\},\]
,\quad \K_2(t_k):=\{i\colon v_{i,n} \le t_k\},\]%=\{i\colon r_n^{v_{i,n}}\ge r_n^{t_k}\},\]
and
\[\K_3(t_k):=\{1,2,\dots,d\}\setminus (\K_1(t_k)\cup \K_2(t_k)).\]
By definitions, we also have
\begin{equation}\label{ni1}
\K_1(t_k)=\{i\colon u_{i,n}\ge t_{k+1}\}
\end{equation}
and 
\begin{equation}\label{ni2}
\K_2(t_k)=\{i\colon t_{k+1}>v_{i,n}\}.
\end{equation}
{With these notations in mind, observe that
\begin{itemize}
    \item[(i)] by \eqref{ni1}, for any $i\in\K_1(t_k)$, we have $r\geq r_n^{u_{i,n}}$ and thus $\min\{r,r_n^{v_{i,n}}\}=r_n^{v_{i,n}}$.
    \medskip

    \item[(ii)] for any $i\in\K_2(t_k)$, we have $r\leq r_n^{v_{i,n}}$ and thus $\min\{r,r_n^{v_{i,n}}\}=r$.
    \medskip

    \item[(iii)] by \eqref{ni2}, for any $i\in\K_3(t_k)$, we have $r_n^{v_{i,n}}\leq r<r_n^{u_{i,n}}$ and thus $\min\{r,r_n^{v_{i,n}}\}=r_n^{v_{i,n}}$. 
\end{itemize}
The above observations yield that the rectangles $$\left\{{\bf x}_{n,j}+\prod_{i=1}^d B_{i,n}^{s_{i,n}}\right\}_{j\in\cS_n(B)}$$ is contained within
\begin{eqnarray*}
    H:=\prod_{i=1}^d B({\bf y}_i,\varepsilon_i),\qquad\text{where} \ \varepsilon_i=\left\{
    \begin{aligned}
        & 2r, \quad \ \ \text{if} \  i\in\K_1(t_k),\\
        & 2r_n^{u_{i,n}}, \ \text{if} \ i\in\K_2(t_k)\cup\K_3(t_k).
    \end{aligned}\right.
\end{eqnarray*}
Then, by the volume argument, we obtain that
\begin{eqnarray*}
    \#\cS_n(B)\lesssim\frac{\prod_{i=1}^d\varepsilon_i^{\delta_i}}{\prod_{i=1}^d r_n^{\delta_iu_{i,n}}}\asymp\prod_{i\in\K_1(t_k)}\Big(\frac{r}{r_n^{u_{i,n}}}\Big)^{\delta_i}.
\end{eqnarray*}
Combining it with the formulas \eqref{meaofen} and \eqref{case3munb}  gives that
\begin{eqnarray*}
    \mu_n(B)&\lesssim& \prod_{i=1}^d r_n^{-\delta_i(v_{i,n}-u_{i,n})}\cdot\prod_{i\in\K_1(t_k)}\Big(\frac{r}{r_n^{u_{i,n}}}\Big)^{\delta_i}\cdot\prod_{i\in\K_1(t_k)\cup\K_3(t_k)}r_n^{\delta_iv_{i,n}}\cdot\prod_{i\in\K_2(t_k)}r^{\delta_i}\\[4pt]
    &=&r^{\sum_{i\in \K_1(t_k)\cup \K_2(t_k)}\delta_i}\cdot r_n^{\sum_{i\in \K_2(t_k)}\delta_i(u_{i,n}-v_{i,n})+\sum_{i\in \K_3(t_k)}\delta_iu_{i,n}}.
\end{eqnarray*}}Next, we finish estimating $\mu_n(B)$ by considering the following two cases:
\begin{itemize}
\item [$\circ$]
If  $\sum_{i\in \K_2(t_k)}\delta_i(u_{i,n}-v_{i,n})+\sum_{i\in \K_3(t_k)}\delta_iu_{i,n}\ge 0$, recall that $r_n^{t_{k+1}}\le r$, then 
\begin{equation*}
\begin{split}
\mu_n(B)&\lesssim  r^{\sum_{i\in \K_1(t_k)\cup \K_2(t_k)}\delta_i+\frac{\sum_{i\in \K_2(t_k)}\delta_i(u_{i,n}-v_{i,n})+\sum_{i\in \K_3(t_k)}\delta_iu_{n,i}}{t_{k+1}}}=: r^{s_1}.
\end{split}
\end{equation*}
It follows from Lemma \ref{elements in an} that $s_1= \mathfrak{s} ({\bf u}(n),{\bf v}(n),t_{k+1})$.
\vspace*{2ex}

\item [$\circ$]
If  $\sum_{i\in \K_2(t_k)}\delta_i(u_{i,n}-v_{i,n})+\sum_{i\in \K_3(t_k)}\delta_iu_{i,n}< 0$, since $r<r_n^{t_k}$, one has
\begin{equation*}
\begin{split}
\mu_n(B)&\lesssim  r^{\sum_{i\in \K_1(t_k)\cup \K_2(t_k)}\delta_i+\frac{\sum_{i\in \K_2(t_k)}\delta_i(u_{i,n}-v_{i,n})+\sum_{i\in \K_3(t_k)}\delta_iu_{n,i}}{t_k}}=r^{ \mathfrak{s} ({\bf u}(n),{\bf v}(n),t_k)}.
\end{split}
\end{equation*}
\end{itemize}
This shows that $\mu_n(B)\lesssim r^{ \min_{a\in\mathcal{U}_n}\{\mathfrak{s}({\bf u}(n),{\bf v}(n),a)\}}$ when $r_n^{v_{d,n}}\leq r<r_n^{v_{1,n}}$.
%Now we show $s_1$ is an element in $\{s({\bf u}(n),{\bf v}(n),A)\colon A\in\mathcal{A}_n\}$. 
%We observe that 
%\[\K_1(t_k)=\K_1(t_{k+1})\setminus \{1\le i\le d\colon u_{i,n}=t_{k+1}\},\]
%\[\K_2(t_k)=\K_2(t_{k+1})\setminus \{1\le i\le d\colon v_{i,n}=t_{k+1}\},\]
%and
%\[\K_3(t_k)=\K_2(t_{k+1})\cup\{1\le i\le d\colon u_{i,n}=t_{k+1}\}\cup \{1\le i\le d\colon v_{i,n}=t_{k+1}\}.\]
%Then 
%\begin{equation*}
%\begin{split}
%s_1&=\sum_{i\in \K_1(t_{k+1})\cup \K_2(t_{k+1})}1-\sum_{\{i\colon u_{i,n}=t_{k+1}\}\atop \cup \{i\colon v_{i,n}=t_{k+1}\}}1+\frac{1}{t_{k+1}}\bigl\{\sum_{i\in\K_2(t_{k+1})}(u_{i,n}-v_{i,n})\\
%&\quad+\sum_{i\in \K_3(t_{k+1})}u_{n,i} +\sum_{\{i\colon u_{i,n}=t_{k+1}\}} u_{i,n}+\sum_{\{i\colon v_{i,n}=t_{k+1}\}}v_{i,n}\bigr\}\\
%&=\sum_{i\in \K_1(t_{k+1})\cup \K_2(t_{k+1})}1+\frac{1}{t_{k+1}}\bigl(\sum_{i\in \K_2(t_{k+1})}(u_{i,n}-v_{i,n})+\sum_{i\in\K_3(t_{k+1})}u_{n,i}\bigr)\\
%&=s({\bf u}(n),{\bf v}(n),t_{k+1}).
%\end{split}
%\end{equation*}
\medskip

Combining the above arguments, we derive that
\[\mu_n(B)\lesssim r^{ \min_{a\in\mathcal{U}_n}\{\mathfrak{s}({\bf u}(n),{\bf v}(n),a)\}}=r^{ \min_{1\le i\le d}\{s({\bf u}(n),{\bf v}(n),i)\}},\]
%Recall that  $s({\bf u},{\bf v})=\min_{1\le i\le d}s({\bf u},{\bf v}, i)$. 
where the last equality follows from Lemma \ref{lemma1}.  
Hence, for any $s<t=t(({\bf u}(n))_{n\in\N},({\bf v}(n))_{n\in\N})$ and sufficiently large $n\in\mathcal{N}$, by the definition of $t$, we have 
\[\mu_n(B)\lesssim r^s\]
for all balls $B$ in $X$. It follows from Lemma \ref{lemmaforlb} that $\limsup_{n\in\mathcal{N},n\to \infty}E_n\in\mathcal{G}^s(X)$ for any $ s<t$. Then, by definition, $\limsup_{n\in\mathcal{N},n\to \infty}E_n\in\mathcal{G}^{t}(X)$, and thus
\[\limsup_{n\to\infty}\bigcup_{1\le j\le k_n}f_n\Big({\bf x}_{n,j}+\prod_{i=1}^dB_{i,n}\Big)\in \mathcal{G}^{t}(X).\]
The proof is complete.
% {\color{blue}The statement provided with Condition (2) can be shown in the same way.}
\end{proof}

\subsection{Proof of Theorem \ref{MTP2}}

% By Lemma \ref{lemma2}, we may assume that $u_i<v_i$, $1\le i\le d$.
Since $v_{i,n}\to v_i$, $u_{i,n}\to u_i$ as $n\to\infty$ for any $1\le i\le d$, $r_n\to0$ as $n\to\infty$ and the condition ({\bf H}) holds,
it follows that for any $\eta >0$ there exist an integer $M_1=M_1(\eta)\in \mathbb{N}$, a sequence of points $\{{\bf y}_{i,n}\}_{n\in\N}\subset Y_i$ for any $1\leq i\leq d$, a sequence of real numbers $\{c_n\}_{n\in\N}\subset[1,+\infty)$ and  a local isometry $f:Y\to X$ with the same constant $\varepsilon_0>0$ as $(f_n)_{n\in\N}$ associated with \eqref{locisodef} that satisfy the following statements:
\begin{itemize}
    \item[(i)] for all $n\ge M_1$, we have
 %   \begin{eqnarray*}\label{uinge1eta2}
    %    u_{i,n}>(1-\eta/2)\cdot u_i\quad \forall \  1\le i\le d\quad{\rm and}\quad \left(\sum_{i=1}^dr_n^{2(1-\eta)\cdot u_i}\right)^{1/2}<\frac{\varepsilon_0}{4}.\medskip
   % \end{eqnarray*}
   % {\bf HZN: It seems that the lower bound on $u_{i,n}$ isn't enough to get (4.29), then I rewrite it as follows.}{\bf \color{brown}H: I agree.}\\
   \begin{eqnarray}\label{uinge1eta2}
     (1-\frac{\eta}{2})\cdot u_i < u_{i,n} <(1+\frac{\eta}{2})\cdot u_i\quad \forall \  1\le i\le d\quad{\rm and}\quad \left(\sum_{i=1}^dr_n^{2(1-\eta)\cdot u_i}\right)^{1/2}<\frac{\varepsilon_0}{4}.\medskip
    \end{eqnarray}
%     \item[(ii)] for any $n\ge M_1$, we have  $c_n<r_n^{-(\min_iu_i)\eta/2}$, where $(c_n)_{n\in\N}$ is given in condition ({\bf H}).
% \medskip
\item[(ii)] $c_n\leq r_n^{-\eta\cdot\min_{1\leq i\leq d}u_i/2}$ for any $n\geq M_1$.
\medskip

\item[(iii)] for  
any $n\in\N$ and any rectangle $R=\prod_{i=1}^d B({\bf x}_i,r_i)\subset Y$ with ${\rm diam}(R)<\varepsilon_0$, there are some vectors $({\bf y}_{R,i,n})_{n\in\N}$ in $Y_i$ for any $1\leq i\leq d$ that satisfy
\begin{eqnarray*}\label{repalacefnbyf}
    f\Big(\prod_{i=1}^d B({\bf y}_{R,i,n},c_n^{-1}r_i)\Big)\subset f_n(R)\subset f\Big(\prod_{i=1}^d B({\bf y}_{R,i,n},c_nr_i)\Big).
\end{eqnarray*}
\end{itemize}
% moreover 
% \[u_i<v_{i,n},\quad 1\le i\le d,\]
%  With this in mind, 
% \[c_n^{-1}>r_n^{(\min_iu_i)\eta/2}>r_n^{v_{i,n}\eta},\quad n\ge N_1.\]
Fix $\eta>0$. For the sake of simplicity, we assume that $M_1=M_1(\eta)=1$ henceforth. By the above (i)-(iii), there exists $({\bf y}_{i,n})_{n\in\N}\subset Y_i$ for any $1\leq i\leq d$ such that
\begin{eqnarray}\label{1etavin}
    f\Big(\prod_{i=1}^d B({\bf y}_{i,n},r_n^{(1+\eta)\cdot v_{i,n}})\Big)\subset f_n\Big(\prod_{i=1}^dB_{i,n}\Big)\subset f\Big(\prod_{i=1}^d B({\bf y}_{i,n},r_n^{(1-\eta)\cdot v_{i,n}})\Big)
\end{eqnarray}
and
\begin{eqnarray}\label{fbyinuinsin}
    f\Big(\prod_{i=1}^d B({\bf y}_{i,n},r_n^{(1+\eta)\cdot u_{i}})\Big)\subset f_n\Big(\prod_{i=1}^dB_{i,n}^{s_{i,n}}\Big)\subset f\Big(\prod_{i=1}^d B({\bf y}_{i,n},r_n^{(1-\eta)\cdot u_{i}})\Big).
\end{eqnarray}
For any $n\ge 1$, denote
\[R_n:=f\Big(\prod_{i=1}^dB({\bf y}_{i,n},r_n^{(1-\eta)\cdot u_i})\Big).
\]
By \eqref{fbyinuinsin} and the full measure condition for $\big(\prod_{i=1}^dB_{i,n}^{s_{i,n}}\big)_{n\in\N}$, we have
$$\mu\Big(\limsup_{n\to\infty}R_n\Big)=\mu(X)$$ and thus $$\mu\Big(\bigcup_{n=m}^\infty R_n\Big)=\mu(X)$$ for every $m\ge1$.
Therefore, for any $m\ge1$, there exists $N_m\ge m$ such that 
\[\mu\Big(\bigcup_{n=m}^{N_m}R_n\Big)>\mu(X)-\frac{1}{m}.\]
By the second inequality in \eqref{uinge1eta2}, the hypothesis of Lemma \ref{coverlemma} is satisfied for $(R_n)_{m\leq n\leq N_m}$. Then, this lemma shows that there exists $\mathcal{I}_m\subset \{m,m+1,\dots,N_m\}$ such that the sets in $(R_n) _{n\in \mathcal{I}_m}$ are disjoint and 
\[\bigcup_{n=m}^{N_m}R_n\subset \bigcup_{n\in \mathcal{I}_m}5^{\hat{c}}R_n,\qquad\text{where $\hat{c}=\frac{\max_{1\leq i\leq d}u_i}{\min_{1\leq i\leq d}u_i}$}.\]

For any $1\le i\le d$ and $m\geq1$,  denote 
\[\tilde{v}_{i,m}:=\max_{n\in\mathcal{I}_m}\{v_{i,n}\}.\]
Given any $n\in\N$, let
\[U_n:=f\Big(\prod_{i=1}^dB\big({\bf y}_{i,n},r_n^{(1+\eta)\cdot\tilde{v}_{i,m}}\big)\Big).\]
By \eqref{1etavin}, we have
\begin{eqnarray}\label{unsubsetrn}
    U_n\subset f\Big(\prod_{i=1}^dB\big({\bf y}_{i,n},r_n^{(1+\eta)\cdot v_{i,n}}\big)\Big)\subset f_n\Big(\prod_{i=1}^dB_{i,n}\Big)\subset R_n,\qquad\forall \ n\geq 1.
\end{eqnarray}
Let
$$E_m:=\bigcup_{n\in \mathcal{I}_m}U_n\qquad\forall \ m\in\N.$$ Then, by \eqref{unsubsetrn}, 
$$\limsup\limits_{m\to\infty}E_m\subset \limsup\limits_{n\to\infty} f_n\Big(\prod_{i=1}^dB_{i,n}\Big).$$
We next prove that  $\limsup_{m\to\infty}E_m\in\mathcal{G}^{s}(X)$ for any $s<s((1-\eta){\bf u},(1+\eta){\bf v})$ making use of Lemma \ref{lemmaforlb}. 

For any $m\in\N$,  define the probability measure $\mu_m$ supported on $E_m$ as
\begin{equation*}\label{measure}
\begin{split}
\mu_m:&=\sum_{n\in \mathcal{I}_m}\frac{\mu(R_n)}{\mu(\bigcup_{k\in \mathcal{I}_m}R_k)}\frac{\mu|_{U_n}}{\mu(U_n)}\\[4pt]
&\asymp \sum_{n\in \mathcal{I}_m}\frac{\mu(R_n)}{\mu(U_n)}\mu|_{U_n}.
\end{split}
\end{equation*}
 We first prove that $(\mu_m)_{m\in\N}$ satisfies inequality \eqref{lip}.  For any ball $B:=B({\bf x},r)\subset  X$ and  sufficiently large $m\in\N$, we have
\begin{equation}\label{measure1}
\begin{split}
\mu_m(B)\asymp\sum_{n\in \mathcal{I}_m}\mu(R_n) \frac{\mu(U_n\cap B)}{\mu(U_n)}
\le \sum_{n\in \mathcal{I}_m\atop R_n\cap B\ne\emptyset}\mu(R_n)\le \mu\left(\frac{3}{2}B\right)\asymp\mu(B),
\end{split}
\end{equation}
where the inequality from the third term to the fourth term follows from the fact that $\max_{n\in\mathcal{I}_m}\{{\rm diam} (R_n)\}<r/2$ when $m\in\N$ is sufficiently large. On the other hand, since $ {\rm diam} (5^{\hat{c}}R_m)<\frac{1}{4}r$ for sufficiently large $m\in\N$, we have 
\[\Big\{n\in\mathcal{I}_m\colon \frac{1}{2}B\cap 5^{\hat{c}}R_n\ne\emptyset\Big\}\subset \Big\{n\in\mathcal{I}_m\colon \frac{3}{4}B\cap U_n\ne\emptyset\Big\}.\]
Then, for any large $m\in\N$,
\begin{equation}\label{measure2}
\begin{split}
\mu_m(B)&\gtrsim \sum_{n\in \mathcal{I}_m\atop U_n\cap \frac{3}{4}B\ne\emptyset}\mu(R_n) \frac{\mu(U_n\cap B)}{\mu(U_n)}= \sum_{n\in \mathcal{I}_m\atop U_n\cap \frac{3}{4}B\ne\emptyset}\mu( R_n)\\[4pt]
&\asymp \sum_{n\in \mathcal{I}_m\atop U_n\cap \frac{3}{4}B\ne\emptyset}\mu( 5^{\hat{c}}R_n)\ge \sum_{n\in \mathcal{I}_m\atop 5^{\hat{c}}R_n\cap \frac{1}{2}B\ne\emptyset}\mu\Big(\frac{1}{2}B\cap 5^{\hat{c}}R_n\Big)\\[4pt]
&> \mu\Big(\frac{1}{2}B\Big)-\frac{1}{m}.
\end{split}
\end{equation}
Therefore,
\[1\lesssim \liminf_{m\to\infty}\frac{\mu_m(B)}{\mu(B)}\le \limsup_{m\to\infty}\frac{\mu_m(B)}{\mu(B)}\lesssim1,\]
where the implicit constant is independent of $B$.

In view of Lemma \ref{lemmaforlb}, it is left to show that for any $s<s((1-\eta){\bf u},(1+\eta){\bf v})$  there exist  $C>0$ and $N\in\N$ such that $\mu_m(B)\le  Cr^{s}$ holds for any $m\geq N$ and any  ball $B=B({\bf x},r)\subset X$. Fix $s<s((1-\eta){\bf u},(1+\eta){\bf v})$. Recall that $v_{i,n}\to v_i$ as $n\to\infty$ for any $1\leq i\leq d$. Then, by Lemma \ref{lemma2}, there exists $M_2=M_2(s,\eta)\in\mathbb{N}$ such that 
\begin{equation}\label{t}
s<\min_{1\le i\le d}\{s((1-\eta){\bf u}, (1+\eta)\widetilde{\bf v}(m),i)\}\qquad\forall \ m\ge M_2,
\end{equation} 
where $\widetilde{\bf v}(m):=(\tilde{v}_{1,m},\dots,\tilde{v}_{d,m})$.
In the following, fix $m\geq M_2$ and a ball $B=B({\bf x},r)\subset X$ with radius $r>0$. We now turn to estimating $\mu_m(B)$. 
 We divide $\mathcal{I}_m$ into several subclasses 
\[\mathcal{F}_p:=\{n\in \mathcal{I}_m\colon 2^{-p}\le r_n<2^{-p+1}\}\qquad\forall \ p\in\N.\]
Denote $p_0$ as the smallest integer among those $p\in\N$ such that $\mathcal{F}_{p}\ne\emptyset$. 
Then 
\begin{equation}\label{measure3}
\begin{split}
\mu_m(B)&\asymp\sum_{p\ge p_0}\sum_{n\in \mathcal{F}_p\atop U_n\cap B\ne \emptyset}\mu(R_n)\frac{\mu(U_n\cap B)}{\mu(U_n)}.
\end{split}
\end{equation}
Throughout, we use $\tilde{v}_{i}$ instead of $\tilde{v}_{i,m}$ and $\widetilde{{\bf v}}$ instead of $\widetilde{\bf v}(m)$ for short. Let 
\[\widetilde{\mathcal{A}}=\big\{(1+\eta)\cdot\tilde{v}_{i},\,(1-\eta)\cdot u_i\colon 1\le i\le d\big\}\]
Assume that $(1-\eta)\cdot u_1$ is the smallest element in $\widetilde{\mathcal{A}}$ and $(1+\eta')\cdot\tilde{v}_d$ is the largest.
Without loss of generality, assume that $r<1$. Then there exists unique integer $p_1\ge1$   such that 
\[2^{-p_1\cdot(1-\eta)\cdot u_1}\le r<2^{-(p_1-1)\cdot(1-\eta)\cdot u_1}.\]
We estimate the $\mu_m$-measure of $B$ by considering the following cases.\medskip

\noindent $\bullet$ \textit{Estimating $\mu_m(B)$ when $p_1<p_0$}.
In this case, $r>\max_{n\in\mathcal{I}_m}r_n^{(1-\eta)u_1}.$ It follows that any rectangle $R_n$  that intersects $B$ is contained within $3B$.
Hence,
\[\mu_m(B)\le \sum_{n\in\mathcal{I}_m\atop U_n\cap B\ne\emptyset }\mu(R_n)\le \mu(3B)\lesssim r^{\delta_0}.\]
Observe that $s<s((1-\eta){\bf u},(1+\eta){\bf v})\leq \delta_0$,  then 
\begin{equation}\label{estimate1}
\mu_m(B)\lesssim r^{\delta_0}< r^s.\medskip
\end{equation}

\noindent $\bullet$ \textit{Estimating $\mu_m(B)$ when $p_1\ge p_0$}. 
For any $p\geq p_0$, denote 
\[I_p:=\sum_{n\in \mathcal{F}_p}\mu(R_n)\frac{\mu(U_n\cap B)}{\mu(U_n)}.\]
Then, 
\begin{equation*}
\begin{split}
\mu_m(B)&\asymp\sum_{p\ge p_0}\sum_{n\in \mathcal{F}_p\atop U_n\cap B\ne \emptyset}\mu(R_n)\frac{\mu(U_n\cap B)}{\mu(U_n)}\\
&=\sum_{p= p_0}^{p_1-1}I_p+\sum_{p= p_1}^{\infty}I_p.
\end{split}
\end{equation*}
For the sake of convenience, given any $p\geq p_0$ and  $n\in \mathcal{F}_p$,  we assume that $r_n=2^{-p}$, then $\{R_n\}_{n\in\mathcal{F}_p} $ and $\{U_n\}_{n\in\mathcal{F}_p}$ share the same shape respectively. Next, we bound the value of $I_p$ according to the range of $p$.\medskip

\textit{Case 1: $p\ge p_1$}. The method is similar to that used in the case $p_1< p_0$. Note that any rectangle $R_n$ with  $n\in\bigcup_{p\ge p_1} \mathcal{F}_p$  that intersects $B$ is contained in $3B$.
From this, we derive that
\begin{equation}\label{estimate2}
\sum_{p= p_1}^{\infty}I_p \le \mu(3B)\asymp r^{\delta_0}< r^{s}.\medskip
\end{equation}

\textit{Case 2: $p<p_1$}.
In this case, $ r<2^{-(p_1-1)\cdot(1-\eta)\cdot u_1}\le 2^{-p\cdot(1-\eta)\cdot u_1}=r_n^{(1-\eta)\cdot u_1}$ for any  $n\in\mathcal{F}_p$; that is to say that $r$ is less than the largest  side of $R_n$.
Next, we estimate the value of $I_p$ 
 depending on the relationship between the size of $r$ and that of the smallest side of the rectangle $U_n$ (namely $2^{-p\cdot(1+\eta)\cdot\widetilde{v}_d}$) with $n\in\mathcal{F}_p$.

\textit{Subcase 1}:  $r<2^{-p\cdot(1+\eta)\cdot\tilde{v}_d}$. Observe that there exists $M>0$ that satisfies the following statement: \textit{if $\{R\}_{R\in\mathcal{R}}$ is a collection of  rectangles in $Y$ whose diameters are less than $\varepsilon_0$ such that $\{f(R)\}_{R\in\mathcal{R}}$ are disjoint, and $B\subseteq X$ is a ball whose radius is less than the size of the smallest side of $R$ for any $R\in\mathcal{R}$, then $\#\{R\in\mathcal{R}:f(R)\cap B\neq\emptyset\}\leq M$}. 
%{\color{red}{\bf HZN: I don't understand why the number of $f(R)$ is bounded, since $f(R)$ maybe intersects each other. But we can assume $f(R)$ are disjoint, since $U_n$ are disjoint rectangles in $X$. So in my opinion, this statement can be rewritten as follows. What do you think? Is that OK?}\textbf{\color{brown} H: You're right, $f(R)$ may overlap. But there is no need of rewriting, because the number of pre-images of $f(R)$ can be controlled.}\\
%\textit{if $\{R\}_{R\in\mathcal{R}}$ is a collection of pairwise disjoint rectangles in $X$ with $\mathrm{diam}(R)<\varepsilon_0$ for any $R\in\mathcal{R}$, and $B\subseteq X$ is a ball whose radius is less than the size of the smallest side of $R$ for any $R\in\mathcal{R}$, then $\#\{R\in\mathcal{R}:R\cap B\neq\emptyset\}\leq M$}. }
With this observation in mind,  we have
\begin{equation*}
\begin{split}
I_p&\leq\sum_{n\in\mathcal{F}_p\atop U_n\cap B\neq\emptyset}\mu(B)\cdot\frac{\mu(R_n)}{\mu(U_n)}\leq M\cdot\mu(B)\cdot\max_{n\in\mathcal{F}_p}\frac{\mu(R_n)}{\mu(U_n)}\\[4pt]
&\asymp  r^{\delta_0-\frac{1}{(1+\eta)\cdot\tilde{v}_d}\sum_{i=1}^d\delta_i((1-\eta)\cdot u_i-(1+\eta)\cdot\tilde{v}_i)}\\[4pt]
&= r^{s((1-\eta)\cdot{\bf u},(1+\eta)\cdot\widetilde{{\bf v}},d)}<r^{s}.
\end{split}
\end{equation*}

\textit{Subcase 2}: $2^{-p\cdot(1+\eta)\cdot\tilde{v}_d}\le r< 2^{-p\cdot(1-\eta)\cdot u_1}$.
Arrange the elements in $\widetilde{\mathcal{A}}$ as the following
\[
t_1<t_2<\cdots t_{\#\widetilde{\mathcal{A}}}.
\]
In this case, there exists $1\leq k\leq \#\widetilde{\mathcal{A}}$ such that 
\[2^{-p\cdot t_{k+1}}\le r<2^{-p\cdot t_k}.\]
Note that $t_k$, $t_{k+1}$ are consecutive, then we have
\[\mathcal{K}_1:=\mathcal{K}_1(t_k)=\{i:(1-\eta)\cdot u_i>t_k\}=\{i:(1-\eta)\cdot u_i\ge t_{k+1}\},\]
\[\mathcal{K}_2:=\mathcal{K}_2(t_k)=\{i:(1+\eta)\cdot\tilde{v}_i\le t_k\}=\{i:(1+\eta)\cdot\tilde{v}_i< t_{k+1}\},\]
and
\[\mathcal{K}_3:=\mathcal{K}_3(t_k)=\{1,2,\dots,d\}\setminus (\mathcal{K}_1\cup \mathcal{K}_2).\]
By definition, it follows that
\begin{itemize}
    \item[(a)]  $r\ge 2^{-p\cdot t_{k+1}}\ge  2^{-p\cdot(1-\eta)\cdot u_i}$ for any $i\in \mathcal{K}_1$.
    \medskip

    \item[(b)]  $r<2^{-p\cdot t_k}\le 2^{-p\cdot (1+\eta)\cdot\tilde{v}_i}$ for any $i\in \mathcal{K}_2$.
    \medskip

    \item[(c)]  $2^{-p\cdot(1+\eta)\cdot\tilde{v}_i}<r<2^{-p\cdot(1-\eta)\cdot u_i}$ for any $i\in \mathcal{K}_3$.
\end{itemize} 
Let $\cS_p(B):=\{n\in\mathcal{F}_p:U_n\cap B\neq\emptyset\}$. Since $f:Y\to X$ is a local isometry, it is easily seen via \eqref{uinge1eta2} that $\big\{\big(f|_{B({\bf y}_n,\varepsilon_0)}\big)^{-1}(B)\big\}_{n\in\mathcal{S}_p(B)}$ are the same ball with radius $r$, where ${\bf y}_n=({\bf y}_{1,n},{\bf y}_{2,n},...,{\bf y}_{d,n})$. Denote the ball as $B({\bf x}',r)$, where ${\bf x}'=({\bf x}'_1,{\bf x}'_2,...,{\bf x}'_d)\in Y$ satisfies $f({\bf x}')={\bf x}$ and ${\bf x}$ is the center of the ball $B$. With these notations in mind, we obtain that
\begin{eqnarray}\label{reformultecspb}
    \cS_p(B)\subseteq\left\{n\in\mathcal{F}_p:B({\bf x}',r)\cap\prod_{i=1}^d B({\bf y}_{i,n},2^{-p\cdot(1-\eta)\cdot u_i})\neq\emptyset \right\}.
\end{eqnarray}
%{\bf HZN: I think (4.37) holds for ``$\le$'' instead of ``=", that is,
%\begin{eqnarray*}\label{reformultecspb}
%    \cS_p(B)&=&  \left\{n\in\mathcal{F}_p:B({\bf x}',r)\cap\prod_{i=1}^d B({\bf y}_{i,n},2^{-p\cdot(1+\eta)\cdot \tilde{v}_i})\neq\emptyset \right\}\\
 %   &\le &   \left\{n\in\mathcal{F}_p:B({\bf x}',r)\cap\prod_{i=1}^d B({\bf y}_{i,n},2^{-p\cdot(1-\eta)\cdot u_i})\neq\emptyset \right\}.
%\end{eqnarray*}
%}
%\textbf{\color{brown}H: Yes, ``='' should be ``$\subseteq$''. I have changed.}\\
Note that the above (a)-(c) yields that all rectangles $\prod_{i=1}^d B({\bf y}_{i,n},2^{-p\cdot(1-\eta)\cdot{u}_i})$ with $n\in\mathcal{S}_p(B)$ is contained within
\begin{eqnarray*}
    H:=\prod_{i=1}^d B({\bf x}'_i,\varepsilon_i),\qquad\text{where}\ \varepsilon_i=\left\{
    \begin{aligned}
        &2r,\qquad\qquad\ \ \, \, \text{if $i\in\K_1$},\\
        &2\cdot 2^{-p\cdot(1-\eta)\cdot u_i},\ \text{if $i\in\K_2\cup\K_3$}.
    \end{aligned}
    \right.
\end{eqnarray*}
 This together with \eqref{reformultecspb} and  the volume argument, we obtain that
\begin{eqnarray*}
    \#\cS_p(B)\lesssim\prod_{i\in\K_1}\Big(\frac{r}{2^{-p\cdot(1-\eta)\cdot u_i}}\Big)^{\delta_i}.
\end{eqnarray*}
 Now, we are ready to estimate the value of $I_p$:
\begin{eqnarray*}
    I_p&=&\sum_{n\in\cS_p(B)}\mu(R_n)\frac{\mu(U_n\cap B)}{\mu(U_n)}\\[4pt]
    &\leq&\#\cS_p(B)\cdot\max\left\{\mu(R_n)\cdot\frac{\mu(U_n\cap B)}{\mu(U_n)}:n\in\cS_p(B)\right\}\\[4pt]
    &\asymp& \#\cS_p(B)\cdot\max\Big\{\nu\big(\prod_{i=1}^d B({\bf y}_{i,n},2^{-p\cdot(1-\eta)\cdot u_i})\big)\\[4pt]
    &~&\hspace{25ex}\times\frac{\nu\big(\prod_{i=1}^{d}B({\bf y}_{i,n},2^{-p\cdot(1+\eta)\cdot\widetilde{v}_i})\cap B({\bf x}',r))\big)}{\nu\big(\prod_{i=1}^{d}B({\bf y}_{i,n},2^{-p\cdot(1+\eta)\cdot\widetilde{v}_i})\big)}:n\in\cS_p(B)\Big\}\\[4pt]
    &\lesssim& \prod_{i\in\K_1}\Big(\frac{r}{2^{-p\cdot(1-\eta)\cdot u_i}}\Big)^{\delta_i}\cdot\prod_{i=1}^d 2^{-p\cdot(1-\eta)\cdot u_i\cdot\delta_i}\cdot\prod_{i=1}^d 2^{p\cdot(1+\eta)\cdot\widetilde{ v}_i\cdot\delta_i}\cdot\prod_{i=1}^d\big(\min_{1\leq i\leq d}\{2^{-p\cdot(1+\eta)\cdot\widetilde{v}_i\cdot\delta_i},r\}\big)^{\delta_i}\\[4pt]
    &=& \prod_{i\in \mathcal{K}_1}\frac{r^{\delta_i}}{2^{-p\cdot(1-\eta)\cdot u_i\cdot\delta_i}}\cdot\prod_{i\in \mathcal{K}_1\cup \mathcal{K}_3}2^{-p\cdot(1+\eta)\cdot\tilde{v}_i\delta_i}\cdot\prod_{i\in \mathcal{K}_2}r\cdot\prod_{i=1}^d2^{-p\cdot\left((1-\eta)\cdot u_i-(1+\eta)\cdot\tilde{v}_i\right)\cdot\delta_i}\\[4pt]
    &=& 2^{-p\big(\sum_{i\in \mathcal{K}_2}\delta_i((1-\eta)\cdot u_i-(1+\eta)\cdot\tilde{v}_i)+\sum_{i\in \mathcal{K}_3}\delta_i(1-\eta)u_i\big)}\cdot \prod_{i\in \mathcal{K}_1\cup \mathcal{K}_2}r^{\delta_i}
\end{eqnarray*}
We finish estimating $I_p$ by considering two cases:
\begin{itemize}
    \item[$\circ$]  If $\sum_{i\in \mathcal{K}_2}\delta_i((1-\eta)\cdot u_i-(1+\eta)\cdot\tilde{v}_i)+\sum_{i\in \mathcal{K}_3}\delta_i(1-\eta)u_i\ge 0$, then by combining Lemma \ref{elements in an} and the fact that $r\ge 2^{-p\cdot t_{k+1}}$, we obtain that
\begin{equation*}
\begin{split}
I_p&\lesssim r^{\sum_{i\in \mathcal{K}_1\cup \mathcal{K}_2}\delta_i+\frac{\sum_{i\in \mathcal{K}_2}\delta_i(u_i(1-\eta)-\tilde{v}_i(1+\eta))+\sum_{i\in \mathcal{K}_3}\delta_iu_i(1-\eta)}{t_{k+1}}}\\[4pt]
&= r^{\mathfrak{s}((1-\eta){\bf u},(1+\eta)\widetilde{{\bf v}},t_{k+1})}.%<r^{t+\eta}<r^t 2^{-pu_1\eta(1-\varepsilon)}.
\end{split}
\end{equation*}
\vspace*{1ex}

\item[$\circ$]  If $\sum_{i\in \mathcal{K}_2}\delta_i((1-\eta)u_i-(1+\eta)\tilde{v}_i)+\sum_{i\in \mathcal{K}_3}\delta_i(1-\eta)u_i< 0$, since $r\le 2^{-p\cdot t_k}$, we have
\begin{equation*}
\begin{split}
I_p&\lesssim r^{\sum_{i\in  \mathcal{K}_1\cup  \mathcal{K}_2}\delta_i+\frac{\sum_{i\in  \mathcal{K}_2}\delta_i((1-\eta)u_i-(1+\eta)\tilde{v}_i)+\sum_{i\in  \mathcal{K}_3}\delta_i(1-\eta)u_i}{t_{k}}}\\[4pt]
&=r^{\mathfrak{s}((1-\eta){\bf u},(1+\eta)\widetilde{{\bf v}},t_k)}.%<r^{t+\eta}<r^t 2^{-pu_1\eta(1-\varepsilon)}.
\end{split}
\end{equation*}
\end{itemize}
\medskip

On combining the above Subcase 1, Subcase 2, Lemma \ref{lemma1} and \eqref{t}, we obtain that for any $p<p_1$, 
\[I_p\lesssim r^{\min_{a\in \widetilde{\mathcal{A}}}\{\mathfrak{s}((1-\eta){\bf u},(1+\eta)\widetilde{{\bf v}},a)\}}=r^{\min_{1\leq i\leq d}s((1-\eta){\bf u},(1+\eta)\widetilde{v}(m),i)}<r^{s}.\]
%Combing Case 1, we have
Therefore, for any $\varepsilon>0$,
\begin{equation}\label{estimate3} 
\sum_{p= p_0}^{p_1-1}I_p\lesssim p_1\cdot r^s\asymp\log 1/r\cdot r^s\lesssim r^{s-\varepsilon}.
\end{equation}
Then, for any $s<s((1-\eta){\bf u},(1+\eta){\bf v})$ and $\varepsilon>0$, \eqref{estimate1}--\eqref{estimate3} give that 
\[\mu_m(B)\lesssim r^{s-\varepsilon}.\]
Applying Lemma \ref{lemmaforlb}, we obtain that $\limsup_{m\to\infty}E_m\in \mathcal{G}^s( X)$ for any $s< s((1-\eta){\bf u},(1+\eta){\bf v})$ for any $\eta>0$ and thus $\limsup_{m\to\infty}E_m\in \mathcal{G}^{ s({\bf u},{\bf v}) }( X)$. 
The proof is complete.

\section{Proof of Theorem \ref{application1}}\label{proofof28}

%\subsection{Proof of Theorem \ref{application1}}
Recall that$\mathcal{A}=(A_n)_{n\in\N}$ is a sequence of $d\times d$ invertible real matrices, $\{\sigma_i(A_n)\}_{i=1}^d$ is the singular values of $A_n$ and $l_{n,i}:=\frac{1}{n}\log \sigma_i(A_n)$ for any $n\in\N$ and $1\le i\le d$. Given any ${\bf y}\in[0,1)^d$, denote
\[F_n({\bf y}):=\{\mathbf{x}\in  [0,1)^d\colon A_n\mathbf{x}~(\bmod~1) =\mathbf{y}\}.\]
For any ${\bf x}\in\R^d$ and $r>0$, denote $B_{\R^d}({\bf x},r)$ the open ball in $\R^d$ centered at ${\bf x}$ with radius $r$  under the Euclidean norm, namely the $L^2$ norm.
\begin{lem}\label{number:ellipse}
Let $\mathcal{A}=(A_n)_{n\in\N}%\subset \mathbb{Z}^{d\times d}
$ be a sequence of $d\times d$ invertible real matrices whose singular values are denoted by $(\sigma_i(A_n))_{i=1}^d$ for any $n\in\N$.   Assume that $\Gamma(\mathcal{A})\subset (\R^+)^d$. Then 
\begin{itemize}
\item[(1)] for sufficiently large $n\in\N$ {and for any ${\bf y}\in[0,1)^d$}, we have
\begin{equation*}
\#F_n({\bf y})\asymp \prod_{i=1}^de^{nl_{n,i}};
\end{equation*}
\item[(2)]given $B:=B(\mathbf{x},r)\subset  [0,1)^d$, for sufficiently large $n\in\N$, we have
\[\#(B\cap F_n({\bf y}))\asymp r^d\prod_{i=1}^de^{nl_{n,i}}.\]
\end{itemize}
\end{lem}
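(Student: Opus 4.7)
The plan is to identify $F_n({\bf y})$ with a translate of the lattice $\Lambda_n = A_n^{-1}\mathbb{Z}^d$ intersected with a convex region, and then apply classical lattice-point counting. Indeed, ${\bf x}\in[0,1)^d$ satisfies $A_n{\bf x}\equiv{\bf y}\ (\bmod\,1)$ if and only if ${\bf x}\in A_n^{-1}{\bf y}+\Lambda_n$, so
\[
F_n({\bf y})=(A_n^{-1}{\bf y}+\Lambda_n)\cap[0,1)^d\qquad\text{and}\qquad B\cap F_n({\bf y})=(A_n^{-1}{\bf y}+\Lambda_n)\cap B.
\]
Both claims therefore reduce to the statement that, for a suitable convex region $K$, the number of points of any translate of $\Lambda_n$ inside $K$ is comparable to $\mathcal{L}^d(K)/\mathrm{covol}(\Lambda_n)$.

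The next step is to control the successive minima of $\Lambda_n$ and the covolume. The columns $A_n^{-1}{\bf e}_1,\dots,A_n^{-1}{\bf e}_d\in\Lambda_n$ are linearly independent and each has Euclidean norm at most $\sigma_1(A_n)^{-1}$ (the operator norm of $A_n^{-1}$). Hence, by Definition \ref{minina}, $m_d(\Lambda_n)\le\sigma_1(A_n)^{-1}$. The hypothesis $\Gamma(\mathcal{A})\subset(\mathbb{R}^+)^d$ yields $\liminf_{n\to\infty}\frac{1}{n}\log\sigma_1(A_n)>0$, so $m_d(\Lambda_n)\to0$ exponentially fast, and in particular all successive minima tend to $0$. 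On the other hand,
\[
\mathrm{covol}(\Lambda_n)=|\det A_n|^{-1}=\prod_{i=1}^d\sigma_i(A_n)^{-1}=\prod_{i=1}^de^{-nl_{n,i}},
\]
and by Theorem \ref{thmsm} this also equals $\prod_{i=1}^d m_i(\Lambda_n)$ up to a constant depending only on $d$.

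Finally, invoking Corollary \ref{corsm}, I would pick a basis ${\bf v}_1,\dots,{\bf v}_d$ of $\Lambda_n$ with $|{\bf v}_i|\asymp m_i(\Lambda_n)$ and let $P_n$ be the associated fundamental parallelepiped, which has volume $\mathrm{covol}(\Lambda_n)$ and diameter $\lesssim m_d(\Lambda_n)$. For $K\in\{[0,1)^d,B\}$, the tiles $P_n+{\bf z}$ with ${\bf z}\in A_n^{-1}{\bf y}+\Lambda_n$ partition $\mathbb{R}^d$, each carrying exactly one point of the translated lattice, and the union of those tiles entirely contained in $K$ covers $K$ except for an $O(m_d(\Lambda_n))$-neighbourhood of $\partial K$. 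A volume comparison then gives
\[
\#\bigl(K\cap(A_n^{-1}{\bf y}+\Lambda_n)\bigr)=\frac{\mathcal{L}^d(K)+O(m_d(\Lambda_n))}{\mathrm{covol}(\Lambda_n)}\asymp\mathcal{L}^d(K)\prod_{i=1}^de^{nl_{n,i}},
\]
valid once $n$ is large enough that $m_d(\Lambda_n)$ is much smaller than the diameter of $K$ (which holds for any fixed $B$, since $r$ is fixed while $m_d(\Lambda_n)\to 0$). Specializing to $K=[0,1)^d$ yields (1) and to $K=B({\bf x},r)$ yields (2). The main obstacle will be the quantitative boundary accounting: one must show that the implied constants depend only on the dimension $d$ and not on ${\bf y}$, on the centre ${\bf x}$ of $B$, or on $n$. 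This is handled by bounding the boundary strip by $O(\text{surface area}(K)\cdot m_d(\Lambda_n))$ and exploiting the exponential decay of $m_d(\Lambda_n)$.
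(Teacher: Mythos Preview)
Your argument is correct, but the paper takes the dual route: rather than counting points of the small-mesh lattice $\Lambda_n=A_n^{-1}\mathbb{Z}^d$ inside the fixed body $K$, it pushes forward by $A_n$ and counts points of $\mathbb{Z}^d$ inside the large ellipsoid $A_nB_{\R^d}-{\bf y}$, whose semi-axes are $r\sigma_i(A_n)=re^{nl_{n,i}}$. The count is then obtained by an elementary cube-packing and cube-covering argument (squares of side~$2$ for the lower bound, unit squares for the upper bound), without invoking successive minima or Corollary~\ref{corsm} at all. Your approach front-loads the lattice machinery that the paper only deploys later in \S\ref{poupthm1} for the upper bound of Theorem~\ref{application1}; it gives a cleaner error term ($O(m_d(\Lambda_n)/r)$ relative error, with $d$-dependent constants coming from Minkowski's second theorem) and makes the uniformity in ${\bf y}$ and ${\bf x}$ transparent, whereas the paper's forward-image argument is more self-contained and requires nothing beyond the singular value decomposition. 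One small point to tidy in your write-up: when $K=B({\bf x},r)$ is a ball in $([0,1)^d,\|\cdot\|_d)$ it may wrap around the torus, so you should first lift it to a Euclidean ball $B_{\R^d}$ as the paper does before running the tiling argument.
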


\begin{proof}
{We first prove the statement (2).  
Let $B_{\R^d}$ be the unique Euclidean ball  with the same centre  as $B$ such that $B_{\R^d}~(\bmod~1)=B$. 
For any integer $n\ge 1$, by definition, we have
\[\#F_n({\bf y})=\#\{\mathbf{x}\in B_{\R^d}\colon A_n\mathbf{x}-\mathbf{y}\in\mathbb{Z}^d\}=\#\big((A_n B_{\R^d}-\mathbf{y})\cap \mathbb{Z}^d\big),\]
where $A_n B_{\R^d}-\mathbf{y}:=\{A_n{\bf z}-{\bf y}: {\bf z}\in  B_{\R^d}\}$.
Notice that $A_nB_{\R^d}-\mathbf{y}$ contains a rectangle $R$ whose side lengths are $cre^{nl_{n,i}}$ for $1\leq i\leq d$, where $c>0$ is a constant only depending on $\mathcal{A}$. Since $\Gamma(\mathcal{A})\subset (\R^+)^d$, it follows that $l_{n,i}>0,~1\le i\le d$ for sufficiently large $n\in\N$. Therefore, the maximal number of disjoint squares with side length $2$ contained within $R$ is
\[\gtrsim \prod_{i=1}^d re^{nl_{n,i}}.\]
Combining this with the observation that any such square contains at least one integral point, we conclude that 
\[\#\big(\mathbb{Z}^d\cap (A_nB_{\R^d}-\mathbf{y})\big)\gtrsim r^d\prod_{i=1}^d e^{nl_{n,i}}.\]
To obtain the reverse inequality, note that the minimal number of unit squares required to cover $A_n B_{\R^d}-\mathbf{y}$ is
\[\lesssim  \prod_{i=1}^d re^{nl_{n,i}}.\]
 Since each of these squares contains at most one integral point, we have
\[\#\big(\mathbb{Z}^d\cap (A_n B_{\R^d}-\mathbf{y})\big)\lesssim r^d\prod_{i=1}^d e^{nl_{n,i}}.\]
Statement (1) is derived from statement (2) by taking $B=B({\bf 0},\sqrt{d})$. This completes the proof of the lemma.}
\end{proof}

{ The following arguments are helpful to verify the condition appearing in the Theorem \ref{MTP1} for shrinking target sets induced by matrix transformations.}
 
\begin{rem}\label{ellipsoidrec}
Let $r>0$ and %$B$ be a lift of $B(\mathbf{0}, r)$ 
{$B_{\R^d}:=B_{\R^d}(\mathbf{0}, r)$ be a Euclidean ball in $\R^d$}. Let $\{\sigma_{n,i}\}_{i=1}^d$ be singular values of $A_n$ and $({\bm \alpha}_{n,i})_{i=1}^d$ be unit eigenvectors of $A_n^TA_n$ with
\[A_n^TA_n{\bm \alpha}_{n,i}=\sigma_{n,i}^2{\bm \alpha}_{n,i},\quad 1\le i\le d.\]
Denote ${\bm \beta}_{n,i}:=\frac{(A_n^{T})^{-1}{\bm \alpha}_{n,i}}{|(A_n^{T})^{-1}{\bm \alpha}_{n,i}|}$, $1\le i\le d$. Consider cubes of $\R^d$
\[R:=\left\{x_1{\bm\beta}_{n,1}+\dots+x_d{\bm\beta}_{n,d}\colon |x_i|<\frac{r}{d}\right\}\subset B_{\R^d}\]
and
\[\widetilde{R}:=\left\{x_1{\bm \beta}_{n,1}+\dots+x_d{\bm \beta}_{n,d}\colon |x_i|<r\right\}\supset B_{\R^d}.\] Then, by the left multiplication of the matrix $A_n^{-1}$, we have
\[A_n^{-1}R=\left\{\sigma_{n,1}^{-1}x_1{\bm \alpha}_{n,1}+\dots+\sigma_{n,d}^{-1}x_d{\bm \alpha}_{n,d}\colon |x_i|<\frac{r}{d}\right\}\subset A_n^{-1}B_{\R^d},\]
\[A_n^{-1}\widetilde{R}=\left\{\sigma_{n,1}^{-1}x_1{\bm \alpha}_{n,1}+\dots+\sigma_{n,d}^{-1}x_d{\bm \alpha}_{n,d}\colon |x_i|<r\right\}\supset A_n^{-1}B_{\R^d}.\]
Note that $A_n^{-1}R$ and $A_n^{-1}\widetilde{R}$ are rectangles with side lengths $2\sigma_{n,i}^{-1}r=2re^{-nl_{n,i}}$, $1\le i\le d$. Therefore, the above inclusions allow us to view
each ellipsoid in $\{\mathbf{x}+A_n^{-1} B_{\R^d}(\mathbf{0}, r):~\mathbf{x}\in F_n\}$ as a rectangle with side lengths  $2re^{-nl_{n,k}}$, $1\le k\le d$. 
% encompasses a rectangle and is itself contained within another rectangle, the lengths of the sides of which are comparable to $re^{-nl_{n,k}}$, $1\le k\le d$. 
%  Thus from now on, we consider these ellipsoids as rectangles with length  $2re^{-nl_{n,k}}$, $1\le k\le d$.
\end{rem}
 
 \begin{rem}\label{localiso}

Consider the metric space $([0,1)^d,\Vert\cdot\Vert_d)$, where 
\[\Vert{\bf x}\Vert_d=\sqrt{\sum_{i=1}^d{\Vert x_i\Vert}^2},\quad \Vert x_i\Vert={\rm dist}(x,\mathbb{Z}),\quad\forall \ \bfx\in[0,1)^d.\]
 Let $O\in GL_d(\mathbb{R})$ be a rotation matrix, that is, an orthogonal matrix with $\det O=1$. Let $f:\R^d\to[0,1)^d$ be defined as 
 \begin{eqnarray}\label{rotationmod1}
     f({\bf x})=O{\bf x}\m,\qquad\forall \ {\bfx}\in\R^d.
 \end{eqnarray}
 For any ${\bf x},{\bf y}\in\R^d$ with $|{\bfx}-{\bfy}|<1/2$, it is easily verified that $\|\bfy-\bfy\|_d=|{\bf x}-{\bfy}|$, where $|\cdot |$ is the Euclidean norm. Since $O$ preserves Euclidean distance on $\R^d$, it follows that 
 \[\Vert f({\bf x})-f({\bfy})\Vert=\Vert O({\bf x}-{\bfy})\m\Vert=|O({\bf x}-{\bfy})|=|{\bf x}-{\bfy}|.\]
  Therefore, $f$ is a local isometry from $\R^d$ onto $[0,1)^d$ with $\varepsilon_0=\frac{1}{2}$. 
For any rectangle $\hat{R}\subset [0,1)^d$  (such as $A_n^{-1}B_{\R^d}({\bf 0},r_n)\m$ associated with Remark \ref{ellipsoidrec}), there exist a rotation transformation $f_{\hat{R}}$ of the form \eqref{rotationmod1} and a Euclidean rectangle $\prod_{i=1}^d B_i\subseteq[-2,2]^d$ such that for any $\bfx\in[0,1)^d$,
\[\bfx+\hat{R}=f_{\hat{R}}\left(\bfx'+\prod_{i=1}^dB_i\right)\]
for some $\bfx'\in[-2,2]^d$.  
 \end{rem}
Given an integer $n\ge1$, let
\begin{equation*}
\begin{split}
W_n (\mathcal{A},\psi,{\bfy})&:=\{\mathbf{x}\in  [0,1)^d\colon A_n\mathbf{x}~(\bmod~ 1)\in B(\mathbf{y}, \psi(n))\}.
%&\ =\bigcup_{{\bf z}\in F_n({\bf y})}\Big(A_n^{-1}B(\mathbf{y}, \psi(n))+{\bf z}\Big)\m.
\end{split}
\end{equation*}
Then by Lemma \ref{number:ellipse}, the set $W_n (\mathcal{A},\psi,{\bfy})$ consists of $L_n\asymp \prod_{k=1}^de^{nl_{n,k}}$ disjoint ellipsoids with lengths of semi-axes $\psi(n)e^{-nl_{n,k}}$, $1\le k\le d$. In the following, the estimate of the Hausdorff dimension of $ W\big(\mathcal{A},\psi,\mathbf{y}\big)$ will be divided into two parts: the lower bound and the upper bound.

\subsection{The lower bound in  Theorem \ref{application1}}

%Here $T_{n_m}(\mathbf{x})=A_{n_m}(\mathbf{x})\pmod1$.
%Since $\lim\limits_{n\to\infty}\frac{-\log\psi(n)}{n}$ exists, we get 
%\[\lim_{m\to\infty}\frac{-\log\psi(n_m)}{n_m}=\tau.\]
%{\color{red}(That's why we need assume $\lim_n\frac{-\log\psi(n)}{n}$ exists). } 

%Without loss of generality, we assume that $(n_m)_{m\in\N}=\mathbb{N}$. %Put 
%$$W_n (\psi)=\{\mathbf{x}\in  [0,1)^d\colon A_n\mathbf{x}\m 1\in B(\mathbf{y}, \psi(n))\},$$
% Then by Lemma \ref{number:ellipse}, the set $W_n (\psi)$ consists of $L_n\asymp \prod_{k=1}^de^{nl_{n,k}}$ ellipsoids with semi-axes $e^{-n(\tau+l_{n,k})}$, $1\le k\le d$.
{
For any integer $n\ge 1$, as mentioned in Remark~\ref{ellipsoidrec}, the sets $W_{n}(\mathcal{A},\psi,{\bfy})$  can be viewed as the union of rectangles (module one) with side lengths $2\psi(n)e^{-nl_{n,k}}$, $1\leq k\leq d$. With this in mind, let $(W_{n,j})_{j=1}^{L_{n}}$  be the corresponding rectangles associated with $W_{n}(\mathcal{A},\psi,{\bfy})$. As illustrated in the Remark \ref{localiso}, there exist $(\bfx_{n,j})_{j=1}^{L_n}\subseteq[-2,2]$, a rectangle $\prod_{k=1}^dB_{k,n}\subseteq[-2,2]^d$ with $r(B_{k,n})=\psi(n)e^{-nl_{n,k}}$ for any $1\leq k\leq d$ and a local isometry $f_n:[-2,2]^d\to[0,1)^d$ with $\varepsilon_0=1/2$ such that
\begin{equation*}
    W_{n,j}=f_n\Big(\bfx_{n,j}+\prod_{k=1}^dB_{k,n}\Big),\qquad\forall \ 1\leq j\leq L_n.
\end{equation*}
Let $r_n=e^{-n}$, then $r(B_{k,n})=r_n^{v_{k,n}}$ for any $1\leq k\leq d$, where
\begin{equation*}
    v_{k,n}:=l_{n,k}+\frac{\log1/\psi(n)}{n},\qquad\forall \ 1\leq k\leq d.
\end{equation*}
Let $M>\sqrt{d}$ and let $\psi_M:\R^+\to\R^+$ be the constant function  $x \mapsto M$. We denote $\{R_{n,j}\}_{j=1}^{L_n}$ the corresponding rectangles associated with $W_n(\mathcal{A},\psi_M,\bfy)$. Then, we have
\begin{equation*}
    R_{n,j}=f_n\Big(\bfx_{n,j}+\prod_{k=1}^dB_{k,n}^{s_{k,n}}\Big),\qquad\forall \ 1\leq j\leq L_n,
\end{equation*}
where for any $1\leq k\leq d$,
\begin{equation*}
    s_{k,n}:=\frac{u_{k,n}}{v_{k,n}},\qquad u_{k,n}:=l_{n,k}+\frac{\log1/M}{n}.
\end{equation*}
The range of $M$ implies that $(R_{n,j})_{j=1}^{L_n}$ covers $[0,1)^d$ for any $n\in\N$ and thus $\big(\bfx_{n,j}+\prod_{k=1}^dB_{k,n}^{s_{k,n}}\big)_{n\in\N,\,1\leq j\leq L_n}$  satisfies the local ubiquity condition \eqref{condition:measure}. Without loss of generality, assume that $\psi(n)\leq M$ for all $n\in\N$. Then $u_{k,n}\leq v_{k,n}$ for any $1\leq k\leq d$ and $n\in\N$.
%  Denote $\{\widetilde{R}_{n_m,j}\}_{j=1}^{L_{n_m}}$ the family of corresponding rectangles  $W_{n_m}(\psi_M)$, and $\{W_{n_m,j}\}_{j=1}^{L_{n_m}}$ be the ellipsoids consisting of $W_{n_m}(\psi)$. 
% By Remark \ref{ellipsoidrec}, the ellipsoids $\widetilde{R}_{n_m,j}$, $W_{n_m,j}$ can be regarded as rectangles with length $2Me^{-n_m l_{n_m,k}}$, $1\le k\le d$ and $\psi(n_m)e^{-n_ml_{n_m,k}}$, $1\le k\le d$, respectively. Notice that for $m\ge 1$, $W_{n_m}(\psi)\subset W_{n_m}(\psi_M)$, then we can assume that $W_{n_m,j}\subset \widetilde{R}_{n_m,j}$ for $1\le j\le L_{n_m}$. 
}

%Each ellipsoid is contained in a rectangle with sides $2Me^{-n_m l_{n_m,k}}$, $1\le k\le d$, and  denote  such rectangles by $\{\widetilde{R}_{n_m,j}\}_{j=1}^{L_{n_m}}$. Note that $W_{n_m} (\psi)\subset T_{n_m}^{-1}B(\mathbf{y},M)$, and each ellipsoid of $W_{n_m}(\psi)$ contains a smaller rectangle with sides $\psi(n_m)e^{-n_ml_{n_m,k}}$, $1\le k\le d$, and we denote such smaller rectangles by $\{R_{n_m,j}\}_{j=1}^{L_{n_m}}$.  
Let us choose a subsequence $(n_m)_{m\in\N}$ such that 
\begin{equation}\label{subseqconv}
    \lim_{m\to\infty}\min_{1\le k\le d}\{\underline{s}_{n_m}(\mathcal{A},\psi,k)\}=\underline{s}(\mathcal{A},\psi)
\end{equation}
and
\[
\lim_{m\to\infty}u_{k,n_m}=u_k, \quad\lim_{m\to\infty}v_{k,n_m}=v_k,\quad \forall \ 1\leq k\leq d.
\]
for some ${\bf u}=(u_1,...,u_d)\in(\R^+)^d$ and ${\bf v}=(v_1,...,v_d)\in(\R^+\cup\{\infty\})^d$.
In the following, we will use Theorem \ref{MTP1} to estimate the Hausdorff dimensional lower bound of subset of $W(\mathcal{A},\psi,{\bfy})$ defined as
\begin{equation*}
\begin{split}
\widetilde{W} (\mathcal{A},\psi,{\bfy})&:=\{\mathbf{x}\in  [0,1)^d\colon A_{n_m}\mathbf{x}\m\in B(\mathbf{y}, \psi(n_m))~ {\rm for~infinitely~many}~m\in\N\}\\[4pt]
&=\limsup_{m\to\infty}W_{n_m}(\mathcal{A},\psi,{\bfy}).
\end{split}
\end{equation*}
Note that previous arguments demonstrate that
\begin{itemize}
    \item[(i)]  $(B_{i,n_m})_{1\leq i \leq d,m\in\N}$, $({\bf v}(n_m))_{m\in\N}$ and ${\bf v}$ satisfy \eqref{conofbandv};\\[4pt]
    \item[(ii)] $({\bf u}(n_m))_{m\in\N}$ and ${\bf u}$ satisfy \eqref{conditiononu};\\[4pt]
    \item[(iii)]the sequence $\big({\bf x}_{n,j}+\prod_{i=1}^dB_{i,n}^{s_{i,n}}\big)_{1\le j\le k_n, n\in\N}$ satisfies  \eqref{condition:measure}. 
    
\end{itemize}
That is to say that all conditions of Theorem \ref{MTP1} are satisfied.
Then, on combining Lemma \ref{lemma2} with Theorem \ref{MTP1}, we obtain that
\begin{eqnarray*}
\begin{aligned}
    \dim_{\rm H} W\big(\mathcal{A},\psi,\mathbf{y}\big)&\ge\dim_{\rm H}\widetilde{W}(\mathcal{A},\psi,{\bfy})  \ge s({\bf u},{\bf v})=\lim_{m\to\infty}\min_{1\le k\le d}\{s({\bf u}(n_m),{\bf v}(n_m),k)\}.
\end{aligned}
\end{eqnarray*}
%for any $\bm{\ell}\in \Gamma$. 
Observe that
\[s({\bf u}(n_m),{\bf v}(n_m),k)=\underline{s}_{n_m}(\mathcal{A},\psi,k),\qquad\forall \ 1\leq k\leq d,\quad \forall \ m\in\N.\]
This together with \eqref{subseqconv} implies that $\dimh W(\mathcal{A},\psi,{\bf y})\ge \underline{s}(\mathcal{A},\psi)$.

\subsection{The upper bound in  Theorem \ref{application1}}\label{poupthm1}

Let $r_{n,k}=e^{-n(\tau_n+l_{n,k})}$, $1\le k\le d$.
Recall that
 \begin{eqnarray*}
W_n(\psi)&:=&\big\{\mathbf{x}\in [0,1)^d\colon A_n\mathbf{x}\m\in B\big(\mathbf{y}, \psi(n)\big)\big\}\\[4pt]
&=&\bigcup_{\mathbf{z}\in F_n}\Big(\mathbf{z}+A_n^{-1}B_{\R^d}(\mathbf{y},\psi(n))\Big)\m,
 \end{eqnarray*}
 where $F_n=F_n({\bf 0})=\{\mathbf{z}\in[0,1)^d:A_n\mathbf{z}\in\mathbb{Z}^d\}=[0,1)^d\cap A_n^{-1}\Z^d${.
These ellipsoids are denoted by $W_{n,j}$ $(1\le j\le L_n)$, where 
\begin{eqnarray}\label{estmofln}
L_n:=\#F_n\asymp\prod_{i=1}^de^{nl_{n,k}}.
\end{eqnarray}
The asymptotic estimate in \eqref{number:ellipse} follows from Lemma \ref{number:ellipse}

For any $n\in\N$, let $\Lambda_n=A_n^{-1}\mathbb{Z}^d$,} and let $\{{\bf v}_{n,i}:1\le i\le d\}$ be the basis of $\Lambda_n$ given in  Corollary \ref{corsm}. Given $1\leq k\leq d$, consider the parallelepiped
\begin{eqnarray}\label{defofpnk}
    P_{n,k}=\left\{\sum_{j=1}^dr{\bf v}_{n,j}:0\le r\le r_{n,k}\right\}.
\end{eqnarray}
By Corollary \ref{corsm}, the vectors $\{{\bf v}_{n,i}:1\leq i\leq d\}$ are ``almost orthogonal'', which implies that $|P_{n,k} |\asymp r_{n,k}$. Here, $|E|$ stands for the diameter of $E\subseteq\R^d$ under the Euclidean norm.
% In the following, we estimate the number of sets of the form
% \begin{eqnarray}\label{setsofform}
%     \bfx+ P_{n,k}\m,\qquad\bfx\in\R^d
% \end{eqnarray}
% needed to cover $W_n(\psi)$. 

Fix $n\in\N$ and $1\leq k\leq d$. A standard covering argument yields that there exists a finite set $\mathcal{P}_{n,k}\subseteq\R^d$ such that
\begin{eqnarray}\label{stacovarg}
    A_n^{-1}B_{\R^d}(\bfy,\psi(n))\subseteq\bigcup_{\bfx\in\mathcal{P}_{n,k}}(\bfx+P_{n,k})\qquad\mathrm{and}\qquad\#\mathcal{P}_{n,k}\lesssim\prod_{i\in \mathcal{K}_{n,2}(k)}\frac{r_{n,i}}{r_{n,k}}.
\end{eqnarray}
Let $\widetilde{P}_{n,k}$ be the set defined similarly as \eqref{defofpnk} with $r_{n,k}$ replaced by $2r_{n,k}$. Note that  ${\bf v}+P_{n,k}\subseteq\widetilde{P}_{n,k}$ for any ${\bf v}\in P_{n,k}$. It follows that
\begin{eqnarray}\label{total}
\bigcup_{{\bfx}\in\mathcal{P}_{n,k}}(\bfx+\widetilde{P}_{n,k})\supseteq\bigcup_{{\bf v}\in P_{n,k}\cap\Lambda_n}\bigcup_{\bfx\in\mathcal{P}_{n,k}}({\bf v}+\bfx+P_{n,k})\supseteq\bigcup_{{\bf v}\in P_{n,k}\cap\Lambda_n}\big({\bf v}+A_{n}^{-1}B_{\R^d}(\bfy,\psi(n)\big).
\end{eqnarray}
Recall from Corollary \ref{corsm} that $|{\bf v}_{n,i}|\asymp m_i(\Lambda_n)$.  Then, we have
\begin{eqnarray}\label{local}
   \#(P_{n,k}\cap\Lambda_n)\asymp\prod_{i\in\Gamma_n(k)}\frac{r_{n,k}}{m_i(\Lambda_n)}. 
\end{eqnarray}
On combining \eqref{stacovarg}, \eqref{total} and \eqref{local}, we obtained that  the number of sets of the form ${\bfx}+\widetilde{P}_{n,k}$ needed to cover $W_n(\psi)$ is  
\begin{equation*}
\begin{split}
&\lesssim  \prod_{i\in L_n(k)}\frac{r_{n,i}}{r_{n,k}}\cdot \prod_{i=1}^de^{nl_{n,i}}\cdot \prod_{i\in \Gamma_n(k)}\frac{m_i(\Lambda_n)}{r_{n,k}}\\
&=  \prod_{i\in L_n(k)}e^{n(l_{n,k}-l_{n,i})}\cdot \prod_{i=1}^de^{nl_{n,j}}\cdot \prod_{i\in \Gamma_n(k)}\frac{m_i(\Lambda_n)}{r_{n,k}},
\end{split}
\end{equation*}
denoted by $N_n$. With this in mind, given $\delta >0$ and  $N$ sufficiently large such that $r_{n,k}<\delta$ for any $n \ge N$, it follows %from the definition of Hausdorff measure and for $m\ge1$, ${\color{blue}W\big((A_n)_n,\psi,\mathbf{y}\big)}\subset \bigcup_{n\ge m}W_n(\psi)$ 
that for any $t >0$,
\begin{equation*}
 \begin{split}
    \mathcal{H}_{\delta}^t( W\big(\mathcal{A},\psi,\mathbf{y}\big) )&\le  \sum_{n\ge N}\mathcal{H}_{\delta}^t(W_n(\psi))\le  \sum_{n\ge N} N_nr_{n,k}^t.
%   &=\sum_{n\ge m} e^{n(kl_k+\sum_{j> k}l_j-\sum_{j\in \mathcal{W}(k)}(\tau+l_k-l_j)-s(\tau+l_k-\epsilon))}
 \end{split}
\end{equation*}
Therefore, for any 
\begin{equation*}
\begin{split}
t& >\limsup_{n\to\infty} \frac{1}{\tau_n+l_{n,k}}\Big\{\sum_{j=1}^dl_{n,j}-\sum_{j\in L_n(k)}(l_{n,j}-l_{n,k})+\sum_{j\in \Gamma_n(k)}(\tau_n+l_{n,k}+\frac{1}{n}\log m_j(\Lambda_n))\Big\}\\[4pt]
&=\limsup_{n\to\infty}\overline{s}_n(\mathcal{A},\psi,k),
\end{split}
\end{equation*}
we have 
\begin{equation*}
\mathcal{H}^s( W\big(\mathcal{A},\psi,\mathbf{y}\big) ) \le \liminf_{N\to\infty}\sum_{n\ge N}N_nr_{n,k}^t<\infty,
\end{equation*}
which implies that
\begin{equation}\label{upb}
\begin{split}
\dim_{\rm H}  W\big(\mathcal{A},\psi,\mathbf{y}\big) & \le \min_{1\le k\le d} \Big\{\limsup_{n\to\infty}\overline{s}_n(\mathcal{A},\psi,k)\Big\}\\[4pt]
&=\overline{s}(\mathcal{A},\psi).
\end{split}
\end{equation}

%The proof of Theorem \ref{application2} is similar to that of Theorem \ref{application1}, then we omit it. 

%\begin{proof}[Proof of Theorem \ref{application3}]
\subsection{ Dimension formula for diagonal matrices}
{Let $\mathcal{A}$ be a sequence of diagonal matrices. In the previous subsections, we have established that
\[\underline{s}(\mathcal{A},\psi)\le \dim_{\rm H} W\big(\mathcal{A},\psi,{\bf y}\big)\le \overline{s}(\mathcal{A},\psi).\]
To prove the desired equality, it suffices to show that  $\underline{s}(\mathcal{A},\psi)=\overline{s}(\mathcal{A},\psi)$} in the context of diagonal matrices. 
For any $n\in\N$, write $A_n={\rm diag}(e^{nl_{n,1}}, \dots, e^{nl_{n,d}})$ and let
\[F_n({\bf y})=\{{\bf x}\in  [0,1)^d\colon A_n{\bf x}\m ={\bf y}\},\]
Note that for any ${\bf x}=(x_1,x_2,\dots,x_d)\in F_n({\bf y})$ and ${\bf y}=(y_1,y_2,\dots,y_d)$, we have 
\begin{equation*}
e^{nl_{n,i}}x_i~(\bmod~1)=y_i\quad \Longleftrightarrow\quad x_i=e^{-nl_{n,i}}y_i+e^{-nl_{n,i}}\mathbb{Z},\quad 1\le i\le d.
\end{equation*}
It follows that 
\[F_n({\bf y})=\Big\{\Big(\frac{y_1+k_1}{e^{nl_{n,1}}},\frac{y_2+k_2}{e^{nl_{n,2}}},\dots,\frac{y_d+k_d}{e^{nl_{n,d}}}\Big)\colon 0\le k_j<e^{nl_{n,j}},\, 1\le j\le d\Big\}\]
{and $\# F_n({\bf y})=\prod_{i=1}^d\lfloor e^{nl_{n,j}} \rfloor $, where $\lfloor x\rfloor:=\max\{z\in\Z:z\leq x\}$ for any $x\in\R$.}

Without loss of generality, we assume that
\[l_{n,1}\le l_{n,2}\le \cdots\le l_{n,d}.\]
{Recall that $m_i(A_n^{-1}\mathbb{Z}^d)$, $1\le i\le d$ denote the successive minima of the lattice $A_n^{-1}\mathbb{Z}^d$.} In the diagonal setting, it is easily seen that
\[m_i(A_n^{-1}\mathbb{Z}^d)=e^{-nl_{n,i}}, \quad 1\le i\le d.\]
It follows that
\begin{align*}
\Gamma_n(i)&=\{j\colon l_{n,j}\ge \tau_n+l_{n,i}\}\\[4pt]
&=\mathcal{K}_{n,1}(i)\cup \{j\colon l_{n,j}= \tau_n+l_{n,i}\}.
\end{align*}
Hence, for any $1\le i\le d$,
\begin{equation}\label{equal}
\begin{split}
\overline{s}_n(\mathcal{A},\psi,i)&=\frac{1}{\tau_n+l_{n,i}}\cdot\Big(\sum_{j=1}^dl_{n,j}-\sum_{j\in \mathcal{K}_{n,2}(i)}(l_{n,j}-l_{n,i})\\[4pt]
&\quad\hspace{18ex} +\sum_{j\in \Gamma_n(i)}(\log m_j(A_n^{-1}\mathbb{Z}^d)+\tau_n+l_{n,i})\Big)\\[4pt]
&=\underline{s}_n(\mathcal{A},\psi,i)+\frac{1}{\tau_n+l_{n,i}}\cdot\Big(\sum_{1\leq j\leq d, l_{n,j}= \tau_n+l_{n,i}}(-l_{n,j}+\tau_n+l_{n,i})\Big)\\[4pt]
&=\underline{s}_n(\mathcal{A},\psi,i).
\end{split}
\end{equation}
The proof is complete.

%Then
%\[\dim_{\rm H}  W\big((A_n)_n,\psi,\mathbf{y}\big)  \le \max_{\ell\in\Gamma} \widetilde{ s}(\ell,\tau+\ell)= \widetilde{ s}((A_n)_n,\tau)\]

%\end{proof}
\section{Proofs of Theorem \ref{counterexample}, and Example  \ref{exam1}}\label{proof13}
\subsection{Proof of Theorem \ref{counterexample}} %\hfill \\

Let $\psi:\R^+\to\R^+$ be a positive and non-increasing function. Recall that $\tau_n=\frac{1}{n}\log\psi(n)$ and $\tau$ is the lower order at infinity of $\psi$. 
 From now on, without loss of generality, we assume that  ${\bf y= 0}$. With this in mind, for any $(A_n)_{n\in\N}$, we have
 \[W_n(\psi)=\{\mathbf{x}\in  [0,1)^d\colon A_n\mathbf{x}\m\in B(\mathbf{0},\psi(n))\},\]
 and 
 \[F_n=F_n({\bf 0})=\{\mathbf{x}\in  [0,1)^d\colon A_n\mathbf{x}\m=0\}.\]
 
Before giving the proof of Theorem \ref{counterexample}, we prove two lemmas. Recall $l_{n,i}=\frac{1}{n}\log\sigma_i(A_n)$. 

 \begin{lem}\label{lambdalni} %{\color{blue}Let $(A_n)_{n\in\N}$ be as given above.} 
 Assume that  $A\in GL_d(\mathbb{Z})$ satisfies $|\det A |=1$. For $n\in\N$, Let $A_n=(\lambda A)^n$, where $\lambda\in\mathbb{N}$. %satisfies that the modulus  of eigenvalues of $\lambda A$ is strictly larger than 1.
 Then we have the following statements:
 \begin{itemize}
 \item[(1)]  For $n\in\N$, $\lambda=\exp\{\frac{1}{d}\sum_{i=1}^d l_{i,n} \}.$
 \item[(2)] $\# F_n=|\det A_n|=e^{\sum_{i=1}^d l_{i,n}},$  for $n\in\N$.
% \item[(3)] $F_n=\Big\{\begin{bmatrix}\frac{i_1}{\lambda^n}&\frac{i_2}{\lambda^n}&\cdots&\frac{i_d}{\lambda^n}\end{bmatrix}\colon 0\le i_k<\lambda^n,~1\le k\le d\Big\}.$
\item[(3)] For $n\in\N$, $F_n=\Big\{\big(\frac{i_1}{\lambda^n},\frac{i_2}{\lambda^n},\cdots,\frac{i_d}{\lambda^n}\big)^T\colon 0\le i_k<\lambda^n,~1\le k\le d\Big\}.$
 \end{itemize}
 \end{lem}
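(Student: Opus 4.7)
The plan is to exploit two structural features: that $A$ is symmetric with $|\det A|=1$, and that $A^{n}\in GL_d(\mathbb{Z})$ is unimodular for every $n$. The calculations themselves are short, so the main task is to organize the three items so that (1) and (3) essentially drive (2).

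For (1), I will first observe that since $A^{T}=A$, the singular values of $A$ are precisely the moduli of its eigenvalues. Writing $\mu_1,\dots,\mu_d$ for these eigenvalues, the matrix $A^{n}$ has eigenvalues $\mu_i^{n}$, still symmetric, so its singular values are $|\mu_i|^{n}$. Consequently $A_n=(\lambda A)^{n}=\lambda^{n}A^{n}$ has singular values $\sigma_i(A_n)=\lambda^{n}|\mu_i|^{n}$, and therefore
\[
l_{n,i}=\tfrac{1}{n}\log\sigma_i(A_n)=\log\lambda+\log|\mu_i|.
\]
Summing and using $\prod_{i=1}^{d}|\mu_i|=|\det A|=1$ gives $\sum_{i=1}^{d}l_{n,i}=d\log\lambda$, which is (1).

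For (3), the key observation is that $A^{n}\in GL_d(\mathbb{Z})$ with $|\det A^{n}|=1$, so $A^{n}$ is unimodular and $A^{-n}\mathbb{Z}^{d}=\mathbb{Z}^{d}$. Hence
\[
A_n^{-1}\mathbb{Z}^{d}=\lambda^{-n}A^{-n}\mathbb{Z}^{d}=\lambda^{-n}\mathbb{Z}^{d},
\]
and intersecting with $[0,1)^{d}$ yields exactly the claimed parametrization by $(i_1,\dots,i_d)$ with $0\le i_k<\lambda^{n}$.

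Finally, (2) follows by combining the two previous parts. From (3) one reads off $\#F_n=\lambda^{nd}$. On the other hand $|\det A_n|=\lambda^{nd}|\det A|^{n}=\lambda^{nd}$, and using (1) we rewrite $\lambda^{nd}=\exp\bigl\{n\sum_{i=1}^{d}l_{n,i}\bigr\}$, which matches the stated formula (up to the apparent typographical omission of the factor $n$ in the exponent). I do not anticipate a genuine obstacle here: the only subtle point is invoking the unimodularity of $A^{n}$ to collapse $A_n^{-1}\mathbb{Z}^{d}$ to the scaled standard lattice $\lambda^{-n}\mathbb{Z}^{d}$, and this hinges on the hypotheses $A\in GL_d(\mathbb{Z})$ and $|\det A|=1$ supplied in the statement.
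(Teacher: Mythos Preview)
Your argument is essentially correct, and for parts (3) and (2) it takes a cleaner route than the paper. The paper proves (2) first by citing an external result (Lemma~2.3 in Everest--Ward) asserting $\#F_n=|\det A_n|$, and then derives (3) by checking that the displayed set lies inside $F_n$ and matching cardinalities via (2). You invert this order: the unimodularity of $A^n$ gives $A_n^{-1}\mathbb{Z}^d=\lambda^{-n}\mathbb{Z}^d$ directly, so (3) comes out without any external input, and (2) then follows by counting. Your route is more self-contained.

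There is one genuine gap in your handling of (1). You invoke $A^T=A$ to identify $\sigma_i(A_n)$ with $\lambda^n|\mu_i|^n$, but the lemma as stated only assumes $A\in GL_d(\mathbb{Z})$ with $|\det A|=1$; symmetry is a hypothesis of the surrounding Theorem~\ref{counterexample}, not of this lemma, and the lemma is also used in Lemma~\ref{formulaequal}, which does not assume symmetry. The paper's proof of (1) avoids this by appealing only to the general identity $\prod_{i=1}^d\sigma_i(A_n)=|\det A_n|=\lambda^{dn}$, valid for any invertible matrix, which immediately yields $\sum_i l_{n,i}=d\log\lambda$. Your argument is easily repaired along these lines, and everything else stands unchanged. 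Your remark about the missing factor $n$ in the exponent of (2) is correct and matches the paper's own computation in the proof of (1).
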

 \begin{proof}
For $n\in\N$, we have $\det A_n=\prod_{i=1}^d\sigma_i(A_n)=e^{n\sum_{i=1}^d l_{i,n}}=\lambda^{dn}$, which implies that $\lambda=\exp\{\frac{1}{d}\sum_{i=1}^d l_{i,n} \}.$
 The second statement (2) follows from Lemma 2.3 in \cite{EW}.  
 
 For $n\ge1$ and $\mathbf{x}=\big(\frac{i_1}{\lambda^n},\frac{i_2}{\lambda^n},\cdots,\frac{i_d}{\lambda^n}\big)^T$, $A_n\mathbf{x}=\lambda^nA^n\mathbf{x}=A^n{\bf i}\in\mathbb{Z}^d$, where ${\bf i}=\big(i_1,i_2,\cdots,i_d\big)^T$.  Also by (2), $\#\Big\{\big(\frac{i_1}{\lambda^n},\frac{i_2}{\lambda^n},\cdots,\frac{i_d}{\lambda^n}\big)\colon 0\le i_k<\lambda^n,~1\le k\le d\Big\}=\lambda^{dn}=\#F_n$, which completes the proof of (3).
 \end{proof}
 
 \begin{lem}\label{formulaequal}
% Assuming the setting of Theorem \ref{counterexample}, we have
Assume that  $A\in GL_d(\mathbb{Z})$ satisfies $|\det A |=1$. For $n\in\N$, Let $A_n=(\lambda A)^n$, where $\lambda\in\mathbb{N}$ such that the modulus  of eigenvalues of $\lambda A$ is strictly larger than 1. Let $\psi:\R^+\to\R^+$ be a positive and non-increasing function. Then
 \[\overline{s}(\mathcal{A},\psi)=\hat{s}(\mathcal{A},\psi).\]
 \end{lem}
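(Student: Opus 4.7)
The plan is to exploit the explicit arithmetic structure of $A_n=(\lambda A)^n$ to compute the successive minima of $\Lambda_n:=A_n^{-1}\mathbb{Z}^d$ exactly, and then to show that the indices $i$ at which the correction term distinguishing $\overline{s}_n(\mathcal{A},\psi,\cdot)$ from $\hat{s}_n(\mathcal{A},\psi,\cdot)$ is nontrivial can never achieve the minimum over $i$.

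Since $A\in GL_d(\mathbb{Z})$ with $|\det A|=1$, the inverse $A^{-1}$ also belongs to $GL_d(\mathbb{Z})$, so $A^{-n}\mathbb{Z}^d=\mathbb{Z}^d$, giving $\Lambda_n=\lambda^{-n}A^{-n}\mathbb{Z}^d=\lambda^{-n}\mathbb{Z}^d$. All successive minima coincide: $m_i(\Lambda_n)=\lambda^{-n}$ for $i=1,\ldots,d$, so $h_{n,j}=\log\lambda$ for every $j$; by Lemma \ref{lambdalni}(1) this common value equals $\tfrac{1}{d}\sum_{j=1}^d l_{n,j}$. Thus $\Gamma_n(i)=\{j:h_{n,j}\geq \tau_n+l_{n,i}\}$ is either all of $\{1,\ldots,d\}$ or empty, according as $l_{n,i}\leq \log\lambda-\tau_n$ holds or not. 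Let $i^*(n)$ be the smallest index with $l_{n,i}>\log\lambda-\tau_n$; then $\overline{s}_n(\mathcal{A},\psi,i)=\hat{s}_n(\mathcal{A},\psi,i)$ for every $i\geq i^*(n)$.

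The key algebraic step concerns the remaining indices $i<i^*(n)$. Plugging $\Gamma_n(i)=\{1,\ldots,d\}$ and $h_{n,j}=\log\lambda=\tfrac{1}{d}\sum_j l_{n,j}$ into the definition of $\overline{s}_n(\mathcal{A},\psi,i)$, the terms $\sum_j l_{n,j}$ and $d\log\lambda$ cancel, yielding
\[
\overline{s}_n(\mathcal{A},\psi,i)\;=\;d+\frac{\sum_{j\in\K_{n,2}(i)}(l_{n,i}-l_{n,j})}{\tau_n+l_{n,i}}\;\geq\;d.
\]
On the other hand, since $l_{n,d}\geq \log\lambda>\log\lambda-\tau_n$ (so $d\geq i^*(n)$) and $\K_{n,2}(d)=\{j:l_{n,j}<l_{n,d}\}$, a direct computation gives
\[
\overline{s}_n(\mathcal{A},\psi,d)\;=\;\hat{s}_n(\mathcal{A},\psi,d)\;=\;\frac{d\,l_{n,d}}{\tau_n+l_{n,d}}\;<\;d
\]
whenever $\tau_n>0$. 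Hence no $i<i^*(n)$ can minimize $\overline{s}_n(\mathcal{A},\psi,\cdot)$.

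Consequently, for all sufficiently large $n$ the minimizer $i^\dagger$ of $\overline{s}_n(\mathcal{A},\psi,\cdot)$ lies in $\{i^*(n),\ldots,d\}$, where $\overline{s}_n\equiv \hat{s}_n$; thus
\[
\min_{1\leq i\leq d}\overline{s}_n(\mathcal{A},\psi,i)\;=\;\hat{s}_n(\mathcal{A},\psi,i^\dagger)\;\geq\;\min_{1\leq i\leq d}\hat{s}_n(\mathcal{A},\psi,i).
\]
Combined with the trivial opposite inequality $\overline{s}_n\leq \hat{s}_n$, this gives equality of the two minima for every sufficiently large $n$, and passing to $\limsup_{n\to\infty}$ produces the desired equality $\overline{s}(\mathcal{A},\psi)=\hat{s}(\mathcal{A},\psi)$. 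The degenerate case where all $l_{n,j}$ coincide (so $A^n$ acts as a scalar) is immediate since then $\Gamma_n(i)=\emptyset$ for every $i$ (as $\tau_n>0$) and $\overline{s}_n\equiv \hat{s}_n$; the edge case $\tau_n\leq 0$ is vacuous since $\psi(n)\geq 1$ makes $W_n$ trivial. The main point requiring care is the bookkeeping when several $l_{n,j}$ coincide, affecting the cardinality of $\K_{n,2}(i)$, but the identity $\overline{s}_n(\mathcal{A},\psi,i)=d+(\text{nonneg})$ is robust to such coincidences, so no real technical difficulty arises.
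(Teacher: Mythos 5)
Your proof is correct and rests on the same key observations as the paper's: since $|\det A|=1$ one gets $\Lambda_n=\lambda^{-n}\Z^d$, so $h_{n,j}=\log\lambda=\tfrac{1}{d}\sum_j l_{n,j}$ for all $j$, hence $\Gamma_n(i)$ is either all of $\{1,\dots,d\}$ or empty, and in the non-empty case the cancellation $\sum_j l_{n,j}=d\log\lambda$ produces the identity $\overline{s}_n(\mathcal{A},\psi,i)=d+\frac{\sum_{j\in\K_{n,2}(i)}(l_{n,i}-l_{n,j})}{\tau_n+l_{n,i}}\ge d$, while $\overline{s}_n(\mathcal{A},\psi,d)=\hat{s}_n(\mathcal{A},\psi,d)=\frac{d\,l_{n,d}}{\tau_n+l_{n,d}}\le d$.

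Where you genuinely diverge from the paper is the final step. Having shown that the minimum of $\overline{s}_n(\mathcal{A},\psi,\cdot)$ is attained at some $i^\dagger$ with $\Gamma_n(i^\dagger)=\emptyset$, the paper separately proves $\min_{1\le i\le d}\hat{s}_n(\mathcal{A},\psi,i)=\min_{i\ge i^*}\hat{s}_n(\mathcal{A},\psi,i)$ via a monotonicity chain $\hat{s}_n(\mathcal{A},\psi,k_0+1)\le\cdots\le\hat{s}_n(\mathcal{A},\psi,1)$ (introducing the auxiliary quantities $t_k=\tfrac{1}{k}\sum_{j>k}l_{n,j}$ and an inequality $t_{k_0}\ge\log\lambda-l_{n,k_0}$). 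You bypass this entirely by pairing the inequality $\min_i\overline{s}_n\ge\min_i\hat{s}_n$ (which follows from $\overline{s}_n(i^\dagger)=\hat{s}_n(i^\dagger)$) with the trivial pointwise bound $\overline{s}_n\le\hat{s}_n$. This squeeze is cleaner and makes the monotonicity lemma unnecessary, which is a modest but genuine improvement. Your treatment of the degenerate cases ($\tau_n=0$, or all $l_{n,j}$ equal) is more casual than the paper's explicit Cases (2)--(3), and the dismissal of $\tau_n\le0$ could be made precise — for instance by noting that under the standing hypotheses $\tau_n+l_{n,i}>0$ for all large $n$ and that when $\tau_n=0$ one still has $\overline{s}_n(\mathcal{A},\psi,d)=\hat{s}_n(\mathcal{A},\psi,d)=d$, so $i^\dagger=d$ still works as a minimizer — but the argument is essentially sound.
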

 \begin{proof}
 
 By Lemma \ref{lambdalni} (3), we have
 \[\frac{1}{n}\log m_k(\Lambda_n)=\log\lambda,\quad 1\le k\le d.\]
 It implies that $\Gamma_n(i)=\{1,2,\dots,d\}$ or $\emptyset$ for $1\le i\le d$. 
 
 Assume that $l_{n,1}\le l_{n,2}\le \cdots\le l_{n,d}$. Since $\log\lambda=\frac{1}{d}\sum_{j=1}^dl_{n,j}\le l_{n,d}$, we have $ \log\lambda\le \tau_n+l_{n,d}$. Depending on the size of $\tau_n$, there are three cases to
consider: $\tau_n+l_{n,k_0}\le \log\lambda<\tau_n+l_{n,k_0+1}$ for some $1\le k_0<d$, $\log\lambda<\tau_n+l_{n,1}$ and $\log\lambda=\tau+l_{n,d}$.
 
{\bf Case (1)}:\,  there is some $1\le k_0<d$ such that 
 \[\tau_n+l_{n,k_0}\le \log\lambda<\tau_n+l_{n,k_0+1}.\]
 For $i>k_0$, we get $\Gamma_n(i)=\emptyset$, then 
 \begin{equation}\label{snwsnh}
 \overline{s}_n(\mathcal{A},\psi,i)=\frac{1}{\tau_n+l_{n,i}}\left\{\sum_{j=1}^dl_{n,j}-\sum_{j\in\mathcal{K}_{n,2}(i)}(l_{n,j}-l_{n,i})\right\}=: \hat{s}_n(\mathcal{A},\psi,i).
 \end{equation}
 For $1\le i\le k_0$, we get $\Gamma_n(i)=\{1,2,\dots,d\}$. For such $i$,
 \begin{equation*}
 \begin{split}
 \overline{s}_n(\mathcal{A},\psi,i)&=\frac{1}{\tau_n+l_{n,i}}\Big(\sum_{j=1}^dl_{n,j}-\sum_{j\in \K_{n,2}(i)}(l_{n,j}-l_{n,i})
+\sum_{j=1}^d(-\log\lambda+l_{n,i}+\tau_n)\Big)\\
&=\frac{1}{\tau_n+l_{n,i}}\Big(-\sum_{j\in \K_{n,2}(i)}(l_{n,j}-l_{n,i})+d(l_{n,i}+\tau_n)\Big)\\
&=d-\frac{\sum_{j\in \K_{n,2}(i)}(l_{n,j}-l_{n,i})}{\tau_n+l_{n,i}}\ge d.
 \end{split}
 \end{equation*}
 Then
 \[\min_{1\le i\le d} \overline{s}_n(\mathcal{A},\psi,i)=\min_{k_0+1\le i\le d} \overline{s}_n(\mathcal{A},\psi,i)=\min_{k_0+1\le i\le d}\hat{s}_n(\mathcal{A},\psi,i).\]

 In the following we will show that for $\tau_n\le \log\lambda-l_{n,k_0}$,
 \begin{equation}\label{721}
 \min_{1\le i\le d}\hat{s}_n(\mathcal{A},\psi,i)=\min_{k_0+1\le i\le d}\hat{s}_n(\mathcal{A},\psi,i),
 \end{equation}
 and it suffices to prove  that for any $1\le i\le k_0$
 \[\hat{s}_n(\mathcal{A},\psi,i)\ge \hat{s}_n(\mathcal{A},\psi,k_0+1).\]
 Write $t_k:=\frac{1}{k}\sum_{j=k+1}^dl_{n,j}$, then $t_{k_0}\le t_{k_0-1}\le \cdots\le t_1$. We observe that when $\tau_n<t_k$,
 \[\hat{s}_n(\mathcal{A},\psi,k)\ge \hat{s}_n(\mathcal{A},\psi,k+1).\]
 Therefore if $\tau_n<t_{k_0}$,
 \begin{equation}\label{snk}
 \hat{s}_n(\mathcal{A},\psi,k_0+1)\le \hat{s}_n(\mathcal{A},\psi,k_0)\le \cdots\le \hat{s}_n(\mathcal{A},\psi,1).
 \end{equation}
 Since 
 \begin{equation*}
 \begin{split}
 t_{k_0}-(\log\lambda-l_{n,k_0})&=\frac{1}{k_0}\sum_{j=k_0+1}^dl_{n,j}+l_{n,k_0}-\frac{1}{d}\sum_{j=1}^dl_{n,j}\\
 &=(\frac{1}{k_0}-\frac{1}{d})\sum_{j=k_0+1}^dl_{n,j}+\frac{1}{k_0}\sum_{j=1}^{k_0}(l_{n,k_0}-l_{n,j})\ge 0,
 \end{split}
 \end{equation*}
we have that inequalities \eqref{snk} also hold when $\tau_n\le \log\lambda-l_{n,k_0}$, which gives \eqref{721}.

{\bf Case (2)}:\, If $\log\lambda<\tau_n+l_{n,1}$, then $\bigcup_{i=1}^d\Gamma_n(i)=\emptyset$, implying that \eqref{snwsnh} holds for any $i$.

{\bf Case (3)}:\,  If $\log\lambda=\tau_n+l_{n,d}$, then $\Gamma_n(i)=\{1,2,\dots,d\}$, $1\le i\le d$. By Case (1),  we have
\[ \min_{1\le i\le d}\hat{s}_n(\mathcal{A},\psi,i)=\hat{s}_n(\mathcal{A},\psi,d)= \overline{s}_n(\mathcal{A},\psi,d)= \min_{1\le i\le d}\overline{s}_n(\mathcal{A},\psi,i)\]

We finish the proof.
 \end{proof}
 
Now we are ready to prove Theorem \ref{counterexample}. Recall that in Theorem \ref{counterexample}, we consider $d=2$, $A\in GL_2(\mathbb{Z})$ satisfies $|\det A |=1$, $A^T=A$ and all the modulus  of eigenvalues of $A$ are not equal to one.  Let $\lambda\in\mathbb{Z}$ be an integer such that  $\lambda A$ is expanding, and put $\mathcal{A}=\{(\lambda A)^n\}_{n\in\N}$. For notational convenience, we assume throughout the rest of this subsection that $\lambda>1$.

 \begin{proof}[Proof of Theorem \ref{counterexample}]
It follows from Theorem \ref{application1} and Lemma \ref{formulaequal} that 
\[\dim_{\rm H}W(\mathcal{A},\psi,{\bf 0})\le \overline{s}(\mathcal{A},\psi)=\hat{s}(\mathcal{A},\psi).\]
In the following we prove that the value $\hat{s}(\mathcal{A},\psi)$ is also the lower bound on $\dim_{\rm H}W(\mathcal{A},\psi,{\bf 0})$.
Rewrite $W_n(\psi)$ as
\begin{equation*}
%\begin{split}
W_n(\psi)=\bigcup_{\mathbf{x}\in F_n}\left(A_n^{-1}B(\mathbf{0},e^{-n\tau_n})+\{\mathbf{x}\}\right)\m,
%\end{split}
\end{equation*}
then $W_n(\psi)$ is the union of finite disjoint ellipsoids with centres in $F_n$, denoted by $\{W_{n,j}\}_{j=1}^{\# F_n}$. Depending on the value of $\tau$, we will consider two cases: $\min_{i=1,2}\{\tau+l_i\}\ge\log\lambda$ and $\min_{i=1,2}\{\tau+l_i\}<\log\lambda$.

%\noindent\makebox[\linewidth]{\rule{\paperwidth}{0.4pt}}

%{\color{blue}Recall that $\mathcal{L}^2$ is the Lebesgue measure restricted on $ [0,1)^2$, and we define 
%\[\mu_n=\frac{1}{\mathcal{L}^2(W_n(\psi))}\mathcal{L}^2|_{W_n(\psi)},\]}
%which is a probability measure supported on $W_n(\psi)$. And
%{\color{blue}\[\mathcal{L}^2(W_n(\psi))=\pi e^{-2n\tau},\quad \mathcal{L}^2(W_{n,j})={\color{red}\frac{\pi e^{-2n\tau}}{e^{n(l_{n,1}+l_{n,2})}}}.\]}
%By Lemma \ref{number:ellipse} (2), the sequence $(\mu_n)_n$ satisfies inequalities \eqref{lip} in Lemma \ref{lemmaforlb}.

% \sout{In the following, we will show that there exist $s>0$, $C>0$ and $N\ge1$ such that 
% \[\mu_n(B)\le Cr^s\]
% hold for any ball $B:=B(\mathbf{z},r)\subset  [0,1)^2$ and $n\ge N$}  {\color{red}[Do we really estimate $\mu_n(B)$ in the following two cases? In Case 1, we use Theorem \ref{MTP1}; in Case 2, we work on a line. None of them estimate $\mu_n(B)$???]}. For any $\bm{\ell}\in\Gamma$ {\color{red}[does it mean that $\bm{\ell}$ is fixed in the following?]}, there are two cases to consider. 

%\subsection{Case: $\tau+\ell_1>\log\lambda$}

\subsubsection{{\bf Case 1:} $\min_{i=1,2}\{\tau+l_i\}\ge\log\lambda$}
% {\bf Case 1: $\min_{i=1,2}\{\tau+l_i\}\ge\log\lambda$}\, 

In this case, given $\epsilon>0$, there exists $\mathcal{N}$ with $\#\mathcal{N}=+\infty$ such that 
\[\log\lambda<\min_i\{\tau_n+l_{n,i}\}+\epsilon\quad n\in\mathcal{N}.\]
Then for $n\in\mathcal{N}$ and $\mathbf{x}_j\in F_n$
\[W_{n,j}:=\left(\{\mathbf{x}_j\}+A_n^{-1}B(\mathbf{0},e^{-n\tau_n})\right)\m\subset B\left(\mathbf{x}_j, 2\sqrt{2}e^{-n(\log\lambda-\epsilon)}\right). \]
Notice that  $ [0,1)^2=\bigcup_{\mathbf{x}_j\in F_n}B\left(\mathbf{x}_j,2\sqrt{2}e^{-n(\log\lambda-\epsilon)}\right)$, then applying Theorem \ref{MTP1}, we obtain that 
\[s=%\min_{1\le i\le d}\Big\{\frac{il_i+\sum_{j=i+1}^dl_j}{\tau+l_i}\Big\}
\min_{1\le i\le 2}\left\{\sum_{k=1}^2\frac{l_k}{\tau+l_i}+\sum_{k\in\K_2(i)}\frac{l_i-l_k}{\tau+l_i}\right\},\]
where  $\K_2(i)=\{k:l_k\le l_i\}.$

%\hrule

\subsubsection{{\bf Case 2:} $\min_{i=1,2}\{\tau+l_i\}<\log\lambda$}
% {\bf Case 2: $\min_{i=1,2}\{\tau+l_i\}<\log\lambda$} 

Assume that $l_1<l_2$, then $\min_{i=1,2}\{\tau+l_i\}=\tau+l_1$. This together with Lemma \ref{lambdalni} (1)     implies that $\tau<\frac{l_2-l_1}{2}$. {Recall that $\tau=\liminf\limits_{n\to\infty}\tau_n$, then there exists an infinite subset $\mathcal{N}\subseteq\N$ such that
\begin{eqnarray}\label{tannlell2}
    \tau_n<\frac{l_2-l_1}{2},\qquad\forall\ n\in\mathcal{N}.
\end{eqnarray}
 }In this case, the lower bound may be obtained in much the same way as  that used in proving \cite[Theorem 4.2]{HP}. 

In the following,  let $\bm{\beta}_1, \bm{\beta}_2$ be the unit eigenvectors (that is $|\bm{\beta}_1|=|\bm{\beta}_2|=1$) of the matrix $\lambda A$ associated with its eigenvalues $\lambda_1, \lambda_2$ respectively.  Let us first list some basic facts that are helpful in the proof:

    \begin{itemize}

    \item[(F1)] Since $A=A^T$, the eigenvalues $\lambda_1,\lambda_2$ of the matrix $\lambda A$ are real numbers and the associated eigenvectors $\bm{\beta}_1$, $\bm{\beta}_2$ are orthogonal; that is to say that their inner product $\bm{\beta}_1\cdot\bm{\beta}_2$ is equal to zero. It follows that the vectors ${\bm\beta}_1$ and ${\bm\beta}_2$ constitute an orthogonal basis of the Euclidean space $\R^2$.
    \medskip
    % It follows that for any $\bf{x}\in\R^2$, $n\in\N$ and $r>0$, we have
%     \begin{eqnarray*}\label{BALLUSEBAS}
%         B_{\R^2}({\bf{x}},r)=\left\{{\bf{x}}+y_1 {\bm\beta}_1+y_2 {\bm\beta}_2:y_1^2+y_2^2<r^2\right\}
%     \end{eqnarray*}
%     and
% \begin{eqnarray*}\label{INVOFBALLBYBAS}
%         A_n^{-1}B_{\R^2}({\bf{x}},r)=\left\{A_n^{-1}{\bf{x}}+y_1 {\bm\beta}_1+y_2 {\bm\beta}_2:\frac{y_1^2}{(\lambda_1^{-n}r)^2}+\frac{y_2^2}{(\lambda_2^{-n}r)^2}<1\right\}.\medskip
%     \end{eqnarray*}

        \item[(F2)]  Since $|\det A|=1$ and the modules of all the eigenvalues of $A$ have modules  are not equal to one, it follows that there exists an eigenvalue of $A$ with modules strictly less than one. This together with  Lemma 5.1 in \cite{HPWZ} implies that both $\lambda_1$ and $\lambda_2$ are irrational quadratic algebraic numbers. Write $\bm{\beta}_i=(\beta_{i,1},\beta_{i,2})$ where $i=1,2$, then it is easily verified that $\frac{\beta_{i,2}}{\beta_{i,1}}$ $(i=1,2)$ are also irrational quadratic algebraic numbers.\medskip

        \item[(F3)] From the above (F2), the unit eigenvector $\bm{\beta}_i$ $(i=1,2)$ is not equal to $(\pm1,0)$ or $(0,\pm1)$. With this in mind,   let $\alpha\in(0,\pi/2)$ be the angle between   $\{0\}\times\R$ and the straight line through $\bm{\beta}_1$.  Recall that $A_n=\lambda^n A^n$ for any $n\in\N$. Then, by a straightforward calculation, we obtain that
\begin{eqnarray}\label{lenofline}
\begin{aligned}
    |(\{x\}\times\R)\,\cap\, A_n^{-1}B_{\R^2}({\bf{y}},e^{-n\tau_n})|\ &\leq\ \frac{2}{\sqrt{\cos^2\alpha\cdot e^{2n(l_1+\tau_n)}+\sin^2\alpha\cdot e^{2n(l_2+\tau_n)}}}\\[4pt]
    \ &\leq\ \frac{2}{\sin\alpha}\cdot e^{-n(l_2+\tau_n)}
    \end{aligned}
\end{eqnarray}
for any $x\in\R$, ${\bf{y}}\in\R^d$ and $n\in\N$.\medskip
    \end{itemize} 

Now, given $x\in\R$, let
\[S_{x}:=\{x\}\times \R.\]
{Since $\lambda A$ is expanding, without loss of generality, we assume that $e^{-n(\tau_n+l_1)}<1/2$ for any $n\in\N$. Then the diameter of the ellipsoid $A_n^{-1}B_{\R^2}(\bm{0},e^{-n\tau_n})$ is less than one for all $n\in\N$. It follows that for any $x\in[0,1)$, $n\in\N$ and ${\bf{x}}_j\in F_n$, there exists at most one integer $z\in\mathbb{Z}\setminus\{0\}$ such that  
\[
\big({\bf{x}}_j+A_n^{-1}B_{\mathbb{R}^2}(\bm{0},e^{-n\tau_n})\big)\cap S_{x+z}\neq\emptyset.
\]
The upshot of the above discussion is that the set
\[
W_{n,j}\cap S_x=\bigcup_{z\in\mathbb{Z}}\big({\bf{x}}_j+A_n^{-1}B_{\mathbb{R}^2}(\bm{0},e^{-n\tau_n})\big)\cap S_{x+z}\m\]
is either an empty set or an interval in $\{x\}\times [0,1)$.} Denote
\begin{equation}\label{defofwnx}
    \begin{aligned}
        W_n(x)&:=\bigcup\left\{W_{n,j}\cap S_x\colon  |W_{n,j}\cap S_x|\geq\frac{1}{2}e^{-n(l_2+\tau_n)},~1\le j\le \#F_n\right\}\\[4pt]
        &=\bigcup_{j\in\cJ_n(x)}W_{n,j}\cap S_x\\[4pt]
        &=\bigcup_{j\in\cJ_n(x)}I_{n,j},
    \end{aligned}
\end{equation}
{where $\mathcal{J}_n(x)$ denotes the set of $j\in\{1,2,...,\# F_n\}$ that satisfies the inequality appearing in \eqref{defofwnx} and $I_{n,j} = I_{n,j}(x):=W_{n,j}\cap S_x$ for any $j\in\cJ_n(x)$. Then, for any $j\in\mathcal{J}_n(x)$, as mentioned above, the set $I_{n,j}$ is an interval in $\{x\}\times[0,1)$. Moreover, by \eqref{lenofline} and the definitions of $W_n(x)$ and $I_{n,j}$, the lengths of the intervals $I_{n,j}$ are comparable to $e^{-n(l_2+\tau_n)}$.} The following theorem gives a lower bound on the Hausdorff dimension of $\limsup\limits_{n\to\infty}W_n(x)$, which is crucial to obtain $\dim_{\rm H}W(\mathcal{A},\psi,\bm{0})$.

\begin{proposition}\label{dimWx}
For $n\in\N$ and $x\in [0,1)$, let $W_n(x)$ be the set defined as above. If $\tau=\liminf\limits_{n\to\infty}\tau_n<\frac{1}{2}(l_2-l_1)$, then for any $x\in[0,1)$, we have $\limsup\limits_{n\to\infty}W_n(x)\in\mathcal{G}^{s_0}([0,1))$, where
%\[s_0=\min\left\{\limsup_{n\to\infty}\frac{l_{n,2}-\tau_n}{\tau_n+l_{n,2}},1\right\}\]
\[s_0=\limsup_{n\to\infty}\frac{l_{n,2}-\tau_n}{\tau_n+l_{n,2}}.\]
%{\color{blue}HU:\, Here I think  we may only consider $A_n=\lambda^nA^n$ with $|\det A|=1$}

%{\color{red} I suggest a rewording: Assume that there exists $\bm{\ell}\in\Gamma$ such that $\tau<\frac{\max\{l_1,l_2\}-\min\{l_1,l_2\}}{2}$. Then for any $x\in[0,1)$, we have $\limsup\limits_{n\to\infty}W_n(x)\in\mathcal{G}^{s_0}([0,1))$, where \[s_0=\limsup_{n\to\infty,\,n\in\mathcal{N}}\frac{\max\{l_{n,1},l_{n,2}\}-\tau_n}{\tau_n+\max\{l_{n,1},l_{n,2}\}}\] and $\mathcal{N}=\mathcal{N}(\bm{\ell})\subseteq\N$ is an infinite subset of $\N$ that satisfies
%\[
%\lim_{n\to\infty,\,n\in\mathcal{N}}(l_{n,1},l_{n,2})=(l_1,l_2).
%\]}

\end{proposition}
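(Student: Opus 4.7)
My plan is to apply Theorem~\ref{MTP1} in dimension one (with $X=Y=[0,1)$, $\delta_1=1$, and $f_n$ the identity) to the sequence of intervals $\{I_{n,j}\}_{j\in\mathcal{J}_n(x)}$ along the subsequence $\mathcal{N}$ from \eqref{tannlell2}. Take $r_n:=e^{-n}$, $v_n:=l_{n,2}+\tau_n$ and $u_n:=l_{n,2}-\tau_n$. Since $A^T=A$ forces $\sigma_i(A_n)=|\lambda_i|^n$, we have $l_{n,i}=l_i$ for every $n$, so $s_n:=u_n/v_n\to s_0$ by Lemma~\ref{lemma2}. By \eqref{lenofline} the balls of radius $r_n^{v_n}$ match the length $\asymp e^{-n(l_2+\tau_n)}$ of each $I_{n,j}$, and the enlarged intervals $\widetilde{I}_{n,j}$ have length $\asymp r_n^{u_n}=e^{-n(l_2-\tau_n)}$.

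First I would pin down the geometry. The ellipsoid $W_{n,j}$ centered at $(p/\lambda^n, q/\lambda^n)\in F_n$ (see Lemma~\ref{lambdalni}(3)) has semi-axes $e^{-n(l_1+\tau_n)}$ and $e^{-n(l_2+\tau_n)}$ along $\bm{\beta}_1$ and $\bm{\beta}_2$, and horizontal projection of length $\asymp e^{-n(l_1+\tau_n)}\sin\alpha$. Consequently $j\in\mathcal{J}_n(x)$ precisely when $|x-p/\lambda^n|\lesssim e^{-n(l_1+\tau_n)}\sin\alpha$, in which case a direct parametrisation places the midpoint of $I_{n,j}$ at $y_{p,q}:=q/\lambda^n+(x-p/\lambda^n)\cot\alpha\pmod{1}$. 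The admissible values of $p$ number $N=N(n)\asymp \sin\alpha\cdot e^{n((l_2-l_1)/2-\tau_n)}$ while every $q\in\{0,\ldots,\lambda^n-1\}$ is admissible, giving $\#\mathcal{J}_n(x)\asymp \sin\alpha\cdot e^{n(l_2-\tau_n)}$.

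The main obstacle will be verifying the local ubiquity condition. Fix an interval $B\subset[0,1)$. Rewriting $y_{p,q}=(q-p\cot\alpha)/\lambda^n+x\cot\alpha\pmod{1}$, for each valid $p$ the set $\{y_{p,q}\}_{q=0}^{\lambda^n-1}$ is a translate of the equispaced $(1/\lambda^n)$-grid by the shift $c_p:=-p\cot\alpha/\lambda^n+x\cot\alpha$. Fact~F2 gives that $\cot\alpha=|\beta_{1,2}|/|\beta_{1,1}|$ is an irrational quadratic number, so Weyl's theorem applied to the sequence $\{p\cot\alpha\pmod{1}\}_{p}$ (which is translation-invariant in $p$) shows that $\{c_p\pmod{1/\lambda^n}\}$ becomes asymptotically equidistributed in $[0,1/\lambda^n)$ as $n\to\infty$ along $\mathcal{N}$. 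Therefore the full collection $\{y_{p,q}\}$ equidistributes in $[0,1)$ with average spacing $\asymp 1/(N\lambda^n)=e^{-n(l_2-\tau_n)}/\sin\alpha$, and since $\widetilde{I}_{n,j}$ has length equal to $\sin\alpha$ times this spacing, the enlarged intervals are pairwise disjoint and their union intersects $B$ in a set of measure at least $(\sin\alpha-o(1))|B|$ as $n\to\infty$ along $\mathcal{N}$. This establishes the local ubiquity condition \eqref{condition:measure} with constant $c=\sin\alpha/2$.

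Once ubiquity is in hand, Theorem~\ref{MTP1} delivers $\limsup_{n\in\mathcal{N}} W_n(x)\in\mathcal{G}^{s_0}([0,1))$. Since $\limsup_{n\to\infty}W_n(x)$ is a $G_\delta$ superset of $\limsup_{n\in\mathcal{N}}W_n(x)$ and the class $\mathcal{G}^{s_0}$ is monotone under passage to $G_\delta$ supersets (direct inspection of the defining identity $\mathcal{M}_\infty^t(E\cap D)=\mathcal{M}_\infty^t(D)$ using $E_1\subset E_2$), the proposition follows.
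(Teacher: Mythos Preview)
Your plan to apply Theorem~\ref{MTP1} in dimension one with $u_n=l_2-\tau_n$ and $v_n=l_2+\tau_n$ is a legitimate alternative route, and the observation that $A^T=A$ forces $l_{n,i}=l_i$ is correct. The crux, however, is the verification of local ubiquity for the enlarged intervals $\widetilde I_{n,j}$, and here your argument has a genuine gap. Invoking ``Weyl's theorem'' for $\{p\cot\alpha\pmod 1\}$ is not enough: Weyl gives only asymptotic equidistribution with no uniformity in the starting index of the block of $N\asymp e^{n((l_2-l_1)/2-\tau_n)}$ consecutive $p$'s, whereas you need the gaps between these $N$ points to be bounded above \emph{and below} by constants times $1/N$ in order to conclude both the pairwise disjointness of the $\widetilde I_{n,j}$ and the measure lower bound $\gtrsim|B|$. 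This quantitative input does follow from $\cot\alpha$ being a quadratic irrational (hence badly approximable, hence bounded partial quotients; then use the three-distance theorem), but you do not supply it, and without it neither of your two concluding claims is justified. A minor related point: you should pass to a further subsequence of $\mathcal N$ along which $\tau_n\to\tau$ before invoking Theorem~\ref{MTP1}, so that \eqref{conofbandv}--\eqref{conditiononu} hold; this also makes the assertion $s_n\to s_0$ honest (the citation of Lemma~\ref{lemma2} there is spurious---in dimension one it is just continuity of $u/v$).

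The paper proceeds differently: it bypasses Theorem~\ref{MTP1} and applies Lemma~\ref{lemmaforlb} directly, with $\mu_n$ the normalised Lebesgue measure on $W_n(x)$. The two required conditions on $\mu_n$ are checked via a two-sided counting estimate for $\#\{j:I_{n,j}\cap L\neq\emptyset\}$ (Lemma~\ref{prelemma}): the upper bound rests on a separation estimate $\mathrm{dist}(I_{n,i},I_{n,j})\gtrsim e^{-n(l_2-\tau_n)}$ (Lemma~\ref{distance}) derived from Liouville's inequality for the quadratic slope (Lemma~\ref{DISTR2PL}), and the lower bound comes from Pick's theorem applied to a suitable parallelogram. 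So both approaches ultimately exploit the same arithmetic feature of $\cot\alpha$, but the paper packages it geometrically (Liouville plus lattice-point counting feeding into Lemma~\ref{lemmaforlb}) rather than through discrepancy-type equidistribution feeding into the MTP.
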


%Note that $W_n(x)$ consists of several intervals with length $\asymp e^{-n(\tau_n+l_{n,2})}$, denoted by $\{I_{n,j}\}_{j\ge1}$.
Before proving Theorem \ref{dimWx}, we need to establish the following several lemmas. The first result, namely Lemma \ref{DISTR2PL}, concerns the distance between integer points and a line with irrational slope. It is required for estimating the gaps between the intervals in $\{I_{n,j}\}_{j\in\cJ_n(x)}$ (see Lemma \ref{distance}). Let $\mathrm{dist}_{\R^2}(\cdot)$  denote the Euclidean distance on $\R^2$.

\begin{lem}\label{DISTR2PL}
    Let ${\bm\beta}=(\beta_1,\beta_2)\in\R^2\setminus\mathbb{Q}^2$ be a  vector satisfying that $\frac{\beta_2}{\beta_1}$ is an irrational quadratic algebraic number. Then there exist constants $M_0=M_0({\bm\beta})>0$ and $C=C({\bm\beta})\in(0,1)$ such that for any $M>M_0$ and any ${\bf z}\in\mathbb{Z}^2$,
    \begin{equation}\label{distofpl}
         \mathrm{dist}_{\R^2}({\bf p},L)>M^{-1},\qquad\forall\ {\bf p}\in\mathbb{Z}^2\setminus\{{\bf z}\},
    \end{equation}
    where  $L=L({\bf z},\bm{\beta},M)\subseteq\R^2$ is the line defined as
\begin{equation}\label{defoflil}
      L:=\left\{{\bf z}+t{\bm\beta}:t\in\R,\,|t|\leq CM\right\}.
\end{equation}
\end{lem}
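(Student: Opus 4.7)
By the translation invariance ${\bf p}\mapsto {\bf p}-{\bf z}$, which sends $\mathbb{Z}^2$ to itself, the problem reduces immediately to the case ${\bf z}={\bf 0}$. Thus the line segment becomes $L=\{t{\bm\beta}:|t|\le CM\}$, and we must show that for every nonzero ${\bf p}=(p_1,p_2)\in\mathbb{Z}^2$ one has $\mathrm{dist}_{\R^2}({\bf p},L)>M^{-1}$, provided $C$ is small and $M$ is large. The plan is to decompose ${\bf p}$ in the orthonormal frame $\hat{\bm\beta}={\bm\beta}/|{\bm\beta}|$, $\hat{\bm\beta}^\perp=(-\beta_2,\beta_1)/|{\bm\beta}|$, write ${\bf p}=p_\parallel\hat{\bm\beta}+p_\perp\hat{\bm\beta}^\perp$, and analyze the two components separately.

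Suppose for contradiction that $\mathrm{dist}_{\R^2}({\bf p},L)\le M^{-1}$; then there exists $t$ with $|t|\le CM$ such that $|{\bf p}-t{\bm\beta}|\le M^{-1}$, giving the two estimates
\[
|p_\perp|\le M^{-1},\qquad |p_\parallel|\le CM|{\bm\beta}|+M^{-1}.
\]
The easy cases $p_1=0$ or $p_2=0$ are handled directly: for instance, when $p_1=0$ and $p_2\neq 0$ we have $|p_\perp|=|p_2\beta_1|/|{\bm\beta}|\ge |\beta_1|/|{\bm\beta}|$, which exceeds $M^{-1}$ for $M$ large. So we may assume $p_1p_2\neq 0$.

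At this point I would invoke Liouville's theorem: since $\beta_2/\beta_1$ is an irrational algebraic number of degree $2$, there is a constant $c=c({\bm\beta})>0$ such that
\[
\left|\frac{\beta_2}{\beta_1}-\frac{p_2}{p_1}\right|\ge \frac{c}{p_1^2}\qquad\text{for all }p_1,p_2\in\mathbb{Z},\,p_1\neq 0.
\]
This yields $|p_1\beta_2-p_2\beta_1|\ge c|\beta_1|/|p_1|$, and hence $|p_\perp|=|p_1\beta_2-p_2\beta_1|/|{\bm\beta}|\ge c|\beta_1|/(|{\bm\beta}|\cdot|p_1|)$. Combined with $|p_\perp|\le M^{-1}$, this forces the lower bound $|p_1|\ge cM|\beta_1|/|{\bm\beta}|$. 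The main (and only) subtle step is then to confront this with the upper bound on $|p_1|$ obtained from expanding ${\bf p}$ in the frame $(\hat{\bm\beta},\hat{\bm\beta}^\perp)$, which gives
\[
|p_1|\le \frac{|p_\parallel|\cdot|\beta_1|+|p_\perp|\cdot|\beta_2|}{|{\bm\beta}|}\le CM|\beta_1|+\frac{2}{M}.
\]

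Comparing the two bounds on $|p_1|$ yields
\[
\frac{c|\beta_1|}{|{\bm\beta}|}\le C|\beta_1|+\frac{2}{M^{2}},
\]
which is impossible once $C<c/(2|{\bm\beta}|)$ and $M$ is sufficiently large. So it suffices to fix $C:=\min\{c/(2|{\bm\beta}|),\,1/2\}\in(0,1)$ and then to choose $M_0$ large enough that $2/M_0^{2}<c|\beta_1|/(2|{\bm\beta}|)$ and $M_0>|{\bm\beta}|/|\beta_1|$ (to handle the trivial axis cases). This contradicts our assumption and completes the argument. The key (and essentially the only non-routine) ingredient is the Liouville-type inequality for quadratic irrationals; everything else is a linear-algebra bookkeeping exercise.
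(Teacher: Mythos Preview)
Your proof is correct and follows essentially the same approach as the paper: both arguments rest on Liouville's inequality for the quadratic irrational $\beta_2/\beta_1$ to bound the perpendicular component $|p_1\beta_2-p_2\beta_1|$ from below, and then confront this with the constraint coming from the bounded length of the segment. The only difference is organizational---the paper does a direct case split on the size of $|p_1-z_1|$ (zero, small, large), whereas you run the same dichotomy implicitly through a contradiction argument in the orthonormal frame $(\hat{\bm\beta},\hat{\bm\beta}^\perp)$; the constants and the logic match.
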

\begin{proof}
    Since $\frac{\beta_2}{\beta_1}$ is an irrational quadratic algebraic number, by Liouville's theorem, there exists a constant $C_1=C_1({\bm\beta})>0$ such that
    \begin{equation}\label{byliouthm}
        \left|\frac{\beta_2}{\beta_1}\cdot p-q\right|>\frac{C_1}{|p|},\qquad\forall\ p\in\mathbb{Z}\setminus\{0\},\ \forall\  q\in\mathbb{Z}.
    \end{equation}
    Denote $M_0=\max\{2|\bm{\beta}|/(C_1\beta_1),|\bm{\beta}|/\beta_1\}$. Throughout, fix ${\bm z}=(z_1,z_2)\in\mathbb{Z}$, $M>M_0$ and $\bm{p}=(p_1,p_2)\in\mathbb{Z}^2\setminus\{{\bm z}\}$.  Let $L\subseteq\R^2$ be the line defined as in \eqref{defoflil} with $C=\frac{C_1}{2{\bm|\beta|}}$. It is clear that the slope of the line $L$ is $\tan\alpha=\frac{\beta_2}{\beta_1}$, where $\alpha\in(-\pi/2,\pi/2)$. Now we are going to prove the desired inequality \eqref{distofpl} by considering the following cases.
\medskip

\noindent\textit{$\bullet$ Proving \eqref{distofpl} when $p_1=z_1$}. In this case, since $M>|\bm{\beta}|/\beta_1$ by the choice of $M$, we have
\begin{eqnarray*}
    \mathrm{dist}_{\R^2}({\bf p},L)=|p_2-z_2|\cdot|\cos\alpha|\geq\frac{\beta_1}{|\bm{\beta}|}>M^{-1}.\medskip
\end{eqnarray*}

\noindent\textit{$\bullet$ Proving \eqref{distofpl} when $0<|p_1-z_1|\leq2CM\beta_1$}. In this case, by the inequality \eqref{byliouthm}, we obtain that
\begin{eqnarray*}
    \mathrm{dist}_{\R^2}({\bf p},L)&=&\left|z_2+\frac{\beta_2}{\beta_1}(p_1-z_1)-p_2\right|\cdot |\cos\alpha|\\
    &>&\frac{C_1}{|p_1-z_1|}\cdot\frac{\beta_1}{|\bm{\beta}|}\\
    &\geq&\frac{C_1}{2CM\beta_1}\cdot\frac{\beta_1}{|\bm{\beta}|}
    = M^{-1}.
\end{eqnarray*}
\medskip

\noindent\textit{$\bullet$ Proving \eqref{distofpl} when $|p_1-z_1|>2CM\beta_1$}. Recall that by definition, the line $L$ is contained in the set
\[
\left\{\bm{x}=(x_1,x_2)\in\R^2:|x_1-z_1|\leq CM\beta_1\right\}.
\]
Therefore, in this case, since $M>\max\{2|\bm{\beta}|/(C_1\beta_1),1\}$ by the choice of $M$, we have
\begin{eqnarray*}
    \mathrm{dist}_{\R^2}({\bf p},L)\geq CM\beta_1=\frac{C_1M\beta_1}{2|\bm{\beta}|}>1>M^{-1}.\medskip
\end{eqnarray*}

By combining the above three cases, we have proved the desired inequality \eqref{distofpl}.
\end{proof}

%The following lemma gives the distance among these intervals. To prove it, we borrow some idea from \cite{HPWZ}.

%Notice that $B(\bm{0},e^{-n\tau_n})\subset B(\bm{0},1/3)$, which 
The distance  induced by the norm $\Vert\cdot\Vert_2$ on $[0,1)^2$ is denoted by ${\rm dist}(\cdot)$.
\begin{lem}\label{distance}For any $x\in[0,1)$ and $n\in\cN$,
\begin{equation}\label{DistT2IniInj}
    \min_{i\ne j\in\mathcal{J}_n(x)}{\rm dist}(I_{n,i},I_{n,j})\gtrsim e^{-n(l_2-\tau_n)},
\end{equation}
where the implied constant in $\gtrsim$ is independent of $x$ and $n$.
\end{lem}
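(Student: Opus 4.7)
The plan is to argue by contradiction, lifting the situation to $\R^2$ and converting proximity of two distinct intervals $I_{n,i},I_{n,j}$ into an arithmetic obstruction on an integer vector near the eigendirection $\bm{\beta}_1$. Suppose that for a constant $c>0$ to be fixed later, there exist $i\ne j\in\mathcal{J}_n(x)$ and points $\mathbf{q}_i\in I_{n,i}$, $\mathbf{q}_j\in I_{n,j}$ with torus distance $<c\,e^{-n(l_2-\tau_n)}$. I would lift $\mathbf{q}_i,\mathbf{q}_j$ to $\tilde{\mathbf{q}}_i,\tilde{\mathbf{q}}_j\in\R^2$ sharing the first coordinate $x$, so that $\tilde{\mathbf{q}}_i-\tilde{\mathbf{q}}_j=(0,y)$ with $|y|<c\,e^{-n(l_2-\tau_n)}$. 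Since every ellipsoid $W_{n,\ell}$ has diameter at most $2e^{-n(\tau_n+l_1)}<1$, each $\tilde{\mathbf{q}}_\ell$ lies in a unique lift $\tilde{\mathbf{p}}_\ell+A_n^{-1}B(\mathbf{0},e^{-n\tau_n})$ of the corresponding ellipsoid. By Lemma~\ref{lambdalni}(3) the centers satisfy $\lambda^n\mathbf{x}_\ell\in\Z^2$, so $\mathbf{w}:=\lambda^n(\tilde{\mathbf{p}}_i-\tilde{\mathbf{p}}_j)\in\Z^2\setminus\{\mathbf{0}\}$, using that $\mathbf{x}_i\ne\mathbf{x}_j$.

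Applying $A_n=\lambda^n A^n$ and the bound $|A_n(\tilde{\mathbf{q}}_\ell-\tilde{\mathbf{p}}_\ell)|\le e^{-n\tau_n}$ yields $|A^n\mathbf{w}-A_n(0,y)|\le 2e^{-n\tau_n}$. With $\alpha\in(0,\pi/2)$ as in (F3), orthogonality of $\bm{\beta}_1,\bm{\beta}_2$ from (F1) gives $(0,1)=\cos\alpha\,\bm{\beta}_1-\sin\alpha\,\bm{\beta}_2$. Using $A^n\bm{\beta}_k=(\lambda_k/\lambda)^n\bm{\beta}_k$ together with $\lambda^n=e^{n(l_1+l_2)/2}$ (the latter from $|\det(\lambda A)|=\lambda^2$), this splits into
\[
|\mathbf{w}\cdot\bm{\beta}_1-y\lambda^n\cos\alpha|\le 2e^{n((l_2-l_1)/2-\tau_n)},\qquad |\mathbf{w}\cdot\bm{\beta}_2+y\lambda^n\sin\alpha|\le 2e^{n((l_1-l_2)/2-\tau_n)}.
\]
Substituting $|y|<c\,e^{-n(l_2-\tau_n)}$ and invoking $(l_2-l_1)/2-\tau_n>0$ on $\mathcal{N}$ (by \eqref{tannlell2}) produces, for all sufficiently large $n\in\mathcal{N}$,
\[
|\mathbf{w}\cdot\bm{\beta}_1|\le 3e^{n((l_2-l_1)/2-\tau_n)}\quad\text{and}\quad |\mathbf{w}\cdot\bm{\beta}_2|\le\bigl(2e^{-2n\tau_n}+c|\sin\alpha|\bigr)e^{n((l_1-l_2)/2+\tau_n)}.
\]

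To close the argument I invoke Lemma~\ref{DISTR2PL} with $\mathbf{z}=\mathbf{0}$ and $\bm{\beta}=\bm{\beta}_1$; this is legitimate because (F2) guarantees that $\beta_{1,2}/\beta_{1,1}$ is an irrational quadratic algebraic number. Taking $M:=3e^{n((l_2-l_1)/2-\tau_n)}/C_0$ with $C_0$ the constant supplied by the lemma, the bound on $|\mathbf{w}\cdot\bm{\beta}_1|$ places the $\bm{\beta}_1$-projection of $\mathbf{w}$ inside the segment $\{t\bm{\beta}_1:|t|\le C_0 M\}$, so the perpendicular distance from $\mathbf{w}$ to this segment equals $|\mathbf{w}\cdot\bm{\beta}_2|$, and the lemma forces $|\mathbf{w}\cdot\bm{\beta}_2|>M^{-1}=\frac{C_0}{3}e^{n((l_1-l_2)/2+\tau_n)}$. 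Comparing with the upper bound above gives $\frac{C_0}{3}<2e^{-2n\tau_n}+c|\sin\alpha|$. Since the non-trivial regime has $\psi(n)\to 0$ and hence $e^{-2n\tau_n}=\psi(n)^2\to 0$, this inequality fails once $c$ is chosen small enough depending only on $C_0$ and $\alpha$, delivering the contradiction. The main obstacle is verifying the irrationality hypothesis of Lemma~\ref{DISTR2PL} for $\bm{\beta}_1$, which is precisely where the symmetry $A=A^T$ and the fact that the eigenvalues of $A$ have modulus different from one enter the picture, through facts (F1) and (F2).
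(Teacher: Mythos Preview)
Your argument is correct and rests on the same Diophantine ingredient (Lemma~\ref{DISTR2PL} together with the quadratic irrationality of the eigendirection slopes from (F2)) as the paper, but the packaging is different. The paper works forward in the target space: it encloses each ball $B_{\R^2}(\mathbf{z},2e^{-n\tau_n})$ inside an auxiliary ellipsoid $S(n,\mathbf{z})$ elongated along $\bm{\beta}_2$, applies Lemma~\ref{DISTR2PL} with $\bm{\beta}=\bm{\beta}_2$ to show that $S$ contains no other lattice point, and then pushes the resulting separation through $A_n^{-1}$ via an explicit eigen-coordinate computation. You instead argue by contradiction in the source space, pass back through $A_n$, decompose the nonzero integer vector $\mathbf{w}$ along the eigenbasis, and invoke Lemma~\ref{DISTR2PL} with $\bm{\beta}=\bm{\beta}_1$: a nonzero lattice point whose $\bm{\beta}_1$-component is at most $C_0M$ cannot have $\bm{\beta}_2$-component below $M^{-1}$. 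The two applications of the lemma are dual to one another, and your route has the minor advantage of avoiding the construction of the auxiliary set $S$, at the cost of a slightly longer chain of inequalities. One bookkeeping remark: your closing contradiction actually needs $2\psi(n)^2<C_0/3$ for large $n\in\mathcal{N}$, not merely $\psi(n)\to 0$; this is the same type of harmless reduction the paper makes when it assumes $e^{-n\tau_n}<\min\{1/4,M_0/2,C/16\}$ at the outset, so your phrasing ``the non-trivial regime'' is acceptable once made precise in that way.
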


%{\color{blue}Here it suffices to consider the distance among $W_{n,j}$ which gives the distance among $I_{n,j}$. However we need that the angle between $S_x$ and $\beta_2$ is fixed. }

\begin{proof}

     Let $M_0>0$ and $C\in(0,1)$ be the constants associated with the unit eigenvector $\bm{\beta}_2$ appearing in the statement of Lemma \ref{DISTR2PL}. In the following, without loss of generality, we assume that $e^{-n\tau_n}<\min\{1/4,M_0/2,C/16\}$ for all $n\in\N$. Then, for any ${\bf{z}}_1\neq{\bf{z}}_2\in\mathbb{Z}^2$, we obtain that
    \begin{equation}\label{BZ1Z2EMPTY}
        B_{\R^2}({\bf z}_1,2e^{-n\tau_n})\cap B_{\R^2}({\bf z}_2,2e^{-n\tau_n})=\emptyset,\qquad\forall\ n\in\N.
    \end{equation}
    Fix $\bf{z}\in\mathbb{Z}^2$ and $n\in\N$, consider the ellipsoid
    \begin{equation*}
        S=S(n,{\bf{z}}):=\left\{{\bf{z}}+y_1{\bm\beta}_1+y_2{\bm\beta}_2:\frac{y_1^2}{4e^{-2n\tau_n}}+\frac{16y_2^2}{C^2 e^{2n\tau_n}}<1\right\}.
    \end{equation*}
    The above (F1) states that the unit eigenvectors ${\bm\beta}_1$ and ${\bm\beta}_1$ are orthogonal, then it is clear that the following equality holds
    \[
    B_{\R^2}({\bf{z}},2e^{-n\tau_n})=\left\{{\bf{z}}+y_1{\bm\beta}_1+y_2{\bm\beta}_2:y_1^2+y_2^2<4e^{-2n\tau_n}\right\}
    \]
    and thus, in view of the range of $e^{-n\tau_n}$, we have $B_{\R^2}({\bf{z}},2e^{-n\tau_n})\subseteq S$. Moreover, the set $S$ is contained in
\begin{equation}\label{neiboflil}
        \left\{{\bf{x}}\in\R^2:\mathrm{dist}_{\R^2}({\bf{x}},L)<2e^{-n\tau_n}\right\},
    \end{equation}
    where $L=L(n,{\bf{z}})$ denotes the line along the direction ${\bm\beta}_2$ defined by
    \begin{equation*}
        L(n,{\bf{z}}):=\left\{{\bf z}+t{\bm\beta}_2:t\in\R,\,|t|\leq Ce^{n\tau_n}/4\right\}.
    \end{equation*}
    As mentioned in (F2) above, the real number $\frac{\beta_{2,2}}{\beta_{2,1}}$ is a quadratic algebraic number. Note that $e^{-n\tau_n}<M_0/2$ as we have assumed before, then on applying Lemma \ref{DISTR2PL} to $L$ and $M=M(n):=e^{n\tau_n}/4$, we obtain that
\begin{equation*}
    \mathrm{dist}_{\R^2}({\bf z}',L)>M^{-1}=4e^{-n\tau_n},\qquad\forall\ {\bf z}'\in\mathbb{Z}^2\setminus\{{\bf z}\}.
\end{equation*}
Combining this  with the fact that $S$ is contained within the set in \eqref{neiboflil}, we have
\begin{equation*}
    S(n,{\bf{z}})\cap B_{\R^2}({\bf{z}}',2e^{-n\tau_n})=\emptyset,\qquad\forall\ {\bf{z}'}\in\mathbb{Z}^2\setminus\{{\bf{z}}\}.
\end{equation*}
Multiplying the matrix $A_n^{-1}$ in both sides, we see that
\begin{equation}\label{An-1scapemp}
    A_n^{-1}\big(S(n,{\bf{z}})\big)\cap A_n^{-1}\big(B_{\R^2}({\bf{z}}',2e^{-n\tau_n})\big)=\emptyset,\qquad\forall\ {\bf{z}'}\in\mathbb{Z}^2\setminus\{{\bf{z}}\}.
\end{equation}

Next, we are going to estimate the Euclidean distance between the sets $A_n^{-1}B_{\R^2}({\bf{z}},e^{-n\tau_n})$ and $A_n^{-1}B_{\R^2}({\bf{z}}',e^{-n\tau_n})$ for any ${\bf{z}}\neq{\bf{z}'}\in\mathbb{Z}^2$ and $n\in\cN$. Fix such $\bfz,\bfz'$ and $n$. By combining \eqref{An-1scapemp} and the fact that $$B_{\R^2}({\bf{z}},e^{-n\tau_n})\subseteq B_{\R^2}({\bf{z}},2e^{-n\tau_n})\subseteq S(n,\bf{z}),$$ we have
\begin{equation*}
\mathrm{dist}_{\R^2}\big(A_n^{-1}B_{\R^2}({\bf{z}},e^{-n\tau_n}),A_n^{-1}B_{\R^2}({\bf{z}}',e^{-n\tau_n})\big)\geq\mathrm{dist}_{\R^2}\big(A_n^{-1}B_{\R^2}({\bf{z}},e^{-n\tau_n}),A_n^{-1}S(n,{\bf{z}})^c\big).
\end{equation*}
Now, we estimate the lower bound of right-hand-side of the above inequality. For any ${\bf{x}}\in A_n^{-1}B_{\R^2}({\bf{z}},e^{-n\tau_n})$ and ${\bf{y}}\in A_n^{-1}S(n,\bf{z})^c$, we have
\begin{equation}\label{formofx}
    {\bf{x}}=A_n^{-1}({\bf{z}})+x_1{\bm{\beta}_1}+x_2{\bm{\beta}_2}
\end{equation}
for some $x_1,x_2\in\R$ such that 
\begin{equation}\label{x1la1entaun}
    \frac{x_1^2}{\lambda_1^{-2n}e^{-2n\tau_n}}+\frac{x_2^2}{\lambda_2^{-2n}e^{-2n\tau_n}}<1,
\end{equation}
and
\begin{equation}\label{formofy}
    {\bf{y}}=A_n^{-1}({\bf{z}})+y_1{\bm{\beta}_1}+y_2{\bm{\beta}_2}
\end{equation}
for some $y_1,y_2\in\R$ such that
\begin{equation}\label{clam2entaun}
    \frac{y_1^2}{4\lambda_1^{-2n}e^{-2n\tau_n}}+\frac{16y_2^2}{C^2\lambda_2^{-2n}e^{2n\tau_n}}\geq1.
\end{equation}
By \eqref{formofx} and \eqref{formofy}, 
\begin{equation}\label{x-ygemax}
    |\bfx-\bfy|\geq\max\{|y_1-x_1|,|y_2-x_2|\}.
\end{equation}
We proceed to estimate the lower bound of the right-hand-side of \eqref{x-ygemax} by considering the following cases:
\begin{itemize}
    \item[$\circ$] If $y_2>\frac{C\lambda_2^{-n}e^{n\tau_n}}{8}$, then 
    \begin{eqnarray*}
        |y_2-x_2|&>&\frac{C\lambda_2^{-n}e^{n\tau_n}}{8}-\lambda_2^{-n}e^{-n\tau_n}\\
        &>&\frac{C\lambda_2^{-n}e^{n\tau_n}}{16},
    \end{eqnarray*}
    where the first inequality follows from \eqref{x1la1entaun} and the second inequality follows from the range of $e^{-n\tau_n}$.
    \medskip

    \item[$\circ$] If $y_2\leq\frac{C\lambda_2^{-n}e^{n\tau_n}}{8}$, then by \eqref{clam2entaun}, we have $y_1>\sqrt{3}\lambda_1^{-n}e^{-n\tau_n}$. Also note that by \eqref{tannlell2}, it holds that $\lambda_1^{-n}e^{-n\tau_n}>\lambda_2^{-n}e^{n\tau_n}$. This together with   \eqref{x1la1entaun} implies
    \begin{eqnarray*}
        |y_1-x_1|&>&\sqrt{3}\lambda_1^{-n}e^{-n\tau_n}-\lambda_1^{-n}e^{-n\tau_n}\\
        &=&(\sqrt{3}-1)\lambda_1^{-n}e^{-n\tau_n}\\
        &>&(\sqrt{3}-1)\lambda_2^{-n}e^{n\tau_n}
    \end{eqnarray*}
\end{itemize}
The upshot of the above discussion is that
\begin{eqnarray}\label{distr2br2zz'}
    \mathrm{dist}_{\R^2}\big(A_n^{-1}B_{\R^2}({\bf{z}},e^{-n\tau_n}),A_n^{-1}B_{\R^2}({\bf{z}}',e^{-n\tau_n})\big)\geq\frac{C\lambda_2^{-n}e^{n\tau_n}}{16}=\frac{Ce^{-n(l_2-\tau_n)}}{16}
\end{eqnarray}
for any $\bfz\neq\bfz'\in\mathbb{Z}^2$ and $n\in\cN$. 

We are now in a position to prove \eqref{DistT2IniInj}. Fix any $x\in[0,1)$, $n\in\cN$ and $i\neq j\in\cJ_n(x)$, there exist $\bfz\neq\bfz'\in\Z^2$ such that
\begin{eqnarray}\label{inisubsetanbrz}
    I_{n,i}\subseteq A_n^{-1}B_{\R^2}({\bf{z}},e^{-n\tau_n})\m\quad\&\quad I_{n,j}\subseteq A_n^{-1}B_{\R^2}({\bf{z}'},e^{-n\tau_n})\m.
\end{eqnarray}
To bound the distance between $I_{n,i}$ and $I_{n,j}$, note that $A_n\bfw\in\Z^2$ for any $\bfw\in\Z^2$. Then, by making use of \eqref{distr2br2zz'},   we obtain that
\begin{eqnarray*}
    \|\bfx-\bfy\|&=&\inf_{\bfw\in\Z^2}|\bfx-\bfy-\bfw|\\
    &\geq&\inf_{\bfw\in\Z^2}\dist_{\R^2}\big(A_n^{-1}B_{\R^2}({\bf{z}},e^{-n\tau_n}),A_n^{-1}B_{\R^2}({\bf{z}'}+A_n\bfw,e^{-n\tau_n})\big)\\
    &\geq&\frac{Ce^{-n(l_2-\tau_n)}}{16}
\end{eqnarray*}
for any $\bfx\in A_n^{-1}B_{\R^2}({\bf{z}},e^{-n\tau_n})$ and $\bfy\in A_n^{-1}B_{\R^2}({\bf{z}'},e^{-n\tau_n})$.
This together with \eqref{inisubsetanbrz} implies that
\[
\dist(I_{n,i},I_{n,j})\geq\frac{Ce^{-n(l_2-\tau_n)}}{16}.
\]
The proof is complete.
\end{proof}

\bigskip

In the following, we give an estimate on the number of the intervals in $\{I_{n,j}\}_{j\in\cJ_n(x)}$ intersecting any given subinterval of $\{x\}\times[0,1)$. 

\begin{lem}\label{prelemma}
Given an interval $L\subset \{x\}\times[0,1)$ with length $c_L>0$, we have 
\begin{equation}\label{countupp}
    \#\{j\colon L\cap I_{n,j}\ne\emptyset\}\leq  \lceil c_L e^{n(l_2-\tau_n)}\rceil.
\end{equation}
Moreover, if $\lambda^nc_L\ge 2$, then
\begin{equation}\label{countasym}
    \#\{j\colon L\cap I_{n,j}\ne\emptyset\}\asymp c_Le^{n(l_{2}-\tau_n)}.
\end{equation}
In particular, 
%\[a_n:=\#\{j\colon S_x\cap W_{n,j}\ne\emptyset\}\asymp e^{n(l_{n,2}-\tau_n)}.\]
\begin{equation}\label{countspec}
    \#\{I_{n,j}\subset S_x\}\asymp e^{n(l_{2}-\tau_n)}.
\end{equation}
\end{lem}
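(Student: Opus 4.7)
The plan is to handle the three parts of the lemma sequentially. I would first establish the upper bound \eqref{countupp}, then the matching lower bound \eqref{countasym} under the hypothesis $\lambda^n c_L\ge 2$, and finally deduce \eqref{countspec} by applying \eqref{countasym} with $L=\{x\}\times[0,1)$ so that $c_L=1$ and $\lambda^n\ge 2$ for all $n\ge 1$ (since $\lambda\ge 2$ has been normalised at the start of the subsection).

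For the upper bound \eqref{countupp}, the work is essentially done by Lemma \ref{distance}: any two distinct intervals $I_{n,i},I_{n,j}\subset S_x$ in $\mathcal{J}_n(x)$ are separated by $\gtrsim e^{-n(l_2-\tau_n)}$, while each individual interval has length $\lesssim e^{-n(l_2+\tau_n)}$ by \eqref{lenofline}. Because $\tau_n>0$, the gap dominates the length, so the intervals together with their gaps tile $S_x$ with period $\gtrsim e^{-n(l_2-\tau_n)}$. A standard counting argument then gives that at most $\lceil c_L e^{n(l_2-\tau_n)}\rceil$ of the $I_{n,j}$ can meet $L$, with the absolute constant absorbed into the ceiling.

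For the lower bound \eqref{countasym}, I would use the explicit form $F_n=\{(k/\lambda^n,m/\lambda^n):0\le k,m<\lambda^n\}$ supplied by Lemma \ref{lambdalni}(3). Fix a centre $\mathbf{x}_j=(k/\lambda^n,m/\lambda^n)$ and parametrise its ellipsoid as $\mathbf{x}_j+r_1\bm{\beta}_1+r_2\bm{\beta}_2$ with $r_1^2 e^{2n(l_1+\tau_n)}+r_2^2 e^{2n(l_2+\tau_n)}<1$; the orthogonality of $\bm{\beta}_1,\bm{\beta}_2$ coming from $A^T=A$ (see (F1)) makes this an honest ellipse equation. Imposing the linear constraint $r_1\beta_{1,1}+r_2\beta_{2,1}=x-k/\lambda^n$ that defines $S_x$ and substituting into the ellipse inequality produces a quadratic in $r_2$ whose solution set has length $\ge\tfrac{1}{2}e^{-n(l_2+\tau_n)}$ precisely when $\|k/\lambda^n-x\|\lesssim e^{-n(l_1+\tau_n)}$. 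The number of admissible $k\in\{0,\ldots,\lambda^n-1\}$ is thus $\asymp \lambda^n\cdot e^{-n(l_1+\tau_n)}=e^{n((l_2-l_1)/2-\tau_n)}$, which is $\gtrsim 1$ because the ambient setting forces $\tau_n<(l_2-l_1)/2$. For each admissible $k$, a short calculation using $\det(\bm{\beta}_1,\bm{\beta}_2)=\pm 1$ shows that the $y$-centres of the corresponding intervals $I_{n,j}$ lie at $m/\lambda^n+c(k)$ plus a perturbation of order $e^{-n(l_2+\tau_n)}\ll 1/\lambda^n$. Under $\lambda^n c_L\ge 2$, the set $L$ therefore contains at least $\lambda^n c_L/2$ admissible residues $m$, and multiplying by the number of admissible $k$ yields $\gtrsim \lambda^n c_L\cdot e^{n((l_2-l_1)/2-\tau_n)}=c_L e^{n(l_2-\tau_n)}$ distinct $I_{n,j}$ meeting $L$. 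Combined with \eqref{countupp}, this gives \eqref{countasym}.

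The principal obstacle will be the quadratic-form analysis that extracts the admissibility condition $\|k/\lambda^n-x\|\lesssim e^{-n(l_1+\tau_n)}$ together with the sharp lower bound on the $y$-length of the resulting interval; this requires delicate control of the discriminant so that the bound $\tfrac{1}{2}e^{-n(l_2+\tau_n)}$ is genuinely attained and is not lost to the cross-term of order $\beta_{2,1}^2 e^{2n(l_1+\tau_n)}/e^{2n(l_2+\tau_n)}$. A secondary care point is to ensure distinctness of the intervals produced by different admissible $(k,m)$ pairs, which is immediate from Lemma \ref{distance}.
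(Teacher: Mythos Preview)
Your proposal is correct, and for \eqref{countupp} and \eqref{countspec} you argue exactly as the paper does. For the lower bound in \eqref{countasym}, however, you take a genuinely different route. The paper proceeds geometrically: it introduces a parallelogram $P_L$ obtained by sliding $L$ along the eigendirection $\bm{\beta}_1$ through distances up to $\tfrac{\sqrt{1-\sin^2\alpha/8}}{\sin\alpha}\,e^{-n(l_1+\tau_n)}$, checks that every lattice point $\mathbf{x}_j\in P_L\cap F_n$ produces an interval $I_{n,j}$ meeting $L$ with length $\ge\tfrac12 e^{-n(l_2+\tau_n)}$, and then counts $\#(\lambda^nP_L\cap\mathbb{Z}^2)$ via Pick's theorem. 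Your approach is algebraic: you substitute the line constraint into the ellipse equation, analyse the discriminant of the resulting quadratic to extract the admissibility condition on $k$, and then count admissible $(k,m)$ pairs as a product of two one-dimensional counts using the explicit description of $F_n$ from Lemma~\ref{lambdalni}(3). Both routes are valid. The paper's argument is shorter and sidesteps precisely the discriminant computation you flag as the main obstacle, at the cost of invoking Pick's theorem as a black box; your argument is more hands-on and more elementary in that it replaces a two-dimensional lattice-point estimate by two one-dimensional ones, exploiting the fibred structure (for each admissible $k$, the $y$-centres form an exact arithmetic progression $m/\lambda^n+c(k)$) that the paper does not isolate.
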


\begin{proof}
    The upper bound \eqref{countupp} is a direct consequence of Lemma \ref{distance}. Next we show that the corresponding lower bound is valid if $\lambda^nc_L\geq2$. Recall that $\bm{\beta}_1$ is the unit eigenvector associated with the eigenvalue $\lambda_1$ of the matrix $\lambda A$, and $\alpha\in(0,\pi/2)$ is the angle between $\{0\}\times\R$ and the straight line through $\bm{\beta}_1$. Let $P_L$ be the parallelogram in  $[0,1)^2$ defined by
    \begin{equation*}
        P_L:=\bigcup\Big\{(L+b\bm{\beta}_1)\m:|b|\leq\frac{\sqrt{1-\sin^2\alpha/8}}{\sin\alpha}e^{-n(l_1+\tau_n)}\Big\}.
    \end{equation*}
    A direct calculation implies that if the point ${\bf{x}}_j=(p/\lambda^n,q/\lambda^n)\in F_n$ with $(p,q)\in\Z^2$ and $1\leq j\leq \# F_n$ is contained in $P_L$, then $W_{n,j}\cap L\neq\emptyset$ and
    \[
   |W_{n,j}\cap S_x|\geq\frac{1}{2}e^{-n(l_2+\tau_n)}.
    \]
    In other words, 
    \begin{eqnarray}\label{coulow}
    \begin{aligned}
         \#\{L\cap I_{n,j}\neq\emptyset\}\ &\geq\ \#\{(p,q)\in\Z^2:(p/\lambda^n,q/\lambda^n)\in P_L\}\\[4pt]
        \ &=\ \#(\lambda^n P_L\cap\Z^2).
    \end{aligned}
    \end{eqnarray}
    Note that if $\lambda^nc_L\geq2$, then $\lambda^nP_L$ contains a quadrilateral $\widetilde{P}_L$ (in the sense of module $\lambda^n$) such that its vertices are integer points and $\mathcal{L}^2(\widetilde{P}_L)\asymp\mathcal{L}^2(\lambda^n P_L)$. By Pick's theorem \cite{pick}, 
    \begin{eqnarray*}
        \#(\lambda^nP_L\cap\mathbb{Z}^2)\geq\#(\widetilde{P}_L\cap\Z^2)\geq\mathcal{L}^2(\widetilde{P}_L)\asymp\mathcal{L}^2(\lambda^n P_L)\asymp c_L e^{n(l_2-\tau_n)}.
    \end{eqnarray*}
    This together with \eqref{countupp} and \eqref{coulow} gives the desired estimate \eqref{countasym}. Applying it to the special case $L=\{x\}\times[0,1)$, we have \eqref{countspec}. The proof is complete.
\end{proof}

Now we are ready to prove Theorem \ref{dimWx}.
\begin{proof}[Proof of Theorem \ref{dimWx}.]%\sout{Recall that {\color{blue}$\mathcal{L}^1$} is the Lebesgue measure restricted on $[0,1)$.} 
By Lemma \ref{prelemma}, we have $\mathcal{L}^1(W_n(x))\asymp e^{-2n\tau_n}$.
Define
$$\mu_n=\frac{1}{\mathcal{L}^1(W_n(x))}\mathcal{L}^1|_{W_n(x)}\asymp e^{2n\tau_n}\mathcal{L}^1|_{W_n(x)}.$$
For any ball $B=B(y,r)\subset S_x$, applying Lemma \ref{prelemma}, we obtain that for $n$ large enough, which depends on $B$,
\begin{equation}\label{measurelebesgue}
\begin{split}
\mu_n(B)&\asymp e^{2n\tau_n}\mathcal{L}^1(B\cap W_n(x))\asymp e^{2n\tau_n}\#\{j\colon W_{n,j}\cap B\ne\emptyset\}\mathcal{L}^1(I_{n,j})\\
&\asymp  e^{2n\tau_n}re^{n(l_{n,2}-\tau_n)}e^{-n(\tau_n+l_{n,2})}=r=\mathcal{L}^1(B).
\end{split}
\end{equation}

Next we will estimate $\mu_n(B)$ for all $n$, $y$  and $r$. %Now we show that $\mu_n(B)\lesssim r^s$ hold for some $s$ and all $n$ and $B$.
There are two cases.
\begin{itemize}
\item If $r<e^{n(\tau-l_2)}$, by Lemma \ref{prelemma}, $B$ intersects at most one segment of $\{I_{n,j}\}_{j\ge1}$, then
\[\mu_n(B)\lesssim e^{2n\tau_n}\min\{r,e^{-n(\tau_n+l_{n,2})}\}<r^{\frac{l_{2,n}-\tau_n}{\tau_n+l_{2,n}}}.\]
\item If $r>e^{n(\tau-l_2)}$, the number of segments of $\{I_{n,j}\}_{j\ge1}$ intersecting $B$ is at most
\[\lesssim \frac{r}{e^{n(\tau-l_2)}},\] 
then
\[\mu_n(B)\lesssim e^{2n\tau_n}\min\{r,e^{-n(\tau_n+l_{2,n})}\}\frac{r}{e^{n(\tau-l_2)}}=re^{n(l_2-l_{2,n})}.\]
Since $A$ is diagonalizable, we obtain that $\lambda_2^n\asymp \sigma_2(A^n)$, which implies that 
\[\mu_n(B)\lesssim r.\]
\end{itemize}
Combining two cases, we get
\[\mu_n(B)\lesssim r^{\frac{l_{2,n}-\tau_n}{\tau_n+l_{2,n}}}.\]
Then for
\[t<\limsup_{n\to\infty}\frac{l_{n,2}-\tau_n}{\tau_n+l_{n,2}}=s_0,\]
we have $\mu_n(B)\lesssim r^t$, combining \eqref{measurelebesgue} and Lemma \ref{lemmaforlb}, 
%We denote such $n$ by $\mathcal{N}$. 
%Then 
%$\limsup\limits_{n\to\infty}W_n(x)\in\mathcal{G}^{s_0}([0,1)),$
giving 
\[\limsup_{ n\to\infty}W_n(x)\in\mathcal{G}^{s_0}([0,1)).\]
\end{proof}

Now it suffices to prove  Theorem \ref{counterexample} for $\tau<\frac{l_2-l_1}{2}$.
 Since 
\[\limsup_{ n\to\infty}W_n(x)\subset W(\mathcal{A},\psi,\bm{0})\cap S_x,\]
applying \cite[Lemma 4.5]{HP}, we obtain that 
$$W(\mathcal{A},\psi,\bm{0})\in \mathcal{G}^{s_0+1}([0,1)^2),$$
where
\[s_0+1=\limsup_{n\to\infty}\frac{2l_{n,2}}{\tau_n+l_{n,2}}.\]
Now we finish the proof of Theorem \ref{counterexample}.
\end{proof}

\subsection{Proof of Example \ref{exam1}}%\hfill \\

Recall that $A=\begin{bmatrix}10&5&0\\5&5&0\\0&0&k \end{bmatrix}$ is an expanding integer matrix. The eigenvalues of $A$ are 
\[\frac{15-5\sqrt{5}}{2}=:\lambda_1,\quad \frac{15+5\sqrt{5}}{2}=:\lambda_2,\quad k=:\lambda_3>0,\]
and let $l_i=\log\lambda_i$, $1\le i\le 3$.
Then
\[
  {\bm \alpha}_1=\begin{bmatrix} 1\\ -\frac{1+\sqrt{5}}{2}\\0 \end{bmatrix} \quad  {\bm \alpha}_2=\begin{bmatrix} 1\\
     \frac{\sqrt{5}-1}{2}\\0 \end{bmatrix}\quad {\rm
    and} \quad   {\bm \alpha}_3=\begin{bmatrix} 0\\  0\\1 \end{bmatrix}
\]
are eigenvectors with eigenvalues $\lambda_1$ $\lambda_2$ and $\lambda_3$ respectively, and $\{{\bm \alpha}_i\}_{i=1}^3$ are orthogonal.

Recall $F_n=\{{\bf x}\in  [0,1)^3\colon A^n{\bf x}\m={\bf 0}\}$, $n\in \N$. 
\begin{lem}
For $n\ge1$,
\[F_n=\Big\{{\left(\frac{i_1}{5^n},\frac{i_2}{5^n},\frac{i_3}{k^n}\right)}^T\colon 0\le i_1,i_2<5^n,0\le i_3<k^n\Big\},\]
and $\#F_n=(\lambda_1\lambda_2\lambda_3)^n$.
\end{lem}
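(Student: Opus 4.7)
The plan is to exploit the block structure of $A$ together with a simple scalar-factorization of the $2\times 2$ block. Observe first that
\[
A=\begin{bmatrix} B & 0 \\ 0 & k\end{bmatrix},\qquad B=\begin{bmatrix}10&5\\5&5\end{bmatrix}=5\,C,\qquad C:=\begin{bmatrix}2&1\\1&1\end{bmatrix},
\]
and that $\det C=1$, so $C\in GL_2(\Z)$ and hence $C^n\in GL_2(\Z)$ for every $n\in\N$. Consequently
\[
A^n=\begin{bmatrix} B^n & 0\\ 0 & k^n\end{bmatrix}=\begin{bmatrix} 5^nC^n & 0\\ 0 & k^n\end{bmatrix}.
\]
This factorisation is the one piece of structure that the whole argument rests on; everything else is counting.

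Next I would verify the inclusion ``$\supseteq$''. Fix any integers $0\leq i_1,i_2<5^n$ and $0\leq i_3<k^n$ and set ${\bf x}=(i_1/5^n,i_2/5^n,i_3/k^n)^T$. Then
\[
A^n{\bf x}=\bigl(C^n(i_1,i_2)^T,\ i_3\bigr)^T\in\Z^2\times\Z=\Z^3,
\]
because $C^n$ is an integer matrix and $i_3\in\Z$. Hence $A^n{\bf x}\equiv{\bf 0}\pmod 1$, so ${\bf x}\in F_n$.

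It remains to match cardinalities. The $(5^n)^2\cdot k^n=(25k)^n$ triples on the right-hand side are pairwise distinct points of $[0,1)^3$, so that set has exactly $(25k)^n$ elements. On the other hand, $\#F_n$ equals the degree of the toral endomorphism induced by $A^n$, namely $|\det A^n|=|\det A|^n$. A direct computation gives
\[
\lambda_1\lambda_2\lambda_3=\frac{(15-5\sqrt5)(15+5\sqrt5)}{4}\cdot k=\frac{225-125}{4}\cdot k=25k,
\]
so $\#F_n=(25k)^n=(\lambda_1\lambda_2\lambda_3)^n$. Since both sides of the claimed identity are subsets of $[0,1)^3$ with the same finite cardinality and one is contained in the other, they are equal, which simultaneously establishes the cardinality statement.

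The only mildly subtle point is justifying $\#F_n=|\det A^n|$; in the generality of an expanding integer matrix this is standard (it is the degree of the covering $\mathbb{T}^3\to\mathbb{T}^3$, see e.g.\ Lemma~2.3 in \cite{EW}), and I would simply invoke it. No genuine obstacle is expected.
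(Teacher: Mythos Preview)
Your proof is correct and follows essentially the same approach as the paper: verify the inclusion $\supseteq$ directly, then match cardinalities via $\#F_n=|\det A^n|$ (both you and the paper cite Lemma~2.3 in \cite{EW}). Your explicit factorisation $B^n=5^nC^n$ with $C\in GL_2(\Z)$ makes the verification of $A^n{\bf x}\in\Z^3$ a bit more transparent than the paper's one-line assertion, but the argument is otherwise identical.
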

\begin{proof}

 For $n\ge1$ and $\mathbf{z}=\big(\frac{i_1}{5^n},\frac{i_2}{5^n},\frac{i_3}{k^n}\big)^T$, note that $A^n\mathbf{z}\in\mathbb{Z}^3$, then
 \[F_n\supset\Big\{{\left(\frac{i_1}{5^n},\frac{i_2}{5^n},\frac{i_3}{k^n}\right)}^T\colon 0\le i_1,i_2<5^n,0\le i_3<k^n\Big\}.\]  
Applying Lemma 2.3 in \cite{EW}, we conclude that $\#F_n=(\lambda_1\lambda_2\lambda_3)^n=\#\{\big(\frac{i_1}{5^n},\frac{i_2}{5^n},\frac{i_3}{k^n}\big)\colon 0\le i_1,i_2<5^n,0\le i_3<k^n\}$, which completes the proof.

\end{proof}

\begin{proof}[Proof of Example \ref{exam1}]
Since $\mathcal{A}_1=(A^n)_n$, by Lemma \ref{lem1},  $\Gamma=\{(\log\lambda_1,\log\lambda_2,\log\lambda_3)\}$.
Without loss of generality, we assume that $\psi(n)=e^{-n\tau}$. Put
\[S_n=\big\{{\bf x}\in  [0,1)^3\colon A^n{\bf x}\m\in B({\bf 0},e^{-n\tau})\big\},\]
and  the lengths of semi-axes of $ A^{-n}B({\bf 0},e^{-n\tau})\m$ are $r_k:=e^{-n(\tau+\log\lambda_k)}$, $k=1,2,3$.

We shall show that 
\begin{equation*}
\dim_{\rm H}W(\mathcal{A}_1,\psi,\mathbf{0})=\begin{cases}
\min\{\frac{\tau+3l_2}{\tau+l_2},\frac{3l_3}{\tau+l_3}\}\quad & \text{if~$\tau\le (l_2-l_1)/2$,}\\
\min\{\frac{\tau+2l_1+l_2}{\tau+l_1},\frac{\tau+3l_2}{\tau+l_2},\frac{3l_3}{\tau+l_3}\}\quad&\text{if~$ (l_2-l_1)/2<\tau\le l_3-l_2$,}\\
\min\{\frac{\tau+2l_1+l_2}{\tau+l_1},\frac{2l_2+l_3}{\tau+l_2},\frac{3l_3}{\tau+l_3}\}\quad&\text{if~$ l_3-l_2<\tau\le l_3-l_1$,}\\
\min\{\frac{l_1+l_2+l_3}{\tau+l_1},\frac{2l_2+l_3}{\tau+l_2},\frac{3l_3}{\tau+l_3}\}\quad &\text{if~$\tau\ge l_3-l_1$.}
\end{cases},%\min\left\{\frac{\sum_{i=1}^3l_i}{\tau+l_1},\frac{\tau+2l_1+l_2}{\tau+l_1},\frac{\tau+3l_2}{\tau+l_2},\frac{2l_2+l_3}{\tau+l_2},\frac{3l_3}{\tau+l_3}\right\},
\end{equation*}
the proof of which is divided into two parts.

(i)\, The upper bound on $\dim_{\rm H}W(\mathcal{A}_1,\psi,\mathbf{0})$. 

In this case, $m_1(\Lambda_n)=k^{-n}$ and $m_2(\Lambda_n)=m_3(\Lambda_n)=5^{-n}$.
Observe that 
\begin{equation*}
\begin{split}
\limsup_{n\to\infty}\overline{s}_n(\mathcal{A}_1,\psi,1)&=\begin{cases}
\frac{\sum_{i=1}^3l_i}{\tau+l_1}\quad&\text{if~$\tau>l_3-l_1$,}\\
\frac{\tau+2l_1+l_2}{\tau+l_1}\quad &\text{if~$(l_2-l_1)/2<\tau\le l_3-l_1$,}\\
3\quad & \text{if~$\tau\le (l_2-l_1)/2$.}
\end{cases}\\
\limsup_{n\to\infty}\overline{s}_n(\mathcal{A}_1,\psi,2)&=\begin{cases}
\frac{\tau+3l_2}{\tau+l_2}\quad&\text{if~$\tau\le l_3-l_2$,}\\
\frac{2l_2+l_3}{\tau+l_2}\quad &\text{if~$\tau> l_3-l_2$.}
\end{cases}\\
\limsup_{n\to\infty}\overline{s}_n(\mathcal{A}_1,\psi,3)&=\frac{3l_3}{\tau+l_3}.
\end{split}
\end{equation*}
It follows from Theorem \ref{application1} that
\begin{equation*}
\begin{split}
\dim_{\rm H}W(\mathcal{A}_1,\psi,\mathbf{0})&\le \min_{i=1,2,3}\left\{\limsup_{n\to\infty}\overline{s}_n(\mathcal{A}_1,\psi,i)\right\}\\
&=\begin{cases}
\min\{\frac{\tau+3l_2}{\tau+l_2},\frac{3l_3}{\tau+l_3}\}\quad & \text{if~$\tau\le (l_2-l_1)/2$,}\\
\min\{\frac{\tau+2l_1+l_2}{\tau+l_1},\frac{\tau+3l_2}{\tau+l_2},\frac{3l_3}{\tau+l_3}\}\quad&\text{if~$ (l_2-l_1)/2<\tau\le l_3-l_2$,}\\
\min\{\frac{\tau+2l_1+l_2}{\tau+l_1},\frac{2l_2+l_3}{\tau+l_2},\frac{3l_3}{\tau+l_3}\}\quad&\text{if~$ l_3-l_2<\tau\le l_3-l_1$,}\\
\min\{\frac{l_1+l_2+l_3}{\tau+l_1},\frac{2l_2+l_3}{\tau+l_2},\frac{3l_3}{\tau+l_3}\}\quad &\text{if~$\tau\ge l_3-l_1$.}
\end{cases}
\end{split}
\end{equation*}

(ii)\, The lower bound on $\dim_{\rm H}W(\mathcal{A}_1,\psi,\mathbf{0})$. 

Put 
\[\mu_n=\frac{1}{\mathcal{L}^3(S_n)}\mathcal{L}^3|_{S_n}\asymp e^{3n\tau}\mathcal{L}^3|_{S_n}. \]
By Lemma \ref{number:ellipse} (2), the sequence $(\mu_n)_n$ satisfies inequalities \eqref{lip} in Lemma \ref{lemmaforlb}.

In the following, we will show that there exist $s>0$, $C>0$ and $N\ge1$ such that 
\[\mu_n(B)\le Cr^s\]
hold for any ball $B:=B(\mathbf{w},r)\subset [0,1)^3$ and $n\ge N$. There are four cases:
\begin{itemize}
\item Case 1:\,$\tau<(l_2-l_1)/2$.
\item Case 2:\, $(l_2-l_1)/2\le \tau<l_3-l_2$,
\item Case 3:\, $l_3-l_2\le \tau<l_3-l_1$,
\item Case 4:\, $\tau\ge l_3-l_1$.
\end{itemize}

(1)\, For {\bf Case 1}, we have
\[r_3<\lambda_3^{-n}<r_2<(\lambda_1\lambda_2)^{-\frac{1}{2}n}=5^{-n}<l_1.\]
\begin{itemize}
\item Subcase (i):\, $r<\lambda_3^{-n}$. Since $B$ intersect at most one ellipsoid of $S_n$, then
\[\mu_n(B)\le e^{3n\tau}r^2\min\{r,r_3\}<r^{\min\{\frac{3l_3}{\tau+l_3},\frac{2l_2+l_3}{\tau+l_2}\}}.\]

\item Subcase (ii):\, $\lambda_3^{-n}\le r<5^{-n}$. In this case, the number of ellipsoids of $S_n$ intersecting $B$ is at most $r\lambda_3^{n}$, and it follows that 
\[\mu_n(B)\le e^{3n\tau}r^2\lambda_3^{n}r_3\min\{r,r_2\}<r^{\frac{\tau+3l_2}{\tau+l_2}}.\]

\item Subcase (iii):\,$5^{-n}\le r<e^{-nl_1}$. We need to estimate the number of ellipsoids intersecting $B$. We first give some notation. For any ${\bf x}=(x_1,x_2,x_3)^T$, we throughout denote ${\bf x}_{\rm P}:=(x_1,x_2)^T$. Let $B_{\R^d}$ denotes a ball in $\R^d$ and $\widetilde{B}=B_{\R^d}\m$. 

Then $B\subset (B_{\mathbb{R}^2}({\bf w}_{\rm P},r)\m)\times (B_{\mathbb{R}}(w_3,r)\m)$. %, where $B_{\rm P}$ denotes a ball of $[0,1)^2$. 
Let $A_2=5\begin{bmatrix}2&1\\1&1\end{bmatrix}$. Then we have
\begin{align*}
S_n&\subset \{{\bf x}_{\rm P}\in[0,1)^2\colon A_2^n{\bf x}_{\rm P}\m\in B_{\mathbb{R}^2}({\bf 0}_{\rm P},e^{-n\tau})\m\}\\
&\quad \times \{x_3\in[0,1)\colon k^nx_3\m\in B_{\mathbb{R}}( 0,e^{-n\tau})\m\}\\
&=:S_{n,1}\times S_{n,2}.
\end{align*}
Note that 
$$S_{n,1}=\bigcup_{{\bf z}_{\rm P}\in \mathbb{Z}^2\cap A_2^n[0,1)^2 }A_2^{-n}B_{\mathbb{R}^2}({\bf z}_{\rm P},e^{-n\tau})\m.$$
It follows from the proof of  Lemma \ref{distance} that 
\begin{align*}
\inf_{{\bf z}_{\rm P}\ne {\bf z}_{\rm P}'\in \mathbb{Z}^2}{\rm dist} \left(A_2^{-n}B_{\mathbb{R}^2}({\bf z}_{\rm P},e^{-n\tau})\m,A_2^{-n}B_{\mathbb{R}^2}({\bf z}_{\rm P}',e^{-n\tau})\m\right)\gtrsim e^{-n(l_2-\tau)},
\end{align*}
which implies 
\begin{equation*}
\begin{split}
&\#\left\{{\bf z}_{\rm P}\in \mathbb{Z}^2\cap A_2^n[0,1)^2\colon A_2^{-n}B_{\mathbb{R}^2}({\bf z}_{\rm P},e^{-n\tau})\m\cap B_{\R^2}({\bf w}_{\rm P},r)\m\ne\emptyset\right\}\\
&\lesssim\, re^{n(l_2-\tau)}.
\end{split}
\end{equation*}
Therefore the number of ellipsoids intersecting $B$ is
\begin{align*}
    &\le \#\left\{{\bf z}_{\rm P}\in \mathbb{Z}^2\cap A_2^n[0,1)^2\colon A_2^{-n}B_{\mathbb{R}^2}({\bf z}_{\rm P},e^{-n\tau})\m\cap B_{\R^2}({\bf w}_{\rm P},r)\m\ne\emptyset\right\} \\
    &\quad \times\#\left\{z_3\in \mathbb{Z}\cap k^n[0,1)\colon k^{-n}B_{\mathbb{R}}(z_3,e^{-n\tau})\m\cap B_{\R}(w_3,r)\m\ne\emptyset\right\}\\
    &\lesssim re^{n(l_2-\tau)}\cdot rk^n=r^2e^{n(l_2+l_3-\tau)}.
\end{align*}
It implies that
\[\mu_n(B)\le e^{3n\tau}\cdot r^2e^{n(l_3+l_2-\tau)}\cdot r_3r_2\min\{r,r_1\}\le r^3.\]

%We observe that for such ellipsoid, its centre must be contained in a parallelogram     
%\begin{thm}[Theorem 2.4.3 in \cite{lnogn}]
%Let $S$ be a bounded set in $\mathbb{R}^d$ with volume $V (S) > k$, where $ k$ is a positive integer. Then $S$ contains $(k + 1)$ points which are congruent modulo $\mathbb{Z}^d$.
%\end{thm}

\item Subcase (iv):\, $r\ge e^{-nl_1}$, in this case, number of ellipsoids intersecting $B$ is at most $r^3(\lambda_1\lambda_2\lambda_3)^{n}$. Hence
\[\mu_n(B)\le e^{3n\tau}r^3(\lambda_1\lambda_2\lambda_3)^{n}r_3r_2r_1=r^3.\]
Combing all subcases, when $\tau<(l_2-l_1)/2$, we have that 
\begin{equation}\label{es1}
\begin{split}
\mu_n(B)&\lesssim \exp\left\{\min\left\{\frac{3l_3}{\tau+l_3},\frac{2l_2+l_3}{\tau+l_2},\frac{\tau+3l_2}{\tau+l_2}\right\}\log r\right\}\\ 
&=\exp\left\{\min\left\{\frac{3l_3}{\tau+l_3},\frac{\tau+3l_2}{\tau+l_2}\right\}\log r\right\}.
%\[s=\min\left\{\frac{3l_3}{\tau+l_3},\frac{2l_2+l_3}{\tau+l_2},\frac{\tau+3l_2}{\tau+l_2}\right\}\]
\end{split}
\end{equation}
\end{itemize}

(2)\, We deal with {\bf Case 2}: $(l_2-l_1)/2\le \tau<l_3-l_2$. With this in mind, 
\[r_3<\lambda_3^{-n}<r_2<r_1< 5^{-n}.\]

\begin{itemize}
\item Subase (i)\, $r\le r_3$. In this case, $B$ intersects at most one ellipsoid, then 
\[\mu_n(B)\lesssim e^{3n\tau}r^3<r^{\frac{3\log\lambda_3}{\tau+\log\lambda_3}}.\]

\item Subcase (ii)\, $r_3<r\le\lambda_3^{-n}$. The ball $B$ intersects at most one ellipsoid. It follows that 
\[\mu_n(B)\lesssim e^{3n\tau}r^2r_3<r^2e^{n(2\tau-\log\lambda_3)}.\]
If $\tau\le \frac{1}{2}\log\lambda_3$, we use $r_3<r$, then 
\[\mu_n(B)<r^{\frac{3\log\lambda_3}{\tau+\log\lambda_3}}.\]
If  $\tau>\frac{1}{2}\log\lambda_3$, we use $r\le\lambda_3^{-n}<r_2$, then 
\[\mu_n(B)<r^{\frac{2\log\lambda_2+\log\lambda_3}{\tau+\log\lambda_2}}.\]

\item Subcase (iii)\, $\lambda_3^{-n}<r\le r_2$. In this case, the number of  ellipsoids intersecting $B$ is at most $r\lambda_3^{n}.$ Then 
\[\mu_n(B)\le e^{3n\tau} r\lambda_3^{n} r^2e^{-n(\tau+\log\lambda_3)}=r^3e^{2n\tau}<r^{\frac{\tau+3\log\lambda_2}{\tau+\log\lambda_2}}.\]

\item Subcase  (iv)\, $r_2<r\le r_1$. The number of  ellipsoids intersecting $B$ is at most $r\lambda_3^{n}.$ Then 
\[
\mu_n(B)\le  e^{3n\tau} r\lambda_3^{n}e^{-n(2\tau+\log\lambda_2+\log\lambda_3)}r\le r^2e^{n(\tau-\log\lambda_2)}.
\]
If $\tau\le \log\lambda_2$, we have
\[ \mu_n(B)\le r^{\frac{\tau+3\log\lambda_2}{\tau+\log\lambda_2}}.\]
If $\tau> \log\lambda_2$, then
\[ \mu_n(B)\le r^{\frac{\tau+2\log\lambda_1+\log\lambda_2}{\tau+\log\lambda_1}}.\]

\item Subcase  (v)\, $r_1<r\le (\lambda_1\lambda_2)^{-\frac{1}{2}n}$. Similarly to Case  (iv), 
\begin{equation*}
\begin{split}
\mu_n(B)&\le e^{3n\tau} r\lambda_3^{n}  e^{-3n\tau}(\lambda_1\lambda_2\lambda_3)^{-n}\\
& =re^{-n(\log\lambda_1+\log\lambda_2)}<r^{\frac{\tau+2\log\lambda_1+\log\lambda_2}{\tau+\log\lambda_1}}.
\end{split}
\end{equation*}

\item Subcase  (vi)\, $r>e^{-\frac{1}{2}(\log\lambda_1+\log\lambda_2)}$. In this case, the number of  ellipsoids intersecting $B$ is at most $r^3(\lambda_1\lambda_2\lambda_3)^n.$
 It follows that
\[\mu_n(B)\le e^{3n\tau}r^3(\lambda_1\lambda_2\lambda_3)^n e^{-3n\tau}(\lambda_1\lambda_2\lambda_3)^{-n}=r^3.\]

Combining all subcases, we have
\begin{equation}\label{es2}
\begin{split}
 \mu_n(B)&\lesssim \exp\left\{\min\left\{ \frac{3l_3}{\tau+l_3} ,\frac{2l_2+l_3}{\tau+l_2},\frac{\tau+3l_2}{\tau+l_2},\frac{\tau+2l_1+l_2}{\tau+l_1}\right\}\log r\right\} \\
 &=\exp\left\{\min\left\{ \frac{3l_3}{\tau+l_3},\frac{\tau+3l_2}{\tau+l_2},\frac{\tau+2l_1+l_2}{\tau+l_1}\right\}\log r\right\} .
 \end{split}
\end{equation}
%\[\mu_n(B)\le r^{s'}\]
%where 
%\begin{equation*}
%\begin{split}
%s'&=\min\Big\{ \frac{3\log\lambda_3}{\tau+\log\lambda_3} ,\frac{2\log\lambda_2+\log\lambda_3}{\tau+\log\lambda_2},\frac{\tau+3\log\lambda_2}{\tau+\log\lambda_2},\frac{\tau+2\log\lambda_1+\log\lambda_2}{\tau+\log\lambda_1}\Big\}\\
% &=\min\Big\{ \frac{3\log\lambda_3}{\tau+\log\lambda_3} ,\frac{\tau+3\log\lambda_2}{\tau+\log\lambda_2},\frac{\tau+2\log\lambda_1+\log\lambda_2}{\tau+\log\lambda_1}\Big\}\\
%\end{split}
%\end{equation*}
\end{itemize}

Similarly, for Case (iii)  we get
\begin{equation}\label{es3}
 \mu_n(B)\lesssim \exp\left\{\min\left\{\frac{\tau+2l_1+l_2}{\tau+l_1},\frac{2l_2+l_3}{\tau+l_2},\frac{3l_3}{\tau+l_3}\right\}\log r\right\} ,
 \end{equation}
and for Case (iv)
\begin{equation}\label{es4}
\mu_n(B)\lesssim \exp\left\{\min\left\{\frac{\sum_{i=1}^3l_i}{\tau+l_1},\frac{2l_2+l_3}{\tau+l_2},\frac{3l_3}{\tau+l_3}\right\}\log r\right\} .
\end{equation}

We conclude from Lemma \ref{lemmaforlb} and inequalities \eqref{es1}--\eqref{es4} that 
\begin{equation*}
\dim_{\rm H}W(\mathcal{A}_1,\psi,\mathbf{0})\ge %\min\left\{\frac{\sum_{i=1}^3l_i}{\tau+l_1},\frac{\tau+2l_1+l_2}{\tau+l_1},\frac{\tau+3l_2}{\tau+l_2},\frac{2l_2+l_3}{\tau+l_2},\frac{3l_3}{\tau+l_3}\right\}.\]
\begin{cases}
\min\{\frac{\tau+3l_2}{\tau+l_2},\frac{3l_3}{\tau+l_3}\}\quad & \text{if~$\tau\le (l_2-l_1)/2$,}\\
\min\{\frac{\tau+2l_1+l_2}{\tau+l_1},\frac{\tau+3l_2}{\tau+l_2},\frac{3l_3}{\tau+l_3}\}\quad&\text{if~$ (l_2-l_1)/2<\tau\le l_3-l_2$,}\\
\min\{\frac{\tau+2l_1+l_2}{\tau+l_1},\frac{2l_2+l_3}{\tau+l_2},\frac{3l_3}{\tau+l_3}\}\quad&\text{if~$ l_3-l_2<\tau\le l_3-l_1$,}\\
\min\{\frac{l_1+l_2+l_3}{\tau+l_1},\frac{2l_2+l_3}{\tau+l_2},\frac{3l_3}{\tau+l_3}\}\quad &\text{if~$\tau\ge l_3-l_1$.}
\end{cases}
\end{equation*}
\end{proof}

\section{Proofs of results in Section \ref{example}}\label{examples}

In this section, we will prove Theorems \ref{diag} and \ref{jordan} and Corollaries \ref{cor} and \ref{jorin}. 

In the following, we prove Corollary \ref{cor}.
Let a matrix $A\in GL_d(\mathbb{Z})$  with eigenvalues $\lambda_1,\dots, \lambda_d$, and $I$ be the identity matrix. Assume that $|\lambda_1|\le |\lambda_2|\le\cdots\le |\lambda_d|$.

\begin{lem}[Lemma 4.1 in \cite{HPWZ}]\label{lem1}
Let $A\in  GL_d(\mathbb{Z})$ be  non-singular, and  $\sigma_{n,1} \leq \cdots \leq \sigma_{n,d}$ be the
  singular values of $A^n$. For any $\epsilon > 0$, there
  exists a constant $c > 0$ such that
  \[
    c^{-1} e^{-\epsilon n} \leq \frac{\sigma_{n,i}}{|\lambda_i|^n}
    \leq c e^{\epsilon n}
  \]
  for all $1\le i\le d$ and $n\in \N$.
\end{lem}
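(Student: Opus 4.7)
\textbf{Proof proposal for Lemma \ref{lem1}.} The plan is to pin down each $\sigma_{n,i}$ by combining a two-sided control on the \emph{products} $\sigma_{n,d-k+1}\cdots\sigma_{n,d}$ of the top $k$ singular values (for each $k$), and then obtaining individual bounds by successive division. The upper bound on such products will come from Gelfand's spectral radius formula applied to exterior powers, while the lower bound will come from Weyl's multiplicative inequality between eigenvalues and singular values.

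First, I would recall the classical Weyl inequality: for any $B\in GL_d(\mathbb{C})$ with eigenvalues $|\mu_1|\le\cdots\le|\mu_d|$ and singular values $s_1\le\cdots\le s_d$,
\[
\prod_{i=d-k+1}^{d}|\mu_i|\ \le\ \prod_{i=d-k+1}^{d}s_i\quad (1\le k\le d-1),\qquad \prod_{i=1}^{d}|\mu_i|=\prod_{i=1}^{d}s_i.
\]
Applying this to $B=A^n$, whose eigenvalues are $\lambda_1^n,\dots,\lambda_d^n$, gives the lower bound
\begin{equation}\label{planeq:weyl}
\prod_{i=d-k+1}^{d}\sigma_{n,i}\ \ge\ \prod_{i=d-k+1}^{d}|\lambda_i|^n,\qquad 1\le k\le d,
\end{equation}
with equality when $k=d$ (since $|\det A^n|=\prod_i|\lambda_i|^n=\prod_i\sigma_{n,i}$).

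Next, for the matching upper bound, I would use the exterior power $\wedge^k A$, whose eigenvalues are the $\binom{d}{k}$ products $\lambda_{i_1}\cdots\lambda_{i_k}$ and whose singular values are the corresponding products of singular values of $A$. The largest singular value of $\wedge^k A^n=(\wedge^kA)^n$ is the operator norm $\|(\wedge^kA)^n\|$, which, by Gelfand's formula applied to $\wedge^kA$, satisfies
\[
\|(\wedge^kA)^n\|^{1/n}\ \longrightarrow\ \rho(\wedge^kA)\ =\ |\lambda_d\lambda_{d-1}\cdots\lambda_{d-k+1}|.
\]
Hence for any $\epsilon>0$ there is $C_\epsilon>0$ (independent of $n$) such that
\begin{equation}\label{planeq:spec}
\prod_{i=d-k+1}^{d}\sigma_{n,i}\ =\ \sigma_1\bigl((\wedge^kA)^n\bigr)\ \le\ C_\epsilon\, e^{\epsilon n}\prod_{i=d-k+1}^{d}|\lambda_i|^n,\qquad 1\le k\le d.
\end{equation}
Combining \eqref{planeq:weyl} and \eqref{planeq:spec} shows that, for each $k$, the product of the top $k$ singular values of $A^n$ equals the corresponding product of $n$-th powers of eigenvalue moduli up to a multiplicative factor in $[1,C_\epsilon e^{\epsilon n}]$.

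Finally, for the individual bounds I would take the ratio of these product estimates for consecutive $k$: dividing the estimate for $k$ by the estimate for $k-1$ yields
\[
c_\epsilon^{-1}e^{-\epsilon n}|\lambda_{d-k+1}|^n\ \le\ \sigma_{n,d-k+1}\ \le\ c_\epsilon\, e^{\epsilon n}|\lambda_{d-k+1}|^n
\]
for a new constant $c_\epsilon>0$ and all $1\le k\le d$, $n\in\mathbb{N}$, which is exactly the claim after relabeling. The only mildly delicate step is justifying the uniform constant $C_\epsilon$ in \eqref{planeq:spec}; this is routine but is the main obstacle, and I would handle it by writing $A=PJP^{-1}$ in Jordan form and bounding $\|J^n\|$ entrywise by $\text{poly}(n)\cdot|\lambda_d|^n$, which is absorbed into $e^{\epsilon n}$ for large $n$ (and the finitely many small $n$ adjust the constant).
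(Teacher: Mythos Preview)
The paper does not prove this lemma; it is quoted verbatim as \cite[Lemma 4.1]{HPWZ} and used as a black box. Your argument via Weyl's majorization inequality and Gelfand's formula on the exterior powers $\wedge^k A$ is a correct and standard way to establish the result, so there is nothing to compare against here.
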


\begin{lem}[Lemma 2.3 in \cite{HL2024}]\label{lem2}
Let a matrix $A\in  GL_d(\mathbb{Z})$. Assume that  the modulus of all eigenvalues are not 1.   Let $e_{n,1} \leq \cdots \leq e_{n,d}$ be the
  singular values of $A^n-I$.  Then there are constants $C > 1$ and $\tau>0$ such that
\[C^{-1}n^{-\tau} \le \frac{e_{n,i}}{|\lambda_i^n-1|}  \le C n^{\tau}\]
holds for all $n\in\N$ and $1\le i\le d$.
\end{lem}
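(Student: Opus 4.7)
The plan is to reduce to the Jordan canonical form of $A$ over $\mathbb{C}$ and to track singular values carefully through the similarity transformation and through each Jordan block.

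Write $A = P J P^{-1}$ with $J$ the Jordan normal form, so $A^n - I = P(J^n - I)P^{-1}$. The first step is to show that the singular values of a conjugate $PMP^{-1}$ differ from those of $M$ by at most a multiplicative constant depending only on $A$ (not on $n$). This follows from the Courant--Fischer min-max characterization together with the inequalities $\sigma_i(PMP^{-1}) \le \|P\|\,\|P^{-1}\|\,\sigma_i(M)$ and the symmetric lower bound, giving a factor of $\kappa(P)^2 = (\|P\|\,\|P^{-1}\|)^2$. Hence it suffices to prove the asserted bounds for the singular values of $J^n - I$.

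Next, $J$ is block-diagonal with Jordan blocks $J(\mu_1, k_1),\dots,J(\mu_s,k_s)$, and $J^n - I$ is correspondingly block-diagonal, so its singular values are the union (with multiplicity) of the singular values of each $J(\mu,k)^n - I$. For a single block with nilpotent part $N$,
\[
J(\mu,k)^n - I = (\mu^n - 1)I + \sum_{j=1}^{k-1}\binom{n}{j}\mu^{n-j}N^j,
\]
which is upper-triangular with diagonal $\mu^n - 1$ and super-diagonal entries bounded in modulus by $\binom{n}{j}|\mu|^{n-j} \lesssim n^{k-1}\max(|\mu|^n, 1)$. In particular, the largest singular value of this block is $\lesssim n^{k-1}\max(|\mu|^n, 1)$, while the determinant gives $\prod_{i=1}^k \sigma_i = |\mu^n - 1|^k$, yielding a matching polynomial lower bound on the smallest singular value.

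The assumption that no eigenvalue of $A$ has modulus $1$ is then invoked as follows: for $|\mu|>1$ one has $|\mu^n-1|\asymp|\mu|^n$, while for $|\mu|<1$ one has $|\mu^n-1|\asymp 1$ and simultaneously $\max(|\mu|^n,1)=1$. In both regimes the singular values of the Jordan block $J(\mu,k)^n - I$ are comparable to $|\mu^n - 1|$ up to a factor $n^{\tau}$ for some $\tau > 0$ depending only on $A$. The main obstacle is the bookkeeping that marries the ordering of the block singular values with the ordering of $|\lambda_1^n - 1|\le\cdots\le |\lambda_d^n - 1|$: some care is needed because the ordering by $|\lambda_i|$ and by $|\lambda_i^n - 1|$ can swap finitely many times when some $|\lambda_i|$ lies just below $1$ and others lie just above, but this reshuffling is absorbed into the constant $C$ and the polynomial factor $n^\tau$. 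Collecting the per-block estimates and the conjugation estimate from the first step yields the claimed two-sided bound $C^{-1}n^{-\tau}\le e_{n,i}/|\lambda_i^n-1|\le C n^\tau$.
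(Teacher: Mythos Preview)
The paper does not prove this lemma; it is quoted verbatim as Lemma~2.3 of \cite{HL2024} and used without argument. So there is no in-paper proof to compare against, and your outline has to be judged on its own merits.

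Your approach is the standard one and is correct in substance. Two small points are worth tightening. First, the conjugation estimate: the multiplicative Weyl inequality $\sigma_i(AB)\le \|A\|\,\sigma_i(B)$ (and its companion $\sigma_i(AB)\le \sigma_i(A)\,\|B\|$) applied twice gives
\[
\kappa(P)^{-1}\,\sigma_i(M)\ \le\ \sigma_i(PMP^{-1})\ \le\ \kappa(P)\,\sigma_i(M),
\]
so the factor is $\kappa(P)=\|P\|\,\|P^{-1}\|$, not $\kappa(P)^2$. This is cosmetic. Second, and more substantively, the bookkeeping in your last paragraph is slightly misdescribed. The reshuffling of $|\lambda_i^n-1|$ against the ordering by $|\lambda_i|$ is \emph{not} a finitely-many-$n$ phenomenon: within the contracting group $\{|\lambda|<1\}$ the ordering of $|\lambda_i^n-1|$ can flip for infinitely many $n$. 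What actually makes the step work is two observations: (i) if two multisets $\{x_i\}$, $\{y_i\}$ admit a bijection with ratios in $[L^{-1},L]$, then the sorted sequences satisfy the same bound; and (ii) within the group $|\lambda|<1$ all $|\lambda^n-1|\asymp 1$, and within $|\lambda|>1$ all $|\lambda^n-1|\asymp|\lambda|^n$, so the sorted sequence of $|\lambda_i^n-1|$ matches the sequence ordered by $|\lambda_i|$ up to bounded ratios. With those two facts the per-block estimate transfers cleanly to the globally sorted singular values. Once you phrase step~6 this way, your argument is complete.
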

For $x\in\R$ and ${\bm\ell}=(l_1,\cdots,l_d)\in \mathbb{R}^d$, we denote $x+{\bm \ell}=(x+l_1,\cdots,x+l_d)$.

 \begin{lem}\label{supelimsup} Let $\mathcal{A} \subset GL_d(\mathbb{R})$ and $\Gamma(\mathcal{A} )\subset (\mathbb{R}^+)^d$.  Let $\psi : \R^+\to \R^+$ be a positive and non-increasing function. Assume that $\Gamma(\mathcal{A} )$ and $\psi$ satisfy  one of the following conditions.
\begin{enumerate}[label=(\arabic*)]
\item The limit $\lim\limits_{n\to\infty}\tau_n$ exists.
\item The limits $\lim\limits_{n\to\infty}l_{n,i}$, $1\le i\le d$  exist.
\end{enumerate}
Then %for $\tau\ge 0$,
\begin{equation}\label{snk1}
\hat{s}(\mathcal{A},\psi)=\sup_{\bm{\ell}\in\Gamma(\mathcal{A} )}\min_{1\le k\le d}\Bigg\{\frac{1}{\tau+l_k}\Big(\sum_{j=1}^dl_{j}-\sum_{j:l_j<l_ki}(l_j-l_k)\Big)\Bigg\},
\end{equation}
and
\begin{equation*}\label{snk2}
%\min_{1\le k\le d} \Bigg\{\limsup_{n\to\infty} \frac{1}{\tau_n+l_{n,k}}\Big\{\sum_{i=1}^dl_{n,i}+\sum_{i:l_{n,i}<l_{n,k}}(l_{n,i}-l_{n,k})\\
%   +\sum_{j:l_{n,j}>\tau_n+l_{n,k}}(\tau_n+l_{n,k}-l_{n,j})\Big\}\Bigg\}
%=s_0(\mathcal{A},\tau)
\underline{s}(\mathcal{A},\psi)=\sup_{{\bm \ell}\in\Gamma(\mathcal{A} )}\min_{1\le k\le d}\{s(\bm{\ell},\tau+\bm{\ell},k)\},
\end{equation*}
where $\tau$ is the lower order at infinity of $\psi $, and $\{s(\bm{\ell},\tau+\bm{\ell},k)\}_k$ are given in Section \ref{MTP}.%$\tau_n=\frac{1}{n}\log\psi(n)$.
\end{lem}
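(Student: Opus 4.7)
The plan is to recognize both $\min_{1\le k\le d}\hat s_n(\mathcal{A},\psi,k)$ and $\min_{1\le k\le d}\underline s_n(\mathcal{A},\psi,k)$ as values of \emph{continuous} functions of the finite-dimensional data $(\bm{\ell}_n,\tau_n):=(l_{n,1},\dots,l_{n,d},\tau_n)$, and then to extract the $\limsup$ via a standard argument through accumulation points.

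More concretely, I will introduce
\[
G(\bm{\ell},\tau):=\min_{1\le k\le d}\frac{1}{\tau+l_k}\Big(\sum_{j=1}^d l_j-\sum_{j:\,l_j<l_k}(l_j-l_k)\Big),
\qquad
H(\bm{\ell},\tau):=\min_{1\le k\le d}\, s(\bm{\ell},\tau+\bm{\ell},k),
\]
so that $\min_k\hat s_n=G(\bm{\ell}_n,\tau_n)$ and $\min_k\underline s_n=H(\bm{\ell}_n,\tau_n)$. The first main step is to verify that $G$ and $H$ are continuous on $(\R^+)^d\times\R^+$. Although the index sets $\{j:l_j<l_k\}$ and $\{j:l_j>\tau+l_k\}$ depend discontinuously on $(\bm{\ell},\tau)$, on each boundary where an index is gained or lost the corresponding summand, $(l_j-l_k)$ or $(\tau+l_k-l_j)$, vanishes, so $G$ and $H$ are in fact continuous as a minimum of continuous functions.

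Once continuity is in hand, I will use that $\Gamma(\mathcal{A})$ is compact (closed by definition, and bounded since $\Gamma(\mathcal{A})\subset(\R^+)^d$ together with the standing hypothesis that the $l_{n,i}$ remain in a fixed bounded interval), and that both $G$ and $H$ are nonincreasing in $\tau$ because each fraction in the defining minimum has $\tau+l_k$ in the denominator. Under hypothesis~(1), $\tau_n\to\tau$, so the accumulation points of $G(\bm{\ell}_n,\tau_n)$ are exactly $\{G(\bm{\ell},\tau):\bm{\ell}\in\Gamma(\mathcal{A})\}$ and
\[
\limsup_{n\to\infty}G(\bm{\ell}_n,\tau_n)=\max_{\bm{\ell}\in\Gamma(\mathcal{A})}G(\bm{\ell},\tau),
\]
which is the claimed formula for $\hat s(\mathcal{A},\psi)$; the same reasoning gives $\underline s(\mathcal{A},\psi)$ via $H$. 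Under hypothesis~(2), $\Gamma(\mathcal{A})=\{\bm{\ell}\}$ is a single point, and monotonicity of $G$ in $\tau$ yields
\[
\limsup_{n\to\infty}G(\bm{\ell},\tau_n)=G\bigl(\bm{\ell},\liminf_{n\to\infty}\tau_n\bigr)=G(\bm{\ell},\tau),
\]
and likewise for $H$, giving both identities at once.

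The step requiring the most care is the continuity verification for $G$ and $H$: one must carefully cancel the strict-versus-nonstrict distinction in $\K_{n,2}(k)=\{j:l_{n,j}<l_{n,k}\}$ and $\K_{n,1}(k)=\{j:l_{n,j}>\tau_n+l_{n,k}\}$ against the identities $(l_j-l_k)\big|_{l_j=l_k}=0$ and $(\tau+l_k-l_j)\big|_{l_j=\tau+l_k}=0$. After that, the rest of the argument is just the elementary fact that for a continuous function $f$ and a sequence $x_n$ whose set of accumulation points is the compact set $K$, $\limsup_n f(x_n)=\max_{x\in K}f(x)$, together with monotonicity in $\tau$ to handle the asymmetry between convergence of $\bm{\ell}_n$ and of $\tau_n$.
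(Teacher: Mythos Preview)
Your approach is correct and is in spirit the same as the paper's, but packaged more cleanly. The paper proves the continuity of $\min_k\hat s_{n,k}$ in $(\bm{\ell}_n,\tau_n)$ ``by hand'' along a chosen subsequence---it writes
\[
\hat s_{n_j,k}=\frac{1}{\tau_{n_j}+l_{n_j,k}}\Big(\sum_i l_{n_j,i}-\sum_{i:l_i<l_k}(l_{n_j,i}-l_{n_j,k})+\sum_{\substack{i:l_{n_j,i}<l_{n_j,k}\\ l_i\ge l_k}}(l_{n_j,i}-l_{n_j,k})\Big)
\]
and observes that the last sum tends to $0$---whereas you isolate this as the statement that $G$ and $H$ are continuous on $(\R^+)^d\times\R^+$ and then appeal to compactness. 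Your framing handles the four subcases (two identities, two hypotheses) uniformly, and for $H$ you could in fact simply cite the paper's Lemma~\ref{lemma2}.

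There is one small gap in your treatment of hypothesis~(2). You write $\limsup_n G(\bm{\ell},\tau_n)$ where the actual object is $\limsup_n G(\bm{\ell}_n,\tau_n)$, silently replacing $\bm{\ell}_n$ by its limit $\bm{\ell}$. This does not follow from pointwise continuity alone, since $\tau_n$ need not stay in a compact set of $\R^+$. The clean fix is to extend $G$ and $H$ continuously to $(\R^+)^d\times[0,\infty]$ (both tend to $0$ as $\tau\to\infty$, uniformly on compacta in $\bm{\ell}$), so that the sequence $(\bm{\ell}_n,\tau_n)$ is relatively compact and your ``elementary fact'' applies directly: the accumulation set is $\{\bm{\ell}\}\times T$ with $T\subset[\tau,\infty]$, and monotonicity in $\tau$ gives $\max_{t\in T}G(\bm{\ell},t)=G(\bm{\ell},\tau)$. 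A related minor point: your justification that $H$ is nonincreasing in $\tau$ (``each fraction has $\tau+l_k$ in the denominator'') is too brief, since $\K_1(k)$ itself depends on $\tau$; you should note that when an index $j$ leaves $\K_1(k)$ at $\tau=l_j-l_k$ its contribution passes continuously from $1$ to $\frac{l_j}{\tau+l_k}=1$ and thereafter decreases.
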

\begin{proof}
Firstly we show the equality \eqref{snk1}. 
Write 
\begin{align*}
\hat{s}(\bm{\ell},\tau+\bm{\ell},k)&=\frac{1}{\tau+l_k}\Big(\sum_{j=1}^dl_{j}-\sum_{j:l_j<l_k}(l_j-l_k)\Big),\\
\hat{s}_{n,k}&=\frac{1}{\tau_n+l_{n,k}}\Big\{\sum_{i=1}^dl_{n,i}-\sum_{i:l_{n,i}<l_{n,k}}(l_{n,i}-l_{n,k})\Big\}.
\end{align*}
%\[s_{n,k}=\frac{1}{\tau_n+l_{n,k}}\Big\{\sum_{i=1}^dl_{n,i}-\sum_{i:l_{n,i}<l_{n,k}}(l_{n,i}-l_{n,k})+\sum_{j:l_{n,j}>\tau_n+l_{n,k}}(\tau_n+l_{n,k}-l_{n,j})\Big\}.\]
Now we assume that condition (1) holds. Then there exists $(n_j)_{j\in\N}$ such that 
\[\lim_{j\to\infty}\hat{s}_{n_j,k}=\limsup_{n\to\infty}\hat{s}_{n,k},\quad 1\le k\le d,\]
and $\lim\limits_{j\to\infty}l_{n_j,k}$ exists, denoted by $l_k,$ $1\le k\le d.$ It follows that $\bm{\ell}=(l_1,\dots,l_d)\in \Gamma(\mathcal{A} )$.
Since $\lim\limits_{n\to\infty}\tau_n=\tau$, we have
\[\lim_{j\to\infty}\tau_{n_j}=\tau.\]
Rewrite  $s_{n_j,k}$ as
\begin{equation*}
\hat{s}_{n_j,k}=\frac{1}{\tau_{n_j}+l_{n_j,k}}\Big\{\sum_{i=1}^dl_{n_j,i}-\sum_{i:l_i<l_k}(l_{n_j,i}-l_{n_j,k})+\sum_{i:l_{n_j,i}<l_{n_j,k}\atop l_i\ge l_k}(l_{n_j,i}-l_{n_j,k})\Big\}.
\end{equation*}
Notice that for $i\in\{i:l_{n_j,i}<l_{n_j,k},~ l_i\ge l_k\}$, we have $\lim\limits_{j\to\infty}\frac{1}{\tau_{n_j}+l_{n_j,k}}(l_{n_j,i}-l_{n_j,k})=0,$ giving
that 
\begin{equation*}
\begin{split}
\lim_{j\to\infty}\hat{s}_{n_j,k}&=\lim_{j\to\infty}\frac{1}{\tau_{n_j}+l_{n_j,k}}\Big\{\sum_{i=1}^dl_{n_j,i}-\sum_{i:l_i<l_k}(l_{n_j,i}-l_{n_j,k})\Big\}\\
&=\frac{1}{\tau+l_k}\left(\sum_{i=1}^dl_i-\sum_{i:l_i<l_k}(l_i-l_k)\right)=\hat{s}(\bm{\ell},\tau+\bm{\ell},k).
\end{split}
\end{equation*}
Therefore
\[\hat{s}(\mathcal{A},\psi)=\limsup_{n\to\infty}\min_{1\le k\le d}\hat{s}_{n,k}\le \sup_{\bm{\ell}\in\Gamma(\mathcal{A} )}\min_{1\le k\le d}\hat{s}(\bm{\ell},\tau+\bm{\ell},k).\]

For any $\bm{\ell}=(l_1,\dots,l_d)\in\Gamma(\mathcal{A} )$, there exists $(\tilde{n}_j)_{j\in\N}$ such that 
\[\lim_{j\to\infty}l_{\tilde{n}_j,k}=l_k,\quad 1\le k\le d.\]
Then for $1\le k\le d$,
\[\lim_{j\to\infty}\hat{s}_{\tilde{n}_j,k}=\hat{s}(\bm{\ell},\tau+\bm{\ell},k),\]
which implies that
\[\hat{s}(\bm{\ell},\tau+\bm{\ell},k)\le \limsup_{n\to\infty}\hat{s}_{n,k},\]
giving $\sup_{\bm{\ell}\in \Gamma(\mathcal{A} )}\min_k\hat{s}(\bm{\ell},\tau+\bm{\ell},k)\le \min_k\Big\{\limsup\limits_{n\to\infty}\hat{s}_{n,k}\Big\}.$  Therefore
\[\hat{s}(\mathcal{A},\psi)=\min_k\Big\{\limsup\limits_{n\to\infty}\hat{s}_{n,k}\Big\}=\sup_{\bm{\ell}\in \Gamma(\mathcal{A} )}\min_k\hat{s}(\bm{\ell},\tau+\bm{\ell},k).\]

Then we can use a similar method to prove another equality and other cases. Now we finish the proof.
%Secondly we prove the equality \eqref{snk2}. Put
%\[s_{n,k}=\frac{1}{\tau_n+l_{n,k}}\Big\{\sum_{i=1}^dl_{n,i}-\sum_{i:l_{n,i}<l_{n,k}}(l_{n,i}-l_{n,k})+\sum_{j:l_{n,j}>\tau_n+l_{n,k}}(\tau_n+l_{n,k}-l_{n,j})\Big\}.\]
%By  condition H2, there exists $(m_j)_{j\in\N}$ such that 
%\[\lim_{j\to\infty}s_{m_j,k}=\limsup_{n\to\infty}s_{n,k},\quad 1\le k\le d,\]
%and $\lim\limits_{j\to\infty}l_{m_j,k}$ exists, denoted by $l_k,$ $1\le k\le d.$ It follows that $\bm{\ell}=(l_1,\dots,l_d)\in \Gamma$.
%Also
%\[\lim_{j\to\infty}\tau_{m_j}=\tau.\]
\end{proof}

%{\color{blue}The following lemma comes from Lemmata \ref{lem1} and \ref{lem2}. 
%\begin{lem}\label{lemmaforsuv}
%Let $A\in {\rm M}(d\times d, \mathbb{Z})$ with all eigenvalues $\lambda_1, \lambda_2,\dots,\lambda_d$ of modulus strictly larger than 1. Then 
%\[\widetilde{s}\left((A^n)_n,\tau\right)=\widetilde{s}\left((A^n-I)_n,\tau\right),\]
%and
%\[\widehat{s}\left((A^n)_n,\tau\right)=\widehat{s}\left((A^n-I)_n,\tau\right).\]
%\end{lem}
%\begin{proof}

\begin{proof}[Proof of Theorem \ref{cor}]
Recall  $\Gamma((A_n)_n)$ is the  set of accumulation points of sequences $$\left\{\big(\frac{1}{n} \log \sigma_1(A_n),\cdots ,\frac{1}{n} \log \sigma_d(A_n)\big)\right\}.$$ It follows from  Lemmata \ref{lem1} and \ref{lem2} that
\begin{equation*}\label{gamma}
\Gamma((A^n)_n)=\Gamma((A^n-I)_n)=\big\{(\log|\lambda_1|,\cdots,\log|\lambda_d|)\big\}.
\end{equation*}
Let $s(\bm{\ell},\tau+\bm{\ell})$ be as given in Section \ref{MTP}, by taking $\bm{\ell}=(\log|\lambda_1|,\cdots,\log|\lambda_d|)$.
Then we conclude from Lemma \ref{supelimsup} that 
\begin{multline*}
%\begin{split}
\underline{s}\left((A^n)_n,\psi\right)=\sup_{\bm{\ell}\in \Gamma((A^n)_n)}\min_k\{s(\bm{\ell},\tau+\bm{\ell},k)\}=\min_k\{s(\bm{\ell},\tau+\bm{\ell},k)\}\\
=\sup_{\bm{\ell}\in \Gamma((A^n-I)_n)}\min_k\{s(\bm{\ell},\tau+\bm{\ell},k)\}=\underline{s}\left((A^n-I)_n,\psi\right).
%\end{split}
\end{multline*}
%Similarly, we also obtain that $\widehat{s}\left((A^n)_n,\tau\right)=\widehat{s}\left((A^n-I)_n,\tau\right)=s(\ell,\tau+\ell)$.
%\end{proof}}
 We also observe that 
 \[\overline{s}\left((A^n)_n,\psi\right)\le \hat{s}\left((A^n)_n,\psi\right),\quad \overline{s}\left((A^n-I)_n,\psi\right)\le \hat{s}\left((A^n-I)_n,\psi\right),\]
 and applying Lemma \ref{supelimsup},  $ \hat{s}\left((A^n)_n,\psi\right)=\hat{s}\left((A^n-I)_n,\psi\right)$.

 It follows from the equalities above and Theorem \ref{application1} that for any ${\bf y}\in[0,1)^d$,
\[\hat{s}\left((A^n)_n,\psi\right)\ge  \dim_{\rm H}S(\psi,{\bf y}),\dim_{\rm H}R(\psi)\ge \underline{s}\left((A^n)_n,\psi\right).\]
Moreover $ \dim_{\rm H}S(\psi,{\bf y})\le \overline{s}\left((A^n)_n,\psi\right)$, $ \dim_{\rm H}R(\psi)\le\overline{s}\left((A^n-I)_n,\psi\right)$ and $\overline{s}\left(\mathcal{A},\psi\right) \le \hat{s}\left(\mathcal{A},\psi\right)$ for any $\mathcal{A}$.
%where $(s(i))_{i=1}^d$ and $(\widetilde{s}(i))_{i=1}^d$ have been given in Theorem \ref{application2}. Since $|\lambda_1|\le\cdots\le |\lambda_d|$, we get 
%\[\K_1(i)=\{1\le j\le d\colon \log|\lambda_j|>\tau+\log|\lambda_i|\},\]
%\[\K_2(i)=\{1,2,\dots,k-1\},\quad \K_3(i)=\{k,\dots,d\}\setminus \K_1(i).\]
%Hence after some simplifications, 
%\[\widetilde{s}(i)=\frac{i\log|\lambda_i|+\sum_{j=i+1}^d\log|\lambda_j|}{\tau+\log|\lambda_i|},\]
%and
%\[s(i)=\frac{i\log|\lambda_i|+\sum_{j=i+1}^d\log|\lambda_j|+\sum_{j\in \K_1(i)}(\tau+\log|\lambda_i/\lambda_j|)}{\tau+\log|\lambda_i|}.\]
This completes the proof.

\end{proof}

%In last example, we show there exist a class of matrices such that $\dim_{\rm H}S(\tau)= \min_k\widetilde{s}(k)$, which is the upper bound given in inequalities \eqref{est} (as well as that in Corollary \ref{cor}). We will further investigate the cases where the lower bound is the dimension formula of $S(\tau)$.

%We remark that we only consider the Hausdorff dimension of shrinking target sets, however one can deduce that the following results also hold for the recurrence set with the same setting.

\subsection{Proof of Theorem \ref{diag}} %\hfill \\

 We start by stating a lemma for later use.
Let $P\in GL_d(\mathbb{Z})$, then it deduces a transformation $T_P\colon [0,1)^d\to[0,1)^d$ as
\[T_P{\bf x}=P{\bf x}\m.\]

\begin{lem}\label{localbi-lip}
    Let $P$ and $T_P$ be as defined above. Assume that $P$ is invertible. Then $T_P$ is locally bi-Lipschitz under the norm $\Vert \cdot\Vert_d$.
\end{lem}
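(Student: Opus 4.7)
The plan is to exploit the fact that, on small enough neighborhoods, the torus metric $\|\cdot\|_d$ coincides with the restriction of the Euclidean metric, so the bi-Lipschitz behaviour of $T_P$ reduces to the trivial bi-Lipschitz behaviour of the linear map $P$ on $\R^d$.

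First I would record the key elementary identity
\[
\|{\bf z}\|_d=\sqrt{\sum_{i=1}^d\|z_i\|^2}=\min_{\mathbf{n}\in\Z^d}|{\bf z}-\mathbf{n}|,\qquad{\bf z}\in\R^d,
\]
where $|\cdot|$ is the Euclidean norm on $\R^d$ (the minimum over $\Z^d$ on the right decouples into minima over each coordinate).
In particular, whenever $|{\bf z}|<1/2$, one has $\|{\bf z}\|_d=|{\bf z}|$.
Next I would observe that, since $P\in GL_d(\Z)$, reduction mod $1$ commutes with $P$ on the torus; more precisely, for any ${\bf x},{\bf y}\in [0,1)^d$,
\[
\|T_P{\bf x}-T_P{\bf y}\|_d=\min_{\mathbf{n}\in\Z^d}|P{\bf x}-P{\bf y}-\mathbf{n}|=\|P({\bf x}-{\bf y})\|_d,
\]
because the integer vectors absorbed by the two ``mod $1$'' operations simply reindex the minimisation.

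The bi-Lipschitz estimate is then immediate on a suitable neighbourhood.
Fix ${\bf x}_0\in[0,1)^d$ and let $\|P\|$, $\|P^{-1}\|$ denote the Euclidean operator norms of $P$ and its (real) inverse.
Choose $\varepsilon>0$ so small that $\varepsilon<1/2$ and $\|P\|\varepsilon<1/2$.
For ${\bf x},{\bf y}\in[0,1)^d$ with $\|{\bf x}-{\bf y}\|_d<\varepsilon$, the first display gives $\|{\bf x}-{\bf y}\|_d=|{\bf x}-{\bf y}|$, and then
$|P({\bf x}-{\bf y})|\le\|P\|\cdot|{\bf x}-{\bf y}|<1/2$, so that
\[
\|T_P{\bf x}-T_P{\bf y}\|_d=\|P({\bf x}-{\bf y})\|_d=|P({\bf x}-{\bf y})|.
\]
Combining this with the two-sided bound
\[
\|P^{-1}\|^{-1}|{\bf x}-{\bf y}|\ \le\ |P({\bf x}-{\bf y})|\ \le\ \|P\|\cdot|{\bf x}-{\bf y}|
\]
yields
\[
\|P^{-1}\|^{-1}\|{\bf x}-{\bf y}\|_d\ \le\ \|T_P{\bf x}-T_P{\bf y}\|_d\ \le\ \|P\|\cdot\|{\bf x}-{\bf y}\|_d,
\]
which is the desired local bi-Lipschitz property on the $\varepsilon$-ball around ${\bf x}_0$.

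There is no real obstacle here; the only subtlety is conceptual, namely verifying that ``mod $1$'' may be transferred from $T_P{\bf x}-T_P{\bf y}$ to $P({\bf x}-{\bf y})$ inside the torus distance, which uses crucially that $P$ has integer entries so that $P\mathbf{k}\in\Z^d$ for every $\mathbf{k}\in\Z^d$. After that step everything reduces to the fact that a nondegenerate linear map on $\R^d$ is globally bi-Lipschitz in the Euclidean metric, and the choice of $\varepsilon$ above guarantees that no wrap-around in $[0,1)^d$ spoils the comparison.
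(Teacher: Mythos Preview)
Your approach is essentially the same as the paper's: reduce to the Euclidean bi-Lipschitz estimate for the linear map $P$ by choosing the neighbourhood small enough that the torus metric coincides with the Euclidean one both before and after applying $P$; the paper phrases the constants via singular values, you via operator norms, which is the same thing. One small slip: from $\|{\bf x}-{\bf y}\|_d<\varepsilon$ you cannot conclude $|{\bf x}-{\bf y}|=\|{\bf x}-{\bf y}\|_d$ (take ${\bf x}=(0.95,0,\dots)$, ${\bf y}=(0.05,0,\dots)$); either replace ${\bf x}-{\bf y}$ by its nearest integer translate ${\bf x}-{\bf y}-\mathbf{k}_0$ (your identity $\|T_P{\bf x}-T_P{\bf y}\|_d=\|P({\bf x}-{\bf y})\|_d$ is $\Z^d$-invariant, so this is harmless), or, as the paper does, simply take the local neighbourhood to be a Euclidean ball in $[0,1)^d$.
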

\begin{proof}
    Recall that for ${\bf x}=(x_1,\dots,x_d)$ and ${\bf y}=(y_1,\dots,y_d)$, $\Vert{\bf x}-{\bf y}\Vert_d=(\sum_{i=1}^d\Vert x_i-y_i\Vert^2)^{1/2}$ and $|\cdot |$  denotes the Euclidean metric on $\mathbb{R}^d$.

We observe that for ${\bf x},\,{\bf y}\in\mathbb{R}^d$ with $|{\bf x}-{\bf y}|<\frac{1}{2}$, 
\[\Vert{\bf x}-{\bf y}\Vert_d=\left(\sum_{i=1}^d\vert x_i-y_i\Vert^2\right)^{1/2}=\left(\sum_{i=1}^d| x_i-y_i|^2\right)^{1/2}=|{\bf x}-{\bf y}|.\]
It suffices to show that for any ${\bf x}\in[0,1)^d$, there exists a neighborhood of ${\bf x}$, on which $T_P $ is  bi-Lipschitz.
Note that since $P$ is  invertible, let 
$$\sigma_{min}:=\min_{1\le i\le d}\sigma_i(P)>0\quad {\rm and}\quad \sigma_{max}:=\max_{1\le i\le d}\sigma_i(P)<\infty.$$
Denote
\begin{equation}\label{l}
L:=\max\left\{\sigma_{max}, \frac{1}{\sigma_{min}}\right\}\in(0,\infty),
\end{equation}
and $\delta:=\min\{\frac{1}{4},\frac{1}{4\sigma_{max}}\}$.  Put
$$U_{\mathbb{R}^d}\left({\bf x},\frac{\delta}{2}\right):=\left\{{\bf y}\in[0,1)^d\colon |{\bf y} -{\bf x}|<\frac{\delta}{2}\right\}.$$
For ${\bf z},\,{\bf y}\in U_{\mathbb{R}^d}\left({\bf x},\frac{\delta}{2}\right)$, we have $|{\bf z}-{\bf y}|<\frac{1}{2}$ and 
 \[|P({\bf z}-{\bf y})|\le \sigma_{max}|{\bf z}-{\bf y}|<\frac{1}{2}.\]
Since 
 \[\Vert T_P{\bf z}-T_P{\bf y}\Vert_d=\left(\sum_{i=1}^d\Vert (P({\bf z}-{\bf y}))_i\Vert^2\right)^{1/2},\]
 where $(P({\bf z}-{\bf y}))_i$ denotes the $i$-th component of $P({\bf z}-{\bf y})$, we have
 \[\Vert T_P{\bf z}-T_P{\bf y}\Vert_d=\left(\sum_{i=1}^d\vert (P({\bf z}-{\bf y}))_i\vert^2\right)^{1/2}=|P({\bf z}-{\bf y})|.\]
 It follows  that 
 \[\Vert T_P{\bf z}-T_P{\bf y}\Vert_d=|P({\bf z}-{\bf y})|\le \sigma_{max}|{\bf z}-{\bf y}|=\sigma_{max}\Vert {\bf z}-{\bf y}\Vert_d\le L\Vert {\bf z}-{\bf y}\Vert_d,\]
 and
 \[\Vert T_P{\bf z}-T_P{\bf y}\Vert_d=|P({\bf z}-{\bf y})|\ge \sigma_{min}|{\bf z}-{\bf y}|=\sigma_{min}\Vert {\bf z}-{\bf y}\Vert_d\ge \frac{1}{L}\Vert {\bf z}-{\bf y}\Vert_d.\]
 Now we finish the proof.
\end{proof}
\begin{rem} When $|\det P|>1$, $T_P$ may fail to be injective, and then its inverse map does not exist. However, $T_P$ is locally bijection. %for every point ${\bf x}\in[0,1)^d$, there exists a neighborhood $U_{\bf x}\subset [0,1)^d$ such that the restriction  $T_P|_{U_{\bf x}}$ is a bijection, and its inverse function is also bi-Lipschitz. 
More precisely, for ${\bf x}\in [0,1)^d$, let  $U:=U_{\mathbb{R}^d}\left({\bf x},\frac{\delta}{2}\right)$ be as given in the proof of Lemma \ref{localbi-lip}. Consider $T_P\colon U\to T_P(U)$, which is surjective. For any  ${\bf z},\,{\bf y}\in U$, if $T_P{\bf z}=T_P{\bf y}$, then $\Vert T_P{\bf z}-T_P{\bf y}\Vert_d=0$, which gives that $|{\bf z}-{\bf y}|=0$, that is, ${\bf z}={\bf y}$. It shows that $T_P$, restricted in $U$,  is injective. Consider the inverse map $T_{P,U}^{-1}\colon T_P(U)\to U$.  Since $T_P\colon U\to T_P(U)$ is bi-Lipschitz,  $T_{P,U}^{-1}$ is also bi-Lipschitz.
\end{rem}

Now we are ready to prove Theorem \ref{diag}.
\begin{proof}[Proof of Theorem \ref{diag}]
Since $A$ is diagonalizable over $\mathbb{Q}$, there is an invertible  matrix $P_0\in GL_d(\mathbb{Q})$ such that 
\[A=P_0DP_0^{-1},\]
where $D={\rm diag}\{\lambda_1,\dots,\lambda_d\}\in GL_d(\mathbb{Q})$. 
Take $p\ge 1$ as the least common multiple of the denominators of elements of $P_0$, which is an integer such that $P:=pP_0 \in GL_d(\mathbb{Z})$. For $n\ge1$, we have $A^n(P{\bf x}\m)\m=P(D^n{\bf x}\m)\m$, that is, $T_A(T_P{\bf x})=T_P(D^n{\bf x}\m)$.

Let $\delta$ and $U_{\mathbb{R}^d}({\bf x}, \frac{\delta}{2})$ be as given in the proof of Lemma \ref{localbi-lip}. Note that $[0,1)^d\subset \bigcup_{{\bf x}\in[0,1)^d}B_{\mathbb{R}^d}({\bf x}, \frac{\delta}{2})$, then there exists a finite  subclass $\{B_{\mathbb{R}^d}({\bf x}_i, \frac{\delta}{2})\}_{i=1}^N$ such that $[0,1)^d\subset \bigcup_{i=1}^NB_{\mathbb{R}^d}({\bf x}_i, \frac{\delta}{2})$. Furthermore
\[[0,1)^d=\bigcup_{i=1}^NB_{\mathbb{R}^d}\left({\bf x}_i, \frac{\delta}{2}\right)\cap [0,1)^d=\bigcup_{i=1}^NU_{\mathbb{R}^d}\left({\bf x}_i, \frac{\delta}{2}\right),\]
and $[0,1)^d=\bigcup_{i=1}^NT_P\left(U_{\mathbb{R}^d}\left({\bf x}_i, \frac{\delta}{2}\right)\right).$

Write $U_i:=U_{\mathbb{R}^d}\left({\bf x}_i, \frac{\delta}{2}\right)$. Let $T_{P,i}^{-1}$ be the inverse map of $T_P$ restricted on $U_i$. For $1\le i\le N$, consider
\[S_i:=\{{\bf x}\in T_P(U_j)\colon A^n{\bf x}\m\in B({\bf y},\psi(n)) ~{\rm for~infinitely~many ~}n\ge1\},\]
and
\[W_i:=\{{\bf x}\in U_j\colon PD^n{\bf x}\m\in B({\bf y},\psi(n)) ~{\rm for~infinitely~many ~}n\ge1\}.\]
Now we show that $T_PW_i=S_i$. For ${\bf x}\in S_i$, we have $T_{P,i}^{-1}{\bf x}\in U_j$, and
\begin{align*}\label{left}
PD^nT_{P,i}^{-1}{\bf x}\m & =PD^nP^{-1}{\bf x}\m=A^n {\bf x}\m\in B({\bf y},\psi(n)),
\end{align*}
then $T_{P,i}^{-1}{\bf x}\in W_i$. Conversely, for ${\bf x}\in W_i$, we get $T_P{\bf x}\in T_P(U_i)$. Also
\begin{align*}
A^n (T_P{\bf x})\m&=PD^nP^{-1} (P{\bf x}\m)\m=PD^n{\bf x}\m\in B({\bf y},\psi(n)),
\end{align*}
then $T_P{\bf x}\in S_i$.

Combing the fact that for $1\le i\le N$,  $T_P$ restricted on $U_i$ is  bi-Lipschitz mappings and \cite[Proposition 3.3]{falconerbook}, we obtain that
\[\dim_{\rm H}\bigcup_{i}W_i=\max_i\{\dim_{\rm H}W_i\}=\max_i\{\dim_{\rm H}T_PW_i\}=\max_i\{\dim_{\rm H}S_i\}=\dim_{\rm H}S(\psi,{\bf y}).\]
To get $\dim_{\rm H}S(\psi,{\bf y})$, it suffices to calculate $\dim_{\rm H}\bigcup_{i}W_i$. 
We observe that 
\begin{multline*}
\quad \bigcup_{i=1}^N W_i=\{{\bf x}\in [0,1)^d\colon PD^n{\bf x}\m\in B({\bf y},\psi(n)) ~{\rm for~infinitely~many ~}n\ge1\}\\
=\bigcup_{{\bf z}\in\mathbb{Z}^d\cap P[0,1)^d}\{{\bf x}\in [0,1)^d\colon D^n{\bf x}\m\in P^{-1}\left(B({\bf y}+{\bf z},\psi(n))\right)\m\\
 ~{\rm for~infinitely~many ~}n\ge1\}.
\end{multline*}
Let $L$ be as defined in \eqref{l}. There exists  ${\bf y}_{\bf z}\in[0,1)^d$ such that $B({\bf y}_z,L^{-1}\psi(n))\m\subset P^{-1}B({\bf y}+{\bf z},\psi(n))\m\subset B({\bf y}_z,L\psi(n))\m$, which implies that 
\[\bigcup_{{\bf z}\in\mathbb{Z}^d\cap P[0,1)^d}W((D^n)_n,L^{-1}\psi,{\bf y}_{\bf z})\subset \bigcup_{i=1}^N W_i\subset \bigcup_{{\bf z}\in\mathbb{Z}^d\cap P[0,1)^d}W((D^n)_n,L\psi,{\bf y}_{\bf z}). \]
It follows  from Theorem \ref{application1} that for any constant $c>0$ and any ${\bf z}\in\mathbb{Z}^d\cap P[0,1)^d$
\[\dim_{\rm H}W((D^n)_n,c\psi,{\bf y}_{\bf z})=\underline{s}\left((D^n)_n,\psi\right) =\underline{s}\left((A^n)_n,\psi\right).\]
We conclude that for any ${\bf y}\in[0,1)^d$
\begin{align*}
\dim_{\rm H}S(\psi, {\bf y})&=\dim_{\rm H}\bigcup_{i}W_i=\underline{s}\left((A^n)_n,\psi\right).
\end{align*}
\end{proof}

\subsection{Proof of Theorem \ref{jordan} and Corollary \ref{jorin}} %\hfill \\

We can deduce Corollary \ref{jorin} from Theorem \ref{jordan} by applying the same method in the proof of Theorem \ref{diag}. Hence we only prove Theorem \ref{jordan}.

\begin{lem}\label{upp} Let $A\in GL_d(\mathbb{Z})$ with eigenvalues $1<|\gamma_1|\le \cdots\le|\gamma_d|$. Assume that 
 \[\lim_{n\to\infty}-\frac{1}{n}\log m_j(A^{-n}\mathbb{Z}^d)=\log|\gamma_j|,\quad 1\le j\le d.\]
 Then 
 \[\overline{s}((A^n)_n,\psi)=\underline{s}((A^n)_n,\psi).\]
 \end{lem}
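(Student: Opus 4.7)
The plan is to show directly that the difference $\overline{s}_n(\mathcal{A},\psi,i)-\underline{s}_n(\mathcal{A},\psi,i)$ tends to $0$ as $n\to\infty$, uniformly in $i$. Once this is established, the equality $\overline{s}((A^n)_n,\psi)=\underline{s}((A^n)_n,\psi)$ follows immediately on taking $\min_i$ and $\limsup_n$.

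First I would invoke Lemma~\ref{lem1}, which guarantees that $l_{n,j}=\tfrac1n\log\sigma_j(A^n)\to \log|\gamma_j|$ as $n\to\infty$. Combined with the standing hypothesis $h_{n,j}=-\tfrac1n\log m_j(A^{-n}\mathbb{Z}^d)\to \log|\gamma_j|$, this yields $\varepsilon_n:=\max_{1\le j\le d}|l_{n,j}-h_{n,j}|\to 0$. Since all eigenvalues satisfy $|\gamma_j|>1$, we also have $\liminf_n l_{n,1}=\log|\gamma_1|>0$, and hence $\tau_n+l_{n,i}$ is bounded away from $0$ uniformly in $i$ for all large $n$.

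The core step is a term-by-term comparison of the two sums appearing in $\overline{s}_n$ and $\underline{s}_n$. Using the identity
\[
(\tau_n+l_{n,i}-h_{n,j})-(\tau_n+l_{n,i}-l_{n,j})=l_{n,j}-h_{n,j},
\]
I would split each index $j\in\{1,\dots,d\}$ into four cases depending on membership in $\Gamma_n(i)$ and $\K_{n,1}(i)$. The case $j\in \Gamma_n(i)\cap\K_{n,1}(i)$ contributes exactly $l_{n,j}-h_{n,j}$, with absolute value at most $\varepsilon_n$. For $j\in\Gamma_n(i)\setminus\K_{n,1}(i)$, one has $\tau_n+l_{n,i}-h_{n,j}\le 0$ and $\tau_n+l_{n,i}-l_{n,j}\ge 0$, which forces $0\le h_{n,j}-(\tau_n+l_{n,i})\le h_{n,j}-l_{n,j}$; the symmetric argument handles $j\in \K_{n,1}(i)\setminus\Gamma_n(i)$. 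In every case the per-index contribution is bounded in absolute value by $\varepsilon_n$. Summing over the $d$ indices and dividing by $\tau_n+l_{n,i}$ gives
\[
\bigl|\overline{s}_n(\mathcal{A},\psi,i)-\underline{s}_n(\mathcal{A},\psi,i)\bigr|\ \le\ \frac{d\,\varepsilon_n}{\tau_n+l_{n,i}}\ \lesssim\ \varepsilon_n,
\]
uniformly in $i$ and in large $n$.

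Finally, this uniform estimate yields $\bigl|\min_i\overline{s}_n(\mathcal{A},\psi,i)-\min_i\underline{s}_n(\mathcal{A},\psi,i)\bigr|\to 0$, and taking $\limsup_{n\to\infty}$ on both sides produces the desired equality $\overline{s}((A^n)_n,\psi)=\underline{s}((A^n)_n,\psi)$. I do not anticipate a genuine obstacle here; the only delicate point is the four-case bookkeeping around ``boundary'' indices $j$ for which $\log|\gamma_j|$ is close to $\tau+\log|\gamma_i|$, where $\Gamma_n(i)$ and $\K_{n,1}(i)$ may disagree. But in precisely those borderline cases the relevant factors $\tau_n+l_{n,i}-h_{n,j}$ and $\tau_n+l_{n,i}-l_{n,j}$ are themselves $O(\varepsilon_n)$, so the disagreement is harmless.
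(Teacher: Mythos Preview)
Your proof is correct and takes a cleaner, more direct route than the paper's. The paper rewrites $\overline{s}_n(\mathcal{A},\psi,i)$ by replacing the $n$-dependent index sets $\K_{n,2}(i)$ and $\Gamma_n(i)$ with their limiting versions $\{j:l_j<l_i\}$ and $\{j:l_j>\tau+l_i\}$, shows the resulting correction terms vanish as $n\to\infty$, and then invokes Lemma~\ref{supelimsup} to identify the limit with the corresponding formula for $\underline{s}((A^n)_n,\psi)$. Your argument bypasses Lemma~\ref{supelimsup} entirely: you compare $\overline{s}_n(i)$ and $\underline{s}_n(i)$ directly for each fixed $n$, using the four-case analysis on $\Gamma_n(i)$ versus $\K_{n,1}(i)$. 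The key identity $(\tau_n+l_{n,i}-h_{n,j})-(\tau_n+l_{n,i}-l_{n,j})=l_{n,j}-h_{n,j}$ and the squeeze in the mismatch cases give the uniform-in-$i$ bound $|\overline{s}_n(i)-\underline{s}_n(i)|\le d\varepsilon_n/(\tau_n+l_{n,i})$, which is exactly what is needed to pass to $\limsup_n\min_i$; the paper's per-$i$ computation of $\limsup_n\overline{s}_n(i)$ leaves this last step slightly implicit. Your approach is more elementary and self-contained; the paper's approach has the minor advantage of exhibiting the common limit explicitly in terms of the eigenvalues.
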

 
 \begin{proof}
% There exists $n_k$ such that 
% \[\tau=\lim_{k\to\infty}-\frac{\psi(n_k)}{n_k}.\]
Recall that $h_{n,j}=\frac{1}{n}\log m_j(A^{-n}\mathbb{Z}^d)$, and $l_j=\log|\gamma_j|$, $1\le j\le d$. Here we also have 
\[\lim_{n\to\infty}l_{n,j}=\lim_{n\to\infty}\frac{1}{n}\log\sigma_j(A^n)=l_j,\quad 1\le j\le d.\]
Notice that for $1\le i\le d$,
 \begin{equation*}
 \begin{split}
\overline{s}_{n}((A^n)_n,\psi,i)&=\sum_{j=1}^d\frac{l_{n,j}}{\tau_n+l_{n,i}}-\sum_{j:l_{n,j}<l_{n,i}}\frac{l_{n,j}-l_{n,i}}{\tau_n+l_{n,i}}+\sum_{j\in\Gamma_n(i)}\frac{h_{n,j}+\tau_n+l_{n,i}}{\tau_n+l_{n,i}}\\
&=\sum_{j=1}^d\frac{l_{n,j}}{\tau_n+l_{n,i}}-\sum_{j:l_j<l_i}\frac{l_{n,j}-l_{n,i}}{\tau_n+l_{n,i}}+\sum_{j:l_j>\tau+l_i}\frac{h_{n,j}+\tau_n+l_{n,i}}{\tau_n+l_{n,i}}\\
&\quad +\sum_{j:l_{n,j}<l_{n,i}\atop l_j\ge l_i}\frac{l_{n,j}-l_{n,i}}{\tau_n+l_{n,i}}+\sum_{j\in\Gamma_n(i)\atop l_j\le \tau+l_i}\left(-\frac{h_{n,j}}{\tau_n+l_{n,i}}-1\right)\\
 \end{split}
 \end{equation*}
 We observe that
 \begin{itemize}
 \item for $j\in \{j:l_{n,j}<l_{n,i},~l_j\ge l_i\}$, 
 \[\lim_{n\to\infty}\frac{l_{n,j}}{l_{n,i}}=1.\]
 \item  for $j\in\{j:j\in\Gamma_n(i),~ l_j\le \tau+l_i\}$, 
 \[-h_{n,j}\ge \tau_n+l_{n,i},\quad  l_j\le \tau+l_i,\]
 then
 \[\limsup_{n\to\infty}\frac{-h_{n,j}}{ \tau_n+l_{n,i}}=1.\]
 \end{itemize}
These together with Lemma \ref{supelimsup}, we have that 
 \begin{equation*}
 \begin{split}
 \limsup_{n\to\infty}\overline{s}_{n}((A^n)_n,\psi,i)&=\sum_{j=1}^d\frac{l_j}{\tau+l_i}-\sum_{j:l_j<l_i}\frac{l_j-l_{i}}{\tau+l_{i}}+\sum_{j:l_j>\tau+l_i}\frac{l_j+\tau+l_{i}}{\tau+l_{i}}\\
 &= \limsup_{n\to\infty}\underline{s}_{n}((A^n)_n,\psi,i),
  \end{split}
 \end{equation*}
 which gives this lemma.
 \end{proof}

\begin{proof}[Proof of Theorem \ref{jordan}]
It follows from Theorem \ref{application1} that for any ${\bf y}\in[0,1)^d$,
\[\dim_{\rm H}W(\psi,{\bf y})\ge \underline{s}((A^n)_n,\psi).\]
Hence  we just need to prove that $\underline{s}((A^n)_n,\psi)$ is also the upper bound of $\dimh W(\psi)$.

%Given $\mathbf{x}=(x_1,...,x_d)\in\R^d$ , denote 
%    $$|\mathbf{x}|_\infty:=\max\{|x_1|,...,|x_d|\}.$$%\quad{\rm and}\quad B(\mathbf{x},r):=\{\mathbf{y}\in\R^d:|\mathbf{y}-\mathbf{x}|\leq r\}.$$ 

    Given $j\in\{1,...,s\}$, $\mathbf{x}=(x_1,...,x_{n_j})\in\R^{n_j}$, denote $|\cdot|_j$ as the $L^{\infty}$ norm on $\R^{n_j}$, that is,
    \[|\mathbf{x}|_j:=\max\{|x_1|,...,|x_{n_j}|\}.\] 
     Let $\epsilon>0$ be a small enough number, then after a straightforward calculation we have
 \begin{eqnarray}
 \begin{split}\label{norm}
     (|\lambda_j|-\epsilon)^n|\mathbf{v}|_j\leq&|J(\lambda_j,n_j)^n\mathbf{v}|_j\leq (|\lambda_j|+\epsilon)^n|\mathbf{v}|_j,\\
     (|\lambda_j|+\epsilon)^{-n}|\mathbf{v}|_j\leq&|J(\lambda_j,n_j)^{-n}\mathbf{v}|_j\leq (|\lambda_j|-\epsilon)^{-n}|\mathbf{v}|_j
 \end{split}
 \end{eqnarray}
 for all $j=1,...,s$, $\mathbf{v}\in\R^{n_j}$ and all $n\gg1$. 
 
 Therefore for any $p\ge 1$ with $\sum_{k=1}^{j-1}n_k<p\le \sum_{k=1}^{j}n_k$, we have
 \[\log(|\lambda_j|-\epsilon)\le -\frac{1}{n}\log m_p(\Lambda_n)\le\log (|\lambda_j|+\epsilon).\]
 It follows that 
 \[\lim_{n\to\infty}-\frac{1}{n}\log m_p(\Lambda_n)=\log|\lambda_j|.\]
By Lemma \ref{upp}, we have $\overline{s}((A^n)_n,\psi)=\underline{s}((A^n)_n,\psi)$. 

The following observation finishes the proof which can be verified directly
\[\underline{s}((A^n)_n,\psi)=\min_{1\le j\le s}\left\{\sum_{j\in\K'_1(i)}n_j+\sum_{j\in\K'_2(i)}\frac{n_j\log|\lambda_i|}{\log|\lambda_i|+\tau}+\sum_{j\in\K'_3(i)}\frac{n_j\log|\lambda_j|}{\log|\lambda_i|+\tau}\right\},\]
where
\begin{eqnarray*}
\K'_1(i)&=&\left\{1\leq j\leq s:\log|\lambda_j|>\log|\lambda_i|+\tau\right\},\\
\K'_2(i)&=&\left\{1\leq j\leq s:|\lambda_j|<|\lambda_i|\right\},\\
\K'_3(i)&=&\{1,...,s\}\backslash(\K_1(i)\cup\K_2(i)).
\end{eqnarray*}

\end{proof}

\subsection*{Acknowledgements}
%Zhangnan Hu  is supported by Science Foundation of China University of  Petroleum, Beijing (Grant No. 2462023SZBH013), by China  Postdoctoral Science Foundation (Grant No. 2023M743878) and  by the Postdoctoral Fellowship Program of CPSF under Grant Number GZB20240848.

This work was supported by National Key R$\And$D Program of China (No. 2024YFA1013700), NSFC 12271176, NSFC 12501115, Beijing Natural Science Foundation 1254048,  Guangdong Natural Science Foundation 2024A1515010946.

\vspace*{10ex}

\noindent Z. N. Hu: College of Science, China University of Petroleum, Beijing,

\noindent\phantom{Zhang-nan Hu: }  Fuxue Road 18, Changping District, Beijing, P. R. China %102249

\noindent\phantom{Zhang-nan Hu: }  hnlgdxhzn@163.com

\vspace{5mm}

\noindent J. J. Huang:  School of Mathematics, South China University of Technology, 

\noindent\phantom{Jun-jie Huang: } Wushan Road 381, Tianhe District, Guangzhou, P. R. China

\noindent\phantom{Jun-jie Huang: } 	h1135778478@163.com

\vspace{5mm}

\noindent B. Li:  School of Mathematics, South China University of Technology, 

\noindent\phantom{Bing Li: } Wushan Road 381, Tianhe District, Guangzhou, P. R. China

\noindent\phantom{Bing Li: } scbingli@scut.edu.cn

\vspace{5mm}
\noindent J. Wu:  School of Mathematics and Statistics, Huazhong University of Science 

\noindent\phantom{Jun Wu: }  and Technology, Luoyu Road 1037,  Wuhan, P. R. China

\noindent\phantom{Jun Wu: } jun.wu@hust.edu.cn

\end{sloppypar}
\end{document}